\newtheorem{theorem}{Theorem}[section]
\newtheorem{corollary}[theorem]{Corollary}
\newtheorem{lemma}[theorem]{Lemma}
\theoremstyle{definition}
\newtheorem{definition}[theorem]{Definition}
\newtheorem{remark}[theorem]{Remark}
\newcommand{\bbbr}{\mathbb R}
\newcommand{\bbbz}{\mathbb Z}
\newcommand{\bbbn}{\mathbb N}
\newcommand{\Fspqf}{\|f\|_{\dot{F}^s_{p,q}(\bbbr^n)}}
\newcommand{\Fspq}{\dot{F}^s_{p,q}(\bbbr^n)}
\newcommand{\Fspinf}{\dot{F}^s_{p,\infty}(\bbbr^n)}
\newcommand{\Bspq}{\dot{B}^s_{p,q}(\bbbr^n)}
\newcommand{\Bspinf}{\dot{B}^s_{p,\infty}(\bbbr^n)}
\newcommand{\Bsinfq}{\dot{B}^s_{\infty,q}(\bbbr^n)}
\newcommand{\Bsifif}{\dot{B}^s_{\infty,\infty}(\bbbr^n)}
\newcommand{\FT}{\mathcal{F}}
\newcommand{\iFT}{\mathcal{F}^{-1}}
\newcommand{\Haus}{\mathcal{H}}
\newcommand{\unitsph}{\mathbb{S}^{n-1}}
\newcommand{\HLmax}{\mathcal{M}}
\newcommand{\PFSmax}{\mathcal{P}}
\newcommand{\Sw}{\mathcal{S}}
\def\esssup{\mathop{\rm ess\,sup\,}}
\def\dist{{\rm dist\,}}
\def\mvint_#1{\mathchoice
          {\mathop{\vrule width 6pt height 3 pt depth -2.5pt
                  \kern -8pt \intop}\nolimits_{\kern -3pt #1}}%
          {\mathop{\vrule width 5pt height 3 pt depth -2.6pt
                  \kern -6pt \intop}\nolimits_{#1}}%
          {\mathop{\vrule width 5pt height 3 pt depth -2.6pt
                  \kern -6pt \intop}\nolimits_{#1}}%
          {\mathop{\vrule width 5pt height 3 pt depth -2.6pt
                  \kern -6pt \intop}\nolimits_{#1}}}
\begin{document}

\title[Inequalities in function spaces]
{Inequalities in homogeneous Triebel-Lizorkin and Besov-Lipschitz spaces}

\author{Lifeng Wang}
\address{Department of Mathematics, University of Pittsburgh, Pittsburgh, PA 15260, USA}
\subjclass[2020]{42B35.}
\keywords{Homogeneous Triebel-Lizorkin space, homogeneous Besov-Lipschitz space, iterated difference, Fourier analysis, Hardy-Littlewood maximal function, Peetre-Fefferman-Stein maximal function.}

\begin{abstract}
This paper provides equivalence characterizations of homogeneous Triebel-Lizorkin and Besov-Lipschitz spaces, denoted by \!$\Fspq$ and $\Bspq$ respectively, in terms of maximal functions of the mean values of iterated difference. It also furnishes the reader with inequalities in $\Fspq$ in terms of iterated difference and in terms of iterated difference along coordinate axes. The corresponding inequalities in $\Bspq$ in terms of iterated difference and in terms of iterated difference along coordinate axes are also considered. The techniques used in this paper are of Fourier analytic nature and the Hardy-Littlewood and Peetre-Fefferman-Stein maximal functions.
\end{abstract}

\maketitle
\tableofcontents

\section{Introduction}
We begin by introducing notations, definitions, historical results, and the main theorems of this paper. Let $\mathcal{S}(\bbbr^n)$ denote the space of Schwartz functions on $\bbbr^n$ and $\mathcal{S}'(\bbbr^n)$ be the space of tempered distributions on $\bbbr^n$. For a function $f\in L^1(\bbbr^n)$, we denote its $n$-dimensional Fourier transform by
$$\FT_n f(\xi)=\int_{\bbbr^n}f(x)e^{-2\pi ix\cdot\xi}dx,$$
and the $n$-dimensional inverse Fourier transform is denoted by
$$\iFT_n f(\xi)=\int_{\bbbr^n}f(x)e^{2\pi ix\cdot\xi}dx,$$
where for $x\in\bbbr^n$ and $\xi\in\bbbr^n$, $x\cdot\xi$ is the inner product. If $f\in\Sw'(\bbbr^n)$, we use the same notation to denote the $n$-dimensional distributional Fourier transform and its inverse. Now we introduce the definitions of related function spaces and maximal functions. We fix throughout this paper $\phi,\psi\in\mathcal{S}(\bbbr^n)$ such that
\begin{equation}\label{eq5}
0\leq\FT_n\psi(\xi)\leq 1\qquad\text{and}\qquad spt.\FT_n\psi\subseteq\{\frac{1}{2}\leq|\xi|<2\}
\end{equation}
and also
\begin{equation}
\sum_{j\in\bbbz}\FT_n\psi(2^{-j}\xi)=1 \qquad\text{if}\qquad \xi\neq 0,\label{eq37}
\end{equation}
then the function $\phi$ is defined in a way so that
\begin{equation}\label{eq345}
\FT_n\phi(\xi)=
\begin{cases}
\sum_{j\leq 0}\FT_n\psi(2^{-j}\xi) &\text{ if }\xi\neq 0,\\
1 &\text{ if }\xi=0.
\end{cases}
\end{equation}
then
\begin{equation}\label{eq346}
spt.\FT_n\phi(\xi)\subseteq\{\xi\in\bbbr^n:|\xi|\leq 2\}\text{ and }
\FT_n\phi(\xi)=1\text{ if }|\xi|\leq 1,
\end{equation}
furthermore we have the equality
\begin{equation}\label{eq347}
\FT_n\phi(\xi)+\sum_{j=1}^{\infty}\FT_n\psi(2^{-j}\xi)=1\text{ for all }\xi\in\bbbr^n.
\end{equation}
Define for $f\in\Sw'(\bbbr^n)$, $j\in\bbbz$ and $x,y\in\bbbr^n$, the function $f_j(x):=\psi_{2^{-j}}*f(x)$ where $\psi_{2^{-j}}(y)=2^{jn}\psi(2^jy)$ and thus we have the following decompositions:
\begin{equation}\label{eq38}
f=\sum_{j\in\bbbz}f_j,
\end{equation}
where the sum in (\ref{eq38}) converges in $\Sw'(\bbbr^n)/\mathscr{P}(\bbbr^n)$ and $\Sw'(\bbbr^n)/\mathscr{P}(\bbbr^n)$ is the space of tempered distributions modulo polynomials (cf. \cite[section 1.1.1]{14modern}), and
\begin{equation}\label{eq512}
f=f*\phi+\sum_{j=1}^{\infty}f_j,
\end{equation}
where the sum in (\ref{eq512}) converges in $\Sw'(\bbbr^n)$. Also due to the support condition of $\FT_n\psi$, we have the following
\begin{equation}
f_j(x)=\sum_{l=j-1}^{j+1}(f_j)_l(x)\qquad\text{for almost every }x\in\bbbr^n,\label{eq39}
\end{equation}
where $(f_j)_l=\psi_{2^{-l}}*f_j=\psi_{2^{-l}}*\psi_{2^{-j}}*f$. Given a sequence $\{f_k(x)\}_{k\in\bbbz}$ of functions defined on $\bbbr^n$ and $0<p,q\leq\infty$, we use the notation $\|\{f_k\}_{k\in\bbbz}\|_{L^p(l^q)}$ to denote the $\|\cdot\|_{L^p(\bbbr^n)}$-quasinorm of the $\|\cdot\|_{l^q}$-quasinorm of the sequence $\{f_k(x)\}_{k\in\bbbz}$ and the notation $\|\{f_k\}_{k\in\bbbz}\|_{l^q(L^p)}$ represents the $\|\cdot\|_{l^q}$-quasinorm of the sequence
$\{\|f_k\|_{L^p(\bbbr^n)}\}_{k\in\bbbz}$. Furthermore, $\esssup_{x\in\bbbr^n}|f_k(x)|$ denotes the essential supremum of the function $|f_k(x)|$ over $\bbbr^n$, that is, the least upper bound of $|f_k(x)|$ over $\bbbr^n$ except on a subset of $\bbbr^n$ of Lebesgue measure zero. Moreover, $\esssup_{k\in\bbbz}|f_k(x)|$ denotes the essential supremum of the sequence $\{|f_k(x)|\}_{k\in\bbbz}$ at $x\in\bbbr^n$, that is, the least upper bound of $\{|f_k(x)|\}_{k\in\bbbz}$ except on a subset of $\bbbz$ of counting measure zero, and in this sense $\esssup_{k\in\bbbz}|f_k(x)|=\sup_{k\in\bbbz}|f_k(x)|$.
\begin{definition}\label{definition1}
For $0<p<\infty$, $0<q\leq\infty$ and $s\in\bbbr$, the homogeneous Triebel-Lizorkin space $\Fspq$ as a subspace of the space $\Sw'(\bbbr^n)/\mathscr{P}(\bbbr^n)$ is
\begin{equation}\label{eq41}
\Fspq=\{f\in\Sw'(\bbbr^n)/\mathscr{P}(\bbbr^n):
\|f\|_{\Fspq}:=\|\{2^{ks}f_k\}_{k\in\bbbz}\|_{L^p(l^q)}<\infty\}.
\end{equation}
For $0<p\leq\infty$, $0<q\leq\infty$ and $s\in\bbbr$, the homogeneous Besov-Lipschitz space $\Bspq$ as a subspace of the space $\Sw'(\bbbr^n)/\mathscr{P}(\bbbr^n)$ is
\begin{equation}\label{eq42}
\Bspq\!=\!\{f\!\in\!\Sw'(\bbbr^n)/\mathscr{P}(\bbbr^n):
\|f\|_{\Bspq}\!:=\!\|\{2^{ks}f_k\}_{k\in\bbbz}\|_{l^q(L^p)}\!<\!\infty\}.
\end{equation}
\end{definition}
It is a well-known fact that the space $\Sw_0(\bbbr^n)$ of Schwartz functions that satisfy the condition
$$\int_{\bbbr^n}x^{\alpha}\varphi(x)dx=0\text{ for all multi-indices }\alpha$$
is dense in $\Fspq$ and $\Bspq$ when $0<p,q<\infty$ and $s\in\bbbr$, and this condition is also equivalent to the condition that all the derivatives of the Fourier transform $\FT_n\varphi$ equal to $0$ at the origin.
\begin{definition}\label{definition3}
For $0<p<\infty$, $0<q\leq\infty$ and $s\in\bbbr$, the inhomogeneous Triebel-Lizorkin space $F^s_{p,q}(\bbbr^n)$ as a subspace of the space $\Sw'(\bbbr^n)$ is
\begin{equation}\label{eq43}
F^s_{p,q}(\bbbr^n)\!=\!\{f\!\in\!\Sw'(\bbbr^n)\!:\!
\|f\|_{F^s_{p,q}(\bbbr^n)}\!:=\!\|\phi*\!f\|_{L^p(\bbbr^n)}\!+\!\|\{2^{ks}\!f_k\}_{k>0}\|_{L^p(l^q)}\!\!<\!\!\infty\}.
\end{equation}
For $0<p\leq\infty$, $0<q\leq\infty$ and $s\in\bbbr$, the inhomogeneous Besov-Lipschitz space $B^s_{p,q}(\bbbr^n)$ as a subspace of the space $\Sw'(\bbbr^n)$ is
\begin{equation}\label{eq44}
B^s_{p,q}(\bbbr^n)\!=\!\{f\!\in\!\Sw'(\bbbr^n)\!:\!
\|f\|_{B^s_{p,q}(\bbbr^n)}\!\!:=\!\!\|\phi*\!f\|_{L^p(\bbbr^n)}\!+\!\|\{2^{ks}\!f_k\}_{k>0}\|_{l^q(L^p)}\!\!<\!\!\infty\}.
\end{equation}
\end{definition}
Here we give the definition of iterated difference. Let $\bbbn$ denote the set of positive integers and $L\in\bbbn$. For a function $f$ defined on $\bbbr^n$ and $x,h\in\bbbr^n$ we define
\begin{equation}\label{eq259}
\varDelta^1_h f(x)=f(x+h)-f(x),\qquad(\varDelta^L_h f)(x)=\varDelta^1_h(\varDelta^{L-1}_h f)(x).
\end{equation}
It is not hard to prove by induction on $L$ that
\begin{equation}\label{eq260}
(-1)^{L+1}(\varDelta^L_h f)(x)=\sum^L_{j=1}d_j f(x+jh)-f(x),
\end{equation}
where
\begin{equation}\label{eq261}
\sum^L_{j=1}d_j=1\text{ and }d_j\in\bbbz\text{ for }1\leq j\leq L.
\end{equation}
When $f$ is in $\Sw(\bbbr^n)$, we have
\begin{equation}\label{eq341}
(\varDelta^1_h\FT_n f)(x)=\FT_n((e^{-2\pi ih\cdot\xi}-1)f(\xi))(x),
\end{equation}
and by iteration, we can obtain
\begin{equation}\label{eq342}
(\varDelta^L_h\FT_n f)(x)=\FT_n((e^{-2\pi ih\cdot\xi}-1)^L f(\xi))(x).
\end{equation}
In a similar way, we also have
\begin{equation}\label{eq343}
(\varDelta^L_h\iFT_n f)(x)=\iFT_n((e^{2\pi ih\cdot\xi}-1)^L f(\xi))(x),
\end{equation}
and therefore the following is true
\begin{equation}\label{eq344}
(\varDelta^L_h f)(x)=\iFT_n((e^{2\pi ih\cdot\xi}-1)^L\FT_n f(\xi))(x).
\end{equation}
Given $L\in\bbbn$, $h\in\bbbr^n$, $f\in\Sw'(\bbbr^n)$, $\varphi\in\Sw(\bbbr^n)$, observe the facts that $<\varDelta^L_h f,\varphi>=<f,\varDelta^L_{-h}\varphi>$ and both spaces $\Sw(\bbbr^n)$ and $\Sw_0(\bbbr^n)$ are closed under the operation $\varDelta^L_{-h}$, then (\ref{eq38}) and (\ref{eq512}) also suggest that
\begin{equation}\label{eq513}
\varDelta^L_h f=\sum_{j\in\bbbz}\varDelta^L_h f_j\qquad\text{in the sense of }\Sw'(\bbbr^n)/\mathscr{P}(\bbbr^n),
\end{equation}
and
\begin{equation}\label{eq514}
\varDelta^L_h f=\varDelta^L_h(f*\phi)+\sum_{j=1}^{\infty}\varDelta^L_h f_j\qquad\text{in the sense of }\Sw'(\bbbr^n).
\end{equation}
We introduce some maximal functions below. Let \!$\Haus^{n-1}$\! denote the \!$(n\!-\!1)$-dimensional Hausdorff measure on the unit sphere $\unitsph$ in $\bbbr^n$ and $A$ denote the annulus $A:=\{z\in\bbbr^n:1\leq|z|<2\}$. In this paper we denote the Lebesgue measures and integrals with respect to Lebesgue measures in the usual way, then $|\unitsph|=\Haus^{n-1}(\unitsph)$ and $|A|$ represent the corresponding surface measure and volume respectively. Also ``$\mvint_{}$'' is the mean value integral.
\begin{definition}\label{definition5}
For a function $f$ defined on $\bbbr^n$, let $t>0$, $r>0$, $x\in\bbbr^n$, $0\neq h\in\bbbr^n$ and $L\in\bbbn$, then the following maximal functions are defined
\begin{align}
(S^L_t f)(x)&=\esssup_{y\in\bbbr^n}|\mvint_{\unitsph}(\varDelta^L_{tz} f)(x-y)d\Haus^{n-1}(z)|\cdot(1+t^{-1}|y|)^{\frac{-n}{r}},
\label{eq348}\\
(V^L_t f)(x)&=\esssup_{y\in\bbbr^n}|\mvint_{A}(\varDelta^L_{tz} f)(x-y)dz|\cdot(1+t^{-1}|y|)^{\frac{-n}{r}},\label{eq349}\\
(D^L_h f)(x)&=\esssup_{y\in\bbbr^n}|(\varDelta^L_h f)(x-y)|\cdot(1+\frac{|y|}{|h|})^{\frac{-n}{r}}.\label{eq350}
\end{align}
\end{definition}
Now we propose the first pair of equivalence characterizations for homogeneous $\Fspq$ and $\Bspq$ spaces. We recall in the theory of tempered distributions, the integral of an equivalence class is justified when the equivalence class has a function representative and integrals of distinct representatives in the same equivalence class are identified. The same principle also applies to the supremum of an equivalence class. Integrals and supremums included in the main theorems and their corollaries below are all considered this way.
\begin{theorem}\label{theorem4}
Let $n\geq 2$, $0<p<\infty$, $0<q\leq\infty$, $f\in\Fspq$ has a function representative and assume $L\in\bbbn$, $s\in\bbbr$ satisfy $\frac{n}{\min\{p,q\}}<s<L$, then for every $r$ as in Definition \ref{definition5} satisfying $\frac{n}{s}<r<\min\{p,q\}$, the following five quasinorms are equivalent quasinorms in $\Fspq$,
\begin{align}
&\|\{2^{ks}S^L_{2^{-k}}f\}_{k\in\bbbz}\|_{L^p(l^q)},\label{eq351}\\
&\|\{2^{ks}\esssup_{0<\tau<2} S^L_{\tau 2^{-k}}f\}_{k\in\bbbz}\|_{L^p(l^q)},\label{eq352}\\
&\|\{2^{ks}V^L_{2^{-k}}f\}_{k\in\bbbz}\|_{L^p(l^q)},\label{eq353}\\
&\|\{2^{ks}\esssup_{0<\tau<2} V^L_{\tau 2^{-k}}f\}_{k\in\bbbz}\|_{L^p(l^q)},\label{eq354}\\
&\|\{2^{ks}\esssup_{0<|h|<2} D^L_{2^{-k}h}f\}_{k\in\bbbz}\|_{L^p(l^q)}.\label{eq355}
\end{align}
\end{theorem}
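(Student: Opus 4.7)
The plan is to collapse the five quantities to $\|f\|_{\Fspq}$ by combining trivial pointwise reductions with two non-trivial estimates: $(\ref{eq355})\lesssim\|f\|_{\Fspq}$ (direct upper bound) and $\|f\|_{\Fspq}$ controlled from below by $(\ref{eq351})$ and by $(\ref{eq353})$ (reconstructions). The first step is to show that each of $(\ref{eq351})$--$(\ref{eq354})$ is dominated pointwise by $(\ref{eq355})$. When $|h|=\tau 2^{-k}$ the weights $(1+\tau^{-1}2^k|y|)^{-n/r}$ and $(1+|y|/|h|)^{-n/r}$ coincide, so Minkowski in the spherical (resp.\ annular) variable followed by exchanging $\sup_y$ with the mean yields
\[
S^L_{\tau 2^{-k}}f(x),\;V^L_{\tau 2^{-k}}f(x)\leq\sup_{|h|=\tau 2^{-k}}D^L_h f(x)\leq\sup_{0<|h|<2}D^L_{2^{-k}h}f(x).
\]
Combined with $(\ref{eq351})\leq(\ref{eq352})$ and $(\ref{eq353})\leq(\ref{eq354})$, the theorem reduces to the two endpoint directions above.

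For the upper bound I would write $\varDelta^L_{2^{-k}h}f=\sum_{j\in\bbbz}\varDelta^L_{2^{-k}h}f_j$ using (\ref{eq513}) and split at $j=k$. For $j\leq k$ the Fourier-side cancellation $|(e^{2\pi i 2^{-k}h\cdot\xi}-1)^L|\lesssim(2^{j-k}|h|)^L$ on $\supp\FT_n\psi(2^{-j}\cdot)$ yields, via a scaled Schwartz kernel bound on $\varDelta^L_{2^{-k}h}\psi_{2^{-j}}$, the estimate $D^L_{2^{-k}h}f_j(x)\lesssim(2^{j-k}|h|)^L\,\psi^*_{j,r}f(x)$, where $\psi^*_{j,r}f(x):=\sup_{z\in\bbbr^n}|f_j(x-z)|(1+2^j|z|)^{-n/r}$ is the Peetre maximal function. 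For $j>k$ a careful weighted Peetre estimate gives $D^L_{2^{-k}h}f_j(x)\lesssim 2^{(j-k)n/r}\,\psi^*_{j,r}f(x)$. The hypotheses $s<L$ and $s>n/r$ (i.e.\ $r>n/s$) now make both tails decay geometrically in $|k-j|$:
\[
2^{ks}\sup_{0<|h|<2}D^L_{2^{-k}h}f(x)\lesssim\sum_{j\in\bbbz}2^{-\delta|k-j|}\,2^{js}\psi^*_{j,r}f(x)
\]
for some $\delta>0$. Young's inequality in $\ell^q(\bbbz)$ eliminates the convolution kernel, and the vector-valued Peetre-Fefferman-Stein inequality (using $r<\min\{p,q\}$) then yields $(\ref{eq355})\lesssim\|f\|_{\Fspq}$.

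For the reconstruction I would invert the mean-value symbols on the Fourier side. Set
\[
m_S(\eta):=\mvint_{\unitsph}(e^{2\pi iz\cdot\eta}-1)^L\,d\Haus^{n-1}(z),\qquad m_A(\eta):=\mvint_A(e^{2\pi iz\cdot\eta}-1)^L\,dz.
\]
Where $m_\bullet$ does not vanish on the annular support of $\FT_n\psi$, define the Schwartz molecule $\Phi_k:=\iFT_n[\FT_n\psi(2^{-k}\cdot)/m_\bullet(2^{-k}\cdot)]=2^{kn}\Phi(2^k\cdot)$ for a single $\Phi\in\Sw(\bbbr^n)$. Then
\[
f_k(x)=\Phi_k*\Big(\mvint_{\bullet}\varDelta^L_{2^{-k}z}f\,d\sigma(z)\Big)(x),
\]
and the rapid decay of $\Phi$ absorbed into the Peetre weight $(1+2^k|y|)^{-n/r}$ produces the pointwise bounds $|f_k(x)|\lesssim S^L_{2^{-k}}f(x)$ and $|f_k(x)|\lesssim V^L_{2^{-k}}f(x)$; taking $L^p(\ell^q)$ norms yields the two required lower bounds.

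The hard part is the non-vanishing of $m_S$ and $m_A$ on $\{\tfrac{1}{2}\leq|\eta|<2\}$. Both symbols are Bessel-type oscillatory functions (linear combinations of $\FT$ of normalized surface measure on dilates of $\unitsph$) that can admit isolated zeros in that range once $L\geq 2$, so the naive inversion above need not be valid as stated. I expect to circumvent this by a finite partition of unity on $\supp\FT_n\psi$ into pieces whose supports avoid the (finitely many) zeros of $m_S$ (resp.\ $m_A$), reconstruct each piece via a suitably shifted or dilated spherical/annular mean, and sum. This is where the integrality of $L$ and the dimensional restriction $n\geq 2$ enter essentially, the latter through the decay at infinity of the Fourier transform of surface measure on $\unitsph$.
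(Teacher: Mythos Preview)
Your upper bound $(\ref{eq355})\lesssim\|f\|_{\Fspq}$ and the trivial pointwise comparisons are correct and match the paper's approach (the paper packages the two difference estimates you describe as Lemma~\ref{lemma1} and then runs the same geometric-sum plus Peetre--Fefferman--Stein argument).

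The substantive divergence is in the reconstruction step. You correctly spot that $m_S,m_A$ can vanish on the standard annulus $\{\tfrac12\le|\eta|<2\}$, but the paper's fix is simpler than your partition-of-unity idea: it analyzes the Taylor expansion of $m_S$ (resp.\ $m_A$) at the origin via the integral representation
\[
m_S(\eta)=C_n\int_{-1}^1(e^{2\pi i t|\eta|}-1)^L(1-t^2)^{\frac{n-3}{2}}\,dt,
\]
from which $m_S(\eta)\sim|\eta|^L$ (or $|\eta|^{L+1}$, depending on the parity of $L$) near $0$. Hence $m_S(\eta)\neq0$ on $0<|\eta|<2^{1-m_1}$ for some large integer $m_1$. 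Instead of inverting $m_S$ on $\supp\FT_n\psi$, the paper inverts it on $\supp\FT_n\psi(2^{m_1}\cdot)\subset\{0<|\eta|<2^{1-m_1}\}$: the function $\FT_n\psi(2^{m_1}\eta)/m_S(\eta)$ is in $C_c^\infty$, its inverse Fourier transform is Schwartz, and the same kernel argument you describe yields $|f_{k-m_1}(x)|\lesssim S^L_{2^{-k}}f(x)$. A shift of index then gives $\|f\|_{\Fspq}\lesssim(\ref{eq351})$ directly.

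Your proposed workaround is under-specified and, as written, may not reach $(\ref{eq351})$. Since $m_S$ is radial, its zeros are whole spheres, so the natural partition is radial; to avoid a zero at $|\eta|=\rho_0$ you would have to use \emph{dilated} spherical means $S^L_{\tau_i 2^{-k}}f$ with several $\tau_i$, producing $|f_k(x)|\lesssim\max_i S^L_{\tau_i 2^{-k}}f(x)$. That only gives $\|f\|_{\Fspq}\lesssim(\ref{eq352})$, not $(\ref{eq351})$. The shift-toward-the-origin trick (equivalently: reconstruct $f_{k-m_1}$ instead of $f_k$) is exactly what closes this gap. Finally, the role of $n\ge2$ in the paper is not decay of $\widehat{d\sigma}$ at infinity but the validity of the one-variable integral formula above, which drives the Taylor expansion of $m_S$ and $m_A$ at $0$.
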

\begin{theorem}\label{theorem5}
Let $n\geq 2$, $0<p\leq\infty$, $0<q\leq\infty$, $f\in\Bspq$ has a function representative and assume $L\in\bbbn$, $s\in\bbbr$ satisfy $\frac{n}{p}<s<L$, then for every $r$ as in Definition \ref{definition5} satisfying $\frac{n}{s}<r<p$, the following five quasinorms are equivalent quasinorms in $\Bspq$,
\begin{align}
&\|\{2^{ks}S^L_{2^{-k}}f\}_{k\in\bbbz}\|_{l^q(L^p)},\label{eq356}\\
&\|\{2^{ks}\esssup_{0<\tau<2} S^L_{\tau 2^{-k}}f\}_{k\in\bbbz}\|_{l^q(L^p)},\label{eq357}\\
&\|\{2^{ks}V^L_{2^{-k}}f\}_{k\in\bbbz}\|_{l^q(L^p)},\label{eq358}\\
&\|\{2^{ks}\esssup_{0<\tau<2} V^L_{\tau 2^{-k}}f\}_{k\in\bbbz}\|_{l^q(L^p)},\label{eq359}\\
&\|\{2^{ks}\esssup_{0<|h|<2} D^L_{2^{-k}h}f\}_{k\in\bbbz}\|_{l^q(L^p)}.\label{eq360}
\end{align}
\end{theorem}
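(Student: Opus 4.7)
The plan is to prove Theorem~\ref{theorem5} by establishing a cyclic chain of inequalities among the five quasinorms, following the same sequence of pointwise estimates used for the Triebel-Lizorkin analogue Theorem~\ref{theorem4}, with only the outer-inner order of norms reversed from $L^p(l^q)$ to $l^q(L^p)$. The trivial comparisons are immediate from the definitions: taking $\tau=1$ gives $\eqref{eq356}\leq\eqref{eq357}$ and $\eqref{eq358}\leq\eqref{eq359}$; writing the annular average in polar coordinates and accounting for the Peetre weight shows $V^L_tf(x)\leq C\sup_{1\leq\rho<2}S^L_{\rho t}f(x)$, hence $\eqref{eq359}\lesssim\eqref{eq357}$; and the pointwise bound $|\mvint_{\unitsph}\Delta^L_{\tau 2^{-k}z}f(x-y)\,d\Haus^{n-1}(z)|\leq\sup_{|h|=\tau}|\Delta^L_{2^{-k}h}f(x-y)|$ gives $\eqref{eq357}\leq\eqref{eq360}$. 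It therefore suffices to establish the two substantive inequalities $\eqref{eq360}\lesssim\|f\|_{\Bspq}$ and $\|f\|_{\Bspq}\lesssim\eqref{eq356}$.

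For the first of these, I would start from the pointwise decomposition $\Delta^L_hf=\sum_{j\in\bbbz}\Delta^L_hf_j$, valid in $\Sw'(\bbbr^n)/\mathscr{P}(\bbbr^n)$ by \eqref{eq513} and pointwise a.e.\ since $f$ has a function representative, and split the sum at the index $j=k$ where $2^{-k}\approx|h|$. Low-frequency terms $j\leq k$ are controlled by the Taylor-type estimate $|\Delta^L_hf_j(x)|\lesssim(|h|2^j)^Lf^*_j(x)$, where $f^*_j(x):=\sup_{y\in\bbbr^n}|f_j(x-y)|/(1+2^j|y|)^{n/r}$ is the Peetre maximal function of $f_j$; high-frequency terms $j>k$ are handled by expanding $\Delta^L_hf_j$ as a finite linear combination of translates of $f_j$ via \eqref{eq260}, bounding each translate by $f^*_j(x)(1+2^j|y|)^{n/r}$, and absorbing the resulting growth against the weight $(1+|y|/|h|)^{-n/r}$ built into $D^L_hf$. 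This produces the pointwise bound
\[
\sup_{0<|h|<2}D^L_{2^{-k}h}f(x)\;\lesssim\;\sum_{j\leq k}2^{(j-k)L}f^*_j(x)\;+\;\sum_{j>k}f^*_j(x).
\]
Multiplying by $2^{ks}$, applying $\|\cdot\|_{L^p}$ in the variable $x$ and then $\|\cdot\|_{l^q}$ in $k$, using the Peetre maximal theorem (which applies because $r<p$) to replace $\|f^*_j\|_{L^p}$ by $\|f_j\|_{L^p}$, and invoking a discrete Young inequality in the sequence variable with geometric decay from $s<L$ at the low-frequency end and $s>n/p>0$ at the high-frequency end, one obtains $\eqref{eq360}\lesssim\|f\|_{\Bspq}$.

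For the reverse direction $\|f\|_{\Bspq}\lesssim\eqref{eq356}$, I would reconstruct $f_k$ from the spherical-difference average $T_kf(x):=\mvint_{\unitsph}\Delta^L_{2^{-k}z}f(x)\,d\Haus^{n-1}(z)$ by the Fourier factorization $\FT_n\psi(2^{-k}\xi)=\FT_n\Theta_k(\xi)\cdot M_k(\xi)$, where $M_k(\xi):=\mvint_{\unitsph}(e^{2\pi i 2^{-k}\xi\cdot z}-1)^L\,d\Haus^{n-1}(z)$; after the change of variable $\xi=2^k\eta$, $\Theta_k$ becomes a $2^k$-dilate of a single function $\Theta_0$, and one obtains $|f_k(x)|\lesssim S^L_{2^{-k}}f(x)$ by absorbing a weighted $L^1$ norm of $\Theta_0$ against the Peetre-type weight of $S^L$; passing to the $L^p$ norm and then the $l^q$ norm then gives the claim. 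The main obstacle I expect is establishing the uniform lower bound $|M_k(\xi)|\gtrsim 1$ on the annular support of $\FT_n\psi(2^{-k}\cdot)$: this is not automatic because the spherical integrand $(e^{2\pi i\eta\cdot z}-1)^L$ can vanish along hyperplanes $\eta\perp z$, so one typically routes the reconstruction through the annular kernel $V^L$ (whose corresponding symbol involves an additional radial average and is therefore more readily bounded below on $\{|\eta|\in[1/2,2)\}$) and then transfers back to $S^L$ via the pointwise chain above. The remaining technicalities---the sup over $\tau\in(0,2)$ and the endpoint cases $p=\infty$ or $q=\infty$---are handled respectively by partitioning $(0,2)$ into finitely many dyadic pieces with a shift of the index $k$, and by the usual interpretation of $l^\infty$ as a supremum.
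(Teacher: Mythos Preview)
Your overall strategy matches the paper's, but two points in the plan as written do not go through.

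First, the high-frequency pointwise bound is misstated. When $j>k$ and $|2^{-k}h|\sim 2^{-k}$, the Peetre weight $(1+|y|/|2^{-k}h|)^{-n/r}\sim(1+2^k|y|)^{-n/r}$ does \emph{not} fully absorb the growth $(1+2^j|y|)^{n/r}$ coming from the translates of $f_j$; a factor $(1+2^{j-k})^{n/r}$ survives. This is exactly Lemma~\ref{lemma1}, inequality \eqref{eq364}, and the correct bound is
\[
\sup_{1\leq|h|<2}D^L_{2^{-k}h}f(x)\;\lesssim\;\sum_{j\leq k}2^{(j-k)L}\PFSmax_n f_j(x)\;+\;\sum_{j>k}2^{(j-k)n/r}\PFSmax_n f_j(x).
\]
It is precisely this surviving factor that forces the condition $s>n/r$ (equivalently $r>n/s$) in the discrete Young step, not merely $s>n/p$ or $s>0$ as you write. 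Also, extending the supremum to $0<|h|<2$ requires \emph{countably} many dyadic shells $\{2^{-m}\leq|h|<2^{1-m}\}_{m\geq 0}$, not finitely many; the resulting geometric series in $m$ converges because $s>0$ (cf.\ \eqref{eq378}--\eqref{eq379} and \eqref{eq399}--\eqref{eq406}).

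Second, your route to $\|f\|_{\Bspq}\lesssim\eqref{eq356}$ does not close. The spherical symbol $M(\eta)=\mvint_{\unitsph}(e^{2\pi i\eta\cdot z}-1)^Ld\Haus^{n-1}(z)$ is radial but may well vanish for some $|\eta|\in[\tfrac12,2)$, and the annular symbol for $V^L$ has exactly the same defect, so detouring through $V^L$ does not help; moreover, even if you established $\|f\|_{\Bspq}\lesssim\eqref{eq358}$, your pointwise chain only yields $\eqref{eq358}\leq\eqref{eq359}\lesssim\eqref{eq357}$, which does not recover $\eqref{eq356}$. The paper's fix is different and simpler: rather than dividing by $M$ on the unit annulus, one shifts the Littlewood--Paley projector to $\FT_n\psi(2^{m_1}\cdot)$ with $m_1$ large, so that the quotient $\FT_n\psi(2^{m_1}\eta)/M(\eta)$ is supported in $\{0<|\eta|<2^{1-m_1}\}$, where the Taylor expansion \eqref{eq385}--\eqref{eq388} gives $|M(\eta)|\sim|\eta|^{L}$ (or $|\eta|^{L+1}$ for $L$ odd) and hence $M$ is nonzero. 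This produces the pointwise estimate \eqref{eq389}, namely $|f_{k-m_1}(x)|\lesssim S^L_{2^{-k}}f(x)$, from which $\|f\|_{\Bspq}\lesssim\eqref{eq356}$ follows after reindexing; the same shift (with a different integer $m_2$) handles $V^L$ and gives $\|f\|_{\Bspq}\lesssim\eqref{eq358}$ independently.
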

\noindent Equivalence characterizations of the same type for inhomogeneous Triebel-Lizorkin and Besov-Lipschitz spaces were obtained by H. Triebel in \cite[section 2.5.9]{1983functionspaces} with unnatural restrictions $s\geq G_p$, $M>2G_p+s$, $s\geq G_{pq}$, $M>2G_{pq}+s$ where $G_p=n+3+\frac{3n}{p}$ and $G_{pq}=n+3+\frac{3n}{\min\{p,q\}}$. Theorems \ref{theorem4} and \ref{theorem5} are both new and present optimal conditions on $s$ and $L$ and extend the ranges of $\tau$ and $|h|$ under the supremums in H. Triebel's result. The proof of Theorem \ref{theorem4} can be found in section \ref{proof.of.theorem4} and the proof of Theorem \ref{theorem5} is given in section \ref{proof.of.theorem5}.

This paper also furnishes the reader with a theorem of inequalities in the homogeneous Triebel-Lizorkin space $\Fspq$ in terms of iterated difference, which includes the case $0<q<1$. The theorem was proven by using Fourier analytic techniques and we now state this result below. Let
\begin{equation}\label{eq361}
\sigma_{pq}=\max\{0,n(\frac{1}{\min\{p,q\}}-1)\},\qquad
\tilde{\sigma}_{pq}=\max\{0,n(\frac{1}{p}-\frac{1}{q})\}.
\end{equation}
\begin{theorem}\label{theorem2}
Let $L\in\bbbn$, $0<p<\infty$, $0<q\leq\infty$, $s\in\bbbr$ and $f\in\Fspq$.\\
(i)If $0<p,q<\infty$, $\tilde{\sigma}_{pq}<s<L$, then
\begin{equation}\label{eq266}
\|(\int_{\bbbr^n}|h|^{-sq}|\varDelta^L_{h}f|^q\frac{dh}{|h|^n})^{\frac{1}{q}}\|_{L^p(\bbbr^n)}\lesssim\|f\|_{\Fspq}.
\end{equation}
(ii)Suppose $f$ has a function representative. If $0<p<\infty$, $0<q<1$ and $\sigma_{pq}+\tilde{\sigma}_{pq}<s<\infty$, or if $0<p<\infty$, $1\leq q<\infty$ and $-n<s<\infty$, then
\begin{equation}\label{eq415}
\|f\|_{\Fspq}\lesssim\|(\int_{\bbbr^n}|h|^{-sq}|(\varDelta^L_{h}f)(\cdot)|^q\frac{dh}{|h|^n})^{\frac{1}{q}}\|_{L^p(\bbbr^n)}.
\end{equation}
(iii)If $0<p<\infty$, $q=\infty$ and $\frac{n}{p}<s<L$, then
\begin{equation}\label{eq309}
\|\esssup_{h\in\bbbr^n}\frac{|\varDelta^L_{h}f|}{|h|^s}\|_{L^p(\bbbr^n)}\lesssim\|f\|_{\Fspinf}.
\end{equation}
(iv)Suppose $f$ has a function representative. If $0<p<\infty$, $q=\infty$ and $-n<s<\infty$, then
\begin{equation}\label{eq318}
\|f\|_{\Fspinf}\lesssim\|\esssup_{h\in\bbbr^n}\frac{|(\varDelta^L_{h}f)(\cdot)|}{|h|^s}\|_{L^p(\bbbr^n)}.
\end{equation}
\end{theorem}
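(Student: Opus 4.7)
The plan is to handle (i) and (iii) via a Littlewood-Paley decomposition $f=\sum_{j\in\bbbz}f_j$ and a pointwise estimate on each $\varDelta^L_h f_j$, and to handle (ii) and (iv) by inverting the difference operator via a Calder\'on-type reproducing formula. For (i), the core pointwise bound is
$$|\varDelta^L_h f_j(x)|\lesssim\min\{1,(2^j|h|)^L\}\cdot f_j^{*}(x),$$
where $f_j^{*}(x)=\sup_{y\in\bbbr^n}|f_j(x-y)|(1+2^j|y|)^{-n/r}$ is the Peetre-type maximal function: the trivial bound ``$1$'' follows from formula (\ref{eq260}) pulled inside the Peetre weight, while the Taylor-type factor ``$(2^j|h|)^L$'' is a consequence of the band-limit $|\xi|\sim 2^j$ of $f_j$ together with (\ref{eq342}). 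Decomposing the measure $|h|^{-sq-n}dh$ dyadically into annuli $|h|\sim 2^{-m}$, and applying either $\ell^1\!\ast\!\ell^q$-Young (when $q\ge 1$) or the $q$-subadditive inequality $(\sum a_j)^q\le\sum a_j^q$ (when $q<1$), converts the left-hand side of (\ref{eq266}) into a discrete convolution with kernel $K_{m-j}=2^{(m-j)s}\min\{1,2^{(j-m)L}\}$, which is $\ell^1$- or $\ell^q$-summable precisely when $0<s<L$. The standard Peetre-type characterization of $\Fspq$, which requires $r<\min\{p,q\}$ and uses the vector-valued Fefferman-Stein inequality to pass from $\{f_j^{*}\}$ back to $\{f_j\}$ in $L^p(\ell^q)$, then closes the estimate; the threshold $\tilde\sigma_{pq}$ appears when the order of $L^p$- and $\ell^q$-integration is swapped via Minkowski to put the estimate in Fefferman-Stein-compatible form.

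For (ii), the plan is to produce a reproducing formula expressing each $f_k$ in terms of $\varDelta^L_h f$. Choose $\omega\in\Sw(\bbbr^n)$ with $\FT_n\omega$ supported in an annulus and, using (\ref{eq342}), construct $\Omega\in\Sw(\bbbr^n)$ so that $\omega(x)=\mvint_{A}(\varDelta^L_h\Omega)(x)\,dh$ after dividing by the Fourier-side factor $\mvint_{A}(e^{2\pi ih\cdot\xi}-1)^L\,dh$, which is invertible on an annulus. Dilation by $2^{-k}$ and a Calder\'on-type identity then yield
$$f_k(x)=\int_{\bbbr^n}\int_{A}(\varDelta^L_{2^{-k}h}f)(x-y)\,G_k(y,h)\,dh\,dy,$$
with $G_k$ a smooth kernel having good $y$-decay uniformly in $h\in A$ and in $k$. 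Bounding the $y$-integral by the Hardy-Littlewood maximal function (and its vector-valued Fefferman-Stein extension for $q\ge 1$, or its $q<1$ subadditive version at the cost of the additional $\sigma_{pq}$ shift), and assembling the estimate in $L^p(\ell^q)$, yields (\ref{eq415}); the lower bound $s>-n$ (respectively $s>\sigma_{pq}+\tilde\sigma_{pq}$ when $q<1$) is dictated solely by the convergence of the $h$-integral against $G_k$, since inverting $\varDelta^L_h$ imposes no upper bound on $s$.

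For (iii) and (iv) the plan is identical to (i) and (ii) with $\ell^q$ replaced by $\ell^\infty$: part (iii) follows by taking the supremum in $h$ inside the pointwise bound $|\varDelta^L_h f_j|\lesssim\min\{1,(2^j|h|)^L\}f_j^{*}$ and using only the scalar Hardy-Littlewood maximal inequality (so $r<p$ together with $n/p<s<L$ suffice), while (iv) is the $q=\infty$ specialization of the reproducing-formula argument, directly delivering $-n<s<\infty$. The main obstacle will be the careful analysis of the reproducing kernel $G_k$ in (ii) in the low-regularity regime, where one cannot invoke Theorem \ref{theorem4} (whose hypothesis is $s>n/\min\{p,q\}$): the kernel must be handled by hand to cover the full range $s>-n$ when $q\ge 1$, and the failure of vector-valued Fefferman-Stein for $q<1$ must be compensated by shifting derivatives inside the kernel, which is precisely what produces the extra $\sigma_{pq}$ in the hypothesis of (ii).
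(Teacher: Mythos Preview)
Your plan for (iii) and (iv) is essentially the paper's approach, and your reproducing-formula idea for (ii) when $1\le q<\infty$ is close to what the paper does (via a kernel $G_k$ built from a suitable Schwartz function $g$). However, there is a genuine gap in your treatment of (i), and your sketch for (ii) when $0<q<1$ misses the key mechanism.

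For (i), the claimed pointwise bound $|\varDelta^L_h f_j(x)|\lesssim\min\{1,(2^j|h|)^L\}\,f_j^{*}(x)$ is false in the regime $2^j|h|\gtrsim 1$. Using (\ref{eq260}) one has $|f_j(x+lh)|\le(1+2^j l|h|)^{n/r}f_j^{*}(x)$, so the correct trivial bound is $(1+2^j|h|)^{n/r}f_j^{*}(x)$, not $f_j^{*}(x)$. With that correction your discrete convolution kernel becomes $2^{(m-j)s}2^{(j-m)n/r}$ for $j>m$, forcing $s>n/r$ and hence only $s>n/\min\{p,q\}$---the hypothesis of (iii), not the sharper $s>\tilde\sigma_{pq}$ claimed in (i). The paper does \emph{not} rely on a pointwise bound in this regime: for $j>k$ it proves an $h$-averaged estimate (see (\ref{eq544})),
\[
2^{kn}\int_{A_k}|\varDelta^L_h f_j(x)|^q\,dh\lesssim 2^{(j-k)n(q/r-1)}\,\HLmax_n(|f_j|^r)(x)^{q/r},
\]
which gains a factor $2^{-(j-k)n}$ over the naive pointwise bound by spending $r$ of the $q$ powers on a genuine $h$-average (equation (\ref{eq543})) rather than on the Peetre weight. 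This gain is exactly what drops the threshold from $n/\min\{p,q\}$ to $n(\tfrac1r-\tfrac1q)$, hence to $\tilde\sigma_{pq}$ after optimizing $r\to\min\{p,q\}$. Your Minkowski-swap remark does not produce this threshold.

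For (ii) with $0<q<1$, the paper does not use a ``$q$-subadditive version'' of Fefferman--Stein. Instead it writes $\psi_{2^{m_0-k}}*f$ via a spherical partition of unity, applies Plancherel--Polya--Nikol'skij (Lemma~\ref{lemma7}) after inserting a low-pass cutoff $\phi_{2^{m_0-J-k}}$, and obtains an inequality of the form
\[
\|f\|_{\Fspq}\le C_1\,2^{Jn(1/r-1)}\big\|\big(\textstyle\int|h|^{-sq}|\varDelta^L_h f|^q\tfrac{dh}{|h|^n}\big)^{1/q}\big\|_{L^p}+C_2\,2^{J[n(1/r-1)+n(1/r-1/q)-s]}\|f\|_{\Fspq},
\]
then absorbs the second term by taking $J$ large. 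It is this absorption argument---with the remainder controlled via the high-frequency tail $\sum_{j>J}f_{k+j-m_0}$ and estimate (\ref{eq544}) again---that produces the condition $s>\sigma_{pq}+\tilde\sigma_{pq}$; your outline does not contain this step.
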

The proof of Theorem \ref{theorem2} can be found in section \ref{proof.of.theorem2}. We believe Theorem \ref{theorem2} (ii) and (iv) are new and better than existing results. Theorem \ref{theorem2} (i) shows the term
\begin{equation}\label{eq3}
\|(\int_{\bbbr^n}|h|^{-sq}|\varDelta^L_{h}f|^q\frac{dh}{|h|^n})^{\frac{1}{q}}\|_{L^p(\bbbr^n)}
\end{equation}
may not be independently defined for tempered distributions, since the iterated difference $\varDelta^L_{h}f$ may not have a function representative if $f$ is a member of $\Sw'(\bbbr^n)$. Another example is that if $P(x)=x^{\alpha}$ is a polynomial function and we put it into (\ref{eq3}) then the resulting term may not have a finite value. However if we consider $P$ as a tempered distribution in $\Fspq$ and the conditions of Theorem \ref{theorem2} (i) are met, then (\ref{eq448}) designates the function representative of $\varDelta^L_{h}P$ is given by $\sum_{j\in\bbbz}\varDelta^L_{h}(\psi_{2^{-j}}*P)(x)=0$ for all $x\in\bbbr^n$. This is because $(\psi_{2^{-j}}*P)(x)$ can be expressed as a linear combination, with coefficients depending on $x$, of derivatives of the Fourier transform $\FT_n\psi$ evaluated at $0$ and these evaluations are identically zero due to the support condition of $\FT_n\psi$. We believe Theorem \ref{theorem2} (i) extends the definition of the term (\ref{eq3}). The same discussion is also true for Theorem \ref{theorem2} (iii).

In \cite[section 2.5.10]{1983functionspaces}, H. Triebel gave an equivalence characterization theorem of the inhomogeneous Triebel-Lizorkin space $F^s_{p,q}(\bbbr^n)$ in terms of iterated difference, where the conditions for the parameters are $0<p<\infty$, $0<q\leq\infty$ and $\frac{n}{\min\{p,q\}}<s<M$. In \cite[Theorem 1 pp. 102]{Stein1961}, E. M. Stein gave the equivalence characterization
$$[f]_{W^{\alpha}_{p,2}(\bbbr^n)}+\|f\|_{L^p(\bbbr^n)}\sim\|f\|_{L^p_{\alpha}(\bbbr^n)}$$
where the restrictions $0<\alpha<1$, $1<p<\infty$ and $\frac{2n}{n+2\alpha}<p<\infty$ were considered essentially sharp. And for $1<p<\infty$, $\alpha\in\bbbr$ and $f\in\Sw'(\bbbr^n)$, the inhomogeneous Sobolev norm of $f$ (cf. \cite[section 1.3.1]{14modern}) is defined to be
\begin{equation}\label{eq45}
\|f\|_{L_{\alpha}^p(\bbbr^n)}:=\|\iFT_n((1+|\xi|^2)^{\alpha/2}\FT_n f)\|_{L^p(\bbbr^n)},
\end{equation}
where $\|\cdot\|_{L^p(\bbbr^n)}$ is considered via duality. Since the inhomogeneous Triebel-Lizorkin space satisfies $L^p_{\alpha}(\bbbr^n)\sim F^{\alpha}_{p,2}(\bbbr^n)$ if $1<p<\infty$ and since
$$[f]_{W^{\alpha}_{p,2}(\bbbr^n)}=
\|(\int_{\bbbr^n}|h|^{-2\alpha}|(\varDelta^1_{h}f)(\cdot)|^2\frac{dh}{|h|^n})^{\frac{1}{2}}\|_{L^p(\bbbr^n)},$$
we consider E. M. Stein's result is an improvement of H. Triebel's. Furthermore in \cite[Theorem 1 pp. 393]{seeger1989note}, A. Seeger provided an equivalence characterization for the homogeneous anisotropic space
$$\|f\|_{\dot{F}^{\alpha}_{p,q}(\bbbr^n)}\sim\|S^{\alpha}_{q,r,m}f\|_{L^p(\bbbr^n)}$$
where $0<p<\infty$, $0<q\leq\infty$, $m>\alpha/a_0$, $r\geq 1$ with
$$\alpha>\max\{0,\nu(\frac{1}{p}-\frac{1}{r}),\nu(\frac{1}{q}-\frac{1}{r})\},$$
and
$$S^{\alpha}_{q,r,m}f(x)=(\int_0^{\infty}[\mvint_{\varrho(h)\leq t}
|(\varDelta^m_{h}f)(x)|^r dh]^{q/r}\frac{dt}{t^{1+\alpha q}})^{1/q}.$$
If we consider the isotropic case of A. Seeger's result, in which $\varrho(h)$ above can be deemed as $|h|$ and $a_0$ can be deemed as $1$, then by letting $r=q$ and changing the order of integration we can obtain
\begin{align*}
\|f\|_{\dot{F}^{\alpha}_{p,q}(\bbbr^n)}
&\sim\|(\int_0^{\infty}\int_{|h|\leq t}t^{-1-n-q\alpha}\cdot|(\varDelta^m_h f)(\cdot)|^q dhdt)^{\frac{1}{q}}\|_{L^p(\bbbr^n)}\\
&\sim\|(\int_{\bbbr^n}|h|^{-q\alpha}\cdot|(\varDelta^m_h f)(\cdot)|^q\frac{dh}{|h|^n})^{\frac{1}{q}}\|_{L^p(\bbbr^n)},
\end{align*}
for $0<p<\infty$, $1\leq q\leq\infty$ and $\max\{0,\nu(\frac{1}{p}-\frac{1}{q})\}<\alpha<m$. Recently in \cite[Theorem 1.2 pp. 693]{Prats19} M. Prats proves an equivalence characterization theorem of the inhomogeneous norm $\|f\|_{F^s_{p,q}(\Omega)}$ in terms of the sum of $\|f\|_{W^{k,p}(\Omega)}$ and
\begin{equation}\label{eq1}
\sum_{|\alpha|=k}\big(\int_{\Omega}\big(\int_{\Omega}
\frac{|D^{\alpha}f(x)-D^{\alpha}f(y)|^q}{|x-y|^{\sigma q+d}}
dy\big)^{\frac{p}{q}}dx\big)^{\frac{1}{p}}
\end{equation}
when parameters satisfy $1\leq p<\infty,1\leq q\leq\infty,s=k+\sigma$, $\max\{0,d(\frac{1}{p}-\frac{1}{q})\}<\sigma<1$ and $\Omega$ is a uniform domain in $\bbbr^d$. Furthermore, M. Prats also shows under the same conditions on parameters, the equivalence relation stands if (\ref{eq1}) is replaced by
\begin{equation}\label{eq2}
\sum_{|\alpha|=k}\big(\int_{\Omega}\big(\int_{\mathbf{Sh}(x)}
\frac{|D^{\alpha}f(x)-D^{\alpha}f(y)|^q}{|x-y|^{\sigma q+d}}
dy\big)^{\frac{p}{q}}dx\big)^{\frac{1}{p}}
\end{equation}
where $\mathbf{Sh}(x):=\{y\in\Omega:|y-x|\leq c_{\Omega}\delta(x)\}$ is the Carleson box centered at $x$, $\delta(x)=\dist(x,\partial\Omega)$ and $c_{\Omega}>1$ is a constant. Moreover when $1\leq q\leq p<\infty$, the set $\mathbf{Sh}(x)$ in (\ref{eq2}) can be improved and replaced by the Whitney ball $B(x,\rho\delta(x))$ for $0<\rho<1$.

Comparing Theorem \ref{theorem2} with H. Triebel's result in \cite[section 2.5.10]{1983functionspaces}, we find that if $0<q<1$ and $\frac{q}{q+1}\leq p<\infty$, then the restriction $\sigma_{pq}+\tilde{\sigma}_{pq}<s<L$ is better than the restriction of $s$ in H. Triebel's result. However if $0<p<\frac{q}{q+1}<q<1$ then we have
$$\frac{n}{\min\{p,q\}}<\sigma_{pq}+\tilde{\sigma}_{pq}$$
and the restrictions of H. Triebel's result remain better. If in addition the number $s$ also satisfies the condition
\begin{equation}\label{eq322}
s\leq n+\frac{n}{q},
\end{equation}
then $\sigma_{pq}+\tilde{\sigma}_{pq}<s$ implies $\frac{q}{q+1}<p$ and hence the restrictions in Theorem \ref{theorem2} are better than the restrictions in H. Triebel's result. This happens for sure when we pick $L=1$ or $L=2$ since $n+\frac{n}{q}>2$ for $0<q<1$. Therefore we formulate these two cases as corollaries below. When $0<p,q<\infty$, $s\in\bbbr$ and $f$ is a function, we define the generalized Gagliardo seminorm of $f$ (cf. \cite[section 2 pp. 524]{hitchhiker2012} for the usual Gagliardo seminorm) is
\begin{equation}\label{eq40}
[f]_{W^s_{p,q}(\bbbr^n)}:=\left(\int_{\bbbr^n}\left(\int_{\bbbr^n}\frac{|f(x)-f(y)|^q}{|x-y|^{n+sq}}dy \right)^{\frac{p}{q}}dx\right)^{\frac{1}{p}}.
\end{equation}
\begin{corollary}\label{corollary2}
Let $0<p<\infty$, $0<q\leq\infty$, $s\in\bbbr$ and $f\in\Fspq$ has a function representative.\\
(i)If $0<p,q<\infty$, $\tilde{\sigma}_{pq}<s<1$, then
\begin{equation}\label{eq464}
(\int_{\bbbr^n}(\int_{\bbbr^n}\frac{|f(x)-f(y)|^q}{|x-y|^{n+sq}}dy)^{\frac{p}{q}}dx)^{\frac{1}{p}}\lesssim\|f\|_{\Fspq}.
\end{equation}
(ii)If $0<p<\infty$, $0<q<1$ and $\sigma_{pq}+\tilde{\sigma}_{pq}<s<\infty$, or if $0<p<\infty$, $1\leq q<\infty$ and $-n<s<\infty$, then
\begin{equation}\label{eq466}
\|f\|_{\Fspq}\lesssim(\int_{\bbbr^n}(\int_{\bbbr^n}\frac{|f(x)-f(y)|^q}{|x-y|^{n+sq}}dy)^{\frac{p}{q}}dx)^{\frac{1}{p}}.
\end{equation}
(iii)If $0<p<\infty$, $q=\infty$ and $\frac{n}{p}<s<1$, then
\begin{equation}\label{eq465}
(\int_{\bbbr^n}\esssup_{y\in\bbbr^n}\frac{|f(x)-f(y)|^p}{|x-y|^{sp}}dx)^{\frac{1}{p}}\lesssim\|f\|_{\Fspinf}.
\end{equation}
(iv)If $0<p<\infty$, $q=\infty$ and $-n<s<\infty$, then
\begin{equation}\label{eq467}
\|f\|_{\Fspinf}\lesssim(\int_{\bbbr^n}\esssup_{y\in\bbbr^n}\frac{|f(x)-f(y)|^p}{|x-y|^{sp}}dx)^{\frac{1}{p}}.
\end{equation}
\end{corollary}
\begin{proof}[Proof of Corollary \ref{corollary2}]
Apply inequalities (\ref{eq266}), (\ref{eq415}), (\ref{eq309}) and (\ref{eq318}) with $L=1$ and use appropriate change of variable. In the case $1\leq q\leq\infty$, inequalities (\ref{eq466}) and (\ref{eq467}) are still true for $s=0$ or $s=1$. In particular, if we let $0<p\leq 1,q=2,s=0$ and apply the equivalence relation $\|\cdot\|_{\dot{F}^0_{p,2}(\bbbr^n)}\sim\|\cdot\|_{H^p(\bbbr^n)}$ in (\ref{eq466}), then we have
\begin{equation}\label{eq515}
\|f\|_{H^p(\bbbr^n)}\lesssim(\int_{\bbbr^n}(\int_{\bbbr^n}\frac{|f(x)-f(y)|^2}{|x-y|^{n}}dy)^{\frac{p}{2}}dx)^{\frac{1}{p}},
\end{equation}
where $\|\cdot\|_{H^p(\bbbr^n)}$ represents the Hardy quasinorm.
\end{proof}
\begin{corollary}\label{corollary1}
Let $0<p<\infty$, $0<q\leq\infty$, $s\in\bbbr$ and $f\in\Fspq$ has a function representative.\\
(i)If $0<p,q<\infty$, $\tilde{\sigma}_{pq}<s<2$, then
\begin{equation}\label{eq468}
(\int_{\bbbr^n}(\int_{\bbbr^n}\frac{|f(x)+f(y)-2f(\frac{x+y}{2})|^q}{|x-y|^{n+sq}}dy)^{\frac{p}{q}}dx)^{\frac{1}{p}}
\lesssim\|f\|_{\Fspq}.
\end{equation}
(ii)If $0<p<\infty$, $0<q<1$ and $\sigma_{pq}+\tilde{\sigma}_{pq}<s<\infty$, or if $0<p<\infty$, $1\leq q<\infty$ and $-n<s<\infty$, then
\begin{equation}\label{eq470}
\|f\|_{\Fspq}\lesssim
(\int_{\bbbr^n}(\int_{\bbbr^n}\frac{|f(x)+f(y)-2f(\frac{x+y}{2})|^q}{|x-y|^{n+sq}}dy)^{\frac{p}{q}}dx)^{\frac{1}{p}}.
\end{equation}
(iii)If $0<p<\infty$, $q=\infty$ and $\frac{n}{p}<s<2$, then
\begin{equation}\label{eq469}
(\int_{\bbbr^n}\esssup_{y\in\bbbr^n}\frac{|f(x)+f(y)-2f(\frac{x+y}{2})|^p}{|x-y|^{sp}}dx)^{\frac{1}{p}}\lesssim\|f\|_{\Fspinf}.
\end{equation}
(iv)If $0<p<\infty$, $q=\infty$ and $-n<s<\infty$, then
\begin{equation}\label{eq471}
\|f\|_{\Fspinf}\lesssim(\int_{\bbbr^n}\esssup_{y\in\bbbr^n}\frac{|f(x)+f(y)-2f(\frac{x+y}{2})|^p}{|x-y|^{sp}}dx)^{\frac{1}{p}}.
\end{equation}
\end{corollary}
\begin{proof}[Proof of Corollary \ref{corollary1}]
Apply inequalities (\ref{eq266}), (\ref{eq415}), (\ref{eq309}) and (\ref{eq318}) with $L=2$ and use appropriate change of variable. In the case $1\leq q\leq\infty$, inequalities (\ref{eq470}) and (\ref{eq471}) are still true for $s=0,1,2$. In particular, if we let $0<p\leq 1,q=2,s=0$ and apply the equivalence relation $\|\cdot\|_{\dot{F}^0_{p,2}(\bbbr^n)}\sim\|\cdot\|_{H^p(\bbbr^n)}$ in (\ref{eq470}), then we have
\begin{equation}\label{eq516}
\|f\|_{H^p(\bbbr^n)}\lesssim
(\int_{\bbbr^n}(\int_{\bbbr^n}\frac{|f(x)+f(y)-2f(\frac{x+y}{2})|^2}{|x-y|^{n}}dy)^{\frac{p}{2}}dx)^{\frac{1}{p}},
\end{equation}
where $\|\cdot\|_{H^p(\bbbr^n)}$ represents the Hardy quasinorm.
\end{proof}

In Theorem \ref{theorem2}, it was shown that the quasinorm $\|f\|_{\Fspq}$ is equivalent to
$$\|(\int_{\bbbr^n}|h|^{-sq}|\varDelta^L_{h}f|^q\frac{dh}{|h|^n})^{\frac{1}{q}}\|_{L^p(\bbbr^n)}=
\|(\int_{\unitsph}\int_0^{\infty}t^{-sq}|\varDelta^L_{t\theta}f|^q\frac{dt}{t}d\Haus^{n-1}(\theta))^{\frac{1}{q}}
\|_{L^p(\bbbr^n)},$$
and the expression inside the parenthesis on the right side is an integral of the term $\int_0^{\infty}t^{-sq}|\varDelta^L_{t\theta}f|^q\frac{dt}{t}$ over the set of all the unit directions $\theta\in\unitsph$. Therefore it is natural to ask: is there a similar equivalence relation if we replace the integral over $\unitsph$ by a finite sum of unit vectors? It seems this question can be answered when $\theta$ takes value in the set of elementary unit vectors $\{e_j\}_{j=1}^n$, where each $e_j\in\bbbr^n$ has its $j$-th coordinate equal to $1$ and all the other $(n-1)$ coordinates equal to $0$. We use the notation $\varDelta^L_{t,j}f=\varDelta^L_{te_j}f$ for $j\in\{1,\cdots,n\}$, then for example, when $f$ is a function defined on $\bbbr^n$, $\varDelta^1_{t,1}f(x)=f(x_1+t,x_2,\cdots,x_n)-f(x_1,x_2,\cdots,x_n)$. Denote
\begin{equation}\label{eq449}
\tilde{\sigma}^1_{pq}=\max\{0,\frac{1}{p}-\frac{1}{q}\},\qquad
\sigma_p=\max\{0,n(\frac{1}{p}-1)\}.
\end{equation}
\begin{theorem}\label{theorem6}
Let $L\in\bbbn$, $0<p<\infty$, $0<q\leq\infty$, $s\in\bbbr$ and $f\in\Fspq$.\\
(i)If $0<p,q<\infty$, $\tilde{\sigma}^1_{pq}<s<L$, then for each $j\in\{1,\cdots,n\}$
\begin{equation}\label{eq450}
\|(\int_0^{\infty}t^{-sq}|\varDelta^L_{t,j}f|^q\frac{dt}{t})^{\frac{1}{q}}\|_{L^p(\bbbr^n)}
\lesssim\|f\|_{\Fspq}.
\end{equation}
(ii)Suppose $f$ has a function representative. If $1<\min\{p,q\}$, $q<\infty$ and $s\in\bbbr$, or if $\min\{p,q\}\leq 1$, $q<\infty$ and $\sigma_{pq}+\tilde{\sigma}^1_{pq}<s<\infty$, then
\begin{equation}\label{eq451}
\|f\|_{\Fspq}\lesssim\sum_{j=1}^n
\|(\int_0^{\infty}t^{-sq}|\varDelta^L_{t,j}f(\cdot)|^q\frac{dt}{t})^{\frac{1}{q}}\|_{L^p(\bbbr^n)}.
\end{equation}
(iii)If $0<p<\infty$, $q=\infty$ and $\frac{1}{p}<s<L$, then for each $j\in\{1,\cdots,n\}$
\begin{equation}\label{eq562}
\|\esssup_{t>0}\frac{|\varDelta^L_{t,j}f|}{t^s}\|_{L^p(\bbbr^n)}\lesssim\|f\|_{\Fspinf}.
\end{equation}
(iv)Suppose $f$ has a function representative. If $1<p<\infty$, $q=\infty$ and $s\in\bbbr$, or if $0<p\leq 1$, $q=\infty$ and $\sigma_p+\frac{1}{p}<s<\infty$, then
\begin{equation}\label{eq563}
\|f\|_{\Fspinf}\lesssim\sum_{j=1}^n\|\esssup_{t>0}\frac{|\varDelta^L_{t,j}f(\cdot)|}{t^s}\|_{L^p(\bbbr^n)}.
\end{equation}
\end{theorem}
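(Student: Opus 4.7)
The plan has two parts: upper bounds (parts (i) and (iii)) and lower bounds (parts (ii) and (iv)), both building on the Littlewood--Paley decomposition $f=\sum_{k\in\bbbz}f_k$. For the upper bounds, I would start from $\varDelta^L_{t,j}f=\sum_k\varDelta^L_{t,j}f_k$ and split the sum according to whether $2^kt\leq 1$ or $2^kt>1$. In the low-frequency regime $2^kt\leq 1$, iterating the one-dimensional mean value theorem along $e_j$ together with the Bernstein inequality (using that $f_k$ has spectrum in $\{|\xi|\sim 2^k\}$, so $\partial_{x_j}^L f_k$ is pointwise bounded by $2^{kL}$ times a maximal function of $f_k$) yields $|\varDelta^L_{t,j}f_k(x)|\lesssim(2^kt)^L\sup_{|h|\leq Lt}|f_k(x+he_j)|$. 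In the high-frequency regime $2^kt>1$ one simply bounds by a supremum of translates of $f_k$ along $e_j$. In both regimes the supremum is controlled by a directional Peetre-type maximal function
\[
(P^{r}_{j,k}f_k)(x):=\sup_{h\in\bbbr}\frac{|f_k(x+he_j)|}{(1+2^k|h|)^{1/r}}.
\]
The decisive point that produces the improved threshold $\tilde{\sigma}^1_{pq}=\max\{0,1/p-1/q\}$ rather than $\tilde{\sigma}_{pq}$ from Theorem \ref{theorem2} is the one-dimensional exponent $1/r$ in the weight: a Plancherel--P\'olya-type sampling argument along the single coordinate $e_j$, combined with Nikolskii-type smoothing in the transverse variables to absorb that $f_k$ is band-limited in $|\xi|$ rather than only in $\xi_j$, allows $P^{r}_{j,k}f_k$ to be dominated by a one-dimensional Hardy--Littlewood maximal operator along $e_j$ applied to $|f_k|^r$. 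A Hardy-type inequality in $t$ using $0<s<L$ converts the pointwise bound into an $l^q$ sum in $k$, and the Fefferman--Stein vector-valued maximal inequality along $e_j$ (requiring $r<\min\{p,q\}$, hence $s>\tilde{\sigma}^1_{pq}$) closes the argument. Part (iii) follows with the obvious $q=\infty$ modifications.

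For the lower bounds (parts (ii) and (iv)) the strategy is to invert the Littlewood--Paley projection on the Fourier side using the coordinate differences. Fix a smooth conic partition of unity $\FT_n\psi(\xi)=\sum_{j=1}^n\FT_n\psi^{(j)}(\xi)$ with $\supp\FT_n\psi^{(j)}\subseteq\{|\xi_j|\geq c|\xi|\}\cap\{1/2\leq|\xi|\leq 2\}$ for suitable $c>0$, so that $f_k=\sum_j\psi^{(j)}_{2^{-k}}*f$. On each conic support, $e^{2\pi it\xi_j}-1$ fails to vanish in an $L^1([1,2],dt)$-averaged sense, so one can construct a smooth symbol $m^{(j,t)}(\xi)$, Schwartz in $\xi$ uniformly in $t\in[1,2]$, satisfying
\[
\FT_n\psi^{(j)}(\xi)=\int_1^2 m^{(j,t)}(\xi)(e^{2\pi it\xi_j}-1)^L\,dt.
\]
Taking inverse Fourier transforms and rescaling by $2^{-k}$ produces the pointwise identity
\[
\psi^{(j)}_{2^{-k}}*f(x)=\int_1^2 K^{(j,t)}_{2^{-k}}*\varDelta^L_{t2^{-k},j}f(x)\,dt
\]
with $K^{(j,t)}=\iFT_n m^{(j,t)}$ Schwartz. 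A convolution-versus-maximal-function estimate then bounds $|\psi^{(j)}_{2^{-k}}*f(x)|$ by an average over $t\in[1,2]$ of a Peetre-type maximal function of $\varDelta^L_{t2^{-k},j}f$. When $\min\{p,q\}>1$ the ordinary Hardy--Littlewood maximal function suffices and no lower restriction on $s$ is needed, while when $\min\{p,q\}\leq 1$ one passes to $|\cdot|^r$ with $r<\min\{p,q\}$ and the resulting restrictions produce the $\sigma_{pq}+\tilde{\sigma}^1_{pq}$ contribution. Part (iv) is the $q=\infty$ specialization with $\sigma_p$ in place of $\sigma_{pq}$.

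The main technical obstacle is constructing the symbol $m^{(j,t)}$ and its kernel $K^{(j,t)}$ for the lower bound: one must invert $(e^{2\pi it\xi_j}-1)^L$ on the conic sector $\{|\xi_j|\geq c|\xi|\}$ with Schwartz control uniform in $t\in[1,2]$, which is delicate because of the oscillation of $e^{2\pi it\xi_j}$ in $t$. A secondary difficulty is the one-dimensional Peetre maximal inequality used in the upper bound, where the transverse regularity of $f_k$ must be extracted by a Nikolskii-type argument even though $f_k$ is not band-limited in the single variable $\xi_j$.
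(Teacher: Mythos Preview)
Your plan for parts (i) and (iii) is essentially the paper's. One minor correction: the worry about ``Nikolskii-type smoothing in the transverse variables'' is misplaced. If $\FT_n f_k$ is supported in $\{|\xi|\le 2^{k+1}\}$ then in particular $|\xi_j|\le 2^{k+1}$, so for each fixed $x_j'$ the map $x_j\mapsto f_k(x)$ is a one-dimensional tempered distribution whose distributional Fourier transform is supported in $[-2^{k+1},2^{k+1}]$ (the paper isolates this as a separate lemma). Hence the one-dimensional Peetre inequality $\PFSmax_1 f_k(\cdot,x_j')\lesssim(\HLmax_1(|f_k(\cdot,x_j')|^r))^{1/r}$ is available directly; no transverse argument is needed.

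For parts (ii) and (iv) your strategy---conic partition of unity plus inversion of $(e^{2\pi it\xi_j}-1)^L$---is the right one, but the paper handles your ``main technical obstacle'' differently and more simply. Rather than build an averaged inverse over $t\in[1,2]$, the paper exploits the freedom to rescale the Littlewood--Paley generator: it replaces $\FT_n\psi(\xi)\rho_j(\xi)$ by $\FT_n\psi(2^{m_0}\xi)\rho_j(2^{m_0}\xi)$ for a large fixed integer $m_0$, so that on its support $|\xi_j|\lesssim 2^{-m_0}$, whence $|t\xi_j|\lesssim 2^{1-m_0}$ for $t\in[1,2]$ and $(e^{2\pi it\xi_j}-1)^L\approx(2\pi it\xi_j)^L$ is bounded away from zero uniformly in $t$. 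One then simply divides pointwise and checks (by integration by parts, uniformly in $t$) that the quotient has inverse Fourier transform decaying like $(1+|x|)^{-N}$. No oscillation issue arises.

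There is, however, a genuine gap in your treatment of the case $\min\{p,q\}\le 1$ in (ii) (and correspondingly $p\le1$ in (iv)). The phrase ``one passes to $|\cdot|^r$ with $r<\min\{p,q\}$'' hides the real difficulty: $\varDelta^L_{t2^{-k},j}f$ is \emph{not} band-limited, so Plancherel--P\'olya--Nikol'skij cannot be applied directly to the convolution integrand. The paper's device is to insert a redundant low-pass factor $\FT_n\phi(2^{m_0-J-k}\xi)$ (identically $1$ on the support of $\FT_n\psi(2^{m_0-k}\xi)\rho_j(2^{m_0-k}\xi)$), so that one works with $\varDelta^L_{t2^{-k},j}(\phi_{2^{m_0-J-k}}*f)$, whose spectrum sits in $\{|\xi|\lesssim 2^{k+J}\}$; the $L^1\to L^r$ step then costs a factor $2^{Jn(1/r-1)}$. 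Writing $\phi_{2^{m_0-J-k}}*f=f-\sum_{l>J}f_{k+l-m_0}$ produces the desired main term plus a high-frequency tail. The tail, estimated via the one-dimensional maximal function along $e_j$, contributes $C\,2^{J[n(1/r-1)+(1/r-1/q)-s]}\|f\|_{\Fspq}$, and the hypothesis $s>\sigma_{pq}+\tilde\sigma^1_{pq}$ is precisely what makes the bracketed exponent negative for $r$ close to $\min\{p,q\}$, so that for $J$ large this tail is at most $\tfrac12\|f\|_{\Fspq}$ and can be absorbed into the left-hand side. Without this low-pass-plus-absorption mechanism it is not clear how your argument closes, nor why the exact threshold $\sigma_{pq}+\tilde\sigma^1_{pq}$ (respectively $\sigma_p+\tfrac1p$ in (iv)) should appear.
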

Theorem \ref{theorem6} is new and it is worth noting that the condition in Theorem \ref{theorem6} (i) and the condition in Theorem \ref{theorem6} (ii) when $1<\min\{p,q\}$ are completely independent of the dimension of the ambient space $\bbbr^n$ while the restriction of $s$ in Theorem \ref{theorem6} (ii) when $\min\{p,q\}\leq 1$ is only partially dependent on the dimension $n$. H. Triebel formulated the counterpart of Theorem \ref{theorem6} for the inhomogeneous $F^s_{p,q}(\bbbr^n)$ space in \cite[section 2.6.2]{1992functionspaces} with rough conditions $0<p<\infty$, $0<q\leq\infty$ and $\frac{n}{\min\{p,q\}}<s<M$. The proof of Theorem \ref{theorem6} can be found in section \ref{proof.of.theorem6}. The corollaries of Theorem \ref{theorem6} are given below.
\begin{corollary}\label{corollary3}
Let $0<p<\infty$, $0<q\leq\infty$, $s\in\bbbr$ and $f\in\Fspq$ has a function representative.\\
(i)If $0<p,q<\infty$, $\tilde{\sigma}^1_{pq}<s<1$, then for each $j\in\{1,\cdots,n\}$
\begin{equation}\label{eq592}
(\int_{\bbbr^n}(\int_0^{\infty}|f(x+te_j)-f(x)|^q\frac{dt}{t^{1+sq}})^{\frac{p}{q}}dx)^{\frac{1}{p}}
\lesssim\|f\|_{\Fspq}.
\end{equation}
(ii)If $1<\min\{p,q\}$, $q<\infty$ and $s\in\bbbr$, or if $\min\{p,q\}\leq 1$, $q<\infty$ and $\sigma_{pq}+\tilde{\sigma}^1_{pq}<s<\infty$, then
\begin{equation}\label{eq593}
\|f\|_{\Fspq}\lesssim\sum_{j=1}^n
(\int_{\bbbr^n}(\int_0^{\infty}|f(x+te_j)-f(x)|^q\frac{dt}{t^{1+sq}})^{\frac{p}{q}}dx)^{\frac{1}{p}}.
\end{equation}
(iii)If $0<p<\infty$, $q=\infty$ and $\frac{1}{p}<s<1$, then for each $j\in\{1,\cdots,n\}$
\begin{equation}\label{eq594}
(\int_{\bbbr^n}\esssup_{t>0}\frac{|f(x+te_j)-f(x)|^p}{t^{sp}}dx)^{\frac{1}{p}}\lesssim\|f\|_{\Fspinf}.
\end{equation}
(iv)If $1<p<\infty$, $q=\infty$ and $s\in\bbbr$, or if $0<p\leq 1$, $q=\infty$ and $\sigma_p+\frac{1}{p}<s<\infty$, then
\begin{equation}\label{eq595}
\|f\|_{\Fspinf}\lesssim\sum_{j=1}^n(\int_{\bbbr^n}\esssup_{t>0}\frac{|f(x+te_j)-f(x)|^p}{t^{sp}}dx)^{\frac{1}{p}}.
\end{equation}
\end{corollary}
\begin{proof}[Proof of Corollary \ref{corollary3}]
Apply inequalities (\ref{eq450}), (\ref{eq451}), (\ref{eq562}) and (\ref{eq563}) with $L=1$.
\end{proof}
\begin{corollary}\label{corollary4}
Let $0<p<\infty$, $0<q\leq\infty$, $s\in\bbbr$ and $f\in\Fspq$ has a function representative.\\
(i)If $0<p,q<\infty$, $\tilde{\sigma}^1_{pq}<s<2$, then for each $j\in\{1,\cdots,n\}$
\begin{equation}\label{eq596}
(\int_{\bbbr^n}(\int_0^{\infty}|f(x+2te_j)-2f(x+te_j)+f(x)|^q\frac{dt}{t^{1+sq}})^{\frac{p}{q}}dx)^{\frac{1}{p}}
\lesssim\|f\|_{\Fspq}.
\end{equation}
(ii)If $1<\min\{p,q\}$, $q<\infty$ and $s\in\bbbr$, or if $\min\{p,q\}\leq 1$, $q<\infty$ and $\sigma_{pq}+\tilde{\sigma}^1_{pq}<s<\infty$, then
\begin{equation}\label{eq597}
\|f\|_{\Fspq}\lesssim\sum_{j=1}^n
(\int_{\bbbr^n}(\int_0^{\infty}|f(x+2te_j)-2f(x+te_j)+f(x)|^q\frac{dt}{t^{1+sq}})^{\frac{p}{q}}dx)^{\frac{1}{p}}.
\end{equation}
(iii)If $0<p<\infty$, $q=\infty$ and $\frac{1}{p}<s<2$, then for each $j\in\{1,\cdots,n\}$
\begin{equation}\label{eq598}
(\int_{\bbbr^n}\esssup_{t>0}\frac{|f(x+2te_j)-2f(x+te_j)+f(x)|^p}{t^{sp}}dx)^{\frac{1}{p}}\lesssim\|f\|_{\Fspinf}.
\end{equation}
(iv)If $1<p<\infty$, $q=\infty$ and $s\in\bbbr$, or if $0<p\leq 1$, $q=\infty$ and $\sigma_p+\frac{1}{p}<s<\infty$, then
\begin{equation}\label{eq599}
\|f\|_{\Fspinf}\lesssim\sum_{j=1}^n(\int_{\bbbr^n}\esssup_{t>0}\frac{|f(x+2te_j)-2f(x+te_j)+f(x)|^p}{t^{sp}}dx)^{\frac{1}{p}}.
\end{equation}
\end{corollary}
\begin{proof}[Proof of Corollary \ref{corollary4}]
Apply inequalities (\ref{eq450}), (\ref{eq451}), (\ref{eq562}) and (\ref{eq563}) with $L=2$.
\end{proof}
Finally, as part of a systematic study, we also state and prove the counterparts of Theorem \ref{theorem6} and Theorem \ref{theorem2} and their corollaries for $\Bspq$ spaces.
\begin{theorem}\label{theorem7}
Let $L\in\bbbn$, $0<p,q\leq\infty$, $s\in\bbbr$ and $f\in\Bspq$.\\
(i)If $0<p\leq\infty$, $0<q<\infty$ and $0<s<L$, then for each $j\in\{1,\cdots,n\}$
\begin{equation}\label{eq600}
(\int_0^{\infty}t^{-sq}\|\varDelta^L_{t,j}f\|_{L^p(\bbbr^n)}^q\frac{dt}{t})^{\frac{1}{q}}
\lesssim\|f\|_{\Bspq}.
\end{equation}
(ii)Suppose $f$ has a function representative. If $1<p\leq\infty$, $1\leq q<\infty$ and $s\in\bbbr$, or if $1<p\leq\infty$, $0<q<1$ and $0<s<L$, or if $0<p\leq 1$, $0<q<\infty$ and $\sigma_p<s<L$, then
\begin{equation}\label{eq601}
\|f\|_{\Bspq}\lesssim\sum_{j=1}^n(\int_0^{\infty}t^{-sq}\|\varDelta^L_{t,j}f\|_{L^p(\bbbr^n)}^q\frac{dt}{t})^{\frac{1}{q}}.
\end{equation}
(iii)If $0<p\leq\infty$, $q=\infty$ and $0<s<L$, then for each $j\in\{1,\cdots,n\}$
\begin{equation}\label{eq602}
\esssup_{t>0}t^{-s}\|\varDelta^L_{t,j}f\|_{L^p(\bbbr^n)}\lesssim\|f\|_{\Bspinf}.
\end{equation}
(iv)Suppose $f$ has a function representative. If $1<p\leq\infty$, $q=\infty$ and $s\in\bbbr$, or if $0<p\leq 1$, $q=\infty$ and $\sigma_p<s<\infty$, then
\begin{equation}\label{eq603}
\|f\|_{\Bspinf}\lesssim\sum_{j=1}^n\esssup_{t>0}t^{-s}\|\varDelta^L_{t,j}f\|_{L^p(\bbbr^n)}.
\end{equation}
\end{theorem}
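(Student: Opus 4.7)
The plan is to prove parts (i) and (iii) (direct bounds) by a standard Littlewood--Paley argument, and parts (ii) and (iv) (reverse bounds) via a Fourier-analytic synthesis expressing each Littlewood--Paley piece as a sum of convolutions with iterated differences along coordinate axes.

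For (i), I would decompose $f=\sum_k f_k$ in $\Sw'(\bbbr^n)/\mathscr{P}(\bbbr^n)$ as in (\ref{eq38}), apply $\varDelta^L_{t,j}$ term by term using (\ref{eq513}), and combine the elementary estimate $\|\varDelta^L_{t,j}f_k\|_{L^p(\bbbr^n)}\leq 2^L\|f_k\|_{L^p(\bbbr^n)}$ with the Bernstein-type bound $\|\varDelta^L_{t,j}f_k\|_{L^p(\bbbr^n)}\lesssim(t2^k)^L\|f_k\|_{L^p(\bbbr^n)}$. The latter follows from writing $\varDelta^L_{t,j}f_k$ as an iterated integral of $\partial_j^L f_k$ and invoking Bernstein's inequality, using that the Fourier support of $f_k$ lies in the annulus $\{|\xi|\sim 2^k\}$. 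Quasi-subadditivity of $\|\cdot\|_{L^p(\bbbr^n)}^{\min(1,p)}$ yields
\[
\|\varDelta^L_{t,j}f\|_{L^p(\bbbr^n)}^{\min(1,p)}\lesssim\sum_k\min(1,(t2^k)^L)^{\min(1,p)}\|f_k\|_{L^p(\bbbr^n)}^{\min(1,p)},
\]
after which a standard Hardy-type computation applied to $\int_0^\infty t^{-sq}(\cdot)\frac{dt}{t}$ produces the bound, the condition $0<s<L$ ensuring convergence of the resulting dyadic sums. Part (iii) is parallel with $q=\infty$, replacing the $\ell^q(dt/t)$-integration by $\sup_{t>0}$.

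For (ii), I would start from the continuous Littlewood--Paley characterization $\|f\|_{\Bspq}^q\sim\int_0^\infty t^{-sq}\|\psi_t*f\|_{L^p(\bbbr^n)}^q\frac{dt}{t}$ with $\psi_t(y)=t^{-n}\psi(y/t)$, and construct a smooth partition of unity $\FT_n\psi(\eta)=\sum_{j=1}^n\hat\rho_j(\eta)$ on $\supp(\FT_n\psi)$, where each $\hat\rho_j$ is supported in a region in which $|\eta_j|$ is bounded away from $0$. Fixing $\tau\in(0,1/2)$ small enough that $\tau|\eta_j|<1/2$ on $\supp(\hat\rho_j)$, the factor $(e^{2\pi i\tau\eta_j}-1)^L$ is bounded away from zero there, so $\hat g_j(\eta):=\hat\rho_j(\eta)/(e^{2\pi i\tau\eta_j}-1)^L$ defines a Schwartz function $g_j$ with compact Fourier support. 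Rescaling yields the synthesis identity $\psi_t*f=\sum_{j=1}^n(g_j)_t*\varDelta^L_{\tau t,j}f$, where $(g_j)_t(y)=t^{-n}g_j(y/t)$. When $p>1$, Young's inequality gives $\|(g_j)_t*\varDelta^L_{\tau t,j}f\|_{L^p(\bbbr^n)}\leq\|g_j\|_{L^1(\bbbr^n)}\|\varDelta^L_{\tau t,j}f\|_{L^p(\bbbr^n)}$; taking the $\ell^q(dt/t)$-norm, using quasi-subadditivity in $\ell^q$, and changing variable $t\mapsto\tau t$ finishes the first two cases of (ii).

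For the remaining case $0<p\leq 1$, Young's inequality fails, but $(\rho_j)_t*f=(g_j)_t*\varDelta^L_{\tau t,j}f$ is Fourier-supported in an annulus of radius $\sim t^{-1}$. I would apply Peetre's maximal-function estimate for band-limited functions together with the pointwise decay $|(g_j)_t*u(x)|\lesssim\int(1+|z|)^{-N}|u(x-tz)|\,dz$, and then use the Fefferman--Stein $r$-th-power averaging trick ($r<p$) to bound the resulting expression in $L^p(\bbbr^n)$. The dimensional loss incurred by replacing $L^1$-convolution with the Peetre approach is precisely compensated by the restriction $s>\sigma_p$, and part (iv) follows the same plan with $q=\infty$. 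The main obstacle throughout is this last case $p\leq 1$: the failure of Young's inequality, the band-limitedness of the output, and the precise extraction of the $\sigma_p$ threshold require a careful interplay of Peetre-type maximal estimates with frequency-localized $L^p$ bounds.
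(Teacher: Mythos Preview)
Your plan for parts (i) and (iii) is essentially the paper's: the key estimates you list, $\|\varDelta^L_{t,j}f_k\|_{L^p}\lesssim\min(1,(t2^k)^L)\|f_k\|_{L^p}$, are exactly (\ref{eq610}) and (\ref{eq612}), and the Hardy-type summation you describe is carried out in (\ref{eq611}) and (\ref{eq613}). The only detail glossed over is justifying the pointwise identity (\ref{eq615}), but that follows once the majorizing series is shown to converge.

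For the $p>1$ cases of (ii) and (iv), your approach with a single fixed $\tau$ is actually a streamlining of the paper's. The paper averages over $\tau\in[1,2]$ and then needs a H\"older step from $L^1(d\tau/\tau)$ to $L^q(d\tau/\tau)$ (see (\ref{eq620})--(\ref{eq621})); that step fails for $0<q<1$ and forces the extra argument (\ref{eq622})--(\ref{eq626}), which in turn invokes the forward estimate and hence the restriction $0<s<L$. Your fixed-$\tau$ identity $\psi_t*f=\sum_j(g_j)_t*\varDelta^L_{\tau t,j}f$ followed by Young's inequality and a change of variable handles all $0<q<\infty$ uniformly.

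The genuine gap is in the $0<p\leq1$ case. You note that $(\rho_j)_t*f=(g_j)_t*\varDelta^L_{\tau t,j}f$ is band-limited and propose Peetre's maximal inequality with the $r$-th power trick. But the Peetre/Plancherel--Polya--Nikol'skij machinery (Lemma~\ref{lemma7}) lets you pass from an $L^1$-integral to an $L^r$-integral only when the \emph{integrand} $y\mapsto(g_j)_t(y)\,\varDelta^L_{\tau t,j}f(x-y)$ is itself band-limited, and for that $\varDelta^L_{\tau t,j}f$ must have compact Fourier support---which it does not. The paper resolves this by inserting a low-pass cutoff $\phi_{2^{m_0-J-k}}$ with a large auxiliary parameter $J$: one writes $f=\phi_{2^{m_0-J-k}}*f+\sum_{l>J}f_{k+l-m_0}$, applies PPN to the first piece (now band-limited at radius $\sim2^{k+J}$, incurring a loss $2^{Jn(1/r-1)}$), and bounds the high-frequency tail by $2^{J[n(1/r-1)-s]}\|f\|_{\Bspq}$ via (\ref{eq708}). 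Since $s>\sigma_p$ makes $n(1/r-1)-s<0$ for $r$ close to $p$, the tail is absorbed into the left-hand side for $J$ large; see (\ref{eq627})--(\ref{eq638}) and (\ref{eq664})--(\ref{eq671}). Your sketch alludes to frequency localization but omits this truncate-and-absorb mechanism, which is precisely where the threshold $\sigma_p$ appears.
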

Theorem \ref{theorem7} is new. The proof of Theorem \ref{theorem7} is given in section \ref{proof.of.theorem7}. The counterpart of Theorem \ref{theorem7} for the inhomogeneous $B^s_{p,q}(\bbbr^n)$ space was obtained by H. Triebel in \cite[section 2.6.1]{1992functionspaces} with rough conditions $0<p,q\leq\infty$ and $\sigma_p<s<M$. The corollaries of Theorem \ref{theorem7} are formulated below.
\begin{corollary}\label{corollary5}
Let $0<p,q<\infty$, $s\in\bbbr$ and $f\in\Bspq$ has a function representative.\\
(i)If $0<p<\infty$, $0<q<\infty$ and $0<s<1$, then for each $j\in\{1,\cdots,n\}$
\begin{equation}\label{eq672}
(\int_0^{\infty}(\int_{\bbbr^n}|f(x+te_j)-f(x)|^p dx)^{\frac{q}{p}}\frac{dt}{t^{1+sq}})^{\frac{1}{q}}\lesssim\|f\|_{\Bspq}.
\end{equation}
(ii)If $1<p<\infty$, $1\leq q<\infty$ and $s\in\bbbr$, or if $1<p<\infty$, $0<q<1$ and $0<s<1$, or if $0<p\leq 1$, $0<q<\infty$ and $\sigma_p<s<1$, then
\begin{equation}\label{eq673}
\|f\|_{\Bspq}\lesssim\sum_{j=1}^n
(\int_0^{\infty}(\int_{\bbbr^n}|f(x+te_j)-f(x)|^p dx)^{\frac{q}{p}}\frac{dt}{t^{1+sq}})^{\frac{1}{q}}.
\end{equation}
\end{corollary}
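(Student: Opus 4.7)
The plan is to derive Corollary \ref{corollary5} as an immediate specialization of Theorem \ref{theorem7} with $L=1$. The crucial observation is that, by the definition of the first difference in (\ref{eq259}), $\varDelta^1_{t,j}f(x)=f(x+te_j)-f(x)$, whence
$$\|\varDelta^1_{t,j}f\|_{L^p(\bbbr^n)}^q=\Bigl(\int_{\bbbr^n}|f(x+te_j)-f(x)|^p\,dx\Bigr)^{q/p}\quad\text{and}\quad t^{-sq}\,\frac{dt}{t}=\frac{dt}{t^{1+sq}}.$$
Hence the quantities inside the norms of (\ref{eq600}) and (\ref{eq601}) with $L=1$ coincide term by term, after Fubini, with those inside (\ref{eq672}) and (\ref{eq673}).

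For part (i), I would apply Theorem \ref{theorem7}(i) with $L=1$: the hypothesis $0<s<L$ becomes $0<s<1$, recovering exactly the stated condition. For part (ii), I would apply Theorem \ref{theorem7}(ii) with $L=1$; the three cases listed in Corollary \ref{corollary5}(ii) line up one-to-one with the three cases of Theorem \ref{theorem7}(ii) after the substitution $L=1$ is made (so the upper bound $s<L$ becomes $s<1$ in the second and third cases, while the first case $1<p\leq\infty$, $1\leq q<\infty$, $s\in\bbbr$ is further restricted to $p<\infty$ so that the iterated integral representation is meaningful without passing to an essential supremum in the inner norm).

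Since all the analytic content has already been encapsulated in Theorem \ref{theorem7}, there is essentially no obstacle at the corollary stage. The only care is the bookkeeping of the three sub-cases in part (ii) and the restriction to $0<p<\infty$ in the hypotheses, which is needed so that $\|\cdot\|_{L^p(\bbbr^n)}$ in (\ref{eq672}) and (\ref{eq673}) is a genuine Lebesgue integral rather than an $L^{\infty}$ supremum; no further estimates, interpolation, or limiting arguments are required.
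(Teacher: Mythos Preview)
Your proposal is correct and follows essentially the same approach as the paper: the paper's proof of Corollary \ref{corollary5} is simply ``Apply Theorem \ref{theorem7} (i) and (ii) with $L=1$,'' which is exactly what you do. Your additional remarks on the bookkeeping of the three sub-cases and the restriction to $p<\infty$ are accurate and consistent with how the paper separates the $p=\infty$ cases into Corollary \ref{corollary6}.
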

\begin{proof}[Proof of Corollary \ref{corollary5}]
Apply Theorem \ref{theorem7} (i) and (ii) with $L=1$. And (\ref{eq673}) also indicates the following inequality
\begin{equation}\label{eq674}
2^{ks}\|\psi_{2^{-k}}*f\|_{L^p(\bbbr^n)}\lesssim\sum_{j=1}^n
(\int_0^{\infty}(\int_{\bbbr^n}|f(x+te_j)-f(x)|^p dx)^{\frac{q}{p}}\frac{dt}{t^{1+sq}})^{\frac{1}{q}}
\end{equation}
for every $k\in\bbbz$, and hence $\lim_{k\rightarrow+\infty}\|\psi_{2^{-k}}*f\|_{L^p(\bbbr^n)}=0$ when $s>0$ and the right side of (\ref{eq673}) is finite.
\end{proof}
\begin{corollary}\label{corollary6}
Let $0<p,q\leq\infty$, $s\in\bbbr$ and at least one of $p$ and $q$ is infinity. Assume $f\in\Bspq$ has a function representative.\\
(i)If $p=\infty$, $0<q<\infty$ and $0<s<1$, then for each $j\in\{1,\cdots,n\}$
\begin{equation}\label{eq675}
(\int_0^{\infty}\esssup_{x\in\bbbr^n}|f(x+te_j)-f(x)|^q\frac{dt}{t^{1+sq}})^{\frac{1}{q}}\lesssim\|f\|_{\Bsinfq}.
\end{equation}
(ii)If $p=\infty$, $1\leq q<\infty$ and $s\in\bbbr$, or if $p=\infty$, $0<q<1$ and $0<s<1$, then
\begin{equation}\label{eq676}
\|f\|_{\Bsinfq}\lesssim\sum_{j=1}^n(\int_0^{\infty}\esssup_{x\in\bbbr^n}|f(x+te_j)-f(x)|^q\frac{dt}{t^{1+sq}})^{\frac{1}{q}}.
\end{equation}
(iii)If $0<p<\infty$, $q=\infty$ and $0<s<1$, then for each $j\in\{1,\cdots,n\}$
\begin{equation}\label{eq677}
\esssup_{t>0}t^{-s}(\int_{\bbbr^n}|f(x+te_j)-f(x)|^p dx)^{\frac{1}{p}}\lesssim\|f\|_{\Bspinf}.
\end{equation}
(iv)If $1<p<\infty$, $q=\infty$ and $s\in\bbbr$, or if $0<p\leq 1$, $q=\infty$ and $\sigma_p<s<\infty$, then
\begin{equation}\label{eq678}
\|f\|_{\Bspinf}\lesssim\sum_{j=1}^n\esssup_{t>0}t^{-s}(\int_{\bbbr^n}|f(x+te_j)-f(x)|^p dx)^{\frac{1}{p}}.
\end{equation}
(v)If $p=q=\infty$ and $0<s<1$, then for each $j\in\{1,\cdots,n\}$
\begin{equation}\label{eq679}
\esssup_{t>0}\esssup_{x\in\bbbr^n}\frac{|f(x+te_j)-f(x)|}{t^s}\lesssim\|f\|_{\Bsifif}.
\end{equation}
(vi)If $p=q=\infty$ and $s\in\bbbr$, then
\begin{equation}\label{eq680}
\|f\|_{\Bsifif}\lesssim\sum_{j=1}^n\esssup_{t>0}\esssup_{x\in\bbbr^n}\frac{|f(x+te_j)-f(x)|}{t^s}.
\end{equation}
\end{corollary}
\begin{proof}[Proof of Corollary \ref{corollary6}]
Apply Theorem \ref{theorem7} with $L=1$. We also deduce from (\ref{eq675}) that for almost every $x\in\bbbr^n$,
\begin{equation}\label{eq681}
(\int_0^{\infty}|f(x+te_j)-f(x)|^q\frac{dt}{t^{1+sq}})^{\frac{1}{q}}\lesssim\|f\|_{\Bsinfq}
\end{equation}
when conditions of Corollary \ref{corollary6} (i) are satisfied. Furthermore if $0<p\leq\infty$, $q=\infty$ and $0<s<1$, then we can infer from (\ref{eq677}) and (\ref{eq679}) the following inequality
\begin{equation}\label{eq682}
\|f(\cdot+te_j)-f(\cdot)\|_{L^p(\bbbr^n)}\lesssim t^s\|f\|_{\Bspinf}\qquad\text{ for }t>0.
\end{equation}
Moreover when conditions of Corollary \ref{corollary6} (iv) are satisfied, the inequality (\ref{eq678}) indicates
\begin{equation}\label{eq683}
\|f\|_{\Bspinf}\lesssim\sum_{j=1}^n(\int_{\bbbr^n}\esssup_{t>0}\frac{|f(x+te_j)-f(x)|^p}{t^{sp}}dx)^{\frac{1}{p}}.
\end{equation}
From (\ref{eq676}) and (\ref{eq680}) we see that for every $k\in\bbbz$ and almost every $x\in\bbbr^n$, $2^{ks}|\psi_{2^{-k}}*f(x)|$ can be estimated from above by the right sides of (\ref{eq676}) and (\ref{eq680}) respectively, and hence we deduce $\lim_{k\rightarrow+\infty}\psi_{2^{-k}}*f(x)=0$ when $s>0$ and the right sides of (\ref{eq676}) and (\ref{eq680}) are finite. From (\ref{eq678}) we see that for every $k\in\bbbz$
\begin{align}\label{eq697}
2^{ks}\|\psi_{2^{-k}}*f\|_{L^p(\bbbr^n)}
&\lesssim\sum_{j=1}^n\esssup_{t>0}t^{-s}(\int_{\bbbr^n}|f(x+te_j)-f(x)|^p dx)^{\frac{1}{p}} \nonumber\\
&\lesssim\sum_{j=1}^n(\int_{\bbbr^n}\esssup_{t>0}\frac{|f(x+te_j)-f(x)|^p}{t^{sp}}dx)^{\frac{1}{p}},
\end{align}
and hence we deduce $\lim_{k\rightarrow+\infty}\|\psi_{2^{-k}}*f\|_{L^p(\bbbr^n)}=0$ when $s>0$ and the right side of (\ref{eq697}) is finite.
\end{proof}
\begin{corollary}\label{corollary7}
Let $0<p,q<\infty$, $s\in\bbbr$ and $f\in\Bspq$ has a function representative.\\
(i)If $0<p<\infty$, $0<q<\infty$ and $0<s<2$, then for each $j\in\{1,\cdots,n\}$
\begin{equation}\label{eq684}
(\int_0^{\infty}(\int_{\bbbr^n}|f(x+2te_j)-2f(x+te_j)+f(x)|^p dx)^{\frac{q}{p}}\frac{dt}{t^{1+sq}})^{\frac{1}{q}}
\lesssim\|f\|_{\Bspq}.
\end{equation}
(ii)If $1<p<\infty$, $1\leq q<\infty$ and $s\in\bbbr$, or if $1<p<\infty$, $0<q<1$ and $0<s<2$, or if $0<p\leq 1$, $0<q<\infty$ and $\sigma_p<s<2$, then
\begin{equation}\label{eq685}
\|f\|_{\Bspq}\lesssim\sum_{j=1}^n
(\int_0^{\infty}(\int_{\bbbr^n}|f(x+2te_j)-2f(x+te_j)+f(x)|^p dx)^{\frac{q}{p}}\frac{dt}{t^{1+sq}})^{\frac{1}{q}}.
\end{equation}
\end{corollary}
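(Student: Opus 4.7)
The plan is to derive Corollary \ref{corollary7} as a direct specialization of Theorem \ref{theorem7} to the case $L=2$. First I would compute the second-order iterated difference along the $j$-th coordinate axis explicitly. By definition $\varDelta^1_{te_j}f(x)=f(x+te_j)-f(x)$, and iterating one more time,
\[
\varDelta^2_{te_j}f(x)=\varDelta^1_{te_j}\bigl(\varDelta^1_{te_j}f\bigr)(x)=f(x+2te_j)-2f(x+te_j)+f(x).
\]
This identifies the integrand appearing in (\ref{eq684}) and (\ref{eq685}) with $|\varDelta^2_{t,j}f|$ in the notation of Theorem \ref{theorem7}. A routine rewriting of $dt/t^{1+sq}$ as $t^{-sq}\,dt/t$ and of the inner $L^p$-integral raised to the power $q/p$ as $\|\varDelta^2_{t,j}f\|_{L^p(\bbbr^n)}^q$ then matches the left-hand sides of (\ref{eq684}) and (\ref{eq685}) with those of (\ref{eq600}) and (\ref{eq601}) respectively.

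Next I would verify that the hypothesis ranges on $s$, $p$, $q$ in Corollary \ref{corollary7} are exactly what Theorem \ref{theorem7} supplies when $L=2$. In part (i), the condition $0<s<L$ becomes $0<s<2$, which matches. In part (ii), each of the three case splits in Theorem \ref{theorem7} (ii) has $L$ appearing only in the upper bound of $s$ (either $0<s<L$ or $\sigma_p<s<L$), so setting $L=2$ yields precisely the three case splits listed in the corollary after restricting to $p<\infty$ as Corollary \ref{corollary7} assumes. Invoking Theorem \ref{theorem7} (i) then yields (\ref{eq684}), and invoking Theorem \ref{theorem7} (ii) yields (\ref{eq685}).

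The only genuine work is the algebraic identity for the second-order difference and the bookkeeping for the exponents, both of which are elementary; there is no substantive obstacle at this stage because the real content sits in Theorem \ref{theorem7}, whose proof is deferred to the section devoted to it.
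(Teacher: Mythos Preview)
Your proposal is correct and takes essentially the same approach as the paper: the paper's proof simply says ``Apply Theorem~\ref{theorem7} (i) and (ii) with $L=2$,'' and your explicit computation of $\varDelta^2_{te_j}f$ together with the bookkeeping on the parameter ranges is exactly the content of that instruction spelled out in detail.
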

\begin{proof}[Proof of Corollary \ref{corollary7}]
Apply Theorem \ref{theorem7} (i) and (ii) with $L=2$. And (\ref{eq685}) also indicates the following inequality
\begin{align}\label{eq686}
&2^{ks}\|\psi_{2^{-k}}*f\|_{L^p(\bbbr^n)} \nonumber\\
&\lesssim\sum_{j=1}^n
(\int_0^{\infty}(\int_{\bbbr^n}|f(x+2te_j)-2f(x+te_j)+f(x)|^p dx)^{\frac{q}{p}}\frac{dt}{t^{1+sq}})^{\frac{1}{q}}
\end{align}
for every $k\in\bbbz$, and hence $\lim_{k\rightarrow+\infty}\|\psi_{2^{-k}}*f\|_{L^p(\bbbr^n)}=0$ when $s>0$ and the right side of (\ref{eq685}) is finite.
\end{proof}
\begin{corollary}\label{corollary8}
Let $0<p,q\leq\infty$, $s\in\bbbr$ and at least one of $p$ and $q$ is infinity. Assume $f\in\Bspq$ has a function representative.\\
(i)If $p=\infty$, $0<q<\infty$ and $0<s<2$, then for each $j\in\{1,\cdots,n\}$
\begin{equation}\label{eq687}
(\int_0^{\infty}\esssup_{x\in\bbbr^n}|f(x+2te_j)-2f(x+te_j)+f(x)|^q\frac{dt}{t^{1+sq}})^{\frac{1}{q}}\lesssim\|f\|_{\Bsinfq}.
\end{equation}
(ii)If $p=\infty$, $1\leq q<\infty$ and $s\in\bbbr$, or if $p=\infty$, $0<q<1$ and $0<s<2$, then
\begin{equation}\label{eq688}
\|f\|_{\Bsinfq}\lesssim\sum_{j=1}^n
(\int_0^{\infty}\esssup_{x\in\bbbr^n}|f(x+2te_j)-2f(x+te_j)+f(x)|^q\frac{dt}{t^{1+sq}})^{\frac{1}{q}}.
\end{equation}
(iii)If $0<p<\infty$, $q=\infty$ and $0<s<2$, then for each $j\in\{1,\cdots,n\}$
\begin{equation}\label{eq689}
\esssup_{t>0}t^{-s}(\int_{\bbbr^n}|f(x+2te_j)-2f(x+te_j)+f(x)|^p dx)^{\frac{1}{p}}\lesssim\|f\|_{\Bspinf}.
\end{equation}
(iv)If $1<p<\infty$, $q=\infty$ and $s\in\bbbr$, or if $0<p\leq 1$, $q=\infty$ and $\sigma_p<s<\infty$, then
\begin{equation}\label{eq690}
\|f\|_{\Bspinf}\lesssim\sum_{j=1}^n\esssup_{t>0}t^{-s}(\int_{\bbbr^n}|f(x+2te_j)-2f(x+te_j)+f(x)|^p dx)^{\frac{1}{p}}.
\end{equation}
(v)If $p=q=\infty$ and $0<s<2$, then for each $j\in\{1,\cdots,n\}$
\begin{equation}\label{eq691}
\esssup_{t>0}\esssup_{x\in\bbbr^n}t^{-s}|f(x+2te_j)-2f(x+te_j)+f(x)|\lesssim\|f\|_{\Bsifif}.
\end{equation}
(vi)If $p=q=\infty$ and $s\in\bbbr$, then
\begin{equation}\label{eq692}
\|f\|_{\Bsifif}\lesssim\sum_{j=1}^n\esssup_{t>0}\esssup_{x\in\bbbr^n}t^{-s}|f(x+2te_j)-2f(x+te_j)+f(x)|.
\end{equation}
\end{corollary}
\begin{proof}[Proof of Corollary \ref{corollary8}]
Apply Theorem \ref{theorem7} with $L=2$. We also deduce from (\ref{eq687}) that for almost every $x\in\bbbr^n$,
\begin{equation}\label{eq693}
(\int_0^{\infty}|f(x+2te_j)-2f(x+te_j)+f(x)|^q\frac{dt}{t^{1+sq}})^{\frac{1}{q}}\lesssim\|f\|_{\Bsinfq}
\end{equation}
when conditions of Corollary \ref{corollary8} (i) are satisfied. Furthermore if $0<p\leq\infty$, $q=\infty$ and $0<s<2$, then we can infer from (\ref{eq689}) and (\ref{eq691}) the following inequality
\begin{equation}\label{eq694}
\|f(\cdot+2te_j)-2f(\cdot+te_j)+f(\cdot)\|_{L^p(\bbbr^n)}\lesssim t^s\|f\|_{\Bspinf}\qquad\text{ for }t>0.
\end{equation}
Moreover when conditions of Corollary \ref{corollary8} (iv) are satisfied, the inequality (\ref{eq690}) indicates
\begin{equation}\label{eq695}
\|f\|_{\Bspinf}\lesssim\sum_{j=1}^n(\int_{\bbbr^n}\esssup_{t>0}\frac{|f(x+2te_j)-2f(x+te_j)+f(x)|^p}{t^{sp}}dx)^{\frac{1}{p}}.
\end{equation}
From (\ref{eq688}) and (\ref{eq692}) we see that for every $k\in\bbbz$ and almost every $x\in\bbbr^n$, $2^{ks}|\psi_{2^{-k}}*f(x)|$ can be estimated from above by the right sides of (\ref{eq688}) and (\ref{eq692}) respectively, and hence we deduce $\lim_{k\rightarrow+\infty}\psi_{2^{-k}}*f(x)=0$ when $s>0$ and the right sides of (\ref{eq688}) and (\ref{eq692}) are finite. From (\ref{eq690}) we see that for every $k\in\bbbz$
\begin{align}\label{eq696}
&2^{ks}\|\psi_{2^{-k}}*f\|_{L^p(\bbbr^n)} \nonumber\\
&\lesssim\sum_{j=1}^n\esssup_{t>0}t^{-s}(\int_{\bbbr^n}|f(x+2te_j)-2f(x+te_j)+f(x)|^p dx)^{\frac{1}{p}} \nonumber\\
&\lesssim\sum_{j=1}^n(\int_{\bbbr^n}\esssup_{t>0}\frac{|f(x+2te_j)-2f(x+te_j)+f(x)|^p}{t^{sp}}dx)^{\frac{1}{p}},
\end{align}
and hence we deduce $\lim_{k\rightarrow+\infty}\|\psi_{2^{-k}}*f\|_{L^p(\bbbr^n)}=0$ when $s>0$ and the right side of (\ref{eq696}) is finite.
\end{proof}
The following theorem is the counterpart of Theorem \ref{theorem2} for $\Bspq$ spaces.
\begin{theorem}\label{theorem8}
Let $L\in\bbbn$, $0<p,q\leq\infty$, $s\in\bbbr$ and $f\in\Bspq$.\\
(i)If $0<p\leq\infty$, $0<q<\infty$ and $0<s<L$, then
\begin{equation}\label{eq698}
(\int_{\bbbr^n}|h|^{-sq}\|\varDelta^L_h f\|_{L^p(\bbbr^n)}^q\frac{dh}{|h|^n})^{\frac{1}{q}}\lesssim\|f\|_{\Bspq}.
\end{equation}
(ii)Suppose $f$ has a function representative. If $1<p\leq\infty$, $0<q<\infty$ and $s\in\bbbr$, or if $0<p<1$, $0<q<\infty$ and $\sigma_p<s<\infty$, then
\begin{equation}\label{eq699}
\|f\|_{\Bspq}\lesssim(\int_{\bbbr^n}|h|^{-sq}\|\varDelta^L_h f\|_{L^p(\bbbr^n)}^q\frac{dh}{|h|^n})^{\frac{1}{q}}.
\end{equation}
(iii)If $0<p\leq\infty$, $q=\infty$ and $0<s<L$, then
\begin{equation}\label{eq700}
\esssup_{h\in\bbbr^n}|h|^{-s}\cdot\|\varDelta^L_h f\|_{L^p(\bbbr^n)}\lesssim\|f\|_{\Bspinf}.
\end{equation}
(iv)Suppose $f$ has a function representative. If $1<p\leq\infty$, $q=\infty$ and $s\in\bbbr$, or if $0<p<1$, $q=\infty$ and $\sigma_p<s<\infty$, then
\begin{equation}\label{eq701}
\|f\|_{\Bspinf}\lesssim\esssup_{h\in\bbbr^n}|h|^{-s}\cdot\|\varDelta^L_h f\|_{L^p(\bbbr^n)}.
\end{equation}
(v)Suppose $f$ has a function representative. If $p=1$, $1\leq q<\infty$ and $-n<s<\infty$, or if $p=1$, $0<q<1$ and $0<s<\infty$, then
\begin{equation}\label{eq702}
\|f\|_{\dot{B}^s_{1,q}(\bbbr^n)}\lesssim
(\int_{\bbbr^n}|h|^{-sq}\|\varDelta^L_h f\|_{L^1(\bbbr^n)}^q\frac{dh}{|h|^n})^{\frac{1}{q}}.
\end{equation}
If $p=1$, $q=\infty$ and $-n<s<\infty$, then
\begin{equation}\label{eq703}
\|f\|_{\dot{B}^s_{1,\infty}(\bbbr^n)}\lesssim\esssup_{h\in\bbbr^n}|h|^{-s}\cdot\|\varDelta^L_h f\|_{L^1(\bbbr^n)}.
\end{equation}
\end{theorem}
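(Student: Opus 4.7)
The proof of Theorem \ref{theorem8} splits naturally along the five parts, with the direct inequalities (i) and (iii) reducing to Theorem \ref{theorem7} by rotation invariance, while the converse inequalities (ii), (iv) and (v) require a representation formula for the Littlewood-Paley piece $f_k$ as a weighted integral of iterated differences at scale $2^{-k}$.

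For parts (i) and (iii), polar coordinates give
\begin{equation*}
\int_{\bbbr^n}|h|^{-sq}\|\varDelta^L_h f\|_{L^p(\bbbr^n)}^q\frac{dh}{|h|^n}
= \int_{\unitsph}\Bigl(\int_0^{\infty} t^{-sq}\|\varDelta^L_{t\theta}f\|_{L^p(\bbbr^n)}^q\frac{dt}{t}\Bigr)\,d\Haus^{n-1}(\theta).
\end{equation*}
For each fixed $\theta\in\unitsph$, after applying a rotation $R_\theta\in SO(n)$ sending $\theta$ to $e_1$ and using rotation invariance of $\|\cdot\|_{\Bspq}$, the inner integral is bounded by a constant multiple of $\|f\|_{\Bspq}^q$ via Theorem \ref{theorem7}(i), uniformly in $\theta$; integrating over $\unitsph$ produces (\ref{eq698}). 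The same argument combined with Theorem \ref{theorem7}(iii) yields (\ref{eq700}).

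For parts (ii), (iv) and (v), I would select $w\in C_c^\infty(\bbbr^n)$ supported in $\{1/2\leq|h|\leq 2\}$ (non-negative and radial) so that the Fourier symbol
\begin{equation*}
K(\xi) := \int w(h)\,(e^{2\pi i h\cdot\xi}-1)^L\,dh
\end{equation*}
is bounded and bounded away from zero on $\supp\FT_n\psi$; existence of such $w$ follows from a continuity argument on the compact annulus. Setting $\FT_n\tilde\eta(\xi)=\FT_n\psi(\xi)/K(\xi)$, smoothly extended outside $\supp\FT_n\psi$, one obtains $\tilde\eta\in\Sw_0(\bbbr^n)$ with $\FT_n\psi = \FT_n\tilde\eta\cdot K$. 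Scaling and applying (\ref{eq344}) to $\iFT_n K$ then yields the pointwise representation
\begin{equation*}
f_k(x) = \int w(h)\,(\tilde\eta_{2^{-k}} * \varDelta^L_{2^{-k}h}f)(x)\,dh,
\end{equation*}
valid whenever $f$ has a function representative. For $1 < p \leq\infty$, Young's inequality on the inner convolution followed by Hölder in $h$ (or the $q$-quasi-triangle when $0<q<1$) produces
\begin{equation*}
2^{ks}\|f_k\|_{L^p(\bbbr^n)}\lesssim\Bigl(\int_{|h|\sim 2^{-k}}|h|^{-sq}\|\varDelta^L_h f\|_{L^p(\bbbr^n)}^q\frac{dh}{|h|^n}\Bigr)^{1/q},
\end{equation*}
and summing in $k$ via the $l^q$ quasi-norm gives (\ref{eq699}) and (\ref{eq701}). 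For $0 < p < 1$, Young's inequality is unavailable, and one majorizes $\tilde\eta_{2^{-k}} * \varDelta^L_{2^{-k}h}f$ by a Peetre-type maximal function of $\varDelta^L_{2^{-k}h}f$ at scale $2^{-k}$ with $r$ chosen so that $n/r < s$; this step forces the restriction $s > \sigma_p$. At the endpoint $p=1$ in (v), Young's inequality on $L^1 * L^1\hookrightarrow L^1$ applies directly with no maximal function, which accounts for the wider range $s>-n$ when $q\geq 1$; for $0<q<1$ the quasi-triangle in $l^q$ forces the additional restriction $s>0$.

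The main obstacle is twofold: constructing a valid weight $w$ so that $K$ is non-vanishing on $\supp\FT_n\psi$ (handled by a normal-family/continuity argument on generic non-negative radial profiles), and transferring the formal Fourier-multiplier identity into a genuine pointwise equality for the function representative of $f$ when $f\in\Sw'(\bbbr^n)/\mathscr{P}(\bbbr^n)$. The latter is treated by density of $\Sw_0(\bbbr^n)$ in $\Bspq$ when $p,q<\infty$, together with separate approximation schemes in the endpoint cases $p=\infty$ or $q=\infty$. Once this foundation is in place, sharp bookkeeping of the Peetre maximal function produces the threshold $\sigma_p$ for $p<1$, and the remaining arithmetic of the $l^q$ sum in $k$ accounts for the distinctions between (ii), (iv) and the three sub-cases of (v).
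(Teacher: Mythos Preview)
Your approach to (i) and (iii) via rotation invariance and Theorem~\ref{theorem7} is a valid shortcut that the paper does not take; the paper instead argues directly by splitting $\sum_{j\leq k}$ and $\sum_{j>k}$ with the estimates $\|\varDelta^L_h f_j\|_{L^p}\lesssim 2^{(j-k)L}\|f_j\|_{L^p}$ and $\|\varDelta^L_h f_j\|_{L^p}\lesssim\|f_j\|_{L^p}$. Your route is shorter once Theorem~\ref{theorem7} is in hand, but you should state explicitly that the implicit constant in Theorem~\ref{theorem7}(i) is rotation invariant (or that one may take $\psi$ radial so that $\|\cdot\|_{\Bspq}$ is).

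For (ii), (iv), (v) with $p\geq 1$ your representation $f_k=\int w(h)\,\tilde\eta_{2^{-k}}*\varDelta^L_{2^{-k}h}f\,dh$ is close in spirit to the paper's kernel $G_k$ used in part~(v), and the argument goes through once $K$ is nonvanishing on $\supp\FT_n\psi$. The paper does not try to find such a $w$ directly; instead it either (a) uses a spherical-cap partition $\{\rho_l\}$ so that on each piece a single direction $\theta$ makes $(e^{2\pi i\tau\theta\cdot\xi}-1)^L$ nonzero, or (b) shifts to $\psi(2^{m_0}\cdot)$ so that only small $|\xi|$ are relevant and a Taylor expansion suffices. Your ``continuity/normal family'' argument for the non-vanishing of $K$ on the full annulus is not obviously correct for all $L$ and needs either a genuine proof or one of the paper's workarounds.

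The real gap is in the case $0<p<1$. You propose to bound $\tilde\eta_{2^{-k}}*\varDelta^L_{2^{-k}h}f$ by a Peetre maximal function of $\varDelta^L_{2^{-k}h}f$ at scale $2^{-k}$ and then pass to $\HLmax_n(|\cdot|^r)^{1/r}$. But the inequality $\PFSmax_n g\lesssim\HLmax_n(|g|^r)^{1/r}$ of Lemma~\ref{lemma4} requires $g$ to have Fourier support in $\{|\xi|\leq t\}$, and $\varDelta^L_{2^{-k}h}f$ is \emph{not} band-limited. The paper's remedy is essential here: one first multiplies by $\FT_n\phi(2^{m_0-J-k}\xi)$ to truncate frequencies at scale $2^{k+J}$, applies Plancherel--Polya--Nikol'skij (Lemma~\ref{lemma7}) to the now band-limited integrand, and then splits $\varDelta^L_h(\phi_{2^{m_0-J-k}}*f)=\varDelta^L_h f-\sum_{j>J}\varDelta^L_h f_{k+j-m_0}$. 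The tail contributes $2^{J[n(1/r-1)-s]}\|f\|_{\Bspq}$, which is absorbed into the left side by choosing $J$ large; this is exactly where the threshold $s>\sigma_p$ enters. Without this truncation-and-absorption step your argument for $p<1$ does not close.
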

Theorem \ref{theorem8} (v) and the case for $1<p\leq\infty$ in Theorem \ref{theorem8} (ii) and (iv) are new. The proof of Theorem \ref{theorem8} can be found in section \ref{proof.of.theorem8}. Initial considerations of the counterpart of Theorem \ref{theorem8} for the inhomogeneous $B^s_{p,q}(\bbbr^n)$ space were given by H. Triebel in \cite[section 2.5.12]{1983functionspaces} and \cite[section 2.6.1]{1992functionspaces}. The corollaries of Theorem \ref{theorem8} are formulated below.
\begin{corollary}\label{corollary9}
Let $0<p,q<\infty$, $s\in\bbbr$ and $f\in\Bspq$ has a function representative.\\
(i)If $0<p<\infty$, $0<q<\infty$ and $0<s<1$, then
\begin{equation}\label{eq746}
(\int_{\bbbr^n}(\int_{\bbbr^n}|f(x+h)-f(x)|^p dx)^{\frac{q}{p}}\frac{dh}{|h|^{n+sq}})^{\frac{1}{q}}\lesssim\|f\|_{\Bspq}.
\end{equation}
(ii)If $1<p<\infty$, $0<q<\infty$ and $s\in\bbbr$, or if $0<p<1$, $0<q<\infty$ and $\sigma_p<s<\infty$, then
\begin{equation}\label{eq747}
\|f\|_{\Bspq}\lesssim(\int_{\bbbr^n}(\int_{\bbbr^n}|f(x+h)-f(x)|^p dx)^{\frac{q}{p}}\frac{dh}{|h|^{n+sq}})^{\frac{1}{q}}.
\end{equation}
(iii)If $p=1$, $1\leq q<\infty$ and $-n<s<\infty$, or if $p=1$, $0<q<1$ and $0<s<\infty$, then
\begin{equation}\label{eq748}
\|f\|_{\dot{B}^s_{1,q}(\bbbr^n)}\lesssim
(\int_{\bbbr^n}(\int_{\bbbr^n}|f(x+h)-f(x)|dx)^q\frac{dh}{|h|^{n+sq}})^{\frac{1}{q}}.
\end{equation}
\end{corollary}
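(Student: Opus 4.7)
The plan is to derive Corollary \ref{corollary9} as a direct specialization of Theorem \ref{theorem8} at $L=1$. For $L=1$ the iterated difference collapses to the ordinary first-order difference $\varDelta^1_h f(x)=f(x+h)-f(x)$, so for $0<p<\infty$ one has
$$\|\varDelta^1_h f\|_{L^p(\bbbr^n)}^q=\left(\int_{\bbbr^n}|f(x+h)-f(x)|^p\,dx\right)^{q/p}.$$
Substituting this identity into the quantities $\int_{\bbbr^n}|h|^{-sq}\|\varDelta^1_h f\|_{L^p(\bbbr^n)}^q\,dh/|h|^n$ appearing in Theorem \ref{theorem8} converts them into exactly the iterated integrals displayed in (\ref{eq746}), (\ref{eq747}) and (\ref{eq748}); no other manipulation of the right-hand sides is needed.

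I would then dispatch the three parts individually. Part (i) follows from Theorem \ref{theorem8} (i) with $L=1$, since the admissibility condition $0<s<L$ becomes $0<s<1$ and the ranges of $p$ and $q$ match verbatim. Part (ii) follows from Theorem \ref{theorem8} (ii) with $L=1$: the two sub-cases \emph{$1<p<\infty$, $0<q<\infty$, $s\in\bbbr$} and \emph{$0<p<1$, $0<q<\infty$, $\sigma_p<s<\infty$} are exactly the $L=1$ restrictions of the two sub-cases of Theorem \ref{theorem8} (ii). Part (iii) follows from Theorem \ref{theorem8} (v) with $L=1$, whose ranges \emph{$-n<s<\infty$} when $1\leq q<\infty$ and \emph{$0<s<\infty$} when $0<q<1$ transcribe directly to the corresponding ranges in the corollary. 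Since Corollary \ref{corollary9} is a genuine corollary rather than an independent statement, there is no substantive obstacle; the only care required is bookkeeping the parameter ranges across the three parts and confirming that the substitution of $L=1$ in each inequality of Theorem \ref{theorem8} produces precisely the stated inequality.
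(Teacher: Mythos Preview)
Your proposal is correct and matches the paper's own proof exactly: the paper simply says to apply inequalities (\ref{eq698}), (\ref{eq699}) and (\ref{eq702}) of Theorem~\ref{theorem8} with $L=1$, which is precisely your specialization of parts (i), (ii) and (v) respectively. The bookkeeping of parameter ranges you describe is accurate, including the implicit restriction from $p\leq\infty$ in Theorem~\ref{theorem8} to $p<\infty$ in the corollary's hypothesis.
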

\begin{proof}[Proof of Corollary \ref{corollary9}]
Apply inequalities (\ref{eq698}), (\ref{eq699}) and (\ref{eq702}) with $L=1$. Inequalities (\ref{eq747}) and (\ref{eq748}) also indicate
\begin{equation}\label{eq749}
2^{ks}\|\psi_{2^{-k}}*f\|_{L^p(\bbbr^n)}\lesssim\|f\|_{\Bspq}\lesssim
(\int_{\bbbr^n}(\int_{\bbbr^n}|f(x+h)-f(x)|^p dx)^{\frac{q}{p}}\frac{dh}{|h|^{n+sq}})^{\frac{1}{q}},
\end{equation}
for every $k\in\bbbz$, and hence $\lim_{k\rightarrow+\infty}\|\psi_{2^{-k}}*f\|_{L^p(\bbbr^n)}=0$ when $s>0$ and the right sides of (\ref{eq747}) and (\ref{eq748}) are finite.
\end{proof}
We can also pick some special values for $p$, $q$, $s$ in the above inequalities and then deduce some other interesting inequalities. For example, let $1\leq q\leq p<\frac{n}{s}$, then $0<s<1$ and $s<\frac{n}{p}$. By Lemma \ref{lemma7}, we have $\|f_k\|_{L^p(\bbbr^n)}\lesssim 2^{kn(\frac{1}{q}-\frac{1}{p})}\|f_k\|_{L^q(\bbbr^n)}$ and hence
\begin{eqnarray}
\|f\|_{\Bspq}
&\lesssim&\big(\sum_{k\leq 0}2^{kq(s+\frac{n}{q}-\frac{n}{p})}
\|f_k\|_{L^q(\bbbr^n)}^q+\sum_{k>0}2^{kq(s+\frac{n}{q}-\frac{n}{p})}\|f_k\|_{L^q(\bbbr^n)}^q\big)^{\frac{1}{q}}\nonumber\\
&\lesssim&\big(\sum_{k\leq 0}\|f_k\|_{L^q(\bbbr^n)}^q \big)^{\frac{1}{q}}+\big(\sum_{k>0}2^{kn} \|f_k\|_{L^q(\bbbr^n)}^q\big)^{\frac{1}{q}}\nonumber\\
&\lesssim&\|f\|_{\dot{B}^{0}_{q,q}(\bbbr^n)}+
\|f\|_{\dot{B}^{\frac{n}{q}}_{q,q}(\bbbr^n)}.\label{eq778}
\end{eqnarray}
By the inequalities given in Corollary \ref{corollary9} and Fubini's theorem, we can further deduce the following inequality
\begin{eqnarray}
&&(\int_{\bbbr^n}(\int_{\bbbr^n}|f(x+h)-f(x)|^p dx)^{\frac{q}{p}}\frac{dh}{|h|^{n+sq}})^{\frac{1}{q}}\lesssim\|f\|_{\Bspq}
\nonumber\\
&\lesssim&\big(\int_{\bbbr^n}\int_{\bbbr^n}\frac{|f(x)-f(y)|^q}{|x-y|^n}dxdy\big)^{\frac{1}{q}}+\big(\int_{\bbbr^n}\int_{\bbbr^n}\frac{|f(x)-f(y)|^q}{|x-y|^{2n}}dxdy\big)^{\frac{1}{q}},\label{eq779}  
\end{eqnarray}
when the corresponding conditions on the parameters are satisfied.
\begin{corollary}\label{corollary10}
Let $0<p,q\leq\infty$, $s\in\bbbr$ and at least one of $p$ and $q$ is infinity. Assume $f\in\Bspq$ has a function representative.\\
(i)If $p=\infty$, $0<q<\infty$ and $0<s<1$, then
\begin{equation}\label{eq750}
(\int_{\bbbr^n}\esssup_{x\in\bbbr^n}\frac{|f(x+h)-f(x)|^q}{|h|^{n+sq}}dh)^{\frac{1}{q}}\lesssim\|f\|_{\Bsinfq}.
\end{equation}
(ii)If $p=\infty$, $0<q<\infty$ and $s\in\bbbr$, then
\begin{equation}\label{eq751}
\|f\|_{\Bsinfq}\lesssim(\int_{\bbbr^n}\esssup_{x\in\bbbr^n}\frac{|f(x+h)-f(x)|^q}{|h|^{n+sq}}dh)^{\frac{1}{q}}.
\end{equation}
(iii)If $0<p<\infty$, $q=\infty$ and $0<s<1$, then
\begin{equation}\label{eq752}
\esssup_{h\in\bbbr^n}|h|^{-s}\cdot(\int_{\bbbr^n}|f(x+h)-f(x)|^p dx)^{\frac{1}{p}}\lesssim\|f\|_{\Bspinf}.
\end{equation}
(iv)If $1<p<\infty$, $q=\infty$ and $s\in\bbbr$, or if $p=1$, $q=\infty$ and $-n<s<\infty$, or if $0<p<1$, $q=\infty$ and $\sigma_p<s<\infty$, then
\begin{equation}\label{eq753}
\|f\|_{\Bspinf}\lesssim\esssup_{h\in\bbbr^n}|h|^{-s}\cdot(\int_{\bbbr^n}|f(x+h)-f(x)|^p dx)^{\frac{1}{p}}.
\end{equation}
(v)If $p=q=\infty$ and $0<s<1$, then
\begin{equation}\label{eq754}
\esssup_{x,y\in\bbbr^n}\frac{|f(x)-f(y)|}{|x-y|^s}\lesssim\|f\|_{\Bsifif}.
\end{equation}
(vi)If $p=q=\infty$ and $s\in\bbbr$, then
\begin{equation}\label{eq755}
\|f\|_{\Bsifif}\lesssim\esssup_{x,y\in\bbbr^n}\frac{|f(x)-f(y)|}{|x-y|^s}.
\end{equation}
\end{corollary}
\begin{proof}[Proof of Corollary \ref{corollary10}]
Apply Theorem \ref{theorem8} with $L=1$. From (\ref{eq750}) we can see that
\begin{equation}\label{eq756}
(\int_{\bbbr^n}\frac{|f(x+h)-f(x)|^q}{|h|^{n+sq}}dh)^{\frac{1}{q}}\lesssim\|f\|_{\Bsinfq},
\end{equation}
for almost every $x\in\bbbr^n$ when conditions of Corollary \ref{corollary10} (i) are satisfied. From (\ref{eq752}) and (\ref{eq754}), we also deduce the following inequality
\begin{equation}\label{eq757}
\|f(\cdot+h)-f(\cdot)\|_{L^p(\bbbr^n)}\lesssim|h|^s\cdot\|f\|_{\Bspinf}
\end{equation}
for almost every $h\in\bbbr^n$ when $0<p\leq\infty$, $q=\infty$ and $0<s<1$. From (\ref{eq753}) with a proper change of variable, we can obtain
\begin{equation}\label{eq758}
\|f\|_{\Bspinf}\lesssim(\int_{\bbbr^n}\esssup_{y\in\bbbr^n}\frac{|f(x)-f(y)|^p}{|x-y|^{sp}}dx)^{\frac{1}{p}},
\end{equation}
when conditions of Corollary \ref{corollary10} (iv) are satisfied. Furthermore from (\ref{eq751}) we have
\begin{equation}\label{eq759}
2^{ks}|\psi_{2^{-k}}*f(x)|\lesssim(\int_{\bbbr^n}\esssup_{x\in\bbbr^n}\frac{|f(x+h)-f(x)|^q}{|h|^{n+sq}}dh)^{\frac{1}{q}}
\end{equation}
for every $k\in\bbbz$ and almost every $x\in\bbbr^n$, and from (\ref{eq755}) we have
\begin{equation}\label{eq760}
2^{ks}|\psi_{2^{-k}}*f(x)|\lesssim\esssup_{x,y\in\bbbr^n}\frac{|f(x)-f(y)|}{|x-y|^s}
\end{equation}
for every $k\in\bbbz$ and almost every $x\in\bbbr^n$, therefore $\lim_{k\rightarrow+\infty}|\psi_{2^{-k}}*f(x)|=0$ when $s>0$ and the right sides of (\ref{eq751}) and (\ref{eq755}) are finite. Moreover from (\ref{eq753}) we have
\begin{equation}\label{eq761}
2^{ks}\|\psi_{2^{-k}}*f\|_{L^p(\bbbr^n)}\lesssim\esssup_{h\in\bbbr^n}|h|^{-s}\cdot(\int_{\bbbr^n}|f(x+h)-f(x)|^p dx)^{\frac{1}{p}}
\end{equation}
for every $k\in\bbbz$, hence $\lim_{k\rightarrow+\infty}\|\psi_{2^{-k}}*f\|_{L^p(\bbbr^n)}=0$ when $s>0$ and the right side of (\ref{eq753}) is finite.
\end{proof}
If $0<\alpha\leq p<\infty$, $0<s<1$ and $0<\beta<\infty$, then by Lemma \ref{lemma7}, $f_k=\psi_{2^{-k}}*f$ satisfies $\|f_k\|_{L^p(\bbbr^n)}\lesssim 2^{kn(\frac{1}{\alpha}-\frac{1}{p})}\|f_k\|_{L^{\alpha}(\bbbr^n)}$, and we have
\begin{align}
\|f\|_{\Bspinf}
&\lesssim\esssup_{k\in\bbbz}2^{k(s+\frac{n}{\alpha}-\frac{n}{p})}
\|f_k\|_{L^{\alpha}(\bbbr^n)}\nonumber\\
&\lesssim\big(\int_{\bbbr^n}(\esssup_{k\in\bbbz}
2^{k(s+\frac{n}{\alpha}-\frac{n}{p})}|f_k(x)|
)^{\alpha}dx\big)^{\frac{1}{\alpha}}\nonumber\\
&\lesssim\big(\int_{\bbbr^n}(\sum_{k\in\bbbz}
2^{k\beta(s+\frac{n}{\alpha}-\frac{n}{p})}|f_k(x)|^{\beta}
)^{\frac{\alpha}{\beta}}dx\big)^{\frac{1}{\alpha}}=
\|f\|_{\dot{F}^{s+\frac{n}{\alpha}-\frac{n}{p}}_{\alpha,\beta}
(\bbbr^n)}.\label{eq780}
\end{align}
By combining (\ref{eq752}), (\ref{eq780}), and (\ref{eq466}) altogether, we can obtain
\begin{equation}\label{eq781}
\esssup_{h\in\bbbr^n}\frac{\|f(\cdot+h)-f(\cdot)\|_{L^p(\bbbr^n)}}{|h|^s}\lesssim
(\int_{\bbbr^n}(\int_{\bbbr^n}\frac{|f(x)-f(y)|^{\beta}}{|x-y|^{n+\beta(s+\frac{n}{\alpha}-\frac{n}{p})}}dy)^{\frac{\alpha}{\beta}}dx)^{\frac{1}{\alpha}},
\end{equation}
where the parameters satisfy
$$\max\{0,n(\frac{1}{\min\{\alpha,\beta\}}-1)\}+
\max\{0,n(\frac{1}{\alpha}-\frac{1}{\beta})\}<
s+\frac{n}{\alpha}-\frac{n}{p}$$
if $0<\beta<1$, and there are no extra conditions for parameters if $1\leq\beta<\infty$ since $-n<s+\frac{n}{\alpha}-\frac{n}{p}<\infty$ is always true for $0<\alpha\leq p<\infty$ and $0<s<1$. In particular, letting $\alpha=p$ in (\ref{eq781}) yields
\begin{equation}\label{eq782}
\esssup_{h\in\bbbr^n}\frac{\|f(\cdot+h)-f(\cdot)\|_{L^p(\bbbr^n)}}{|h|^s}\lesssim
(\int_{\bbbr^n}(\int_{\bbbr^n}\frac{|f(x)-f(y)|^{\beta}}{|x-y|^{n+s\beta}}dy)^{\frac{p}{\beta}}dx)^{\frac{1}{p}}=
[f]_{W^s_{p,\beta}(\bbbr^n)},   
\end{equation}
when the above conditions are met. If $0<\alpha<p=\infty$, $0<s<1$, and $0<\beta<\infty$, then by Lemma \ref{lemma7} we have $\|f_k\|_{L^{\infty}(\bbbr^n)}\lesssim 2^{\frac{kn}{\alpha}}\|f_k\|_{L^{\alpha}(\bbbr^n)}$ and
\begin{equation}\label{eq783}
\|f\|_{\Bsifif}\lesssim\esssup_{k\in\bbbz}2^{k(s+\frac{n}{\alpha})}\|f_k\|_{L^{\alpha}(\bbbr^n)}\lesssim\|f\|_{\dot{F}^{s+\frac{n}{\alpha}}_{\alpha,\infty}(\bbbr^n)}.
\end{equation}
By (\ref{eq754}), (\ref{eq783}), and (\ref{eq467}), we have
\begin{equation}\label{eq784}
\esssup_{x,y\in\bbbr^n}\frac{|f(x)-f(y)|}{|x-y|^s}\lesssim
(\int_{\bbbr^n}\esssup_{y\in\bbbr^n}\frac{|f(x)-f(y)|^{\alpha}}{|x-y|^{n+s\alpha}}dx)^{\frac{1}{\alpha}}
\end{equation}
for all $0<\alpha<\infty$ and $0<s<1$. Because $\|f\|_{\dot{F}^{s+\frac{n}{\alpha}}_{\alpha,\infty}(\bbbr^n)}\lesssim\|f\|_{\dot{F}^{s+\frac{n}{\alpha}}_{\alpha,\beta}(\bbbr^n)}$ for all $0<\beta<\infty$, then (\ref{eq754}), (\ref{eq783}) and (\ref{eq466}) combined together give us the following inequality
\begin{equation}\label{eq785}
\esssup_{x,y\in\bbbr^n}\frac{|f(x)-f(y)|}{|x-y|^s}\lesssim
(\int_{\bbbr^n}(\int_{\bbbr^n}\frac{|f(x)-f(y)|^{\beta}}{|x-y|^{n+(s+\frac{n}{\alpha})\beta}}dy)^{\frac{\alpha}{\beta}}dx)^{\frac{1}{\alpha}},
\end{equation}
where the parameters satisfy
$$\max\{0,n(\frac{1}{\min\{\alpha,\beta\}}-1)\}+
\max\{0,n(\frac{1}{\alpha}-\frac{1}{\beta})\}<
s+\frac{n}{\alpha}$$
if $0<\beta<1$, and there are no extra conditions for parameters if $1\leq\beta<\infty$. In particular, when $\alpha$ and $\beta$ are related by the equation $\beta=\alpha\cdot\gamma$ for some $\gamma>0$, then (\ref{eq785}) becomes
\begin{equation}\label{eq786}
\esssup_{x,y\in\bbbr^n}\frac{|f(x)-f(y)|}{|x-y|^s}\lesssim
(\int_{\bbbr^n}(\int_{\bbbr^n}\frac{|f(x)-f(y)|^{\alpha\cdot\gamma}}{|x-y|^{n+n\gamma+s\alpha\cdot\gamma}}dy)^{\frac{1}{\gamma}}dx)^{\frac{\gamma}{\beta}},
\end{equation}
when the corresponding conditions are satisfied.
\begin{corollary}\label{corollary11}
Let $0<p,q<\infty$, $s\in\bbbr$ and $f\in\Bspq$ has a function representative.\\
(i)If $0<p<\infty$, $0<q<\infty$ and $0<s<2$, then
\begin{equation}\label{eq762}
(\int_{\bbbr^n}(\int_{\bbbr^n}|f(x+2h)-2f(x+h)+f(x)|^p dx)^{\frac{q}{p}}\frac{dh}{|h|^{n+sq}})^{\frac{1}{q}}
\lesssim\|f\|_{\Bspq}.
\end{equation}
(ii)If $1<p<\infty$, $0<q<\infty$ and $s\in\bbbr$, or if $0<p<1$, $0<q<\infty$ and $\sigma_p<s<\infty$, then
\begin{equation}\label{eq763}
\|f\|_{\Bspq}\lesssim
(\int_{\bbbr^n}(\int_{\bbbr^n}|f(x+2h)-2f(x+h)+f(x)|^p dx)^{\frac{q}{p}}\frac{dh}{|h|^{n+sq}})^{\frac{1}{q}}.
\end{equation}
(iii)If $p=1$, $1\leq q<\infty$ and $-n<s<\infty$, or if $p=1$, $0<q<1$ and $0<s<\infty$, then
\begin{equation}\label{eq764}
\|f\|_{\dot{B}^s_{1,q}(\bbbr^n)}\lesssim
(\int_{\bbbr^n}(\int_{\bbbr^n}|f(x+2h)-2f(x+h)+f(x)|dx)^q\frac{dh}{|h|^{n+sq}})^{\frac{1}{q}}.
\end{equation}
\end{corollary}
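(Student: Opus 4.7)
The plan is to apply Theorem \ref{theorem8} directly with $L=2$, after unpacking the iterated difference. The key algebraic observation is that, iterating the definition (\ref{eq259}),
$$\varDelta^2_h f(x) = \varDelta^1_h(\varDelta^1_h f)(x) = \bigl[f(x+2h)-f(x+h)\bigr] - \bigl[f(x+h)-f(x)\bigr] = f(x+2h) - 2f(x+h) + f(x).$$
Consequently $\|\varDelta^2_h f\|_{L^p(\bbbr^n)}$ is exactly the $L^p$-norm of the map $x\mapsto f(x+2h)-2f(x+h)+f(x)$, so the integrands in (\ref{eq762}), (\ref{eq763}), (\ref{eq764}) coincide with those appearing in Theorem \ref{theorem8} (i), (ii), (v) respectively.

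For part (i), I would invoke Theorem \ref{theorem8} (i) with $L=2$; the hypothesis $0<s<L$ becomes $0<s<2$, matching the assumption of (i) exactly. For part (ii), I would split according to whether $1<p<\infty$ (using the first branch of Theorem \ref{theorem8} (ii), which imposes no upper bound on $s$) or $0<p<1$ with $\sigma_p<s<\infty$ (the second branch). Part (iii) is an immediate application of Theorem \ref{theorem8} (v) with $L=2$, covering both the $1\leq q<\infty$ and the $0<q<1$ sub-cases.

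To mirror the structure of Corollary \ref{corollary9}, I would close by combining (\ref{eq763}) and (\ref{eq764}) with the trivial bound $2^{ks}\|\psi_{2^{-k}}*f\|_{L^p(\bbbr^n)}\leq \|f\|_{\Bspq}$ coming from (\ref{eq42}), obtaining for every $k\in\bbbz$
$$2^{ks}\|\psi_{2^{-k}}*f\|_{L^p(\bbbr^n)}\lesssim \Bigl(\int_{\bbbr^n}\Bigl(\int_{\bbbr^n}|f(x+2h)-2f(x+h)+f(x)|^p\,dx\Bigr)^{q/p}\frac{dh}{|h|^{n+sq}}\Bigr)^{1/q},$$
and similarly with the $p=1$ integrand in place of the inner $L^p$-norm. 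One then reads off that $\lim_{k\to+\infty}\|\psi_{2^{-k}}*f\|_{L^p(\bbbr^n)}=0$ whenever $s>0$ and the right-hand side of (\ref{eq763}) or (\ref{eq764}) is finite.

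There is no substantive obstacle: all the analytic work is carried out inside Theorem \ref{theorem8}, and the only nontrivial verification is matching the parameter ranges of each part of the corollary with the corresponding parameter ranges in Theorem \ref{theorem8}, which is straightforward case by case. The most delicate piece to keep track of is the $p=1$ endpoint, where one must use (v) rather than (ii) of the theorem.
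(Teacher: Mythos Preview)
Your proposal is correct and follows exactly the paper's own approach: the paper's proof of Corollary \ref{corollary11} simply says to apply inequalities (\ref{eq698}), (\ref{eq699}) and (\ref{eq702}) (i.e., Theorem \ref{theorem8} (i), (ii), (v)) with $L=2$, and then records the consequence (\ref{eq765}) and the limit $\lim_{k\to+\infty}\|\psi_{2^{-k}}*f\|_{L^p(\bbbr^n)}=0$ when $s>0$, which is precisely what you outline. Your identification of $\varDelta^2_h f(x)=f(x+2h)-2f(x+h)+f(x)$ and your case-by-case parameter matching are exactly right.
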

\begin{proof}[Proof of Corollary \ref{corollary11}]
Apply inequalities (\ref{eq698}), (\ref{eq699}) and (\ref{eq702}) with $L=2$. Inequalities (\ref{eq763}) and (\ref{eq764}) also indicate
\begin{equation}\label{eq765}
2^{ks}\|\psi_{2^{-k}}*f\|_{L^p(\bbbr^n)}\lesssim
(\int_{\bbbr^n}(\int_{\bbbr^n}|f(x+2h)-2f(x+h)+f(x)|^p dx)^{\frac{q}{p}}\frac{dh}{|h|^{n+sq}})^{\frac{1}{q}},
\end{equation}
for every $k\in\bbbz$, and hence $\lim_{k\rightarrow+\infty}\|\psi_{2^{-k}}*f\|_{L^p(\bbbr^n)}=0$ when $s>0$ and the right sides of (\ref{eq763}) and (\ref{eq764}) are finite.
\end{proof}
\begin{corollary}\label{corollary12}
Let $0<p,q\leq\infty$, $s\in\bbbr$ and at least one of $p$ and $q$ is infinity. Assume $f\in\Bspq$ has a function representative.\\
(i)If $p=\infty$, $0<q<\infty$ and $0<s<2$, then
\begin{equation}\label{eq766}
(\int_{\bbbr^n}\esssup_{x\in\bbbr^n}|f(x+2h)-2f(x+h)+f(x)|^q\frac{dh}{|h|^{n+sq}})^{\frac{1}{q}}\lesssim\|f\|_{\Bsinfq}.
\end{equation}
(ii)If $p=\infty$, $0<q<\infty$ and $s\in\bbbr$, then
\begin{equation}\label{eq767}
\|f\|_{\Bsinfq}\lesssim(\int_{\bbbr^n}\esssup_{x\in\bbbr^n}|f(x+2h)-2f(x+h)+f(x)|^q\frac{dh}{|h|^{n+sq}})^{\frac{1}{q}}.
\end{equation}
(iii)If $0<p<\infty$, $q=\infty$ and $0<s<2$, then
\begin{equation}\label{eq768}
\esssup_{h\in\bbbr^n}|h|^{-s}\cdot(\int_{\bbbr^n}|f(x+2h)-2f(x+h)+f(x)|^p dx)^{\frac{1}{p}}\lesssim\|f\|_{\Bspinf}.
\end{equation}
(iv)If $1<p<\infty$, $q=\infty$ and $s\in\bbbr$, or if $p=1$, $q=\infty$ and $-n<s<\infty$, or if $0<p<1$, $q=\infty$ and $\sigma_p<s<\infty$, then
\begin{equation}\label{eq769}
\|f\|_{\Bspinf}\lesssim\esssup_{h\in\bbbr^n}|h|^{-s}\cdot(\int_{\bbbr^n}|f(x+2h)-2f(x+h)+f(x)|^p dx)^{\frac{1}{p}}.
\end{equation}
(v)If $p=q=\infty$ and $0<s<2$, then
\begin{equation}\label{eq770}
\esssup_{x,y\in\bbbr^n}\frac{|f(x)+f(y)-2f(\frac{x+y}{2})|}{|x-y|^s}\lesssim\|f\|_{\Bsifif}.
\end{equation}
(vi)If $p=q=\infty$ and $s\in\bbbr$, then
\begin{equation}\label{eq771}
\|f\|_{\Bsifif}\lesssim\esssup_{x,y\in\bbbr^n}\frac{|f(x)+f(y)-2f(\frac{x+y}{2})|}{|x-y|^s}.
\end{equation}
\end{corollary}
\begin{proof}[Proof of Corollary \ref{corollary12}]
Apply Theorem \ref{theorem8} with $L=2$. From (\ref{eq766}) we can see that
\begin{equation}\label{eq772}
(\int_{\bbbr^n}|f(x+2h)-2f(x+h)+f(x)|^q\frac{dh}{|h|^{n+sq}})^{\frac{1}{q}}\lesssim\|f\|_{\Bsinfq},
\end{equation}
for almost every $x\in\bbbr^n$ when conditions of Corollary \ref{corollary12} (i) are satisfied. From (\ref{eq768}) and (\ref{eq770}), we also deduce the following inequality
\begin{equation}\label{eq773}
\|f(\cdot+2h)-2f(\cdot+h)+f(\cdot)\|_{L^p(\bbbr^n)}\lesssim|h|^s\cdot\|f\|_{\Bspinf}
\end{equation}
for almost every $h\in\bbbr^n$ when $0<p\leq\infty$, $q=\infty$ and $0<s<2$. From (\ref{eq769}) with a proper change of variable, we can obtain
\begin{equation}\label{eq774}
\|f\|_{\Bspinf}\lesssim
(\int_{\bbbr^n}\esssup_{y\in\bbbr^n}\frac{|f(x)+f(y)-2f(\frac{x+y}{2})|^p}{|x-y|^{sp}}dx)^{\frac{1}{p}},
\end{equation}
when conditions of Corollary \ref{corollary12} (iv) are satisfied. Furthermore from (\ref{eq767}) we have
\begin{equation}\label{eq775}
2^{ks}|\psi_{2^{-k}}*f(x)|\lesssim
(\int_{\bbbr^n}\esssup_{x\in\bbbr^n}|f(x+2h)-2f(x+h)+f(x)|^q\frac{dh}{|h|^{n+sq}})^{\frac{1}{q}}
\end{equation}
for every $k\in\bbbz$ and almost every $x\in\bbbr^n$, and from (\ref{eq771}) we have
\begin{equation}\label{eq776}
2^{ks}|\psi_{2^{-k}}*f(x)|\lesssim\esssup_{x,y\in\bbbr^n}\frac{|f(x)+f(y)-2f(\frac{x+y}{2})|}{|x-y|^s}
\end{equation}
for every $k\in\bbbz$ and almost every $x\in\bbbr^n$, therefore $\lim_{k\rightarrow+\infty}|\psi_{2^{-k}}*f(x)|=0$ when $s>0$ and the right sides of (\ref{eq767}) and (\ref{eq771}) are finite. Moreover from (\ref{eq769}) we have
\begin{equation}\label{eq777}
2^{ks}\|\psi_{2^{-k}}*f\|_{L^p(\bbbr^n)}\lesssim
\esssup_{h\in\bbbr^n}|h|^{-s}\cdot(\int_{\bbbr^n}|f(x+2h)-2f(x+h)+f(x)|^p dx)^{\frac{1}{p}}
\end{equation}
for every $k\in\bbbz$, hence $\lim_{k\rightarrow+\infty}\|\psi_{2^{-k}}*f\|_{L^p(\bbbr^n)}=0$ when $s>0$ and the right side of (\ref{eq769}) is finite.
\end{proof}

\section{Preliminaries}\label{Preliminaries}
In this section, we continue introducing some more notations and definitions. In order to write this paper in a more self-included fashion, we also cite useful results directly from the literature and the proofs of cited results can be found from their respective source and then we deduce frequently used remarks right after the closely related citations. Let $C_c^{\infty}(\bbbr^n)$ be the set of smooth functions on $\bbbr^n$ with compact supports and if a function $f$ is in $C_c^{\infty}(\bbbr^n)$, we use $spt.f$ to denote the support set of this function. The notation ``$X\lesssim Y$'' means $X$ is dominated by a constant multiple of $Y$ and the constant is determined by some fixed parameters, and when we want to emphasize the constant is $1$ we still use the usual notation ``$X\leq Y$''. If $X\lesssim Y$ and $Y\lesssim X$, then we consider $X$ and $Y$ are equivalent and write $X\sim Y$. For a sufficiently smooth function $f$ and a multi-index $\alpha=(\alpha_1,\cdots,\alpha_n)$, we denote the derivative by
$$\partial^{\alpha}f(x)=\frac{\partial^{|\alpha|}f(x)}
{\partial x_1^{\alpha_1} \partial x_2^{\alpha_2}\cdots\partial x_n^{\alpha_n}}.$$
We also recall the definition of the famous Hardy-Littlewood maximal function.
\begin{definition}\label{definition4}
If a function $f$ is locally integrable on $\bbbr^n$, then
$$\HLmax_n(f)(x):=\esssup_{\delta>0} \mvint_{B(x,\delta)}|f(y)|dy$$
is the $n$-dimensional Hardy-Littlewood maximal function of $f$ at $x$, and $B(x,\delta)$ is the $n$-dimensional ball centered at $x\in\bbbr^n$ and has radius $\delta$.
\end{definition}
We define the Peetre-Fefferman-Stein maximal function for a function, the Fourier transform of which has compact supports in $\bbbr^n$.
\begin{definition}\label{definition2}
If $f$ is a function defined on $\bbbr^n$ whose distributional Fourier transform is compactly supported in the ball $B(0,t)\subseteq\bbbr^n$ centered at origin with radius $t>0$, then the associated $n$-dimensional Peetre-Fefferman-Stein maximal function of $f$ at $x$ is given by
$$\PFSmax_n f(x)=\esssup_{z\in\bbbr^n}\frac{|f(x-z)|}{(1+t|z|)^{\frac{n}{r}}}$$
where in most cases of this paper we pick $r$ to be a positive number satisfying either $0<r<\min\{p,q\}$ or $0<r<p$.
\end{definition}
In general, the $n$-dimensional Peetre-Fefferman-Stein maximal function can be defined as
$$\PFSmax_n f(x)=\esssup_{z\in\bbbr^n}\frac{|f(x-z)|}{(1+t|z|)^{a}}$$
for any positive real number $a$ but for the convenience of notations in this paper, we choose $a=\frac{n}{r}$ for the specified $r$.
\begin{remark}
For the Fourier transform, Hardy-Littlewood maximal function and Peetre-Fefferman-Stein maximal function, when we want to apply these operations only to some specific coordinates, we use a subscript number different from the dimension of the ambient space $\bbbr^n$. For example if $f(x)\in\Sw(\bbbr^n)$ for $n>1$, let $x=(x_1,x_2,\cdots,x_n)\in\bbbr^n$ and we denote $x_1'=(x_2,\cdots,x_n)$ then $x=(x_1,x_1')$ and $f(x)=f(x_1,x_1')$. If the $1$-dimensional Fourier transform is done with respect to $x_1$ then we use the notation
$$\FT_1 f(\cdot,x_1')(y_1):=\int_{\bbbr}f(x_1,x_1')e^{-2\pi ix_1 y_1}dx_1,$$
and if the $(n-1)$-dimensional Fourier transform is done with respect to $x_1'$ then we use the notation
$$\FT_{n-1}f(x_1,\cdot)(y_1'):=\int_{\bbbr^{n-1}}f(x_1,x_1')e^{-2\pi ix_1'\cdot y_1'}dx_1',$$
where $y_1'\in\bbbr^{n-1}$ and $x_1'\cdot y_1'$ is the inner product in $\bbbr^{n-1}$, $dx_1'=dx_2\cdots dx_n$. Similar notations are used for the inverse Fourier transforms. If we fix $x_1'\in\bbbr^{n-1}$ then the $1$-dimensional Hardy-Littlewood maximal function of $f$, with respect to the first coordinate, centered at $u\in\bbbr$ is given by
$$\HLmax_1(f(\cdot,x_1'))(u):=\esssup_{\delta>0}\mvint_{-\delta<t<\delta}|f(u+t,x_1')|dt,$$
and if furthermore the $1$-dimensional Fourier transform $\FT_1 f(\cdot,x_1')(u)$ is supported in the interval $\{u\in\bbbr:|u|<t\}$, $t\in\bbbr$, then we can also define the associated $1$-dimensional Peetre-Fefferman-Stein maximal function of $f$ at $u$, with respect to the first coordinate, as follows
$$\PFSmax_1 f(\cdot,x_1')(u)=\esssup_{z\in\bbbr}\frac{|f(u-z,x_1')|}{(1+t|z|)^{\frac{1}{r}}}.$$
\end{remark}

\begin{remark}\label{remark2}
Directly from Definition \ref{definition2} and the use of some basic inequalities we can obtain useful inequalities for the Peetre-Fefferman-Stein maximal function. If $x,y\in\bbbr^n$ then
\begin{align*}
\PFSmax_n f(x-y)
&=\esssup_{z\in\bbbr^n}\frac{|f(x-y-z)|}{(1+t|z+y|)^{n/r}}\cdot\frac{(1+t|y+z|)^{n/r}}{(1+t|z|)^{n/r}}\\
&\lesssim\esssup_{z\in\bbbr^n}\frac{|f(x-y-z)|}{(1+t|z+y|)^{n/r}}\cdot\frac{(1+t|y|)^{n/r}\cdot(1+t|z|)^{n/r}}{(1+t|z|)^{n/r}}\\
&=\PFSmax_n f(x)\cdot(1+t|y|)^{n/r}.
\end{align*}
And also, using the above inequality gives that
$$\PFSmax_n f(x)=\PFSmax_n f(x-y+y)\lesssim \PFSmax_n f(x-y)\cdot(1+t|y|)^{n/r}.$$
Therefore we reach the conclusion that for $x,y\in\bbbr^n$
\begin{equation}\label{eq70}
\PFSmax_n f(x)\cdot(1+t|y|)^{-n/r}\lesssim \PFSmax_n f(x-y)\lesssim \PFSmax_n f(x)\cdot(1+t|y|)^{n/r},
\end{equation}
where the constants involved are independent of $t$ and $y$. We also infer from (\ref{eq70}) that if $\PFSmax_n f(x)$ vanishes at some point $x\in\bbbr^n$ then the Peetre-Fefferman-Stein maximal function $\PFSmax_n f$ vanishes on all of $\bbbr^n$ and hence $f$ vanishes on all of $\bbbr^n$. So for a nonzero function $f$, the associated Peetre-Fefferman-Stein maximal function $\PFSmax_n f$ is positive everywhere. And it would be obvious that if $\PFSmax_n f(x)=\infty$ for some $x\in\bbbr^n$ then $\PFSmax_n f=\infty$ on all of $\bbbr^n$.
\end{remark}
\begin{remark}\label{remark3}
If $f$ is a function defined on $\bbbr^n$ and its distributional Fourier transform satisfies $spt.\FT_n f\subseteq B(0,t)$ for $t>0$ and $\varphi$ is a Schwartz function whose Fourier transform is compactly supported in $B(0,1)\subseteq\bbbr^n$ and $\varphi_{1/t}(x)=t^n\varphi(tx)$ then the distributional Fourier transform of the convolution $\varphi_{1/t}*f$ is also compactly supported in $B(0,t)\subseteq\bbbr^n$, and we have
\begin{align*}
\PFSmax_n (\varphi_{1/t}*f)(x)
&=\esssup_{z\in\bbbr^n}\frac{|\varphi_{1/t}*f(x-z)|}{(1+t|z|)^{n/r}}\\
&\lesssim\esssup_{z\in\bbbr^n}\int_{\bbbr^n}\frac{|\varphi_{1/t}(y)f(x-z-y)|\cdot(1+t|z+y|)^{n/r}}
         {(1+t|z|)^{n/r}(1+t|z+y|)^{n/r}}dy\\
&\lesssim\esssup_{z\in\bbbr^n}\int_{\bbbr^n}\frac{|\varphi_{1/t}(y)f(x-z-y)|\cdot(1+t|y|)^{n/r}}
         {(1+t|z+y|)^{n/r}}dy\\
&\lesssim\PFSmax_n f(x)\cdot\int_{\bbbr^n}|\varphi_{1/t}(y)|\cdot(1+t|y|)^{n/r}dy,
\end{align*}
that is,
\begin{equation}\label{eq71}
\PFSmax_n (\varphi_{1/t}*f)(x)\lesssim \PFSmax_n f(x)
\end{equation}
for $t>0, x\in\bbbr^n$ and the constant is independent of $t$.
\end{remark}
For the reader's convenience we also would like to cite some useful results from the well-known literature \cite{14classical, 14modern} below.
\begin{lemma}[cf. Lemma 2.2.3 of \cite{14modern}]\label{lemma4}
Let $0<r<\infty$. Then there exist constants $C_1$ and $C_2$ such that for all $t>0$ and for all $\mathscr{C}^1$ functions $u$ on $\bbbr^n$ whose distributional Fourier transform is supported in the ball $|\xi|\leq t$ we have
\begin{align}
\esssup_{z\in\bbbr^n}\frac{1}{t}\frac{|\nabla u(x-z)|}{(1+t|z|)^{\frac{n}{r}}}
&\leq C_1 \esssup_{z\in\bbbr^n}\frac{|u(x-z)|}{(1+t|z|)^{\frac{n}{r}}},\label{eq72}\\
\esssup_{z\in\bbbr^n}\frac{|u(x-z)|}{(1+t|z|)^{\frac{n}{r}}}
&\leq C_2 \HLmax_n(|u|^r)(x)^{\frac{1}{r}},\label{eq73}
\end{align}
where $\HLmax_n$ denotes the Hardy-Littlewood maximal operator. The constants $C_1$ and $C_2$ depend only on the dimension $n$ and $r$; in particular, they are independent of $t$.
\end{lemma}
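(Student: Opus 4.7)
The plan is to establish the two inequalities in sequence, with the first used crucially in proving the second. For inequality (\ref{eq72}), I would pick an auxiliary Schwartz function $\varphi$ with $\FT_n\varphi \equiv 1$ on the ball $\{|\xi|\le 1\}$ and $\supp \FT_n\varphi \subseteq \{|\xi|\le 2\}$, so that the hypothesis $\supp\FT_n u \subseteq \{|\xi|\le t\}$ yields the reproducing identity $u = u * \varphi_{1/t}$, where $\varphi_{1/t}(x) = t^n\varphi(tx)$. Differentiating gives $\nabla u = u * \nabla \varphi_{1/t}$, hence pointwise $|\nabla u(x-z)| \le \int |u(x-z-y)|\,|\nabla\varphi_{1/t}(y)|\,dy$. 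The elementary bound $(1+t|z+y|)^{n/r} \le (1+t|z|)^{n/r}(1+t|y|)^{n/r}$ lets me factor out the Peetre supremum $\sup_w |u(x-w)|/(1+t|w|)^{n/r}$, leaving the integral $\int (1+t|y|)^{n/r}|\nabla\varphi_{1/t}(y)|\,dy$. Rescaling $w = ty$ converts this to $t\int (1+|w|)^{n/r}|\nabla\varphi(w)|\,dw$, a finite constant times $t$. Dividing by $t$ and taking the supremum in $z$ yields (\ref{eq72}).

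For inequality (\ref{eq73}), the crucial consequence of (\ref{eq72}) is that $u$ varies slowly on balls of radius $\delta/t$ for small $\delta$. Applying (\ref{eq72}) pointwise at $x-z-\xi$ and then using the shift estimate (\ref{eq70}) gives $|\nabla u(x-z-\xi)| \lesssim t(1+t|z|)^{n/r}\,\PFSmax_n u(x)$ whenever $|\xi|\le \delta/t$ with $\delta \le 1$, so integrating along the segment from $x-z$ to $x-z-\xi$ produces $|u(x-z) - u(x-z-\xi)| \le C_0 \delta (1+t|z|)^{n/r}\PFSmax_n u(x)$. I would then pick $z_0$ realizing at least half the supremum, i.e. $|u(x-z_0)|/(1+t|z_0|)^{n/r} \ge \tfrac{1}{2} \PFSmax_n u(x)$, and choose $\delta$ small enough that $C_0\delta < 1/4$, which forces $|u(y)| \ge \tfrac{1}{4}(1+t|z_0|)^{n/r}\PFSmax_n u(x)$ throughout the ball $B(x-z_0,\delta/t)$.

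The final step is a volume comparison: since $B(x-z_0,\delta/t) \subseteq B(x,|z_0|+\delta/t)$,
\[
\HLmax_n(|u|^r)(x) \gtrsim \frac{|B(x-z_0,\delta/t)|}{|B(x,|z_0|+\delta/t)|}\cdot 4^{-r}(1+t|z_0|)^{n}\PFSmax_n u(x)^{r} \gtrsim \delta^{n}\PFSmax_n u(x)^{r},
\]
using $(1+t|z_0|)/(\delta+t|z_0|)\ge 1$ for $\delta\le 1$. Taking $r$-th roots produces (\ref{eq73}). The main technical subtlety is ensuring $\PFSmax_n u(x) < \infty$ so that ``select $z_0$ near the supremum'' is legitimate; the standard remedy is to prove the inequality first for the truncated supremum over $|z|\le N$ with constants independent of $N$, then let $N\to\infty$. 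All constants depend only on $n$, $r$, and the fixed $\varphi$, as required.
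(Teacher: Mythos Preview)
The paper does not supply its own proof of this lemma; it is quoted verbatim from Grafakos's \emph{Modern Fourier Analysis} (Lemma~2.2.3 there) and the reader is referred to that source. Your argument is correct and is essentially the standard proof found in that reference: the reproducing identity $u=u*\varphi_{1/t}$ for (\ref{eq72}), and for (\ref{eq73}) the slow-variation estimate coming from (\ref{eq72}), the choice of a near-maximizer $z_0$, and the volume comparison inside the Hardy--Littlewood maximal function. Your handling of the finiteness issue via truncation to $|z|\le N$ is also the standard device. There is nothing to correct.
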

\begin{remark}\label{remark12}
The above Lemma \ref{lemma4} is significant in the sense that it provides a pointwise estimate by the famous Hardy-Littlewood maximal function to a function $u$ whose distributional Fourier transform has compact support in the ball $B(0,t)$ of center $0$ and radius $t$ and we have the following
$$|u(x)|\lesssim\esssup_{z\in\bbbr^n}\frac{|u(x-z)|}{(1+t|z|)^{\frac{n}{r}}}=\PFSmax_n u(x)
\lesssim \HLmax_n(|u|^r)(x)^{\frac{1}{r}}$$
where $r$ can be a positive finite number chosen to satisfy particular needs.
\end{remark}
\begin{remark}\label{remark4}
Assuming sufficient smoothness of the function $u$ as a tempered distribution in $\Sw'(\bbbr^n)$ and iterating (\ref{eq72}) of Lemma \ref{lemma4} repeatedly, since for any multi-index $\alpha=(\alpha_1,\cdots,\alpha_n)$, the distributional Fourier transform of $\partial^{\alpha}u$ is also supported in $B(0,t)\subseteq\bbbr^n$, we obtain
\begin{equation}\label{eq74}
\esssup_{z\in\bbbr^n}\frac{1}{t^{|\alpha|}}\frac{|\partial^{\alpha} u(x-z)|}{(1+t|z|)^{\frac{n}{r}}}
\lesssim\esssup_{z\in\bbbr^n}\frac{|u(x-z)|}{(1+t|z|)^{\frac{n}{r}}}
\end{equation}
and the constant is independent of $t$.
\end{remark}
\begin{lemma}[cf. Corollary 2.2.4 of \cite{14modern}]\label{lemma5}
Let $0<p\leq\infty$ and $\alpha$ a multi-index. Then there are constants $C=C(\alpha,n,p)$ and $C'=C(\alpha,n,p)$ such that for all Schwartz functions $u$ on $\bbbr^n$ whose Fourier transform is supported in the ball $B(0,t)$, for some $t>0$, we have
\begin{equation}\label{eq75}
\|\partial^{\alpha}u\|_{L^p(\bbbr^n)}\leq C t^{|\alpha|}\|u\|_{L^p(\bbbr^n)}
\end{equation}
and
\begin{equation}\label{eq76}
\|\partial^{\alpha}u\|_{L^{\infty}(\bbbr^n)}\leq C' t^{|\alpha|+\frac{n}{p}}\|u\|_{L^p(\bbbr^n)}.
\end{equation}
\end{lemma}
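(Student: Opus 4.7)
The plan is to deduce both inequalities from the pointwise Peetre-Fefferman-Stein control in Remarks \ref{remark4} and \ref{remark12}, supplemented by a scaling reduction and, for the endpoint $L^\infty$ bound when $p<1$, a Bernstein-Lipschitz argument.

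For (\ref{eq75}), I would first observe that $\FT_n\partial^\alpha u$ shares the support of $\FT_n u$, so Remark \ref{remark4} yields $|\partial^\alpha u(x)|\leq Ct^{|\alpha|}\PFSmax_n u(x)$ and then (\ref{eq73}) of Lemma \ref{lemma4} gives the pointwise bound $|\partial^\alpha u(x)|\leq C't^{|\alpha|}\HLmax_n(|u|^r)(x)^{1/r}$ valid for any $0<r<\infty$. The case $p=\infty$ is immediate from $\|\HLmax_n g\|_{L^\infty}\leq\|g\|_{L^\infty}$; for $0<p<\infty$ I would pick $0<r<p$, take $L^p$-norms, and invoke the $L^{p/r}$-boundedness of the Hardy-Littlewood maximal operator (permissible since $p/r>1$) to conclude that $\|\partial^\alpha u\|_{L^p}\lesssim t^{|\alpha|}\|u\|_{L^p}$.

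For (\ref{eq76}), I would first reduce to the case $|\alpha|=0$. Since $\FT_n\partial^\alpha u$ is also supported in $B(0,t)$, proving the endpoint bound $\|v\|_{L^\infty}\leq Ct^{n/p}\|v\|_{L^p}$ for every Schwartz $v$ with $\FT_n v\subseteq B(0,t)$ and applying it to $v=\partial^\alpha u$, combined with (\ref{eq75}), yields the full statement. To establish this endpoint bound I would rescale, setting $v(x)=u(x/t)$, so that $\FT_n v\subseteq B(0,1)$, $\|v\|_{L^p}=t^{n/p}\|u\|_{L^p}$, and $\|v\|_{L^\infty}=\|u\|_{L^\infty}$, reducing the matter to the case $t=1$. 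For $1\leq p\leq\infty$ the reproducing identity $v=v*\varphi$ with $\varphi\in\Sw(\bbbr^n)$, $\FT_n\varphi\equiv 1$ on $B(0,1)$, together with H\"older's inequality, yields $\|v\|_{L^\infty}\leq\|\varphi\|_{L^{p'}}\|v\|_{L^p}$.

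The main obstacle is the range $0<p<1$, where H\"older is unavailable. The plan here is to apply (\ref{eq72}) of Lemma \ref{lemma4} with $t=1$ and $|\alpha|=1$ to obtain $|\nabla v(y)|\leq C_1\|v\|_{L^\infty}$ for all $y$, so that $v$ is Lipschitz with constant $C_1\|v\|_{L^\infty}$. Since $u$ is Schwartz, $M:=\|v\|_{L^\infty}$ is finite; given $\epsilon>0$ I would pick $x_0$ with $|v(x_0)|>M-\epsilon$, and the Lipschitz bound forces $|v(y)|\geq M-\epsilon-C_1M|y-x_0|$. Choosing $\epsilon=M/4$ shows $|v(y)|\geq M/4$ throughout the ball $B(x_0,1/(2C_1))$, hence integrating $|v|^p$ over this ball gives $M^p\lesssim\|v\|_{L^p}^p$, i.e., $\|v\|_{L^\infty}\leq C\|v\|_{L^p}$. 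Unscaling then delivers the $|\alpha|=0$ case of (\ref{eq76}), and the general case follows by the reduction above.
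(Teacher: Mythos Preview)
Your argument is correct. Note, however, that the paper does not supply its own proof of Lemma~\ref{lemma5}: it is cited from \cite{14modern}. The closest the paper comes is the proof of the more general Lemma~\ref{lemma7}, whose method specializes to Schwartz functions and covers both inequalities here.

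Comparing your route to that of Lemma~\ref{lemma7}: your treatment of (\ref{eq75}) via Remark~\ref{remark4} and (\ref{eq73}) is exactly the paper's argument (see (\ref{eq522})--(\ref{eq523})). For (\ref{eq76}) in the range $1\le p\le\infty$ you and the paper both use the reproducing identity $u=\varphi_{1/t}*u$ together with H\"older (cf.~(\ref{eq519})). The genuine difference is the case $0<p<1$ of the $L^\infty$ bound. The paper (in (\ref{eq520})) uses the self-improving splitting
\[
\|u\|_{L^\infty}\le t^n\|\varphi\|_{L^\infty}\int_{\bbbr^n}|u|^p\,|u|^{1-p}\le t^n\|\varphi\|_{L^\infty}\,\|u\|_{L^\infty}^{1-p}\|u\|_{L^p}^p,
\]
and then divides through by $\|u\|_{L^\infty}^{1-p}$ (legitimate since $u$ is Schwartz). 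Your Bernstein--Lipschitz argument---deriving $|\nabla v|\le C_1\|v\|_{L^\infty}$ from (\ref{eq72}) and locating a ball on which $|v|\ge M/4$---is equally valid and perhaps more geometric; the paper's bootstrap is shorter and avoids the scaling reduction. Both approaches rely in an essential way on the a~priori finiteness of $\|u\|_{L^\infty}$, which is guaranteed here by the Schwartz hypothesis.
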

\begin{remark}\label{remark5}
Let $p,u,t,\alpha$ be given as in Lemma \ref{lemma5} and $q\in\bbbr$ satisfies $0<p\leq q\leq\infty$ then a simple interpolation with (\ref{eq75}) and (\ref{eq76}) reveals that
\begin{equation}\label{eq77}
\|\partial^{\alpha}u\|_{L^q(\bbbr^n)}\lesssim t^{|\alpha|+n(\frac{1}{p}-\frac{1}{q})}\|u\|_{L^p(\bbbr^n)},
\end{equation}
where $u$ is a Schwartz function on $\bbbr^n$ and $\FT_n u$ is supported in the ball $B(0,t)$. This inequality is also known as the Plancherel-Polya-Nikol'skij inequality and the constant on the right side of (\ref{eq77}) only depends on $\alpha,n,p,q$. For a more general introduction to the Plancherel-Polya-Nikol'skij inequality, we would like to refer the interested reader to section 1.3 of \cite{1983functionspaces}.
\end{remark}
The Plancherel-Polya-Nikol'skij inequality can be generalized to the class of sufficiently smooth functions that are also tempered distributions.
\begin{lemma}\label{lemma7}
Suppose $u(x)$ defined on $\bbbr^n$ is a sufficiently smooth function as a tempered distribution in $\Sw'(\bbbr^n)$, and its $n$-dimensional distributional Fourier transform is supported in the ball $\{\xi\in\bbbr^n:|\xi|\leq t\}$ for some $t>0$. Assume $\alpha=(\alpha_1,\cdots,\alpha_n)$ is a multi-index and $0<p\leq q\leq\infty$, then
\begin{equation}\label{eq517}
\|\partial^{\alpha}u\|_{L^q(\bbbr^n)}\lesssim t^{|\alpha|+n(\frac{1}{p}-\frac{1}{q})}\|u\|_{L^p(\bbbr^n)},
\end{equation}
and the constant on the right side of (\ref{eq517}) only depends on $\alpha,n,p,q$.
\end{lemma}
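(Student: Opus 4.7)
My plan is to extend Lemma \ref{lemma5} (together with the interpolated Plancherel-Polya-Nikol'skij bound from Remark \ref{remark5}) from Schwartz functions to the class of sufficiently smooth tempered distributions specified in the statement. The strategy reduces to the base case $t=1$ by a dilation, exploits the reproducing formula $u=\eta*u$, and then splits according to whether $p\ge 1$ (Young's inequality) or $p<1$ (a Schwartz approximation combined with the Peetre-Fefferman-Stein machinery).

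For the reduction, set $\tilde u(y):=u(y/t)$. Then $\FT_n\tilde u$ is supported in $\{|\xi|\le 1\}$, $\|\tilde u\|_{L^r(\bbbr^n)}=t^{n/r}\|u\|_{L^r(\bbbr^n)}$ for every $r$, and $\partial^\alpha\tilde u(y)=t^{-|\alpha|}(\partial^\alpha u)(y/t)$, so an estimate of the form $\|\partial^\alpha\tilde u\|_{L^q(\bbbr^n)}\lesssim\|\tilde u\|_{L^p(\bbbr^n)}$ rescales into the claimed bound. Henceforth I assume $t=1$ and fix $\eta\in\Sw(\bbbr^n)$ with $\FT_n\eta\equiv 1$ on $\{|\xi|\le 1\}$ and $\FT_n\eta$ supported in $\{|\xi|\le 2\}$. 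Because $\FT_n u$ is supported in $\{|\xi|\le 1\}$ the identity $u=\eta*u$ holds as tempered distributions; both sides are continuous functions, so the identity is pointwise, and consequently $\partial^\alpha u=(\partial^\alpha\eta)*u$ pointwise as well.

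For $1\le p\le q\le\infty$, Young's convolution inequality applied to the reproducing formula with exponent $1/r=1+1/q-1/p\in[0,1]$ gives
\[
\|\partial^\alpha u\|_{L^q(\bbbr^n)}\le\|\partial^\alpha\eta\|_{L^r(\bbbr^n)}\|u\|_{L^p(\bbbr^n)},
\]
finishing this range. For $0<p<1$ Young is unavailable, so I would approximate $u$ by Schwartz functions with comparable compact Fourier support. Let $\rho\in C_c^\infty(\bbbr^n)$ with $\rho(0)=1$ and set $u_N:=\eta*(\rho(\cdot/N)u)$; each $u_N\in\Sw(\bbbr^n)$ has $\FT_n u_N$ supported in $\{|\xi|\le 2\}$, so the Schwartz-case Plancherel-Polya-Nikol'skij inequality (Lemma \ref{lemma5} and Remark \ref{remark5}) applied to $u_N$ yields $\|\partial^\alpha u_N\|_{L^q(\bbbr^n)}\lesssim\|u_N\|_{L^p(\bbbr^n)}$ with constant independent of $N$. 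The uniform bound $\|u_N\|_{L^p(\bbbr^n)}\lesssim\|u\|_{L^p(\bbbr^n)}$ would follow by combining the pointwise estimate $|u_N(x)|\le\int|\eta(x-y)||u(y)|\,dy$ (using $|\rho(y/N)u(y)|\le|u(y)|$), the translation estimate \eqref{eq70} for the Peetre-Fefferman-Stein maximal function of the bandlimited $u$ to get $|u(y)|\le\PFSmax_n u(y)\lesssim\PFSmax_n u(x)(1+|x-y|)^{n/r}$, the Schwartz decay of $\eta$ which makes $\int|\eta(x-y)|(1+|x-y|)^{n/r}\,dy$ finite, and finally \eqref{eq73} of Lemma \ref{lemma4} together with the boundedness of $\HLmax_n$ on $L^{p/r}(\bbbr^n)$ for any $r<p$. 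The same integrable majorant justifies dominated convergence for $\partial^\alpha u_N(x)\to\partial^\alpha u(x)$ pointwise, and Fatou's lemma (or the pointwise a.e. analog when $q=\infty$) concludes
\[
\|\partial^\alpha u\|_{L^q(\bbbr^n)}\le\liminf_{N\to\infty}\|\partial^\alpha u_N\|_{L^q(\bbbr^n)}\lesssim\|u\|_{L^p(\bbbr^n)}.
\]

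The principal obstacle is establishing the uniform bound $\|u_N\|_{L^p(\bbbr^n)}\lesssim\|u\|_{L^p(\bbbr^n)}$ in the range $0<p<1$: naive application of the Peetre-Hardy-Littlewood machinery to $u_N$ itself produces only the tautology $\|u_N\|_{L^p}\lesssim\|u_N\|_{L^p}$. The resolution is to route the estimate through the original function $u$ via the shift property \eqref{eq70} applied to $\PFSmax_n u$ rather than $\PFSmax_n u_N$, which is available precisely because $\FT_n u$ is already compactly supported by hypothesis.
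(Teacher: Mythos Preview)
Your proof is correct but follows a genuinely different route from the paper's. The paper never approximates by Schwartz functions; instead, after establishing $u=\varphi_{1/t}*u$, it proves the $L^\infty$ bound directly: for $1\le p\le\infty$ via H\"older, and for $0<p<1$ via the bootstrap
\[
\|u\|_{L^\infty(\bbbr^n)}\le t^n\|\varphi\|_{L^\infty(\bbbr^n)}\int_{\bbbr^n}|u|^p\,|u|^{1-p}\le t^n\|\varphi\|_{L^\infty(\bbbr^n)}\|u\|_{L^\infty(\bbbr^n)}^{1-p}\|u\|_{L^p(\bbbr^n)}^p,
\]
from which $\|u\|_{L^\infty(\bbbr^n)}\lesssim t^{n/p}\|u\|_{L^p(\bbbr^n)}$; the general $L^p\to L^q$ embedding then follows by writing $|u|^q=|u|^{q-p}|u|^p$. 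Only afterwards does the paper bring in the Peetre--Fefferman--Stein machinery, and solely to pass from $u$ to $\partial^\alpha u$ via $|\partial^\alpha u|\lesssim t^{|\alpha|}\HLmax_n(|u|^r)^{1/r}$ together with the already-established $\|u\|_{L^q}\lesssim\|u\|_{L^p}$.

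Your argument, by contrast, handles derivatives and the $L^p\to L^q$ gain simultaneously through the Schwartz-case Plancherel--Polya--Nikol'skij inequality, paying for this with the approximation $u_N=\eta*(\rho(\cdot/N)u)$ and a Fatou step. The Peetre machinery enters in a different place: you use \eqref{eq70} and \eqref{eq73} on $u$ itself to obtain the uniform bound $\|u_N\|_{L^p}\lesssim\|u\|_{L^p}$. Both approaches are valid; the paper's is more self-contained (no limiting argument), while yours is more modular (reducing to an already-cited lemma) and the dilation to $t=1$ is a clean normalization the paper does not exploit.
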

\begin{proof}
Let $\varphi\in\Sw(\bbbr^n)$ satisfy the following conditions
$$0\leq\FT_n\varphi\leq 1,\text{ }spt.\FT_n\varphi\subseteq\{\xi\in\bbbr^n:|\xi|\leq 2\}\text{ and }\FT_n\varphi=1
\text{ if }|\xi|\leq 1.$$
Then $\FT_n\varphi_{\frac{1}{t}}(\xi)=\FT_n\varphi(\frac{\xi}{t})$ is supported in $\{\xi\in\bbbr^n:|\xi|\leq 2t\}$ and equals $1$ if $|\xi|\leq t$. Let $g$ be an arbitrary element in $\Sw(\bbbr^n)$, then
$$<u,g>=<\FT_n u,\iFT_n g>=<\FT_n u,\FT_n\varphi_{\frac{1}{t}}\cdot\iFT_n g>+
<\FT_n u,(1-\FT_n\varphi_{\frac{1}{t}})\cdot\iFT_n g>.$$
The Schwartz function $(1-\FT_n\varphi_{\frac{1}{t}})\cdot\iFT_n g$ is supported in $\{\xi\in\bbbr^n:|\xi|>t\}$ and the distributional Fourier transform $\FT_n u$ is supported in $\{\xi\in\bbbr^n:|\xi|\leq t\}$, then
$$<\FT_n u,(1-\FT_n\varphi_{\frac{1}{t}})\cdot\iFT_n g>=0.$$
Hence we obtain the following
\begin{equation}\label{eq518}
<u,g>=<\FT_n u,\iFT_n(\tilde{\varphi}_{\frac{1}{t}}*g)>=<\varphi_{\frac{1}{t}}*u,g>,
\end{equation}
where $\tilde{\varphi}(x)=\varphi(-x)$. Equation (\ref{eq518}) shows that $u(x)=\varphi_{\frac{1}{t}}*u(x)$ for almost every $x\in\bbbr^n$. If $1\leq p\leq\infty$, using H\"{o}lder's inequality, we have
\begin{equation}\label{eq519}
\|u\|_{L^{\infty}(\bbbr^n)}\leq\esssup_{x\in\bbbr^n}\int_{\bbbr^n}|u(y)|\cdot t^n|\varphi(tx-ty)|dy\leq
t^{\frac{n}{p}}\|\varphi\|_{L^{p'}(\bbbr^n)}\cdot\|u\|_{L^p(\bbbr^n)},
\end{equation}
where $p'$ is the H\"{o}lder's conjugate of $p$. If $0<p<1$, then we have
\begin{align}\label{eq520}
\|u\|_{L^{\infty}(\bbbr^n)}
&\leq t^n\|\varphi\|_{L^{\infty}(\bbbr^n)}\int_{\bbbr^n}|u(y)|^p\cdot|u(y)|^{1-p}dy \nonumber\\
&\leq t^n\|\varphi\|_{L^{\infty}(\bbbr^n)}\cdot\|u\|_{L^{\infty}(\bbbr^n)}^{1-p}\cdot\|u\|_{L^p(\bbbr^n)}^p.
\end{align}
If $0<p\leq q\leq\infty$, we then use (\ref{eq519}) and (\ref{eq520}) to get
\begin{align}\label{eq521}
\int_{\bbbr^n}|u(x)|^q dx
&=\int_{\bbbr^n}|u(x)|^{q-p}\cdot|u(x)|^p dx \nonumber\\
&\lesssim t^{\frac{n}{p}(q-p)}\|u\|_{L^p(\bbbr^n)}^{q-p}\cdot\|u\|_{L^p(\bbbr^n)}^{p}
=t^{\frac{n}{p}(q-p)}\|u\|_{L^p(\bbbr^n)}^{q}.
\end{align}
Thus we can obtain (\ref{eq517}) if $|\alpha|=0$. When $u$ is sufficiently smooth and $|\alpha|>0$, we use Remark \ref{remark4} and Lemma \ref{lemma4} to obtain
\begin{equation}\label{eq522}
|\partial^{\alpha}u(x)|\lesssim\PFSmax_n(\partial^{\alpha}u)(x)\lesssim t^{|\alpha|}\PFSmax_n(u)(x)
\lesssim t^{|\alpha|}\HLmax_n(|u|^r)(x)^{\frac{1}{r}},
\end{equation}
where the constants depend on $n$ and $r$. By picking $r<q$ and invoking the mapping property of the Hardy-Littlewood maximal function, we have
\begin{equation}\label{eq523}
\|\partial^{\alpha}u\|_{L^q(\bbbr^n)}\lesssim t^{|\alpha|}\|\HLmax_n(|u|^r)(x)^{\frac{1}{r}}\|_{L^q(\bbbr^n)}\lesssim
t^{|\alpha|}\|u\|_{L^q(\bbbr^n)}\lesssim t^{|\alpha|+n(\frac{1}{p}-\frac{1}{q})}\|u\|_{L^p(\bbbr^n)},
\end{equation}
where constants depend only on $n,r,p,q$ and are independent of $t$.
\end{proof}
An example that shows the advantage of Lemma \ref{lemma7} over Remark \ref{remark5} can be given below. Consider the function $f_j=f*\psi_{2^{-j}}$ where $f\in\Sw'(\bbbr^n)$, $j\in\bbbz$, and $\psi\in\Sw(\bbbr^n)$ satisfies (\ref{eq5}), (\ref{eq37}). Then for each $j\in\bbbz$, the distributional Fourier transform of $f_j$ is supported in $\{\xi\in\bbbr^n:2^{j-1}\leq|\xi|<2^{j+1}\}$ and by Theorem 2.3.20 of \cite{14classical}, $f_j$ is a smooth function which has at most polynomial growth at infinity. Then $f_j$ is a smooth tempered distribution and applying Lemma \ref{lemma7}, we have
$$f_j(x)\lesssim\|f_j\|_{L^{\infty}(\bbbr^n)}\lesssim 2^{jn/p}\|f_j\|_{L^p(\bbbr^n)}\quad\text{for all }0<p<\infty.$$
\begin{lemma}[cf. Theorem 5.6.6 of \cite{14classical}]\label{lemma6}
For $1<p,r<\infty$ the Hardy-Littlewood maximal function $\HLmax$ satisfies the vector-valued inequalities
\begin{align}
\|(\sum_j |\HLmax(f_j)|^r)^{\frac{1}{r}}\|_{L^{1,\infty}}
&\leq C'_n(1+(r-1)^{-1})\|(\sum_j |f_j|^r)^{\frac{1}{r}}\|_{L^1},\label{eq78}\\
\|(\sum_j |\HLmax(f_j)|^r)^{\frac{1}{r}}\|_{L^{p}}
&\leq C_n c(p,r) \|(\sum_j |f_j|^r)^{\frac{1}{r}}\|_{L^p},\label{eq79}
\end{align}
where $c(p,r)=(1+(r-1)^{-1})(p+(p-1)^{-1})$.
\end{lemma}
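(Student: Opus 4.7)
The plan is to prove the Fefferman-Stein vector-valued maximal inequality by reducing it to the scalar Hardy-Littlewood maximal theorem via a trilogy of standard moves: diagonal case, weak-type $(1,1)$ via a Calder\'on-Zygmund decomposition, and Marcinkiewicz interpolation, with a final duality step if needed for the range $p>r$.

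First, I would dispatch the diagonal case $p=r$. In that situation Fubini's theorem reduces the vector-valued statement to the scalar maximal inequality term by term, yielding
\begin{equation*}
\Big\|\big(\sum_j |\HLmax(f_j)|^r\big)^{1/r}\Big\|_{L^p}^p
=\sum_j\|\HLmax(f_j)\|_{L^p}^p
\leq C_p^p\sum_j\|f_j\|_{L^p}^p
=C_p^p\Big\|\big(\sum_j |f_j|^r\big)^{1/r}\Big\|_{L^p}^p,
\end{equation*}
since $L^p(\ell^p)=L^p(\ell^p)$. This provides a strong $(r,r)$ bound for the vector-valued sublinear operator $T\boldg=(\HLmax f_j)_{j}$ from $L^r(\ell^r)$ to $L^r(\ell^r)$.

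Second, I would establish the weak-type $(1,1)$ bound from $L^1(\ell^r)$ to $L^{1,\infty}(\ell^r)$ for $1<r<\infty$, which after Marcinkiewicz interpolation with the diagonal bound above will cover $1<p\leq r$. For this, set $F(x)=(\sum_j|f_j(x)|^r)^{1/r}$ and apply the Calder\'on-Zygmund decomposition to the scalar $F$ at height $\alpha$, producing disjoint cubes $\{Q_k\}$ with $\Omega=\bigcup_k Q_k$ satisfying $|\Omega|\lesssim\alpha^{-1}\|F\|_{L^1}$. For each coordinate $j$, split $f_j=g_j+b_j$ where $g_j$ equals $f_j$ off $\Omega$ and equals its average on each $Q_k$. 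Using the $L^r(\ell^r)\to L^r(\ell^r)$ bound of Step~1 on the good part and Chebyshev's inequality produces the required measure bound for $\{(\sum_j|\HLmax g_j|^r)^{1/r}>\alpha/2\}$. For the bad part, use the standard trick that $\HLmax b_j(x)$ is controlled pointwise outside a fixed dilate $\Omega^\ast$ of $\Omega$ by an averaged expression allowing one to pull the $\ell^r$-norm inside, and conclude by the size estimate on $\Omega^\ast$.

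Third, for $r<p<\infty$ I would invoke duality: the dual of $L^p(\ell^r)$ is $L^{p'}(\ell^{r'})$ with $1<p'<r'<\infty$, and the operator $\HLmax$ is essentially self-adjoint up to harmless modifications (pair $\HLmax f_j$ against $g_j$ by a pointwise $\leq\HLmax g_j\cdot|f_j|$ argument after introducing suitable linearizing balls, or instead apply the interpolation argument directly in the indexing $(p',r')$ already covered). Either way this yields the range $r<p<\infty$ once Step~2 gives $1<p\leq r$.

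The main obstacle I anticipate is the bad-part estimate in Step~2: one must exploit the mean zero property of each $b_j$ on each $Q_k$ uniformly in $j$ so that the $\ell^r$-norm of $(\HLmax b_j)_j$ is controlled on $\bbbr^n\setminus\Omega^\ast$ without losing a factor depending on the (possibly infinite) number of indices. The standard workaround is to bound $\HLmax b_j(x)$ by a geometric series of averages over translates of $Q_k$, then invoke Minkowski's inequality in $\ell^r$ (which is where $r>1$ enters crucially) to move the sum inside, reducing everything to a scalar weak-type estimate for $\HLmax F$.
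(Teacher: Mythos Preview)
The paper does not supply a proof of this lemma. It is quoted verbatim from Grafakos, \emph{Classical Fourier Analysis}, Theorem~5.6.6; the preliminaries section states explicitly that such results are cited for the reader's convenience and that ``the proofs of cited results can be found from their respective source.'' So there is no argument in the paper to compare your proposal against.

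For what it is worth, your outline is the standard Fefferman--Stein strategy and agrees with the proof in Grafakos: the diagonal case $p=r$ by Fubini, the weak-type $(1,1)$ endpoint via a Calder\'on--Zygmund decomposition of the scalar function $F=\|(f_j)_j\|_{\ell^r}$, and then Marcinkiewicz interpolation for $1<p\le r$. One caveat on your Step~3: since $\HLmax$ is only sublinear, duality is not directly available; the range $r<p<\infty$ is normally reached via the Fefferman--Stein weighted inequality $\int(\HLmax f)^r\,w\,dx\le C_r\int|f|^r\,\HLmax w\,dx$, pairing against $w\in L^{(p/r)'}$ and using the scalar $L^{(p/r)'}$ boundedness of $\HLmax$. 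Your identification of the bad-part estimate as the crux is accurate, and the use of Minkowski's inequality in $\ell^r$ (which is exactly where $r>1$ enters) is the right device there.
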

\begin{remark}\label{remark6}
Recall that for each $l\in\bbbz$ the distributional Fourier transform of $f_l:=\psi_{2^{-l}}*f$ is supported in the compact annulus $2^{l-1}\leq|\xi|<2^{l+1}$ and
\begin{equation*}
\PFSmax_n f_l(x)=\esssup_{z\in\bbbr^n}\frac{|\psi_{2^{-l}}*f(x-z)|}{(1+2^{l+1}|z|)^{\frac{n}{r}}}.
\end{equation*}
Applying Lemma \ref{lemma4} first and then Lemma \ref{lemma6} and assuming that $s\in\bbbr, 0<p,q<\infty, 0<r<\min\{p,q\}$ we obtain
\begin{align}\label{eq80}
&(\int_{\bbbr^n}(\sum_{l=-\infty}^{\infty}|2^{ls}\PFSmax_n f_l(y)|^q)^{\frac{p}{q}}dy)^{\frac{1}{p}} \nonumber\\
&\lesssim(\int_{\bbbr^n}(\sum_{l=-\infty}^{\infty}|2^{ls}\HLmax_n(|f_l|^r)(y)^{\frac{1}{r}}|^q)^{\frac{p/r}{q/r}}
         dy)^{\frac{1/r}{p/r}} \nonumber\\
&\lesssim(\int_{\bbbr^n}(\sum_{l=-\infty}^{\infty}|2^{ls}f_l(y)|^q)^{\frac{p}{q}}dy)^{\frac{1}{p}}.
\end{align}
And by definition of the Peetre-Fefferman-Stein maximal function, $f_l(y)\lesssim\PFSmax_n f_l(y)$ for every $y\in\bbbr^n$ and $l\in\bbbz$ thus
\begin{equation}
(\int_{\bbbr^n}(\sum_{l=-\infty}^{\infty}|2^{ls}f_l(y)|^q)^{\frac{p}{q}}dy)^{\frac{1}{p}}
\lesssim(\int_{\bbbr^n}(\sum_{l=-\infty}^{\infty}|2^{ls}\PFSmax_n f_l(y)|^q)^{\frac{p}{q}}dy)^{\frac{1}{p}}.\label{eq81}
\end{equation}
Therefore we have reached the conclusion that for $s\in\bbbr, 0<p,q<\infty$, the following is true
\begin{align}\label{eq82}
\Fspqf
&=(\int_{\bbbr^n}(\sum_{l=-\infty}^{\infty}|2^{ls}f_l(y)|^q)^{\frac{p}{q}}dy)^{\frac{1}{p}} \nonumber\\
&\sim(\int_{\bbbr^n}(\sum_{l=-\infty}^{\infty}|2^{ls}\PFSmax_n f_l(y)|^q)^{\frac{p}{q}}dy)^{\frac{1}{p}}.
\end{align}
\end{remark}
Next, we provide useful lemmas in the proofs of our main theorems.
\begin{lemma}\label{lemma1}
Let $x,y\in\bbbr^n$, $L\in\bbbn$, $r>0$ and for each $k\in\bbbz$ and $j\in\bbbz$, if $h\in\bbbr^n$ and $|h|\sim 2^{-k}$ and $f_j=\psi_{2^{-j}}*f$, then we have the following two estimates
\begin{align}
|(\varDelta^L_h f_j)(x-y)|\cdot(1+\frac{|y|}{|h|})^{\frac{-n}{r}}&\lesssim
2^{(j-k)L}(1+2^{j-k})^{\frac{n}{r}} \PFSmax_n f_j(x),\label{eq363}\\
|(\varDelta^L_h f_j)(x-y)|\cdot(1+\frac{|y|}{|h|})^{\frac{-n}{r}}&\lesssim
(1+2^{j-k})^{\frac{n}{r}} \PFSmax_n f_j(x),\label{eq364}
\end{align}
and constants are independent of $y$ and $h$.
\end{lemma}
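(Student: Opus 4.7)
The plan is to prove both (\ref{eq363}) and (\ref{eq364}) by exploiting the fact that $f_j = \psi_{2^{-j}} * f$ has distributional Fourier transform supported in $B(0, 2^{j+1}) \subseteq \bbbr^n$, which places it in the framework of Definition \ref{definition2} at scale $t = 2^{j+1}$ and allows application of Remark \ref{remark2} and Remark \ref{remark4}. Both estimates share a common weight-bookkeeping step: since $|h| \sim 2^{-k}$, any point of the form $w = y - \lambda h$ with $0 \leq \lambda \leq L$ satisfies $|w| \leq |y| + L|h|$, and hence $1 + 2^{j+1}|w| \lesssim 1 + 2^{j+1}|y| + 2^{j-k}$. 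A short case split ($j \leq k$ versus $j > k$), using that $(1 + |y|/|h|) \sim (1 + 2^k|y|)$, then gives
\begin{equation*}
\frac{(1 + 2^{j+1}|w|)^{n/r}}{(1 + |y|/|h|)^{n/r}} \lesssim (1 + 2^{j-k})^{n/r},
\end{equation*}
uniformly in $\lambda \in [0,L]$.

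For (\ref{eq364}), I would substitute the identity (\ref{eq260}) into $(\varDelta^L_h f_j)(x - y)$, expressing it as a finite signed sum of terms $f_j(x - (y - m h))$ with integer $0 \leq m \leq L$. Each such term satisfies
\begin{equation*}
|f_j(x - (y - m h))| \leq \PFSmax_n f_j(x) \cdot (1 + 2^{j+1}|y - m h|)^{n/r}
\end{equation*}
directly from Definition \ref{definition2}. Multiplying by $(1 + |y|/|h|)^{-n/r}$ and applying the weight identity with $w = y - m h$ then yields (\ref{eq364}).

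For (\ref{eq363}), the sharper estimate providing the gain $2^{(j-k)L}$ relevant when $j < k$, the strategy is to replace the discrete expansion by the iterated Newton--Leibniz formula
\begin{equation*}
(\varDelta^L_h f_j)(x - y) = \int_{[0,1]^L} (h \cdot \nabla)^L f_j\bigl(x - (y - (t_1 + \cdots + t_L) h)\bigr)\, dt_1 \cdots dt_L,
\end{equation*}
which is permissible because $f_j$ is smooth as the convolution of a tempered distribution with a Schwartz function. This bounds the left-hand side by $|h|^L$ times a supremum over multi-indices $\alpha$ with $|\alpha| = L$ and over $t \in [0, L]$ of $|\partial^\alpha f_j(x - (y - t h))|$. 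Combining the definition of the Peetre--Fefferman--Stein maximal function at scale $2^{j+1}$ with the Bernstein-type inequality (\ref{eq74}) of Remark \ref{remark4} yields
\begin{equation*}
|\partial^\alpha f_j(x - (y - t h))| \lesssim 2^{jL} \PFSmax_n f_j(x) \cdot (1 + 2^{j+1}|y - t h|)^{n/r}.
\end{equation*}
Multiplying by $|h|^L \sim 2^{-kL}$ produces the factor $2^{(j-k)L}$, and the common weight identity applied with $w = y - t h$ supplies the remaining $(1 + 2^{j-k})^{n/r}$ factor, completing (\ref{eq363}).

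No serious technical obstacle is anticipated. The only step requiring care is the weight identity, which is made routine by the case split on the sign of $j - k$, and the constants that emerge depend only on $L$, $n$, and $r$, matching the claim that they are independent of $y$ and $h$.
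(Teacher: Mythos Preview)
Your proposal is correct and follows essentially the same approach as the paper: for (\ref{eq364}) both you and the paper expand via (\ref{eq260}) and invoke the definition of $\PFSmax_n f_j$ together with the weight inequality $(1+2^{j+1}|y-\lambda h|)^{n/r}\lesssim(1+2^k|y|)^{n/r}(1+2^{j-k})^{n/r}$, while for (\ref{eq363}) both use an iterated mean-value/Newton--Leibniz representation of $\varDelta^L_h f_j$ to produce the factor $|h|^L\sim 2^{-kL}$, then apply Remark~\ref{remark4} to convert $\PFSmax_n(\partial^\alpha f_j)$ into $2^{jL}\PFSmax_n f_j$ and finish with the same weight inequality. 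The only cosmetic difference is that the paper writes the iterated mean value theorem with intermediate points $t_{\alpha,l}\in(0,1)$ rather than your integral over $[0,1]^L$, and factors the weight directly rather than via a case split on the sign of $j-k$.
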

\begin{proof}
To prove (\ref{eq363}), we use the mean value theorem and the iteration formula (\ref{eq259}) consecutively and obtain
\begin{equation}\label{eq365}
|(\varDelta^L_h f_j)(x-y)|\lesssim
\sum_{|\alpha|=L}|\partial^{\alpha}f_j(x-y+\sum^L_{l=1}t_{\alpha,l}h)|\cdot|h|^L,
\end{equation}
where $\alpha$ represents a multi-index and each $t_{\alpha,l}$ is in the interval $(0,1)$. For each multi-index $\alpha$ with $|\alpha|=L$ and since $\FT_n\partial^{\alpha}f_j(\xi)$ is supported in $\{\xi\in\bbbr^n:2^{j-1}\leq|\xi|<2^{j+1}\}$, we use the basic inequality
\begin{equation}\label{eq366}
(1+2^{j+1}|y-\sum^L_{l=1}t_{\alpha,l}h|)^{\frac{n}{r}}
\lesssim(1+2^{j-k}\cdot 2^k|y|+2^j|\sum^L_{l=1}t_{\alpha,l}h|)^{\frac{n}{r}}
\lesssim(1+2^k|y|)^{\frac{n}{r}}\cdot(1+2^{j-k})^{\frac{n}{r}}
\end{equation}
and obtain that
\begin{align}\label{eq367}
&|\partial^{\alpha}f_j(x-y+\sum^L_{l=1}t_{\alpha,l}h)| \nonumber\\
&=\frac{|\partial^{\alpha}f_j(x-y+\sum^L_{l=1}t_{\alpha,l}h)|}{(1+2^{j+1}|y-\sum^L_{l=1}t_{\alpha,l}h|)^{\frac{n}{r}}}
  \cdot(1+2^{j+1}|y-\sum^L_{l=1}t_{\alpha,l}h|)^{\frac{n}{r}} \nonumber\\
&\lesssim(1+2^k|y|)^{\frac{n}{r}}\cdot(1+2^{j-k})^{\frac{n}{r}}\PFSmax_n (\partial^{\alpha}f_j)(x).
\end{align}
We also use Remark \ref{remark4} to get $\PFSmax_n (\partial^{\alpha}f_j)(x)\lesssim 2^{jL} \PFSmax_n f_j(x)$. Since $|h|\sim 2^{-k}$, we also have
$$(1+2^k|y|)^{\frac{n}{r}}\lesssim(1+\frac{|y|}{|h|})^{\frac{n}{r}}.$$
We insert these estimates and (\ref{eq367}) into (\ref{eq365}) and obtain (\ref{eq363}). To prove (\ref{eq364}), we use (\ref{eq260}), (\ref{eq261}) and Remark \ref{remark2} to get
\begin{equation}\label{eq368}
|(\varDelta^L_h f_j)(x-y)|\lesssim\sum_{l=0}^L|f_j(x-y+lh)|
\lesssim\sum_{l=0}^L (1+2^j|y-lh|)^{\frac{n}{r}} \PFSmax_n f_j(x).
\end{equation}
Since $|h|\sim 2^{-k}$, then
\begin{equation}\label{eq369}
(1+2^j|y-lh|)^{\frac{n}{r}}\lesssim(1+2^k|y|)^{\frac{n}{r}}\cdot(1+2^{j-k})^{\frac{n}{r}}
\lesssim(1+\frac{|y|}{|h|})^{\frac{n}{r}}\cdot(1+2^{j-k})^{\frac{n}{r}}.
\end{equation}
Inserting (\ref{eq369}) into (\ref{eq368}) yields (\ref{eq364}).
\end{proof}
\begin{lemma}\label{lemma2}
Let $f\in C_c^{\infty}(\bbbr^n)$, $\tau\in[1,2]$ and $\theta\in\unitsph$ satisfy the condition that for all $\xi\in spt.f$, there are positive real numbers $a$ and $b$, which can be sufficiently small and are independent of $\tau$ and $\theta$, such that $a\leq|\theta\cdot\xi|\leq b$. Assume $L$ is a positive integer. Then for any positive integer $N$, there exists a positive constant $C$ such that
\begin{equation}\label{eq407}
|\FT_n(\frac{f(\xi)}{(e^{2\pi i\tau\theta\cdot\xi}-1)^L})(x)|\leq\frac{C}{(1+|x|)^N}\qquad\text{for all }x\in\bbbr^n.
\end{equation}
Most importantly, the constant $C$ may depend on $f,L,N,a,b$ but is independent of $\tau\in[1,2]$ and $\theta\in\unitsph$.
\end{lemma}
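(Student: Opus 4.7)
The plan is to combine two standard ingredients: the classical rapid-decay bound for Fourier transforms of $C_c^{\infty}$ functions, and a uniform lower bound on the denominator $e^{2\pi i\tau\theta\cdot\xi}-1$ on $spt.f$. Setting $g_{\tau,\theta}(\xi):=f(\xi)/(e^{2\pi i\tau\theta\cdot\xi}-1)^L$, once I verify that $g_{\tau,\theta}\in C_c^{\infty}(\bbbr^n)$, repeated application of the identity $x^{\alpha}\FT_n g(x)=(2\pi i)^{-|\alpha|}\FT_n(\partial^{\alpha}g)(x)$ gives
$$(1+|x|)^N|\FT_n g_{\tau,\theta}(x)|\lesssim\sum_{|\alpha|\leq N}\|\partial^{\alpha}g_{\tau,\theta}\|_{L^1(\bbbr^n)},$$
with constant depending only on $n$ and $N$. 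Thus the entire task reduces to bounding each $\|\partial^{\alpha}g_{\tau,\theta}\|_{L^1(\bbbr^n)}$ by a constant depending only on $f,L,N,a,b$, uniformly in $\tau\in[1,2]$ and $\theta\in\unitsph$.

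First I would exploit the slack in the hypothesis — that $a$ and $b$ may be sufficiently small — to assume $2b\leq\tfrac{1}{2}$. Then for $\xi\in spt.f$, $\tau\in[1,2]$ and $\theta\in\unitsph$ one has $|\tau\theta\cdot\xi|\in[a,2b]\subseteq(0,\tfrac{1}{2}]$, and the identity $|e^{2\pi it}-1|=2|\sin(\pi t)|$ combined with monotonicity of $\sin$ on $[0,\pi/2]$ gives
$$|e^{2\pi i\tau\theta\cdot\xi}-1|\geq 2\sin(\pi a)>0$$
uniformly in $\tau,\theta$ and $\xi\in spt.f$. This uniform lower bound is the analytical heart of the argument and in particular confirms $g_{\tau,\theta}\in C_c^{\infty}(\bbbr^n)$.

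Second, with $h_{\tau,\theta}(\xi):=(e^{2\pi i\tau\theta\cdot\xi}-1)^{-1}$, a straightforward induction on $|\alpha|$ (equivalently Fa\`a di Bruno's formula) expresses $\partial^{\alpha}(f\cdot h_{\tau,\theta}^L)$ as a finite linear combination, with coefficients depending only on $L$ and $\alpha$, of terms of the form
$$(\partial^{\beta}f)(\xi)\cdot\Bigl(\prod_{l}\tau\theta_{j_l}\Bigr)\cdot e^{2\pi ik\tau\theta\cdot\xi}\cdot h_{\tau,\theta}(\xi)^m,$$
with $|\beta|\leq|\alpha|$, the integer $k$ bounded in terms of $|\alpha|$, and $L\leq m\leq L+|\alpha|$. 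Each factor $|\tau\theta_{j_l}|\leq 2$, each exponential has modulus $1$, and $|h_{\tau,\theta}(\xi)|^m\leq(2\sin(\pi a))^{-m}$ on $spt.f$ by the first step, so $\|\partial^{\alpha}g_{\tau,\theta}\|_{L^{\infty}(\bbbr^n)}\leq C(f,L,\alpha,a)$. Since $g_{\tau,\theta}$ is supported in the fixed compact set $spt.f$, the same uniform bound holds for the $L^1$ norm, and summing over $|\alpha|\leq N$ completes the proof. The main obstacle is purely organizational — tracking the combinatorics of the Leibniz expansion of $h_{\tau,\theta}^L$ — but there is no real analytic difficulty once the uniform lower bound is in hand.
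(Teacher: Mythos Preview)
Your proposal is correct and follows essentially the same route as the paper: both arguments rest on (i) a uniform lower bound for $|e^{2\pi i\tau\theta\cdot\xi}-1|$ on $spt.f$ coming from $a\leq|\tau\theta\cdot\xi|\leq 2b$, and (ii) integration by parts to convert decay in $x$ into control of derivatives of $g_{\tau,\theta}$, followed by a Leibniz/chain-rule expansion bounding $\partial^\alpha\bigl((e^{2\pi i\tau\theta\cdot\xi}-1)^{-L}\bigr)$ by powers of $h_{\tau,\theta}$. The only cosmetic difference is that the paper splits into $|x|<1$ and $|x|\geq 1$ and integrates by parts $N$ times in the direction of the largest coordinate of $x$, whereas you package the same step via the identity $x^{\alpha}\FT_n g=(2\pi i)^{-|\alpha|}\FT_n(\partial^{\alpha}g)$ summed over $|\alpha|\leq N$; the content is identical.
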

\begin{proof}
If $|x|<1$, we use Taylor expansion and obtain
\begin{equation}\label{eq408}
(e^{2\pi i\tau\theta\cdot\xi}-1)^L=(2\pi i\tau\theta\cdot\xi)^L\cdot(1+O(2\pi i\tau\theta\cdot\xi)).
\end{equation}
Since $\tau\in[1,2]$ and if $\xi\in spt.f$, $a$ and $b$ are sufficiently small, then we have
\begin{equation}\label{eq409}
|(e^{2\pi i\tau\theta\cdot\xi}-1)^L|\geq C_1 a^L(1-C_2 b)>0,
\end{equation}
where $C_1$ and $C_2$ are constants independent of $\tau$ and $\theta$. Hence we have
\begin{equation}\label{eq410}
|\FT_n(\frac{f(\xi)}{(e^{2\pi i\tau\theta\cdot\xi}-1)^L})(x)|
\!\leq\!\int_{\bbbr^n}\frac{|f(\xi)|}{|(e^{2\pi i\tau\theta\cdot\xi}-1)^L|}d\xi
\!\lesssim\!\|f\|_{L^1(\bbbr^n)}\!\lesssim\!\frac{C}{(1+|x|)^N}
\end{equation}
if $|x|<1$ and the constant $C$ is independent of $\tau\in[1,2]$ and $\theta\in\unitsph$.

If $|x|\geq 1$, without loss of generality we can assume that $x=(x_1,\cdots,x_n)\in\bbbr^n$ and $|x_1|=\max\{|x_1|,|x_2|,\cdots,|x_n|\}>0$. Using integration by parts with respect to $\xi_1$, the condition that $f\in C_c^{\infty}(\bbbr^n)$ and the basic formula
$$e^{-2\pi ix_1\xi_1}=\frac{\partial}{\partial\xi_1}(\frac{e^{-2\pi ix_1\xi_1}}{-2\pi ix_1}),$$
we can obtain
\begin{equation*}
\FT_n(\frac{f(\xi)}{(e^{2\pi i\tau\theta\cdot\xi}-1)^L})(x)=\frac{1}{2\pi ix_1}
\int_{\bbbr^n}\frac{\partial}{\partial\xi_1}(\frac{f(\xi)}{(e^{2\pi i\tau\theta\cdot\xi}-1)^L})\cdot
e^{-2\pi ix\cdot\xi}d\xi.
\end{equation*}
We can iterate the integration by parts with respect to $\xi_1$ for $N$ times and obtain
\begin{equation}\label{eq411}
\FT_n(\frac{f(\xi)}{(e^{2\pi i\tau\theta\cdot\xi}-1)^L})(x)=\frac{1}{(2\pi ix_1)^N}
\int_{\bbbr^n}\frac{\partial^N}{\partial\xi_1^N}(\frac{f(\xi)}{(e^{2\pi i\tau\theta\cdot\xi}-1)^L})\cdot
e^{-2\pi ix\cdot\xi}d\xi.
\end{equation}
By direct calculation, we have
\begin{equation}\label{eq412}
|\frac{\partial^k}{\partial\xi_1^k}(e^{2\pi i\tau\theta\cdot\xi}-1)^{-L}|\lesssim
|\theta_1|^k\sum_{j=0}^k|(e^{2\pi i\tau\theta\cdot\xi}-1)^{-L-j}|\lesssim
\sum_{j=0}^k|(e^{2\pi i\tau\theta\cdot\xi}-1)^{-L-j}|
\end{equation}
for every nonnegative integer $k$ and the constants are independent of $\tau\in[1,2]$ and $\theta\in\unitsph$. Furthermore, using (\ref{eq408}) and (\ref{eq409}), if $\xi\in spt.f$ we can choose sufficiently small positive numbers $a$ and $b$ so that $a\leq|\theta\cdot\xi|\leq b$ implies
\begin{equation}\label{eq413}
|(e^{2\pi i\tau\theta\cdot\xi}-1)^{L+j}|\geq C_3>0\qquad\text{for all }0\leq j\leq N,
\end{equation}
and $C_3$ is independent of $\tau\in[1,2]$ and $\theta\in\unitsph$. Therefore Leibniz rule, (\ref{eq411}), (\ref{eq412}) and (\ref{eq413}) tell us that
\begin{equation}\label{eq414}
|\FT_n(\frac{f(\xi)}{(e^{2\pi i\tau\theta\cdot\xi}-1)^L})(x)|
\lesssim\frac{1}{|x|^N}\sum_{k=0}^N \|\frac{\partial^k f}{\partial\xi_1^k}\|_{L^1(\bbbr^n)}
\lesssim\frac{C}{(1+|x|)^N}
\end{equation}
if $|x|\geq 1$ and the constants are independent of $\tau\in[1,2]$ and $\theta\in\unitsph$. The proof of Lemma \ref{lemma2} is now complete.
\end{proof}
\begin{lemma}\label{lemma8}
Suppose $f$ is a tempered distribution in $\Sw'(\bbbr^n)$. Recall that $f_l(x)=\psi_{2^{-l}}*f(x)$ for every $l\in\bbbz$, $x=(x_1,x_1')\in\bbbr^n$. Then for every fixed $x_1'=(x_2,\cdots,x_n)\in\bbbr^{n-1}$, the smooth function $x_1\in\bbbr\mapsto f_l(x_1,x_1')$ is an element in $\Sw'(\bbbr)$ and its distributional Fourier transform $\FT_1 f_l(\cdot,x_1')$ is supported in the set $\{u\in\bbbr:|u|\leq 2^{l+1}\}$ and hence the associated $1$-dimensional Peetre-Fefferman-Stein maximal function can be defined as
\begin{equation}\label{eq477}
\PFSmax_1 f_l(\cdot,x_1')(u)=\esssup_{z\in\bbbr}\frac{|f_l(u-z,x_1')|}{(1+2^{l+1}|z|)^{\frac{1}{r}}}.
\end{equation}
\end{lemma}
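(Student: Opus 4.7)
The plan proceeds in three stages. First, since $\psi_{2^{-l}}\in\Sw(\bbbr^n)$ and $f\in\Sw'(\bbbr^n)$, the convolution $f_l(x)=\langle f,\psi_{2^{-l}}(x-\cdot)\rangle$ is smooth on $\bbbr^n$, and unwinding the continuity seminorm of $f$ yields polynomial growth for $f_l$ together with all of its partial derivatives. Consequently, for any fixed $x_1'\in\bbbr^{n-1}$, the slice $x_1\mapsto f_l(x_1,x_1')$ has polynomial growth on $\bbbr$, defines an element of $\Sw'(\bbbr)$, and its one-dimensional distributional Fourier transform $\FT_1 f_l(\cdot,x_1')$ lives in $\Sw'(\bbbr)$.

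The substantive step is the support condition $\supp\FT_1 f_l(\cdot,x_1')\subseteq\{u\in\bbbr:|u|\le 2^{l+1}\}$. Fix $\varphi\in\Sw(\bbbr)$ with $\supp\varphi\subseteq\{u\in\bbbr:|u|>2^{l+1}\}$ and aim to prove
$$\langle \FT_1 f_l(\cdot,x_1'),\varphi\rangle=\int_\bbbr f_l(x_1,x_1')\,\FT_1\varphi(x_1)\,dx_1=0,$$
where absolute convergence of the integral is clear from polynomial growth of $f_l$ against Schwartz decay of $\FT_1\varphi$. Substituting $f_l(x_1,x_1')=\langle f,\psi_{2^{-l}}((x_1,x_1')-\cdot)\rangle$ and commuting the integration in $x_1$ with the pairing against $f$, the right side becomes $\langle f,g\rangle$, where
$$g(y_1,y_1'):=\int_\bbbr\psi_{2^{-l}}(x_1-y_1,x_1'-y_1')\,\FT_1\varphi(x_1)\,dx_1.$$
A change of variables $v_1=x_1-y_1$, $v_1'=x_1'-y_1'$ together with the Fourier identity $\int_\bbbr\FT_1\varphi(x_1)e^{-2\pi ix_1\eta_1}dx_1=\varphi(-\eta_1)$ then gives
$$\FT_n g(\eta_1,\eta_1')=\varphi(-\eta_1)\cdot e^{-2\pi ix_1'\cdot\eta_1'}\cdot(\iFT_n\psi)(2^{-l}(\eta_1,\eta_1')).$$
Here $(\iFT_n\psi)(2^{-l}\cdot)$ is supported in $\{\eta:2^{l-1}\le|\eta|<2^{l+1}\}$, which forces $|\eta_1|<2^{l+1}$, whereas $\varphi(-\eta_1)$ lives where $|\eta_1|>2^{l+1}$. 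These supports are disjoint, so $\FT_n g\equiv 0$, hence $g\equiv 0$ as a Schwartz function, hence $\langle f,g\rangle=0$. Formula \eqref{eq477} then follows by specializing Definition \ref{definition2} to the case $n=1$ and $t=2^{l+1}$.

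The main obstacle I anticipate is the rigorous justification of the Fubini-type interchange between the $x_1$-integration and the distributional pairing with $f$. The cleanest approach is to approximate $g$ by Riemann sums of Schwartz functions on $\bbbr^n$ and verify convergence to $g$ against every Schwartz seminorm, using the rapid decay of $\psi_{2^{-l}}$ and $\FT_1\varphi$ as a dominating envelope. Once this is secured, the sequential continuity of $f\in\Sw'(\bbbr^n)$ delivers the interchange, and the remainder of the argument reduces to the explicit Fourier-space support computation above.
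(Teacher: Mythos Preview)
Your proposal is correct and follows essentially the same route as the paper: both hinge on the Riemann-sum justification of commuting the $x_1$-integral past the pairing with $f$ to obtain $\langle \FT_1 f_l(\cdot,x_1'),\varphi\rangle=\langle f,g\rangle$, and both then exploit the support of $\FT_n\psi$ to kill the result. Your version is in fact slightly cleaner than the paper's: you compute $\FT_n g$ directly and see it vanishes identically, whereas the paper routes the same conclusion through an auxiliary approximation of $f$ by $C_c^\infty$ functions (equations \eqref{eq524}--\eqref{eq540}) that is not strictly needed once the Riemann-sum interchange \eqref{eq527} is in hand.
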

\begin{proof}
Since $f\in\Sw'(\bbbr^n)$ and $\psi_{2^{-l}}\in\Sw(\bbbr^n)$, by Theorem 2.3.20 of \cite{14classical}, $f_l$ is a smooth function and there exist positive constants $a$ and $b$ such that
$$|f_l(x_1,x_1')|\leq a(1+|x_1|+|x_1'|)^b\leq a(1+|x_1'|)^b(1+|x_1|)^b.$$
This inequality shows for fixed $x_1'\in\bbbr^{n-1}$, the smooth function $x_1\in\bbbr\mapsto f_l(x_1,x_1')$ is in $\Sw'(\bbbr)$. To prove the distributional Fourier transform $\FT_1 f_l(\cdot,x_1')$ is supported in the set $\{u\in\bbbr:|u|\leq 2^{l+1}\}$, we find a sequence $\{\varphi_k\}_{k\in\bbbn}\subseteq C_c^{\infty}(\bbbr^n)$ so that $\{\varphi_k\}_{k\in\bbbn}$ converges to $f$ in the sense of $\Sw'(\bbbr^n)$. Next, we establish the equality
\begin{equation}\label{eq524}
<\FT_1 f_l(\cdot,x_1'),g>=\lim_{k\rightarrow\infty}<\FT_1(\psi_{2^{-l}}*\varphi_k(\cdot,x_1')),g>
\end{equation}
for every $g\in\Sw(\bbbr)$. With an argument like above, the smooth function $x_1\in\bbbr\rightarrow\psi_{2^{-l}}*\varphi_k(x_1,x_1')$ is an element of $\Sw'(\bbbr)$, thus we have the following
\begin{align}\label{eq525}
&<\FT_1(\psi_{2^{-l}}*\varphi_k(\cdot,x_1')),g> \nonumber\\
&=\int_{\bbbr}\psi_{2^{-l}}*\varphi_k(u,x_1')\cdot\FT_1 g(u)du \nonumber\\
&=\int_{\bbbr^n}\varphi_k(y_1,y_1')\int_{\bbbr}\psi_{2^{-l}}(u-y_1,x_1'-y_1')\FT_1 g(u)dudy_1dy_1'.
\end{align}
It is not hard to check by direct calculations that for fixed $x_1'\in\bbbr^{n-1}$, the function $(y_1,y_1')\in\bbbr^n\mapsto\int_{\bbbr}\psi_{2^{-l}}(u-y_1,x_1'-y_1')\FT_1 g(u)du$ is an element of $\Sw(\bbbr^n)$. Therefore we have
\begin{align}\label{eq526}
&\lim_{k\rightarrow\infty}<\FT_1(\psi_{2^{-l}}*\varphi_k(\cdot,x_1')),g> \nonumber\\
&=\lim_{k\rightarrow\infty}<\varphi_k,\int_{\bbbr}\psi_{2^{-l}}(u-\cdot,x_1'-\cdot)\FT_1 g(u)du> \nonumber\\
&=<f,\int_{\bbbr}\psi_{2^{-l}}(u-\cdot,x_1'-\cdot)\FT_1 g(u)du>.
\end{align}
We now justify the equation
\begin{equation}\label{eq527}
<f,\int_{\bbbr}\psi_{2^{-l}}(u-\cdot,x_1'-\cdot)\FT_1 g(u)du\!>=\!
\int_{\bbbr}\!<\!f,\psi_{2^{-l}}(u-\cdot,x_1'-\cdot)\!>\!\FT_1 g(u)du.
\end{equation}
Since $f$ is a tempered distribution in $\Sw'(\bbbr^n)$, (\ref{eq527}) requires the Riemann sums of the integral $\int_{\bbbr}\psi_{2^{-l}}(u-y_1,x_1'-y_1')\FT_1 g(u)du$ converges to that integral in Schwartz seminorms with respect to $y=(y_1,y_1')\in\bbbr^n$. For a sufficiently large $N\in\bbbn$, we consider the interval $[-N,N]$ and decompose it into $2N^2$ disjoint subintervals $\{I_m\}_{m=1}^{2N^2}$ of equal length $\frac{1}{N}$ and pick $u_m\in I_m$, then $[-N,N]=\bigcup_{m=1}^{2N^2}I_m$. The difference between the integral and its $N$-th Riemann sum is given by
\begin{equation}\label{eq528}
\int_{\bbbr}\psi_{2^{-l}}(u-y_1,x_1'-y_1')\FT_1 g(u)du-\sum_{m=1}^{2N^2}|I_m|\cdot
\psi_{2^{-l}}(u_m-y_1,x_1'-y_1')\FT_1 g(u_m)
\end{equation}
and can be written as a sum of the following two terms,
\begin{equation}\label{eq529}
\int_{|u|>N}\psi_{2^{-l}}(u-y_1,x_1'-y_1')\FT_1 g(u)du
\end{equation}
and
\begin{equation}\label{eq530}
\sum_{m=1}^{2N^2}\int_{I_m}\psi_{2^{-l}}(u-y_1,x_1'-y_1')\FT_1 g(u)-\psi_{2^{-l}}(u_m-y_1,x_1'-y_1')\FT_1 g(u_m)du.
\end{equation}
It is sufficient to show both (\ref{eq529}) and (\ref{eq530}) converge to zero in Schwartz seminorms with respect to $y$ as $N\rightarrow\infty$. Let $\alpha=(\alpha_1,\alpha_2,\cdots,\alpha_n)=(\alpha_1,\alpha_1')$ and $\beta=(\beta_1,\beta_2,\cdots,\beta_n)$ denote multi-indices. To estimate the Schwartz seminorms of (\ref{eq529}), we compute as follows
\begin{align}\label{eq531}
&|y^{\alpha}\cdot\partial_{y}^{\beta}(\int_{|u|>N}\psi_{2^{-l}}(u-y_1,x_1'-y_1')\FT_1 g(u)du)| \nonumber\\
&\lesssim|y_1|^{\alpha_1}\cdot|y_1'|^{|\alpha_1'|}\cdot\int_{|u|>N}
          |(\partial^{\beta}\psi)(2^l u-2^l y_1,2^l x_1'-2^l y_1')\FT_1 g(u)|du \nonumber\\
&\lesssim|y_1|^{\alpha_1}\cdot|y_1'|^{|\alpha_1'|}\cdot\int_{|u|>N}
          \frac{1}{(1+4^l|u-y_1|^2+4^l|x_1'-y_1'|^2)^{2M}}\cdot\frac{1}{(1+u^2)^M}du \nonumber\\
&\lesssim|y_1|^{\alpha_1}\cdot|y_1'|^{|\alpha_1'|}\cdot\int_{|u|>N}
          \frac{1}{(1+|u-y_1|^2)^M(1+|x_1'-y_1'|^2)^M(1+u^2)^M}du,
\end{align}
for some arbitrarily large positive integer $M$. We apply the following estimates
\begin{equation}\label{eq532}
(1+y_1^2)^{\frac{M}{2}}\leq 2^{\frac{M}{2}}(1+|u-y_1|^2)^{\frac{M}{2}}\cdot(1+u^2)^{\frac{M}{2}},
\end{equation}
\begin{equation}\label{eq533}
\frac{1}{(1+|u-y_1|^2)^{\frac{M}{2}}}\leq 1,\text{ and }
|y_1'|^{|\alpha_1'|}\leq(|y_1'-x_1'|^2+|x_1'|^2)^{\frac{|\alpha_1'|}{2}},
\end{equation}
then we can estimate (\ref{eq531}) from above by
\begin{equation}\label{eq534}
\frac{|y_1|^{\alpha_1}\cdot|y_1'|^{|\alpha_1'|}}{(1+y_1^2)^{\frac{M}{2}}(1+|x_1'-y_1'|^2)^M}\cdot
\int_{|u|>N}\frac{1}{(1+u^2)^{\frac{M}{2}}}du,
\end{equation}
where the constant may depend on $\alpha,\beta,l,\psi,g,M,x_1'$. And (\ref{eq534}) tends to zero as $N\rightarrow\infty$ uniformly in $y\in\bbbr^n$ if we pick $M\in\bbbn$ to be sufficiently large. This shows the Schwartz seminorms of (\ref{eq529}) with respect to $y$ converge to zero as $N\rightarrow\infty$. To estimate the Schwartz seminorms of each term in the summation of (\ref{eq530}), we use Mean Value Theorem with respect to $u$ and obtain
\begin{align*}
&|y^{\alpha}\cdot\partial_y^{\beta}(\int_{I_m}\psi_{2^{-l}}(u-y_1,x_1'-y_1')\FT_1 g(u)
          -\psi_{2^{-l}}(u_m-y_1,x_1'-y_1')\FT_1 g(u_m)du)|\\
&\lesssim|y^{\alpha}|\int_{I_m}|\frac{d}{du}[(\partial^{\beta}\psi)(2^l u-2^l y_1,2^l x_1'-2^l y_1')\FT_1 g(u)]\bigg|_{u=t_m}|
          \cdot|u-u_m|du\\
&\lesssim\frac{|y_1|^{\alpha_1}\cdot|y_1'|^{|\alpha_1'|}}{N}\int_{I_m}
          \frac{1}{(1+4^l|t_m-y_1|^2+4^l|x_1'-y_1'|^2)^{2M}}\cdot\frac{1}{(1+t_m^2)^M}du\\
&\lesssim\frac{|y_1|^{\alpha_1}\cdot|y_1'|^{|\alpha_1'|}}{N}\int_{I_m}
          \frac{1}{(1+|x_1'-y_1'|^2)^M(1+|t_m-y_1|^2)^M(1+t_m^2)^M}du,
\end{align*}
for some $t_m\in I_m$ between $u$ and $u_m$. When $N$ is large, we have $|u-t_m|\leq\frac{1}{N}<\frac{1}{2}$ and $u^2\leq 2t_m^2+\frac{1}{2}$. We also use the estimate $(1+y_1^2)^{\frac{M}{2}}\leq 2^{\frac{M}{2}}(1+|t_m-y_1|^2)^{\frac{M}{2}}\cdot(1+t_m^2)^{\frac{M}{2}}$ and then we can dominate the above inequality by
\begin{equation}\label{eq536}
\frac{|y_1|^{\alpha_1}\cdot|y_1'|^{|\alpha_1'|}}{N(1+y_1^2)^{\frac{M}{2}}(1+|x_1'-y_1'|^2)^M}\cdot
\int_{I_m}\frac{1}{(1+u^2)^{\frac{M}{2}}}du.
\end{equation}
Summing over $m=1,\cdots,2N^2$ and taking supremum over $y\in\bbbr^n$ yield that the Schwartz seminorms of (\ref{eq530}) with respect to $y$ can be estimated from above by
\begin{equation}\label{eq537}
\frac{1}{N}\int_{[-N,N]}\frac{1}{(1+u^2)^{\frac{M}{2}}}du,
\end{equation}
where $M$ is a sufficiently large positive integer and the constant may depend on $\alpha,\beta,l,\psi,g,M,x_1'$. And (\ref{eq537}) tends to zero as $N\rightarrow\infty$. Therefore the validity of equation (\ref{eq527}) has been proved. Furthermore from equation $(2.3.21)$ on page 127 of \cite{14classical}, we know $<f,\psi_{2^{-l}}(u-\cdot,x_1'-\cdot)>$ can be identified with the function $f_l(u,x_1')$. By inserting this identification into (\ref{eq527}) and combining the result with (\ref{eq526}), we have obtained
\begin{equation}\label{eq538}
<\FT_1 f_l(\cdot,x_1'),g>=<f_l(\cdot,x_1'),\FT_1 g>=\lim_{k\rightarrow\infty}<\FT_1(\psi_{2^{-l}}*\varphi_k(\cdot,x_1')),g>.
\end{equation}
Since $\psi\in\Sw(\bbbr^n)$ and $\varphi_k\in C_c^{\infty}(\bbbr^n)$, we have
\begin{equation}\label{eq539}
\psi_{2^{-l}}*\varphi_k(x)=\iFT_1[\iFT_{n-1}[\FT_n\psi(2^{-l}\xi)\FT_n\varphi_k(\xi)](x_1')](x_1),
\end{equation}
where $\xi=(\xi_1,\xi_1')\in\bbbr^n$ and the $(n-1)$-dimensional inverse Fourier transform is done with respect to $\xi_1'\in\bbbr^{n-1}$ and the $1$-dimensional inverse Fourier transform is done with respect to $\xi_1\in\bbbr$. Hence for every $k\in\bbbn$, we have
\begin{align}\label{eq540}
&<\FT_1(\psi_{2^{-l}}*\varphi_k(\cdot,x_1')),g> \nonumber\\
&=\int_{\bbbr}\FT_1(\psi_{2^{-l}}*\varphi_k(\cdot,x_1'))(u)\cdot g(u)du \nonumber\\
&=\int_{\bbbr}\iFT_{n-1}[\FT_n\psi(2^{-l}u,2^{-l}\xi_1')\FT_n\varphi_k(u,\xi_1')](x_1')\cdot g(u)du \nonumber\\
&=\iFT_{n-1}[\int_{\bbbr}\FT_n\psi(2^{-l}u,2^{-l}\xi_1')\FT_n\varphi_k(u,\xi_1')g(u)du](x_1'),
\end{align}
and from (\ref{eq540}) we also see that if $spt.g$ is contained in the complement of the set $\{u\in\bbbr:|u|\leq 2^{l+1}\}$, then $<\FT_1(\psi_{2^{-l}}*\varphi_k(\cdot,x_1')),g>=0$ for every $k\in\bbbn$. Therefore the distributional Fourier transform $\FT_1 f_l(\cdot,x_1')$ is supported in the set $\{u\in\bbbr:|u|\leq 2^{l+1}\}$ and the proof of Lemma \ref{lemma8} is now concluded.
\end{proof}

\section{Triebel-Lizorkin space and maximal functions of means of differences}\label{proof.of.theorem4}
\begin{proof}[Proof Of Theorem \ref{theorem4}]
In this proof we consider the number $r$ appearing in Definition \ref{definition5} equals the number $r$ appearing in Definition \ref{definition2}. We first prove that
\begin{equation}\label{eq370}
\|\{2^{ks}\esssup_{1\leq\tau<2}S^L_{\tau 2^{-k}}f(x)\}_{k\in\bbbz}\|_{L^p(l^q)}\lesssim\|f\|_{\Fspq}
\end{equation}
when $0<p<\infty$, $0<q\leq\infty$, $\frac{n}{\min\{p,q\}}<s<L$ and for every $r\in\bbbr$ with $\frac{n}{s}<r<\min\{p,q\}$. We denote
\begin{equation}\label{eq371}
S_k^*f(x)
=\sum_{j\in\bbbz}\esssup_{\substack{1\leq\tau<2\\y\in\bbbr^n}}\mvint_{\unitsph}
|\varDelta^L_{\tau 2^{-k}z}f_j(x-y)|\cdot(1+2^k|y|)^{\frac{-n}{r}}d\Haus^{n-1}(z).
\end{equation}
Since $|\tau 2^{-k}z|\sim 2^{-k}$ if $1\leq\tau<2$ and $z\in\unitsph$, then we have
\begin{align*}
&\mvint_{\unitsph}|\varDelta^L_{\tau 2^{-k}z}f_j(x-y)|\cdot(1+2^k|y|)^{\frac{-n}{r}}d\Haus^{n-1}(z)\\
&\lesssim\mvint_{\unitsph}|\varDelta^L_{\tau 2^{-k}z}f_j(x-y)|
          \cdot(1+\frac{|y|}{|\tau 2^{-k}z|})^{\frac{-n}{r}}d\Haus^{n-1}(z).
\end{align*}
We use (\ref{eq363}) for $j\leq k$ and (\ref{eq364}) for $j>k$, and obtain
\begin{align}\label{eq372}
S_k^*f(x)
&\lesssim\sum_{j\leq k}2^{(j-k)L}(1+2^{j-k})^{\frac{n}{r}} \PFSmax_n
          f_j(x)+\sum_{j>k}(1+2^{j-k})^{\frac{n}{r}} \PFSmax_n f_j(x) \nonumber\\
&\lesssim\sum_{j\leq k}2^{(j-k)L} \PFSmax_n f_j(x)+\sum_{j>k}2^{(j-k)\frac{n}{r}} \PFSmax_n f_j(x).
\end{align}
For $0<q<\infty$, we pick $0<\varepsilon<\min\{L-s,s-\frac{n}{r}\}$ and deduce from (\ref{eq372}) the following
\begin{align}\label{eq375}
&\sum_{k\in\bbbz}2^{ksq}(S_k^*f(x))^q \nonumber\\
&\lesssim\sum_{k\in\bbbz}2^{ksq}(\sum_{j\leq k}2^{j\varepsilon}\cdot 2^{-j\varepsilon+(j-k)L} \PFSmax_n f_j(x))^q \nonumber\\
&\quad+\sum_{k\in\bbbz}2^{ksq}(\sum_{j>k}2^{-j\varepsilon}\cdot 2^{j\varepsilon+(j-k)\frac{n}{r}} \PFSmax_n f_j(x))^q \nonumber\\
&\lesssim\sum_{k\in\bbbz}2^{ksq}(\sum_{j\leq k}2^{j\varepsilon})^q\cdot\esssup_{l\leq k}2^{-lq\varepsilon+(l-k)qL} \PFSmax_n
          f_l(x)^q \nonumber\\
&\quad+\sum_{k\in\bbbz}2^{ksq}(\sum_{j>k}2^{-j\varepsilon})^q\cdot
          \esssup_{l>k}2^{lq\varepsilon+(l-k)q\frac{n}{r}} \PFSmax_n f_l(x)^q \nonumber\\
&\lesssim\sum_{k\in\bbbz}2^{kq(s+\varepsilon)}\sum_{l\leq k}2^{-lq\varepsilon+(l-k)qL} \PFSmax_n f_l(x)^q \nonumber\\
&\quad+\sum_{k\in\bbbz}2^{kq(s-\varepsilon)}\sum_{l>k}2^{lq\varepsilon+(l-k)q\frac{n}{r}} \PFSmax_n f_l(x)^q \nonumber\\
&\lesssim\sum_{l\in\bbbz}2^{lsq} \PFSmax_n f_l(x)^q,
\end{align}
where in the last inequality we switched the order of summation. Then we raise the power to $\frac{1}{q}$, apply $\|\cdot\|_{L^p(\bbbr^n)}$-quasinorm to both sides of (\ref{eq375}), use Remark \ref{remark6} and we can obtain the estimate
\begin{equation}\label{eq374}
\|\{2^{ks}S_k^*f(x)\}_{k\in\bbbz}\|_{L^p(l^q)}
\lesssim(\int_{\bbbr^n}(\sum_{j\in\bbbz}2^{jsq} \PFSmax_n f_j(x)^q)^{\frac{p}{q}}dx)^{\frac{1}{p}}
\lesssim\|f\|_{\Fspq}.
\end{equation}
If $q=\infty$, we use the same $\varepsilon$ as above and obtain
\begin{align}\label{eq376}
&\esssup_{k\in\bbbz}2^{ks}S_k^*f(x) \nonumber\\
&\lesssim\esssup_{k\in\bbbz}2^{ks}\sum_{j\leq k}2^{j\varepsilon}\esssup_{l\leq k} 2^{-l\varepsilon+(l-k)L} \PFSmax_n f_l(x) \nonumber\\
&\quad+\esssup_{k\in\bbbz}2^{ks}\sum_{j>k}2^{-j\varepsilon}\esssup_{l>k} 2^{l\varepsilon+(l-k)\frac{n}{r}} \PFSmax_n f_l(x) \nonumber\\
&\lesssim\esssup_{k\in\bbbz}2^{k(s+\varepsilon)}\esssup_{l\leq k} 2^{-l\varepsilon+(l-k)L} \PFSmax_n f_l(x) \nonumber\\
&\quad+\esssup_{k\in\bbbz}2^{k(s-\varepsilon)}\esssup_{l>k} 2^{l\varepsilon+(l-k)\frac{n}{r}} \PFSmax_n f_l(x) \nonumber\\
&=\esssup_{l\in\bbbz}\esssup_{k\geq l}2^{k(s+\varepsilon-L)}\cdot 2^{l(L-\varepsilon)} \PFSmax_n f_l(x) \nonumber\\
&\quad+\esssup_{l\in\bbbz}\esssup_{k<l}2^{k(s-\varepsilon-\frac{n}{r})}\cdot 2^{l(\frac{n}{r}+\varepsilon)} \PFSmax_n f_l(x) \nonumber\\
&\lesssim\esssup_{l\in\bbbz}2^{ls} \PFSmax_n f_l(x).
\end{align}
We apply $\|\cdot\|_{L^p(\bbbr^n)}$-quasinorm to both sides of (\ref{eq376}), use Remark \ref{remark12} and the mapping property of Hardy-Littlewood maximal function to get
\begin{align}\label{eq377}
&\|\{2^{ks}S_k^*f(x)\}_{k\in\bbbz}\|_{L^p(l^{\infty})} \nonumber\\
&\lesssim\|\esssup_{l\in\bbbz}2^{ls}\HLmax_n(|f_l|^r)(x)^{\frac{1}{r}}\|_{L^p(\bbbr^n)} \nonumber\\
&\lesssim\|\HLmax_n(\esssup_{l\in\bbbz}2^{lsr}|f_l|^r)(x)\|_{L^{\frac{p}{r}}(\bbbr^n)}^{\frac{1}{r}} \nonumber\\
&\lesssim\|\esssup_{l\in\bbbz}2^{ls}|f_l|\|_{L^p(\bbbr^n)}=\|f\|_{\Fspinf}.
\end{align}
The above proof also shows for every $k\in\bbbz$,
\begin{equation}\label{eq541}
\|\esssup_{\substack{1\leq\tau<2\\y\in\bbbr^n}}\mvint_{\unitsph}\sum_{j\in\bbbz}
|\varDelta^L_{\tau 2^{-k}z}f_j(x-y)|d\Haus^{n-1}(z)\cdot(1+2^k|y|)^{\frac{-n}{r}}\|_{L^p(\bbbr^n)}<\infty,
\end{equation}
and thus $\sum_{j\in\bbbz}|\varDelta^L_{\tau 2^{-k}z}f_j(x-y)|<\infty$ for almost every $1\leq\tau<2$, $z\in\unitsph$, $x,y\in\bbbr^n$. Therefore we can infer from (\ref{eq513}) that
\begin{equation}\label{eq542}
\varDelta^L_{\tau 2^{-k}z}f(x-y)=\sum_{j\in\bbbz}\varDelta^L_{\tau 2^{-k}z}f_j(x-y)
\end{equation}
in the sense of $\Sw'(\bbbr^n)/\mathscr{P}(\bbbr^n)$ for every $k\in\bbbz$, and almost every $1\leq\tau<2$, $z\in\unitsph$, $x,y\in\bbbr^n$. The above justification of decomposition also tells us that
$$\esssup_{1\leq\tau<2}S^L_{\tau 2^{-k}}f(x)\lesssim S_k^*f(x),$$
and this estimate, combined with (\ref{eq374}) and (\ref{eq377}), finishes the proof of (\ref{eq370}). We also observe that for $0<q<\infty$
\begin{align}\label{eq378}
&\sum_{k\in\bbbz}2^{ksq}\big(\esssup_{0<\tau<2}S^L_{\tau 2^{-k}}f(x)\big)^q \nonumber\\
&\lesssim\sum_{k\in\bbbz}\sum_{j\geq 0}2^{ksq}\big(\esssup_{2^{-j}\leq\tau<2^{1-j}}S^L_{\tau 2^{-k}}f(x)\big)^q \nonumber\\
&=\sum_{j\geq 0}2^{-jsq}\sum_{k\in\bbbz}2^{(k+j)sq}\big(\esssup_{1\leq\tau<2}S^L_{\tau 2^{-k-j}}f(x)\big)^q \nonumber\\
&\lesssim\sum_{k\in\bbbz}2^{ksq}\big(\esssup_{1\leq\tau<2}S^L_{\tau 2^{-k}}f(x)\big)^q,
\end{align}
and for $q=\infty$
\begin{align}\label{eq379}
&\esssup_{k\in\bbbz}2^{ks}\esssup_{0<\tau<2}S^L_{\tau 2^{-k}}f(x) \nonumber\\
&\lesssim\esssup_{k\in\bbbz}\sum_{j\geq 0}2^{-js}\cdot 2^{(k+j)s}\esssup_{1\leq\tau<2}S^L_{\tau 2^{-k-j}}f(x) \nonumber\\
&\lesssim\big(\sum_{j\geq 0}2^{-js}\big)\esssup_{k\in\bbbz}2^{ks}\esssup_{1\leq\tau<2}S^L_{\tau 2^{-k}}f(x),
\end{align}
therefore (\ref{eq352}) can be estimated from above by $\|f\|_{\Fspq}$. Using the same method, we can also estimate (\ref{eq354}) from above by $\|f\|_{\Fspq}$ under the conditions of Theorem \ref{theorem4}. As for (\ref{eq355}), by using the same method as (\ref{eq370}) we can show that
\begin{equation}\label{eq380}
\|\{2^{ks}\esssup_{1\leq|h|<2}D^L_{2^{-k}h}f(x)\}_{k\in\bbbz}\|_{L^p(l^q)}\lesssim\|f\|_{\Fspq}
\end{equation}
when $p,q,s,r,L$ satisfy conditions of Theorem \ref{theorem4}. And then we use the arguments in (\ref{eq378}) and (\ref{eq379}) to prove that
\begin{equation}\label{eq381}
\|\{2^{ks}\esssup_{0<|h|<2}D^L_{2^{-k}h}f(x)\}_{k\in\bbbz}\|_{L^p(l^q)}\lesssim
\|\{2^{ks}\esssup_{1\leq|h|<2}D^L_{2^{-k}h}f(x)\}_{k\in\bbbz}\|_{L^p(l^q)}
\end{equation}
for all $0<p<\infty$, $0<q\leq\infty$, since we have the decomposition
$$\{h\in\bbbr^n:0<|h|<2\}=\bigcup_{j=0}^{\infty}\{h\in\bbbr^n:2^{-j}\leq|h|<2^{1-j}\}.$$

To prove the reverse directions, we first show that for any $0<\tau<2$, $0<p<\infty$, $0<q\leq\infty$ and $s\in\bbbr$, we have the estimate
\begin{equation}\label{eq382}
\|f\|_{\Fspq}\lesssim\|\{2^{ks}S^L_{\tau 2^{-k}}f(x)\}_{k\in\bbbz}\|_{L^p(l^q)}.
\end{equation}
Let $\tau\in(0,2)$ be fixed for now, and let $a$ denote the tempered distribution in $\Sw'(\bbbr^n)$ whose distributional Fourier transform is the function below
\begin{equation}\label{eq383}
\FT_n a(\xi)=\mvint_{\unitsph}(e^{2\pi i\tau\xi\cdot z}-1)^L d\Haus^{n-1}(z).
\end{equation}
For example, we can choose $a=\sum_{m=0}^L\binom{L}{m}(-1)^{L-m}\mvint_{\unitsph}\delta_{-m\tau z}d\Haus^{n-1}(z)$, where $\delta_{-m\tau z}$ is the Dirac mass at $-m\tau z$. Then from (\ref{eq344}) and Definition \ref{definition5} we deduce the following equality
\begin{equation}\label{eq384}
S^L_{\tau 2^{-k}}f(x)=\esssup_{y\in\bbbr^n}\frac{|\iFT_n(\FT_n a(2^{-k}\xi)\cdot\FT_n f(\xi))(x-y)|}{(1+\tau^{-1}2^k|y|)^{\frac{n}{r}}}.
\end{equation}
Using the formula given in Appendix D.3 of \cite{14classical}, we have
\begin{equation}\label{eq385}
\FT_n a(\xi)=C_n\cdot\int_{-1}^1 (e^{2\pi it\tau|\xi|}-1)^L(1-t^2)^{\frac{n-3}{2}}dt,
\end{equation}
where $C_n$ is a positive constant depending on $n$. By using Taylor expansion, we can write
\begin{equation}\label{eq386}
(e^{2\pi it\tau|\xi|}-1)^L=\sum_{k=0}^{\infty}A_{L+k}(t\tau|\xi|)^{L+k},
\end{equation}
and each $A_{L+k}$ is a complex number whose value is independent of $t,\tau,\xi$ and satisfies $|A_{L+k}|>0$. Hence we have the expression
\begin{equation}\label{eq387}
\FT_n a(\xi)=C_n\cdot\sum_{k=0}^{\infty}A_{L+k}B_{L+k}|\tau\xi|^{L+k}\text{ for every $\xi\in\bbbr^n$ when $|\xi|$ is small,}
\end{equation}
where
\begin{equation}\label{eq388}
B_{L+k}=\int_{-1}^1 t^{L+k}(1-t^2)^{\frac{n-3}{2}}dt\qquad\text{for }k\geq 0,
\end{equation}
and $B_{L+k}=0$ if $L+k$ is an odd integer, $B_{L+k}>0$ if $L+k$ is an even integer. If $L$ is an even integer, then
\begin{equation*}
\FT_n a(\xi)=C_n A_L B_L|\tau\xi|^L(1+O(|\tau\xi|^2))
\end{equation*}
and $|\FT_n a(\xi)|\sim|\xi|^L>0$ if $|\xi|>0$ is sufficiently small. If $L$ is an odd integer, then
\begin{equation*}
\FT_n a(\xi)=C_n A_{L+1} B_{L+1}|\tau\xi|^{L+1}(1+O(|\tau\xi|^2))
\end{equation*}
and $|\FT_n a(\xi)|\sim|\xi|^{L+1}>0$ if $|\xi|>0$ is sufficiently small. Therefore we can pick a sufficiently large positive integer $m_1$ so that $|\FT_n a(\xi)|>0$ if $0<|\xi|<2^{1-m_1}$, and hence $\frac{\FT_n\psi(2^{m_1}\xi)}{\FT_n a(\xi)}$ is a well-defined function in $C_c^{\infty}(\bbbr^n)$ and $\iFT_n(\frac{\FT_n\psi(2^{m_1}\xi)}{\FT_n a(\xi)})(\cdot)\in\Sw(\bbbr^n)$. Furthermore by using (\ref{eq384}) we have for each $k\in\bbbz$
\begin{align}\label{eq389}
&|\iFT_n(\FT_n\psi(2^{m_1-k}\xi)\FT_n f(\xi))(x)| \nonumber\\
&\lesssim\int_{\bbbr^n}|\iFT_n(\frac{\FT_n\psi(2^{m_1-k}\xi)}{\FT_n a(2^{-k}\xi)})(y)|\cdot
          |\iFT_n(\FT_n a(2^{-k}\xi)\FT_n f(\xi))(x-y)|dy \nonumber\\
&\lesssim\int_{\bbbr^n}|\iFT_n(\frac{\FT_n\psi(2^{m_1}\xi)}{\FT_n a(\xi)})(2^k y)|\cdot 2^{kn}(1+\tau^{-1}2^k|y|)^{\frac{n}{r}}dy
          \cdot S^L_{\tau 2^{-k}}f(x) \nonumber\\
&\lesssim S^L_{\tau 2^{-k}}f(x),
\end{align}
and the constants are independent of $k\in\bbbz$. By using (\ref{eq389}) above, we reach the conclusion that for $0<p<\infty$, $0<q\leq\infty$ and $s\in\bbbr$
\begin{align}\label{eq390}
\|f\|_{\Fspq}
&=2^{-sm_1}\|\{2^{ks}\iFT_n(\FT_n\psi(2^{m_1-k}\xi)\FT_n f(\xi))(x)\}_{k\in\bbbz}\|_{L^p(l^q)} \nonumber\\
&\lesssim\|\{2^{ks}S^L_{\tau 2^{-k}}f(x)\}_{k\in\bbbz}\|_{L^p(l^q)}.
\end{align}
We let $\tau=1$ in (\ref{eq390}) and get that $\|f\|_{\Fspq}$ can be estimated from above by (\ref{eq351}). Therefore we have shown that (\ref{eq351}) and (\ref{eq352}) are equivalent quasinorms in $\Fspq$ when parameters $p,q,s,r,L$ satisfy the conditions of Theorem \ref{theorem4}.

To show that when $0<p<\infty$, $0<q\leq\infty$ and $s\in\bbbr$, the quasinorm $\|f\|_{\Fspq}$ can be estimated from above by (\ref{eq353}), we consider the tempered distribution $b\in\Sw'(\bbbr^n)$ whose distributional Fourier transform is the function below
\begin{equation}\label{eq391}
\FT_n b(\xi)=\mvint_A (e^{2\pi i\xi\cdot z}-1)^L dz
=\frac{1}{|A|}\int_1^2 \tau^{n-1}\int_{\unitsph}(e^{2\pi i\tau\xi\cdot z}-1)^L d\Haus^{n-1}(z)d\tau.
\end{equation}
For example, we can choose $b=\sum_{m=0}^L\binom{L}{m}(-1)^{L-m}\mvint_{A}\delta_{-mz}dz$, where $\delta_{-mz}$ is the Dirac mass at $-mz$. Using (\ref{eq383}), (\ref{eq385}), (\ref{eq386}) and (\ref{eq387}), we obtain that
\begin{equation}\label{eq392}
\FT_n b(\xi)
=C'_n\sum_{k=0}^{\infty}\int_1^2\tau^{n+k+L-1}d\tau\cdot A_{L+k}B_{L+k}|\xi|^{L+k}
=C'_n\sum_{k=0}^{\infty}A'_{L+k}B_{L+k}|\xi|^{L+k}
\end{equation}
for every $\xi\in\bbbr^n$ when $|\xi|$ is small, where $C'_n$ is a positive constant depending on $n$, each $A'_{L+k}$ is a complex number satisfying $|A'_{L+k}|>0$, and each $B_{L+k}$ is defined by (\ref{eq388}). Therefore using a similar analysis like the one for $\FT_n a(\xi)$, we can find a sufficiently large positive integer $m_2$ so that $|\FT_n b(\xi)|>0$ if $0<|\xi|<2^{1-m_2}$. Hence $\frac{\FT_n\psi(2^{m_2}\xi)}{\FT_n b(\xi)}$ is a well-defined function in $C_c^{\infty}(\bbbr^n)$ and $\iFT_n(\frac{\FT_n\psi(2^{m_2}\xi)}{\FT_n b(\xi)})(\cdot)\in\Sw(\bbbr^n)$. Using a similar argument like the one to deduce (\ref{eq389}) and the estimate
\begin{equation}\label{eq393}
|\iFT_n(\FT_n b(2^{-k}\xi)\FT_n f(\xi))(x\!-\!y)|\!=\!|\mvint_A \varDelta^L_{2^{-k}z}f(x\!-\!y)dz|
\!\leq\!(1\!+\!2^k|y|)^{\frac{n}{r}}\cdot V^L_{2^{-k}}f(x),
\end{equation}
we can obtain
\begin{equation}\label{eq394}
|\iFT_n(\FT_n\psi(2^{m_2-k}\xi)\FT_n f(\xi))(x)|\lesssim V^L_{2^{-k}}f(x)
\qquad\text{for every }k\in\bbbz,
\end{equation}
and the constant is independent of $k$. By using (\ref{eq394}) above, we reach the conclusion that for $0<p<\infty$, $0<q\leq\infty$ and $s\in\bbbr$
\begin{align}\label{eq395}
\|f\|_{\Fspq}
&=2^{-sm_2}\|\{2^{ks}\iFT_n(\FT_n\psi(2^{m_2-k}\xi)\FT_n f(\xi))(x)\}_{k\in\bbbz}\|_{L^p(l^q)} \nonumber\\
&\lesssim\|\{2^{ks}V^L_{2^{-k}}f(x)\}_{k\in\bbbz}\|_{L^p(l^q)}.
\end{align}
By using their defining expressions in Definition \ref{definition5}, it is easy to see that $V^L_{2^{-k}}f(x)\lesssim\esssup_{0<|h|<2}D^L_{2^{-k}h}f(x)$ for every $x\in\bbbr^n$ and $k\in\bbbz$, thus $\|f\|_{\Fspq}$ can be estimated from above by (\ref{eq355}) for all $0<p<\infty$, $0<q\leq\infty$ and $s\in\bbbr$. Hereby we conclude the proof of Theorem \ref{theorem4}.
\end{proof}

\section{Besov-Lipschitz space and maximal functions of means of differences}\label{proof.of.theorem5}
\begin{proof}[Proof Of Theorem \ref{theorem5}]
The proof of Theorem \ref{theorem5} is alike to the proof of Theorem \ref{theorem4} and thus we will just sketch it. In this proof we still consider the number $r$ appearing in Definition \ref{definition5} equals the number $r$ appearing in Definition \ref{definition2}. We first prove the counterpart of (\ref{eq370}), that is,
\begin{equation}\label{eq396}
\|\{2^{ks}\esssup_{1\leq\tau<2}S^L_{\tau 2^{-k}}f\}_{k\in\bbbz}\|_{l^q(L^p)}\lesssim\|f\|_{\Bspq}
\end{equation}
when $0<p\leq\infty$, $0<q\leq\infty$, $\frac{n}{p}<s<L$ and for every $r\in\bbbr$ with $\frac{n}{s}<r<p$. By using Lemma \ref{lemma1}, we still have (\ref{eq372}) with $S_k^*f(x)$ given in (\ref{eq371}). If $1\leq p\leq\infty$, we use Minkowski's inequality for $L^p(\bbbr^n)$-norms, Remark \ref{remark12} and the mapping property of Hardy-Littlewood maximal function in a sequence and obtain the following
\begin{equation}\label{eq397}
\|S_k^*f\|_{L^p(\bbbr^n)}\lesssim
\sum_{j\leq k}2^{(j-k)L}\|f_j\|_{L^p(\bbbr^n)}+\sum_{j>k}2^{(j-k)\frac{n}{r}}\|f_j\|_{L^p(\bbbr^n)}.
\end{equation}
With (\ref{eq397}), we use the calculation method of (\ref{eq375}) when $0<q<\infty$ and the calculation method of (\ref{eq376}) when $q=\infty$ and justify the decomposition in a similar way like (\ref{eq542}), then we can obtain (\ref{eq396}) for the case $1\leq p\leq\infty$. If $0<p<1$, we raise the power of both sides of (\ref{eq372}) to $p$ and integrate over $\bbbr^n$ with respect to $x$, use Remark \ref{remark12} and the mapping property of Hardy-Littlewood maximal function in a sequence and obtain the following
\begin{equation}\label{eq398}
\|S_k^*f\|_{L^p(\bbbr^n)}^p\lesssim
\sum_{j\leq k}2^{(j-k)Lp}\|f_j\|_{L^p(\bbbr^n)}^p+\sum_{j>k}2^{(j-k)\frac{np}{r}}\|f_j\|_{L^p(\bbbr^n)}^p.
\end{equation}
With (\ref{eq398}), we use the calculation method of (\ref{eq375}) when $0<\frac{q}{p}<\infty$ and the calculation method of (\ref{eq376}) when $\frac{q}{p}=\infty$ and justify the decomposition then we can obtain (\ref{eq396}) for the case $0<p<1$. Next, we show that
\begin{equation}\label{eq399}
\|\{2^{ks}\esssup_{0<\tau<2}S^L_{\tau 2^{-k}}f\}_{k\in\bbbz}\|_{l^q(L^p)}\lesssim
\|\{2^{ks}\esssup_{1\leq\tau<2}S^L_{\tau 2^{-k}}f\}_{k\in\bbbz}\|_{l^q(L^p)}
\end{equation}
for $0<p\leq\infty$, $0<q\leq\infty$ and $s\in\bbbr$. We have the following pointwise estimate
\begin{equation}\label{eq400}
\esssup_{0<\tau<2}S^L_{\tau 2^{-k}}f(x)\lesssim\sum_{j=0}^{\infty}
\esssup_{1\leq\tau<2}S^L_{\tau 2^{-k-j}}f(x)\qquad\text{for every }x\in\bbbr^n.
\end{equation}
If $1\leq p\leq\infty$, then we use Minkowski's inequality for $L^p(\bbbr^n)$-norms and get
\begin{equation}\label{eq401}
\|\{2^{ks}\!\esssup_{0<\tau<2}S^L_{\tau 2^{-k}}f\}_{k\in\bbbz}\|_{l^q(L^p)}\!\lesssim\!
\|\{ 2^{ks}\!\sum_{j=0}^{\infty}\|\esssup_{1\leq\tau<2}S^L_{\tau 2^{-k-j}}f\|_{L^p(\bbbr^n)}\}_{k\in\bbbz}\|_{l^q}.
\end{equation}
When $0<q<1$, we can switch the order of summation and obtain
\begin{align}\label{eq402}
(\ref{eq401})
&\lesssim(\sum_{k\in\bbbz}2^{ksq}\sum_{j=0}^{\infty}
          \|\esssup_{1\leq\tau<2}S^L_{\tau 2^{-k-j}}f\|_{L^p(\bbbr^n)}^q)^{\frac{1}{q}} \nonumber\\
&=(\sum_{j=0}^{\infty}2^{-jsq}\sum_{k\in\bbbz}2^{(k+j)sq}
          \|\esssup_{1\leq\tau<2}S^L_{\tau 2^{-k-j}}f\|_{L^p(\bbbr^n)}^q)^{\frac{1}{q}} \nonumber\\
&\lesssim\|\{2^{ks}\esssup_{1\leq\tau<2}S^L_{\tau 2^{-k}}f\}_{k\in\bbbz}\|_{l^q(L^p)}.
\end{align}
When $1\leq q\leq\infty$, we use Minkowski's inequality for $l^q$-norms and obtain
\begin{align}\label{eq403}
(\ref{eq401})
&\lesssim\sum_{j=0}^{\infty}2^{-js}\|\{2^{(k+j)s}
          \|\esssup_{1\leq\tau<2}S^L_{\tau 2^{-k-j}}f\|_{L^p(\bbbr^n)}\}_{k\in\bbbz}\|_{l^q} \nonumber\\
&\lesssim\|\{2^{ks}\esssup_{1\leq\tau<2}S^L_{\tau 2^{-k}}f\}_{k\in\bbbz}\|_{l^q(L^p)}.
\end{align}
If $0<p<1$, then we raise the power of both sides of (\ref{eq400}) to $p$ and integrate over $\bbbr^n$ with respect to $x$ to obtain
\begin{equation}\label{eq404}
\|\esssup_{0<\tau<2}S^L_{\tau 2^{-k}}f\|_{L^p(\bbbr^n)}^p\lesssim
\sum_{j=0}^{\infty}\|\esssup_{1\leq\tau<2}S^L_{\tau 2^{-k-j}}f\|_{L^p(\bbbr^n)}^p.
\end{equation}
When $0<\frac{q}{p}<1$, we use (\ref{eq404}) to obtain
\begin{align}\label{eq405}
&\|\{2^{ks}\esssup_{0<\tau<2}S^L_{\tau 2^{-k}}f\}_{k\in\bbbz}\|_{l^q(L^p)} \nonumber\\
&\lesssim(\sum_{k\in\bbbz}(\sum_{j=0}^{\infty}2^{ksp}
          \|\esssup_{1\leq\tau<2}S^L_{\tau 2^{-k-j}}f\|_{L^p(\bbbr^n)}^p)^{\frac{q}{p}})^{\frac{1}{q}} \nonumber\\
&\lesssim(\sum_{k\in\bbbz}\sum_{j=0}^{\infty}2^{ksq}
          \|\esssup_{1\leq\tau<2}S^L_{\tau 2^{-k-j}}f\|_{L^p(\bbbr^n)}^q )^{\frac{1}{q}} \nonumber\\
&=(\sum_{j=0}^{\infty}2^{-jsq}\sum_{k\in\bbbz}2^{(k+j)sq}
          \|\esssup_{1\leq\tau<2}S^L_{\tau 2^{-k-j}}f\|_{L^p(\bbbr^n)}^q )^{\frac{1}{q}} \nonumber\\
&\lesssim\|\{2^{ks}\esssup_{1\leq\tau<2}S^L_{\tau 2^{-k}}f\}_{k\in\bbbz}\|_{l^q(L^p)}.
\end{align}
When $1\leq\frac{q}{p}\leq\infty$, we use (\ref{eq404}) and Minkowski's inequality for $l^{\frac{q}{p}}$-norms and obtain
\begin{align}\label{eq406}
&\|\{2^{ks}\esssup_{0<\tau<2}S^L_{\tau 2^{-k}}f\}_{k\in\bbbz}\|_{l^q(L^p)} \nonumber\\
&\lesssim\|\{\sum_{j=0}^{\infty}2^{ksp}\|\esssup_{1\leq\tau<2}S^L_{\tau 2^{-k-j}}f\|_{L^p(\bbbr^n)}^p
          \}_{k\in\bbbz}\|_{l^{\frac{q}{p}}}^{\frac{1}{p}} \nonumber\\
&\lesssim(\sum_{j=0}^{\infty}2^{-jsp}\|\{2^{(k+j)s}\|\esssup_{1\leq\tau<2}S^L_{\tau 2^{-k-j}}f\|_{L^p(\bbbr^n)}
          \}_{k\in\bbbz}\|_{l^q}^p)^{\frac{1}{p}} \nonumber\\
&\lesssim\|\{2^{ks}\esssup_{1\leq\tau<2}S^L_{\tau 2^{-k}}f\}_{k\in\bbbz}\|_{l^q(L^p)}.
\end{align}
From (\ref{eq401}), (\ref{eq402}), (\ref{eq403}), (\ref{eq405}) and (\ref{eq406}), we see that (\ref{eq399}) has been proved. Combining (\ref{eq396}) and (\ref{eq399}) gives that (\ref{eq357}) can be estimated from above by $\|f\|_{\Bspq}$ when conditions of Theorem \ref{theorem5} are satisfied. Using the same method, we also prove that (\ref{eq359}) and (\ref{eq360}) can be estimated from above by $\|f\|_{\Bspq}$ under the conditions of Theorem \ref{theorem5}.

To prove the reverse directions, we just notice that (\ref{eq389}) and (\ref{eq394}) are pointwise estimates for every $x\in\bbbr^n$ and then we use the same method given in (\ref{eq390}) and (\ref{eq395}) to prove that $\|f\|_{\Bspq}$ can be estimated from above by (\ref{eq356}) and (\ref{eq358}) for all $0<p\leq\infty$, $0<q\leq\infty$ and $s\in\bbbr$. Furthermore (\ref{eq358}) can be estimated from above by (\ref{eq360}) by using Definition \ref{definition5}. The proof of Theorem \ref{theorem5} is complete.
\end{proof}

\section{Triebel-Lizorkin space and iterated differences}\label{proof.of.theorem2}
\begin{proof}[Proof Of Theorem \ref{theorem2}]
We first prove Theorem \ref{theorem2} (i). Let $f\in\Fspq$ be an element of $\Sw'(\bbbr^n)$. We note that $\tilde{\sigma}_{pq}<s$ implies $\frac{nq}{n+sq}<p$. And we recall the notation $A_k=\{h\in\bbbr^n:2^{-k}\leq|h|<2^{1-k}\}$ for $k\in\bbbz$. For $|h|\lesssim 2^{-k}$ and $f_j=\psi_{2^{-j}}*f$, we deduce two estimates for $|(\varDelta^L_h f_j)(x)|$. Using mean value theorem and the iteration formula (\ref{eq259}) consecutively, we get
\begin{equation}\label{eq267}
|(\varDelta^L_h f_j)(x)|\lesssim
\sum_{|\alpha|=L}|\partial^{\alpha}f_j(x+\sum^L_{l=1}t_{\alpha,l}h)|\cdot|h|^L,
\end{equation}
where $\alpha$ represents a multi-index and each $t_{\alpha,l}$ is in $(0,1)$. Since the $n$-dimensional distributional Fourier transform $\FT_n f_j$ is supported in $\{\xi\in\bbbr^n:2^{j-1}\leq|\xi|<2^{j+1}\}$, we use Remark \ref{remark4} to get
\begin{equation}\label{eq268}
|\partial^{\alpha}f_j(x+\sum^L_{l=1}t_{\alpha,l}h)|\lesssim
\PFSmax_n (\partial^{\alpha}f_j)(x+\sum^L_{l=1}t_{\alpha,l}h)\lesssim
2^{jL} \PFSmax_n f_j(x+\sum^L_{l=1}t_{\alpha,l}h).
\end{equation}
Since $|\sum^L_{l=1}t_{\alpha,l}h|\lesssim L2^{-k}$, by (\ref{eq70}) of Remark \ref{remark2} we have
\begin{equation}\label{eq269}
\PFSmax_n f_j(x+\sum^L_{l=1}t_{\alpha,l}h)\lesssim\PFSmax_n f_j(x)\cdot(1+L2^{j-k})^{n/r},
\end{equation}
where $r$ is the chosen positive number in Definition \ref{definition2} and satisfies $0<r<\min\{p,q\}$. We infer from (\ref{eq267}), (\ref{eq268}) and (\ref{eq269}) the first estimate
\begin{equation}\label{eq270}
|(\varDelta^L_h f_j)(x)|\lesssim 2^{(j-k)L}(1+L2^{j-k})^{n/r} \PFSmax_n f_j(x)\quad\text{for}\quad|h|\lesssim 2^{-k},
\end{equation}
and the constant is independent of $h\in\bbbr^n$, $j,k\in\bbbz$. Also by using (\ref{eq260}), we get
\begin{equation}\label{eq271}
|(\varDelta^L_h f_j)(x)|\lesssim\sum^L_{l=0}|f_j(x+lh)|.
\end{equation}
If $0\leq l\leq L$, $|h|\lesssim 2^{-k}$ and $j>k$, we recall that $0<r<\min\{p,q\}$ and the value of $r$ will be determined later, then using Remark \ref{remark2}, Lemma \ref{lemma4} and the definition of Peetre-Fefferman-Stein maximal function, we obtain
\begin{equation}\label{eq272}
\PFSmax_n f_j(x+lh)\lesssim(1+2^{j+1}l|h|)^{n/r}\PFSmax_n f_j(x)\lesssim 2^{(j-k)n/r}\HLmax_n(|f_j|^r)(x)^{1/r},
\end{equation}
and the constant in (\ref{eq272}) is independent of $h\in\bbbr^n$, $0\leq l\leq L$ and $j,k\in\bbbz$. Using the proper change of variable, we also have
\begin{equation}\label{eq543}
2^{kn}\int_{A_k}|f_j(x+lh)|^r dh\lesssim\mvint_{l2^{-k}\leq|y|<l2^{-k+1}}|f_j(x+y)|^r dy\lesssim\HLmax_n(|f_j|^r)(x)
\end{equation}
for $0<l\leq L$, and $|f_j(x)|^r\leq\HLmax_n(|f_j|^r)(x)$ by Lebesgue's differentiation theorem. Applying (\ref{eq272}) and (\ref{eq543}), we can obtain the second estimate
\begin{align}\label{eq544}
&2^{kn}\int_{A_k}|(\varDelta^L_h f_j)(x)|^q dh \nonumber\\
&\lesssim\sum_{l=0}^L 2^{kn}\int_{A_k}|f_j(x+lh)|^r\cdot|f_j(x+lh)|^{q-r}dh \nonumber\\
&\lesssim\sum_{l=0}^L 2^{kn}\int_{A_k}|f_j(x+lh)|^r dh\cdot\PFSmax_n f_j(x+lh)^{q-r} \nonumber\\
&\lesssim 2^{(j-k)n(\frac{q}{r}-1)}\HLmax_n(|f_j|^r)(x)^{\frac{q}{r}}.
\end{align}
And estimate (\ref{eq544}) is true for $0<q<\infty$ and $j>k$. Now we consider estimating the expression below
\begin{equation}\label{eq460}
(\sum_{k\in\bbbz}2^{k(sq+n)}\int_{A_k}(\sum_{j\in\bbbz}|(\varDelta^L_{h}f_j)(x)|)^q dh)^{\frac{1}{q}}
\end{equation}
by the sum of the following two expressions,
\begin{equation}\label{eq274}
(\sum_{k\in\bbbz}2^{k(sq+n)}\int_{A_k}(\sum_{j\leq k}|(\varDelta^L_{h}f_j)(x)|)^q dh)^{\frac{1}{q}}
\end{equation}
and
\begin{equation}\label{eq275}
(\sum_{k\in\bbbz}2^{k(sq+n)}\int_{A_k}(\sum_{j>k}|(\varDelta^L_{h}f_j)(x)|)^q dh)^{\frac{1}{q}}.
\end{equation}
For $0<q<\infty$, we pick $0<\varepsilon<\min\{s,L-s\}$ and the value of $\varepsilon$ will be determined later. Then we have
\begin{align}\label{eq276}
(\sum_{j\leq k}|(\varDelta^L_{h}f_j)(x)|)^q
&=(\sum_{j\leq k}2^{j\varepsilon}\cdot 2^{-j\varepsilon}|(\varDelta^L_{h}f_j)(x)|)^q \nonumber\\
&\lesssim(\sum_{j\leq k}2^{j\varepsilon})^q\cdot\esssup_{j\leq k} 2^{-jq\varepsilon}|(\varDelta^L_{h}f_j)(x)|^q \nonumber\\
&\lesssim 2^{kq\varepsilon}\cdot\sum_{j\leq k} 2^{-jq\varepsilon}|(\varDelta^L_{h}f_j)(x)|^q,
\end{align}
and
\begin{align}\label{eq277}
(\sum_{j>k}|(\varDelta^L_{h}f_j)(x)|)^q
&=(\sum_{j>k}2^{-j\varepsilon}\cdot 2^{j\varepsilon}|(\varDelta^L_{h}f_j)(x)|)^q \nonumber\\
&\lesssim(\sum_{j>k}2^{-j\varepsilon})^q\cdot\esssup_{j>k} 2^{jq\varepsilon}|(\varDelta^L_{h}f_j)(x)|^q \nonumber\\
&\lesssim 2^{-kq\varepsilon}\cdot\sum_{j>k} 2^{jq\varepsilon}|(\varDelta^L_{h}f_j)(x)|^q.
\end{align}
Using (\ref{eq276}) and (\ref{eq270}), we can estimate (\ref{eq274}) from above by
\begin{align}\label{eq278}
&(\sum_{k\in\bbbz}\sum_{j\leq k}2^{k(n+qs+q\varepsilon)}2^{-jq\varepsilon}
       \int_{A_k}|(\varDelta^L_{h}f_j)(x)|^q dh)^{\frac{1}{q}} \nonumber\\
&\lesssim(\sum_{k\in\bbbz}\sum_{j\leq k}2^{k(qs+q\varepsilon)}2^{-jq\varepsilon}2^{(j-k)Lq}
       (1+L2^{j-k})^{\frac{nq}{r}} \PFSmax_n f_j(x)^q)^{\frac{1}{q}}.
\end{align}
We notice that $(1+L2^{j-k})^{nq/r}\lesssim C$ if $j\leq k$ and $C$ is a constant determined by $n,q,r,L$ and we switch the order of summation to obtain
\begin{align}\label{eq279}
(\ref{eq278})
&\lesssim(\sum_{j\in\bbbz}\sum_{k\geq j}2^{kq(s+\varepsilon-L)}2^{-jq\varepsilon+jqL}\PFSmax_n f_j(x)^q)^{\frac{1}{q}} \nonumber\\
&=(\sum_{j\in\bbbz}2^{jqs}\PFSmax_n f_j(x)^q)^{\frac{1}{q}} \nonumber\\
&\lesssim(\sum_{j\in\bbbz}2^{jqs}\HLmax_n(|f_j|^r)(x)^{\frac{q}{r}})^{\frac{1}{q}},
\end{align}
where we also used Remark \ref{remark12} and the condition that $\varepsilon<L-s$. Using (\ref{eq277}) and (\ref{eq544}) and switching the order of summation, we can estimate (\ref{eq275}) from above by
\begin{align}\label{eq284}
&(\sum_{k\in\bbbz}\sum_{j>k}2^{k(n+qs-q\varepsilon)}2^{jq\varepsilon}
          \int_{A_k}|(\varDelta^L_{h}f_j)(x)|^q dh)^{\frac{1}{q}} \nonumber\\
&\lesssim(\sum_{k\in\bbbz}\sum_{j>k}2^{kq(s-\varepsilon)+jq\varepsilon}\cdot
          2^{(j-k)n(\frac{q}{r}-1)}\HLmax_n(|f_j|^r)(x)^{\frac{q}{r}})^{\frac{1}{q}} \nonumber\\
&\lesssim(\sum_{j\in\bbbz}\sum_{k<j}2^{kq[s-\varepsilon-n(\frac{1}{r}-\frac{1}{q})]}\cdot
          2^{jq[\varepsilon+n(\frac{1}{r}-\frac{1}{q})]}\HLmax_n(|f_j|^r)(x)^{\frac{q}{r}})^{\frac{1}{q}}.
\end{align}
If $q\leq p<\infty$, we have
$$\lim_{\varepsilon\rightarrow 0,r\rightarrow q}s-\varepsilon-n(\frac{1}{r}-\frac{1}{q})=s>0.$$
If $\frac{nq}{n+sq}<p<q$, we have
$$\lim_{\varepsilon\rightarrow 0,r\rightarrow p}s-\varepsilon-n(\frac{1}{r}-\frac{1}{q})=s-n(\frac{1}{p}-\frac{1}{q})>0.$$
Therefore if we pick $\varepsilon$ sufficiently small and $r$ sufficiently close to $\min\{p,q\}$, then we can make $s-\varepsilon-n(\frac{1}{r}-\frac{1}{q})$ a positive finite number and hence
\begin{equation}\label{eq285}
\sum_{k<j}2^{kq[s-\varepsilon-n(\frac{1}{r}-\frac{1}{q})]}\lesssim 2^{jq[s-\varepsilon-n(\frac{1}{r}-\frac{1}{q})]}.
\end{equation}
Inserting (\ref{eq285}) into (\ref{eq284}) yields
\begin{equation}\label{eq545}
(\sum_{k\in\bbbz}2^{k(sq+n)}\int_{A_k}(\sum_{j>k}|(\varDelta^L_{h}f_j)(x)|)^q dh)^{\frac{1}{q}}\lesssim
(\sum_{j\in\bbbz}2^{jqs}\HLmax_n(|f_j|^r)(x)^{\frac{q}{r}})^{\frac{1}{q}}.
\end{equation}
Combining (\ref{eq274}), (\ref{eq278}), (\ref{eq279}), (\ref{eq275}) and (\ref{eq545}) and also invoking Lemma \ref{lemma6}, we can obtain
\begin{equation}\label{eq546}
\|(\sum_{k\in\bbbz}2^{k(sq+n)}\int_{A_k}(\sum_{j\in\bbbz}|(\varDelta^L_{h}f_j)(x)|)^q dh)^{\frac{1}{q}}\|_{L^p(\bbbr^n)}
\lesssim\|f\|_{\Fspq},
\end{equation}
when $0<p,q<\infty$ and $\tilde{\sigma}_{pq}<s<L$. From the assumption $f\in\Fspq$, we know inequality (\ref{eq546}) also shows $\sum_{j\in\bbbz}|(\varDelta^L_{h}f_j)(x)|<\infty$ for every $k\in\bbbz$ and for almost every $x\in\bbbr^n,h\in A_k$. Together with (\ref{eq513}), we have reached the conclusion that
\begin{equation}\label{eq448}
\varDelta^L_h f=\sum_{j\in\bbbz}\varDelta^L_h f_j(x)\text{ in the sense of }\Sw'(\bbbr^n)/\mathscr{P}(\bbbr^n)
\end{equation}
for every $k\in\bbbz$ and almost every $h\in A_k, x\in\bbbr^n$, and the tempered distribution $\varDelta^L_h f$ has a function representative which is the pointwise limit of the series $\sum_{j\in\bbbz}\varDelta^L_h f_j(x)$. Furthermore, integration of $\varDelta^L_h f$ with respect to the Lebesgue measure is justified, and the inequality
\begin{align}\label{eq547}
&\|(\int_{\bbbr^n}|h|^{-sq}|\varDelta^L_{h}f|^q\frac{dh}{|h|^n})^{\frac{1}{q}}\|_{L^p(\bbbr^n)} \nonumber\\
&\lesssim\|(\sum_{k\in\bbbz}2^{k(sq+n)}\int_{A_k}(\sum_{j\in\bbbz}|(\varDelta^L_{h}f_j)(x)|)^q dh)^{\frac{1}{q}}\|_{L^p(\bbbr^n)}
\end{align}
is also validated. Therefore the proof of Theorem \ref{theorem2} (i) is now complete.

Now we prove Theorem \ref{theorem2} (ii) when $f$ is a function, $0<p<\infty$, $0<q<1$ and $\sigma_{pq}+\tilde{\sigma}_{pq}<s<\infty$. Without loss of generality, we also assume the right side of (\ref{eq415}) is finite, otherwise inequality (\ref{eq415}) is trivial. To do this, recall that $spt.\FT_n\psi\subseteq A'=\{\xi\in\bbbr^n:\frac{1}{2}\leq|\xi|<2\}$ and by Taylor expansion of $e^{2\pi it}$, we have
\begin{equation}\label{eq416}
(e^{2\pi it}-1)^L=(2\pi it)^L(1+O(2\pi it))
\end{equation}
and there exists a sufficiently large positive integer $m_0$ such that
\begin{equation}\label{eq417}
0<|t|<2^{2-m_0}\text{ implies }|(e^{2\pi it}-1)^L|>0.
\end{equation}
For a unit vector $\theta\in\unitsph$, we can find $\delta>0$ so small that if $\xi\in A'\subseteq\bbbr^n$ and $\frac{1}{4}\leq|\theta\cdot\xi|<2$, then for all other $\theta'$ in the spherical cap $C_{\theta}:=\{\theta'\in\unitsph:|\theta'-\theta|<\delta\}$, we also have $\frac{1}{4}\leq|\theta'\cdot\xi|<2$. We choose properly distributed unit vectors $\theta_1,\theta_2,\cdots,\theta_M$ where $M\in\bbbn$ is sufficiently large so that the spherical caps $C_1,C_2,\cdots,C_M$, respectively associated with $\theta_1,\theta_2,\cdots,\theta_M$ in the above way, cover the unit sphere $\unitsph$. For each cap $C_l$, $1\leq l\leq M$, we consider the set
\begin{equation}\label{eq418}
P_l:=\{\xi\in\bbbr^n:\frac{1}{2}\leq|\xi|<2,\frac{\xi}{|\xi|}\in C_l\},
\end{equation}
then from the construction of $\{C_l\}_{l=1}^M$, we have that
\begin{equation}\label{eq419}
\frac{1}{4}\leq|\theta\cdot\xi|<2\qquad\text{for all }\xi\in P_l\text{ and }\theta\in C_l
\end{equation}
and
\begin{equation}\label{eq420}
\bigcup_{l=1}^M P_l=A'.
\end{equation}
We use a partition of unity associated with $\{P_l\}_{l=1}^M$ by smooth functions $\{\rho_l\}_{l=1}^M$ with compact supports and $\{\rho_l\}_{l=1}^M$ also satisfy
\begin{equation}\label{eq421}
\sum_{l=1}^M\rho_l(\xi)=1\text{ if }\xi\in A'\qquad\text{and}\qquad spt.\rho_l\bigcap A'\subseteq P_l\text{ for each }l.
\end{equation}
Recall the definition of $\FT_n\phi$ given in (\ref{eq345}), we pick a large positive integer $J>m_0$ and the value of $J$ will be determined later, then we have for each $k\in\bbbz$
\begin{equation}\label{eq422}
\FT_n\phi(2^{m_0-J-k}\xi)=1\qquad\text{if}\qquad|\xi|\leq 2^{k+J-m_0},
\end{equation}
and by (\ref{eq347}),
\begin{equation}\label{eq423}
\FT_n\phi(2^{m_0-J-k}\xi)=1-\sum_{j=1}^{\infty}\FT_n\psi(2^{m_0-J-k-j}\xi)=1-\sum_{j=J+1}^{\infty}\FT_n\psi(2^{m_0-k-j}\xi).
\end{equation}
Furthermore if $\tau\in[1,2]$, $\theta\in C_l$, $2^{m_0-k}\xi\in spt.\FT_n\psi\bigcap spt.\rho_l\subseteq P_l$, $1\leq l\leq M$ then (\ref{eq417}) and (\ref{eq419}) tell us that
\begin{equation}\label{eq424}
0<2^{-m_0-2}\leq 2^{-k}\tau|\xi\cdot\theta|<2^{2-m_0},
\end{equation}
and
\begin{equation}\label{eq425}
|(e^{2\pi i2^{-k}\tau\theta\cdot\xi}-1)^L|>0.
\end{equation}
Hence if we let
\begin{equation}\label{eq426}
\lambda_{l,\tau\theta}(\xi):=\frac{\FT_n\psi(2^{m_0}\xi)\rho_l(2^{m_0}\xi)}{(e^{2\pi i\tau\theta\cdot\xi}-1)^L},
\end{equation}
then $\lambda_{l,\tau\theta}(2^{-k}\xi)$ is a well-defined function in $C_c^{\infty}(\bbbr^n)$ for every $k\in\bbbz$. Using formula (\ref{eq344}), we have
\begin{align}\label{eq427}
&|\iFT_n(\lambda_{l,\tau\theta}(2^{-k}\xi))*[\varDelta^L_{2^{-k}\tau\theta}(\phi_{2^{m_0-k-J}}*f)](x)| \nonumber\\
&\lesssim\int_{\bbbr^n}|\iFT_n(\lambda_{l,\tau\theta}(2^{-k}\xi))(y)\cdot
          \varDelta^L_{2^{-k}\tau\theta}(\phi_{2^{m_0-k-J}}*f)(x-y)|dy.
\end{align}
The Fourier transform of the Schwartz function
$$y\mapsto\iFT_n(\lambda_{l,\tau\theta}(2^{-k}\xi))(y)\cdot\varDelta^L_{2^{-k}\tau\theta}(\phi_{2^{m_0-k-J}}*f)(x-y)$$
is supported in $\{\xi\in\bbbr^n:|\xi|\lesssim 2^{k+J-m_0}\}$. Since $0<r<\min\{p,q\}<1$ as mentioned in Definition \ref{definition2}, we use (\ref{eq422}), observe the simple fact that both $\int_1^2\frac{d\tau}{\tau}$ and $\Haus^{n-1}(C_l)$ are fixed positive finite constants, and then apply Lemma \ref{lemma7} to (\ref{eq427}) and obtain
\begin{align}\label{eq428}
&|\iFT_n(\FT_n\psi(2^{m_0-k}\xi)\rho_l(2^{m_0-k}\xi)\FT_n f)(x)|^r \nonumber\\
&=\mvint_{[1,2]}\mvint_{C_l}|\iFT_n(\FT_n\psi(2^{m_0-k}\xi)\rho_l(2^{m_0-k}\xi) \nonumber\\
&\quad\cdot\FT_n\phi(2^{m_0-J-k}\xi)\FT_n f)(x)|^r
   d\Haus^{n-1}(\theta)\frac{d\tau}{\tau} \nonumber\\
&=\mvint_{[1,2]}\mvint_{C_l}|\iFT_n(\frac{\FT_n\psi(2^{m_0-k}\xi)\rho_l(2^{m_0-k}\xi)}{(e^{2\pi i2^{-k}\tau\theta\cdot\xi}-1)^L}
 \nonumber \\
&\quad\cdot(e^{2\pi i2^{-k}\tau\theta\cdot\xi}-1)^L\FT_n\phi(2^{m_0-J-k}\xi)\FT_n f)(x)|^r
   d\Haus^{n-1}(\theta)\frac{d\tau}{\tau} \nonumber\\
&=\mvint_{[1,2]}\mvint_{C_l}
   |\iFT_n(\lambda_{l,\tau\theta}(2^{-k}\xi))*[\varDelta^L_{2^{-k}\tau\theta}(\phi_{2^{m_0-k-J}}*f)](x)|^r
   d\Haus^{n-1}(\theta)\frac{d\tau}{\tau}\nonumber \\
&\lesssim 2^{(J+k-m_0)n(1-r)}\int_1^2\int_{C_l}\int_{\bbbr^n}
         |\iFT_n(\lambda_{l,\tau\theta}(2^{-k}\xi))(y)|^r \nonumber\\
&\quad\cdot|\varDelta^L_{2^{-k}\tau\theta}(\phi_{2^{m_0-k-J}}*f)(x-y)|^r
         dy d\Haus^{n-1}(\theta) \frac{d\tau}{\tau}.
\end{align}
We let $k=0$ in (\ref{eq424}) and pick $m_0$ so large that conditions of Lemma \ref{lemma2} are satisfied. Applying Lemma \ref{lemma2} to the smooth function $\FT_n\psi(2^{m_0}\xi)\rho_l(2^{m_0}\xi)$ whose support set is compactly contained in $P_l$ yields that for a sufficiently large positive integer $N$, whose value will be determined later, we can find a constant $C$ such that
\begin{equation}\label{eq429}
|\iFT_n\lambda_{l,\tau\theta}(x)|\leq\frac{C}{(1+|x|)^N}\qquad\text{for all }x\in\bbbr^n,
\end{equation}
and the constant $C$ may depend on $\psi,\rho_l,m_0,L,N$ but it is independent of $\tau\in[1,2]$ and $\theta\in C_l$. Recall that $A_{k-m}$ denotes the annulus $\{y\in\bbbr^n:2^m\leq 2^k|y|<2^{m+1}\}$ for integers $k,m$. With (\ref{eq429}), we can estimate the most inside integral at the end of (\ref{eq428}) as follows,
\begin{align}\label{eq430}
&\int_{\bbbr^n}|\iFT_n(\lambda_{l,\tau\theta}(2^{-k}\xi))(y)|^r\cdot
   |\varDelta^L_{2^{-k}\tau\theta}(\phi_{2^{m_0-k-J}}*f)(x-y)|^r dy \nonumber\\
&=\sum_{m\in\bbbz}\int_{A_{k-m}} 2^{knr}|\iFT_n\lambda_{l,\tau\theta}(2^k y)|^r\cdot
   |\varDelta^L_{2^{-k}\tau\theta}(\phi_{2^{m_0-k-J}}*f)(x-y)|^r dy \nonumber\\
&\lesssim\sum_{m<0}2^{knr}\int_{A_{k-m}}|\varDelta^L_{2^{-k}\tau\theta}(\phi_{2^{m_0-k-J}}*f)(x-y)|^r dy \nonumber\\
&\quad+\sum_{m\geq 0}2^{knr-mNr}\int_{A_{k-m}}|\varDelta^L_{2^{-k}\tau\theta}(\phi_{2^{m_0-k-J}}*f)(x-y)|^r dy \nonumber\\
&\lesssim\sum_{m<0}2^{kn(r-1)+mn}\mvint_{A_{k-m}}
         |\varDelta^L_{2^{-k}\tau\theta}(\phi_{2^{m_0-k-J}}*f)(x-y)|^r dy \nonumber\\
&\quad+\sum_{m\geq 0}2^{kn(r-1)+m(n-Nr)}\mvint_{A_{k-m}}|\varDelta^L_{2^{-k}\tau\theta}(\phi_{2^{m_0-k-J}}*f)(x-y)|^r dy.
\end{align}
We insert (\ref{eq430}) into (\ref{eq428}), apply Fubini's Theorem to switch the order of integration, use the following simple estimate
\begin{align}\label{eq431}
&\mvint_{A_{k-m}}\int_1^2\int_{C_l}|\varDelta^L_{2^{-k}\tau\theta}(\phi_{2^{m_0-k-J}}*f)(x-y)|^r
          d\Haus^{n-1}(\theta)\frac{d\tau}{\tau}dy \nonumber\\
&\lesssim\mvint_{|y|\leq 2^{m+1-k}}\int_1^2\int_{\unitsph}|\varDelta^L_{2^{-k}\tau\theta}(\phi_{2^{m_0-k-J}}*f)(x-y)|^r
          d\Haus^{n-1}(\theta)\frac{d\tau}{\tau}dy \nonumber\\
&\lesssim\HLmax_n(\int_1^2\int_{\unitsph}|\varDelta^L_{2^{-k}\tau\theta}(\phi_{2^{m_0-k-J}}*f)|^r
          d\Haus^{n-1}(\theta)\frac{d\tau}{\tau})(x),
\end{align}
and also pick $N$ so that $n-Nr<0$, then we obtain the estimate
\begin{align}\label{eq432}
&|\iFT_n(\FT_n\psi(2^{m_0-k}\xi)\rho_l(2^{m_0-k}\xi)\FT_n f)(x)|^r \nonumber\\
&\lesssim 2^{(J-m_0)n(1-r)}(\sum_{m<0}2^{mn}+\sum_{m\geq 0}2^{m(n-Nr)}) \nonumber\\
&\quad\cdot\HLmax_n(\int_1^2\int_{\unitsph}|\varDelta^L_{2^{-k}\tau\theta}(\phi_{2^{m_0-k-J}}*f)|^r
      d\Haus^{n-1}(\theta)\frac{d\tau}{\tau})(x) \nonumber\\
&\lesssim 2^{(J-m_0)n(1-r)}\HLmax_n(\int_1^2\int_{\unitsph}|\varDelta^L_{2^{-k}\tau\theta}(\phi_{2^{m_0-k-J}}*f)|^r
          d\Haus^{n-1}(\theta)\frac{d\tau}{\tau})(x),
\end{align}
where $\HLmax_n$ is the Hardy-Littlewood maximal function, and we can obtain all these inequalities above because the constant $C$ in (\ref{eq429}) does not rely on $\tau\in[1,2]$ and $\theta\in C_l\subseteq\unitsph$. Recall (\ref{eq421}) and the fact that $0<r<\min\{p,q\}<1$, then we have
\begin{align}\label{eq433}
&|\psi_{2^{m_0-k}}*f(x)| \nonumber\\
&=|\sum_{l=1}^M\iFT_n(\FT_n\psi(2^{m_0-k}\xi)\rho_l(2^{m_0-k}\xi)\FT_n f)(x)| \nonumber\\
&\lesssim(\sum_{l=1}^M|\iFT_n(\FT_n\psi(2^{m_0-k}\xi)\rho_l(2^{m_0-k}\xi)\FT_n f)(x)|^r)^{\frac{1}{r}} \nonumber\\
&\lesssim 2^{(J-m_0)n(\frac{1}{r}-1)}\HLmax_n(\int_1^2\int_{\unitsph}|\varDelta^L_{2^{-k}\tau\theta}(\phi_{2^{m_0-k-J}}*f)|^r
          d\Haus^{n-1}(\theta)\frac{d\tau}{\tau})(x)^{\frac{1}{r}}.
\end{align}
We insert (\ref{eq433}) into $\|f\|_{\Fspq}$ below, incorporate those coefficients that contain $m_0$ into constants since $m_0$ will be fixed, apply Lemma \ref{lemma6} and then we can obtain
\begin{align}\label{eq434}
&\|f\|_{\Fspq} \nonumber\\
&=2^{-sm_0}\|\{2^{ks}|\psi_{2^{m_0-k}}*f|\}_{k\in\bbbz}\|_{L^p(l^q)} \nonumber\\
&\lesssim 2^{Jn(\frac{1}{r}-1)}\|\{2^{ks}(\int_1^2\int_{\unitsph}|\varDelta^L_{2^{-k}\tau\theta}(\phi_{2^{m_0-k-J}}*f)(\cdot)|^r \nonumber\\
&\quad d\Haus^{n-1}(\theta)\frac{d\tau}{\tau})^{\frac{1}{r}}\}_{k\in\bbbz}\|_{L^p(l^q)}.
\end{align}
We use H\"{o}lder's inequality to obtain
\begin{align}\label{eq435}
&(\int_1^2\int_{\unitsph}|\varDelta^L_{2^{-k}\tau\theta}(\phi_{2^{m_0-k-J}}*f)(\cdot)|^r
          d\Haus^{n-1}(\theta)\frac{d\tau}{\tau})^{\frac{1}{r}} \nonumber\\
&\lesssim(\int_1^2\int_{\unitsph}|\varDelta^L_{2^{-k}\tau\theta}(\phi_{2^{m_0-k-J}}*f)(\cdot)|^q
          d\Haus^{n-1}(\theta)\frac{d\tau}{\tau})^{\frac{1}{q}}.
\end{align}
Inserting (\ref{eq435}) into (\ref{eq434}) yields $\|f\|_{\Fspq}$ can be estimated from above by
\begin{equation}\label{eq436}
2^{Jn(\frac{1}{r}-1)}
\|\{2^{ks}(\!\int_1^2\!\!\int_{\unitsph}\!|\varDelta^L_{2^{-k}\tau\theta}(\phi_{2^{m_0-k-J}}*f)(\cdot)|^q
d\Haus^{n-1}(\theta)\frac{d\tau}{\tau})^{\frac{1}{q}}\}_{k\in\bbbz}\|_{L^p(l^q)}.
\end{equation}
Recall (\ref{eq423}) and the notation $f_j=\psi_{2^{-j}}*f$ then we have
\begin{equation}\label{eq500}
\phi_{2^{m_0-k-J}}*f=f-\sum_{j=J+1}^{\infty}f_{k+j-m_0}\qquad\text{in the sense of }\Sw'(\bbbr^n).
\end{equation}
We can use an argument like the one for deducing (\ref{eq514}) to obtain
\begin{equation}\label{eq548}
\varDelta^L_{2^{-k}\tau\theta}(\phi_{2^{m_0-k-J}}*f)=\varDelta^L_{2^{-k}\tau\theta}f
-\sum_{j=J+1}^{\infty}\varDelta^L_{2^{-k}\tau\theta}f_{k+j-m_0}\qquad\text{in the sense of }\Sw'(\bbbr^n).
\end{equation}
Inferring from (\ref{eq500}) and assuming the validity of decomposition for now, then (\ref{eq436}) can be estimated from above by the sum of the following two terms,
\begin{equation}\label{eq437}
2^{Jn(\frac{1}{r}-1)}
\|\{2^{ks}(\int_1^2\int_{\unitsph}|\varDelta^L_{2^{-k}\tau\theta}f(\cdot)|^q
d\Haus^{n-1}(\theta)\frac{d\tau}{\tau})^{\frac{1}{q}}\}_{k\in\bbbz}\|_{L^p(l^q)},
\end{equation}
and
\begin{equation}\label{eq438}
2^{Jn(\frac{1}{r}-1)}
\|\{2^{ks}(\int_1^2\int_{\unitsph}|\varDelta^L_{2^{-k}\tau\theta}(\sum_{j=J+1}^{\infty}f_{k+j-m_0})(\cdot)|^q
d\Haus^{n-1}(\theta)\frac{d\tau}{\tau})^{\frac{1}{q}}\}_{k\in\bbbz}\|_{L^p(l^q)}.
\end{equation}
For the first term, we use the change of variable formulas $t=2^{-k}\tau$ for $\tau\in[1,2]$ and $h=t\theta$ for $\theta\in\unitsph$ in a sequence and we can get
\begin{equation}\label{eq439}
\int_1^2\int_{\unitsph}|\varDelta^L_{2^{-k}\tau\theta}f(x)|^q
d\Haus^{n-1}(\theta)\frac{d\tau}{\tau}=
\int_{A_k}\frac{|\varDelta^L_{h}f(x)|^q}{|h|^n}dh,
\end{equation}
where $A_k$ is the annulus $\{h\in\bbbr^n:2^{-k}\leq|h|<2^{1-k}\}$, and hence
\begin{equation}\label{eq440}
(\ref{eq437})\lesssim 2^{Jn(\frac{1}{r}-1)}\cdot
\|(\int_{\bbbr^n}\frac{|\varDelta^L_{h}f(\cdot)|^q}{|h|^{n+sq}}dh)^{\frac{1}{q}}\|_{L^p(\bbbr^n)}<\infty,
\end{equation}
and the value of the large positive integer $J$ will be determined later. For the second term (\ref{eq438}), we begin with the same change of variable as in (\ref{eq439}) and obtain
\begin{align}\label{eq549}
&\int_1^2\int_{\unitsph}(\sum_{j=J+1}^{\infty}|\varDelta^L_{2^{-k}\tau\theta}f_{k+j-m_0}(x)|)^q
         d\Haus^{n-1}(\theta)\frac{d\tau}{\tau} \nonumber\\
&\lesssim 2^{kn}\int_{A_k}(\sum_{j=J+1}^{\infty}|\varDelta^L_{h}f_{k+j-m_0}(x)|)^q dh \nonumber\\
&\lesssim\sum_{j=J+1}^{\infty}2^{kn}\int_{A_k}|\varDelta^L_{h}f_{k+j-m_0}(x)|^q dh \nonumber\\
&\lesssim\sum_{j=J+1}^{\infty}2^{(j-m_0)n(\frac{q}{r}-1)}\HLmax_n(|f_{k+j-m_0}|^r)(x)^{\frac{q}{r}},
\end{align}
where in the penultimate expression of (\ref{eq549}) we used the condition $0<q<1$, and in the last expression of (\ref{eq549}) we used estimate (\ref{eq544}) since $k+j-m_0>k+J-m_0>k$. Therefore we have the estimate
\begin{align}\label{eq551}
&2^{Jn(\frac{1}{r}-1)}
\|\{2^{ks}(\int_1^2\!\!\!\!\int_{\unitsph}\!\!(\!\!\sum_{j=J+1}^{\infty}\!\!|\varDelta^L_{2^{-k}\tau\theta}f_{k+j-m_0}(\cdot)|)^q
d\Haus^{n-1}(\theta)\frac{d\tau}{\tau})^{\frac{1}{q}}\}_{k\in\bbbz}\|_{L^p(l^q)} \nonumber\\
&\lesssim 2^{Jn(\frac{1}{r}-1)}
\|(\sum_{k\in\bbbz}\sum_{j=J+1}^{\infty}2^{ksq}\cdot 2^{jqn(\frac{1}{r}-\frac{1}{q})}
\HLmax_n(|f_{k+j-m_0}|^r)(\cdot)^{\frac{q}{r}})^{\frac{1}{q}}\|_{L^p(\bbbr^n)} \nonumber\\
&\lesssim 2^{Jn(\frac{1}{r}-1)}
\|(\sum_{j=J+1}^{\infty}2^{jq[n(\frac{1}{r}-\frac{1}{q})-s]}\cdot
\sum_{k\in\bbbz}2^{ksq}\HLmax_n(|f_{k}|^r)(\cdot)^{\frac{q}{r}})^{\frac{1}{q}}\|_{L^p(\bbbr^n)} \nonumber\\
&\lesssim 2^{J[n(\frac{1}{r}-1)+n(\frac{1}{r}-\frac{1}{q})-s]}\|f\|_{\Fspq},
\end{align}
where in the above calculation we incorporate coefficients containing $m_0$ into constants since $m_0$ is fixed. Recall the conditions $0<q<1$, $\sigma_{pq}+\tilde{\sigma}_{pq}<s$ and $0<r<\min\{p,q\}$. If $\min\{p,q\}=q$, then the condition $\sigma_{pq}+\tilde{\sigma}_{pq}<s$ means $s>n(\frac{1}{q}-1)$ and we can pick $r$ sufficiently close to $q$ so that
\begin{equation}\label{eq446}
s>n(\frac{1}{r}-1)+n(\frac{1}{r}-\frac{1}{q})>n(\frac{1}{r}-\frac{1}{q}).
\end{equation}
If $\min\{p,q\}=p$, then the condition $\sigma_{pq}+\tilde{\sigma}_{pq}<s$ means $s>n(\frac{1}{p}-1)+n(\frac{1}{p}-\frac{1}{q})$ and we can pick $r$ sufficiently close to $p$ so that (\ref{eq446}) still holds true. Hence by invoking Lemma \ref{lemma6}, the last inequality (\ref{eq551}) is justified. We also infer from the assumption $f\in\Fspq$ and inequality (\ref{eq551}) that $\sum_{j=J+1}^{\infty}|\varDelta^L_{2^{-k}\tau\theta}f_{k+j-m_0}(x)|<\infty$ for every $k\in\bbbz$, almost every $\tau\in[1,2],\theta\in\unitsph,x\in\bbbr^n$. Therefore (\ref{eq548}), (\ref{eq440}), the above inference and the supposition of $f$ being a function validate the decomposition
\begin{equation}\label{eq552}
\varDelta^L_{2^{-k}\tau\theta}(\phi_{2^{m_0-k-J}}*f)(x)=\varDelta^L_{2^{-k}\tau\theta}f(x)
-\sum_{j=J+1}^{\infty}\varDelta^L_{2^{-k}\tau\theta}f_{k+j-m_0}(x)
\end{equation}
in the sense of $\Sw'(\bbbr^n)$ for every $k\in\bbbz$, almost every $\tau\in[1,2],\theta\in\unitsph,x\in\bbbr^n$ when $0<p<\infty$, $0<q<1$ and $\sigma_{pq}+\tilde{\sigma}_{pq}<s<\infty$, furthermore estimating (\ref{eq436}) from above by the sum of (\ref{eq437}) and (\ref{eq438}) is justified, moreover (\ref{eq438}) can be estimated from above by the beginning expression of (\ref{eq551}) and hence by the ending expression of (\ref{eq551}). We have reached the conclusion
\begin{align}\label{eq447}
\|f\|_{\Fspq}
&\leq C'2^{Jn(\frac{1}{r}-1)}\cdot\|(\int_{\bbbr^n}\frac{|\varDelta^L_{h}f(\cdot)|^q}{|h|^{n+sq}}dh)^{\frac{1}{q}}\|_{L^p(\bbbr^n)} \nonumber\\
&\quad+C'2^{J[n(\frac{1}{r}-1)+n(\frac{1}{r}-\frac{1}{q})-s]}\cdot\|f\|_{\Fspq},
\end{align}
where the constant $C'$ is independent of $J$. From (\ref{eq446}) we see that if we pick $J$ sufficiently large so that the coefficient $C'2^{J[n(\frac{1}{r}-1)+n(\frac{1}{r}-\frac{1}{q})-s]}$ is less than $\frac{1}{2}$ and then shift the term $C'2^{J[n(\frac{1}{r}-1)+n(\frac{1}{r}-\frac{1}{q})-s]}\cdot\|f\|_{\Fspq}$ to the left side of (\ref{eq447}), then we can finish the proof of the first part of Theorem \ref{theorem2} (ii).

Next we prove the second part of Theorem \ref{theorem2} (ii) when $f$ is a function, $0<p<\infty$, $1\leq q<\infty$ and $-n<s<\infty$. It seems that the same method as in the proof of the first part of Theorem \ref{theorem2} (ii) produces a worse result in the case $0<p<1\leq q<\infty$, therefore we use a different method to prove the second part. Still, we assume the right side of (\ref{eq415}) is finite. We use equalities (\ref{eq260}) and (\ref{eq261}) and integrate
$[(-1)^{L+1}\varDelta^L_{2^{-k}z}f(x)]$ against a Schwartz function $g(z)$ of chosen properties. We let $g$ be a radial Schwartz function whose radial Fourier transform $\FT_n g$ satisfies
\begin{equation}\label{eq297}
0\leq\FT_n g\leq 1,\quad\FT_n g\text{ is supported in }\{\xi\in\bbbr^n:\frac{1}{4}\leq|\xi|<4L\}
\end{equation}
and
\begin{equation}\label{eq298}
\FT_n g(\xi)=1\text{ on }\{\xi\in\bbbr^n:\frac{1}{2}\leq|\xi|<2L\}.
\end{equation}
Since $\FT_n g(0)=0$, then $\int_{\bbbr^n}g(z)dz=0$ and we obtain the equality
\begin{equation}\label{eq299}
\!\int_{\bbbr^n}\!\!\!\!g(z)[(-1)^{L+1}(\varDelta^L_{2^{-k}z}f)(x)]dz
=\!\int_{\bbbr^n}\!\!\!\!g(z)[\sum^L_{j=1}d_j f(x+2^{-k}jz)]dz
=\!\sum^L_{j=1}d_j g_{2^{-k}j}*f(x),
\end{equation}
where the kernel $G_k(z):=\sum^L_{j=1}d_j g_{2^{-k}j}(z)$ satisfies
\begin{equation}\label{eq300}
spt.\FT_n G_k\subseteq\{\xi\in\bbbr^n:2^{k-2}/L\leq|\xi|\leq 2^{k+2}L\},\text{ }
\FT_n G_k(\xi)=1\text{ if }2^{k-1}\leq|\xi|<2^{k+1}.
\end{equation}
We first estimate the term $\|\{2^{ks}G_k*f\}_{k\in\bbbz}\|_{L^p(l^q)}$. Since $g(z)$ is a bounded Schwartz function, then $|g(z)|\lesssim|z|^{-N'}$ for $0\neq z\in\bbbr^n$ and $N'$ can be a sufficiently large positive integer whose value will be determined later. Recall that $A_l=\{z\in\bbbr^n:2^{-l}\leq|z|<2^{1-l}\}$ and $A_0=\{h\in\bbbr^n:1\leq|h|<2\}$ and by (\ref{eq299}), we have for every $x\in\bbbr^n$
\begin{align}\label{eq553}
2^{ks}|(G_k*f)(x)|
&\lesssim 2^{ks}\int_{\bbbr^n}|g(z)|\cdot|(\varDelta^L_{2^{-k}z}f)(x)|dz \nonumber\\
&=2^{ks}\sum_{l\in\bbbz}\int_{A_l}|g(z)|\cdot|(\varDelta^L_{2^{-k}z}f)(x)|dz \nonumber\\
&=\sum_{l\in\bbbz}2^{ks-ln}\int_{A_0}|g(2^{-l}h)|\cdot|(\varDelta^L_{2^{-k-l}h}f)(x)|dh \nonumber\\
&\lesssim\sum_{l\geq 0}2^{ks-ln}\int_{A_0}|(\varDelta^L_{2^{-k-l}h}f)(x)|dh \nonumber\\
&\quad+\sum_{l<0}2^{ks+l(N'-n)}\int_{A_0}|(\varDelta^L_{2^{-k-l}h}f)(x)|dh.
\end{align}
Applying Minkowski's inequality for $\|\cdot\|_{l^q}$-norm for $1\leq q<\infty$ to the above inequality yields
\begin{align}\label{eq306}
&\|\{2^{ks}(G_k*f)(x)\}_{k\in\bbbz}\|_{l^q} \nonumber\\
&\lesssim\sum_{l\geq 0}2^{-l(n+s)}(\sum_{k\in\bbbz}2^{(k+l)sq}(\int_{A_0}|(\varDelta^L_{2^{-k-l}h}f)(x)|dh)^q)^{\frac{1}{q}} \nonumber\\
&\quad+\sum_{l<0}2^{l(N'-n-s)}(\sum_{k\in\bbbz}2^{(k+l)sq}(\int_{A_0}|(\varDelta^L_{2^{-k-l}h}f)(x)|dh)^q)^{\frac{1}{q}} \nonumber\\
&\lesssim(\sum_{k\in\bbbz}2^{ksq}(\int_{A_0}|(\varDelta^L_{2^{-k}h}f)(x)|dh)^q)^{\frac{1}{q}},
\end{align}
if $N'$ is chosen so that $N'>n+s>0$. Using H\"{o}lder's inequality for $1\leq q<\infty$, we have
\begin{equation}\label{eq307}
(\int_{A_0}|(\varDelta^L_{2^{-k}h}f)(x)|dh)^q\lesssim\int_{A_0}|(\varDelta^L_{2^{-k}h}f)(x)|^q dh.
\end{equation}
Inserting (\ref{eq307}) into (\ref{eq306}) and applying the appropriate change of variable $z=2^{-k}h$ and then inserting the resulting inequality into $\|\cdot\|_{L^p(\bbbr^n)}$ quasinorm yield
\begin{align}\label{eq308}
&\|\{2^{ks}G_k*f\}_{k\in\bbbz}\|_{L^p(l^q)} \nonumber\\
&\lesssim\|(\sum_{k\in\bbbz}2^{k(sq+n)}\int_{A_k}|(\varDelta^L_{z}f)(\cdot)|^q dz)^{\frac{1}{q}}\|_{L^p(\bbbr^n)} \nonumber\\
&\lesssim\|(\int_{\bbbr^n}|z|^{-sq}\cdot|(\varDelta^L_{z}f)(\cdot)|^q\frac{dz}{|z|^n})^{\frac{1}{q}}\|_{L^p(\bbbr^n)}.
\end{align}
From the first line of (\ref{eq553}) and the inequality (\ref{eq308}), we also deduce for every $k\in\bbbz$ and almost every $x\in\bbbr^n$, the integral on the left end of (\ref{eq299}) is absolutely convergent and hence well-defined. Thus for each $k\in\bbbz$ and almost every $x\in\bbbr^n$
\begin{equation}\label{eq301}
f_k(x)=\iFT_n(\FT_n\psi(2^{-k}\xi)\FT_n f)(x)=\psi_{2^{-k}}*G_k*f(x)
\end{equation}
and we argue as in Remark \ref{remark3} to obtain
\begin{align}\label{eq302}
&|\psi_{2^{-k}}*G_k*f(x)| \nonumber\\
&\lesssim\esssup_{z\in\bbbr^n}\frac{|\psi_{2^{-k}}*G_k*f(x-z)|}{(1+2^{k+2}L|z|)^{n/r}} \nonumber\\
&\lesssim\esssup_{z\in\bbbr^n}\int_{\bbbr^n}|\psi_{2^{-k}}(y)|(1+2^{k+2}L|y|)^{n/r}\cdot
          \frac{|G_k*f(x-z-y)|}{(1+2^{k+2}L|z+y|)^{n/r}}dy \nonumber\\
&\lesssim\PFSmax_n (G_k*f)(x)\cdot\int_{\bbbr^n}|\psi_{2^{-k}}(y)|(1+2^{k+2}L|y|)^{n/r}dy \nonumber\\
&\lesssim\PFSmax_n (G_k*f)(x),
\end{align}
and we recall that $\psi_{2^{-k}}(y)=2^{kn}\psi(2^ky)$ thus the constant in (\ref{eq302}) is independent of $k\in\bbbz$. From (\ref{eq301}), (\ref{eq302}) and (\ref{eq82}) of Remark \ref{remark6}, we deduce that
\begin{align}\label{eq303}
\|f\|_{\Fspq}
&=(\int_{\bbbr^n}(\sum_{k\in\bbbz}2^{ksq}|f_k(x)|^q)^{p/q}dx)^{1/p} \nonumber\\
&\lesssim(\int_{\bbbr^n}(\sum_{k\in\bbbz}2^{ksq}|\PFSmax_n (G_k*f)(x)|^q)^{p/q}dx)^{1/p} \nonumber\\
&\sim(\int_{\bbbr^n}(\sum_{k\in\bbbz}2^{ksq}|(G_k*f)(x)|^q)^{p/q}dx)^{1/p}.
\end{align}
Combining (\ref{eq303}) and (\ref{eq308}), we conclude the proof of the second part of Theorem \ref{theorem2} (ii).

For the case $q=\infty$, we first prove Theorem \ref{theorem2} (iii). We begin with estimating the term $\esssup_{k\in\bbbz}\esssup_{h\in A_k}2^{ks}\sum_{j\in\bbbz}|(\varDelta^L_{h}f_j)(x)|$ from above by the following
\begin{equation}\label{eq310}
\esssup_{k\in\bbbz}\sum_{j\leq k}2^{ks}\esssup_{h\in A_k}|(\varDelta^L_{h}f_j)(x)|
+\esssup_{k\in\bbbz}\sum_{j>k}2^{ks}\esssup_{h\in A_k}|(\varDelta^L_{h}f_j)(x)|.
\end{equation}
We pick $0<\varepsilon<\min\{s,L-s\}$ and estimate the first term of (\ref{eq310}) as follows
\begin{align}\label{eq311}
&\esssup_{k\in\bbbz}\sum_{j\leq k}2^{ks}\esssup_{h\in A_k}|(\varDelta^L_{h}f_j)(x)| \nonumber\\
&=\esssup_{k\in\bbbz}\sum_{j\leq k}2^{j\varepsilon}2^{-j\varepsilon+ks}\esssup_{h\in A_k}|(\varDelta^L_{h}f_j)(x)| \nonumber\\
&\lesssim\esssup_{k\in\bbbz}\sum_{j\leq k}2^{j\varepsilon}\esssup_{l\leq k} 2^{-l\varepsilon+ks}
          \esssup_{h\in A_k}|(\varDelta^L_{h}f_l)(x)| \nonumber\\
&\lesssim\esssup_{k\in\bbbz}\esssup_{l\leq k} 2^{-l\varepsilon+k(s+\varepsilon)}
          \esssup_{h\in A_k}|(\varDelta^L_{h}f_l)(x)|.
\end{align}
We use estimate (\ref{eq270}) and Remark \ref{remark12} to get
\begin{align}\label{eq312}
(\ref{eq311})
&\lesssim\esssup_{l\in\bbbz}\esssup_{k\geq l}2^{k(s+\varepsilon-L)}2^{l(L-\varepsilon)}\PFSmax_n f_l(x) \nonumber\\
&\lesssim\esssup_{l\in\bbbz}2^{ls}\PFSmax_n f_l(x) \nonumber\\
&\lesssim\esssup_{l\in\bbbz}2^{ls}\HLmax_n(|f_l|^r)(x)^{\frac{1}{r}} \nonumber\\
&\lesssim\HLmax_n(\esssup_{l\in\bbbz}2^{lsr}|f_l|^r)(x)^{\frac{1}{r}}.
\end{align}
We estimate the second term of (\ref{eq310}) from above by
\begin{align}\label{eq313}
&\esssup_{k\in\bbbz}\sum_{j>k}2^{ks}\esssup_{h\in A_k}|(\varDelta^L_{h}f_j)(x)| \nonumber\\
&=\esssup_{k\in\bbbz}\sum_{j>k}2^{-j\varepsilon}\cdot 2^{j\varepsilon+ks}\esssup_{h\in A_k}|(\varDelta^L_{h}f_j)(x)| \nonumber\\
&\lesssim\esssup_{k\in\bbbz}\sum_{j>k}2^{-j\varepsilon}\esssup_{l>k} 2^{l\varepsilon+ks}\esssup_{h\in
         A_k}|(\varDelta^L_{h}f_l)(x)| \nonumber\\
&\lesssim\esssup_{l\in\bbbz}\esssup_{k<l}2^{k(s-\varepsilon)}2^{l\varepsilon}\esssup_{h\in A_k}|(\varDelta^L_{h}f_l)(x)|.
\end{align}
From Lebesgue's differentiation theorem, we have $|f_l(x)|\lesssim\HLmax_n(|f_l|^r)(x)^{\frac{1}{r}}$ for almost every $x\in\bbbr^n$. Putting (\ref{eq272}) back into (\ref{eq271}) yields the estimate
\begin{equation}\label{eq273}
|(\varDelta^L_h f_l)(x)|\lesssim 2^{(l-k)n/r}\HLmax_n(|f_l|^r)(x)^{1/r}\quad\text{for }|h|\lesssim 2^{-k},l>k,
\end{equation}
where the constant in (\ref{eq273}) is independent of $h\in\bbbr^n$, $l,k\in\bbbz$. The value of $r\in(0,p)$ will be determined later. Inserting (\ref{eq273}) into (\ref{eq313}) yields
\begin{equation}\label{eq315}
(\ref{eq313})\lesssim\esssup_{l\in\bbbz}\esssup_{k<l}2^{k(s-\varepsilon-\frac{n}{r})}2^{l(\varepsilon+\frac{n}{r})}
\HLmax_n(|f_l|^r)(x)^{\frac{1}{r}}.
\end{equation}
Since $\frac{n}{s}<p$ and $s-\varepsilon-\frac{n}{r}\rightarrow s-\frac{n}{p}>0$ as $\varepsilon\rightarrow 0$ and $r\rightarrow p$, we can pick $\varepsilon$ sufficiently small and $r$ sufficiently close to $p$ so that $s-\varepsilon-\frac{n}{r}$ is a positive number and hence
\begin{equation}\label{eq316}
(\ref{eq315})\lesssim\esssup_{l\in\bbbz}2^{ls}\HLmax_n(|f_l|^r)(x)^{\frac{1}{r}}
\lesssim\HLmax_n(\esssup_{l\in\bbbz}2^{lsr}|f_l|^r)(x)^{\frac{1}{r}}.
\end{equation}
From the above discussion and the $L^{\frac{p}{r}}(\bbbr^n)$-boundedness of the Hardy-Littlewood maximal function, we have proven that
\begin{align}\label{eq317}
&\|\esssup_{k\in\bbbz}\esssup_{h\in A_k}2^{ks}\sum_{j\in\bbbz}|(\varDelta^L_{h}f_j)(\cdot)|\|_{L^p(\bbbr^n)} \nonumber\\
&\lesssim\|\HLmax_n(\esssup_{l\in\bbbz}2^{lsr}|f_l|^r)(\cdot)^{\frac{1}{r}}\|_{L^p(\bbbr^n)}\lesssim\|f\|_{\Fspinf}
\end{align}
for all $0<p<\infty$ and $\frac{n}{p}<s<L$. The above inequality and the assumption $f\in\Fspinf$ have shown $\sum_{j\in\bbbz}|(\varDelta^L_{h}f_j)(x)|<\infty$ for every $k\in\bbbz$, almost every $h\in A_k,x\in\bbbr^n$. In conjunction with (\ref{eq513}), we have justified the claim that
\begin{equation}\label{eq554}
\varDelta^L_h f=\sum_{j\in\bbbz}\varDelta^L_h f_j(x)\text{ in the sense of }\Sw'(\bbbr^n)/\mathscr{P}(\bbbr^n)
\end{equation}
for every $k\in\bbbz$, almost every $h\in A_k,x\in\bbbr^n$, and hence also the inequality
\begin{equation}\label{eq555}
\|\esssup_{h\in\bbbr^n}\frac{|\varDelta^L_{h}f|}{|h|^s}\|_{L^p(\bbbr^n)}\lesssim
\|\esssup_{k\in\bbbz}\esssup_{h\in A_k}2^{ks}\sum_{j\in\bbbz}|(\varDelta^L_{h}f_j)(\cdot)|\|_{L^p(\bbbr^n)}.
\end{equation}
Now (\ref{eq555}) and (\ref{eq317}) conclude the proof of Theorem \ref{theorem2} (iii).

To prove Theorem \ref{theorem2} (iv), we assume the right side of (\ref{eq318}) is finite and use the inequality (\ref{eq553}) with $N'>n+s>0$, and then we can deduce that
\begin{align}\label{eq320}
&\|\esssup_{k\in\bbbz}2^{ks}|G_k*f(\cdot)|\|_{L^p(\bbbr^n)} \nonumber\\
&\lesssim(\sum_{l\geq 0}2^{-l(n+s)}+\sum_{l<0}2^{l(N'-n-s)}) \nonumber\\
&\quad\cdot\|\esssup_{k\in\bbbz}2^{ks}\int_{A_0}|(\varDelta^L_{2^{-k}h}f)(\cdot)|dh\|_{L^p(\bbbr^n)} \nonumber\\
&\lesssim\|\esssup_{k\in\bbbz}\esssup_{h\in A_k} 2^{ks}|(\varDelta^L_{h}f)(\cdot)|\|_{L^p(\bbbr^n)} \nonumber\\
&\lesssim\|\esssup_{h\in\bbbr^n}\frac{|(\varDelta^L_{h}f)(\cdot)|}{|h|^s}\|_{L^p(\bbbr^n)}.
\end{align}
And the above estimate, in conjunction with the first line of (\ref{eq553}), shows the absolute convergence of the integral on the left end of (\ref{eq299}) for every $k\in\bbbz$ and almost every $x\in\bbbr^n$. Using equality (\ref{eq301}), estimate (\ref{eq302}), Remark \ref{remark12} and the $L^{\frac{p}{r}}(\bbbr^n)$-boundedness of Hardy-Littlewood maximal function in a sequence, we can also obtain
\begin{align}\label{eq319}
\|f\|_{\Fspinf}
&=\|\esssup_{k\in\bbbz}|2^{ks}f_k(\cdot)|\|_{L^p(\bbbr^n)} \nonumber\\
&\lesssim\|\esssup_{k\in\bbbz}2^{ks}\HLmax_n(|G_k*f|^r)(\cdot)^{\frac{1}{r}}\|_{L^p(\bbbr^n)} \nonumber\\
&\lesssim\|\HLmax_n(\esssup_{k\in\bbbz}2^{ksr}|G_k*f|^r)(\cdot)^{\frac{1}{r}}\|_{L^p(\bbbr^n)} \nonumber\\
&\lesssim\|\esssup_{k\in\bbbz}2^{ks}|G_k*f(\cdot)|\|_{L^p(\bbbr^n)}.
\end{align}
Inequalities (\ref{eq320}) and (\ref{eq319}) finish the proof of Theorem \ref{theorem2} (iv). The proof of Theorem \ref{theorem2} is now complete.
\end{proof}

\section{Triebel-Lizorkin space and iterated differences along coordinate axes}\label{proof.of.theorem6}
\begin{proof}[Proof Of Theorem \ref{theorem6}]
We first prove inequality (\ref{eq450}) when $0<p,q<\infty$, $\tilde{\sigma}^1_{pq}<s<L$ and $f$ is a tempered distribution in $\Sw'(\bbbr^n)$. Without loss of generality, we only need to prove the inequality for $j=1$, the cases for $j=2,\cdots,n$ can be proved in the same way. Recall that for $x\in\bbbr^n$, $x=(x_1,x_1')$ and $x_1'=(x_2,\cdots,x_n)\in\bbbr^{n-1}$. We still denote $f_l=\psi_{2^{-l}}*f$ and begin with estimating $\sum_{k\in\bbbz}2^{k(sq+1)}\int_{2^{-k}}^{2^{1-k}}(\sum_{l\in\bbbz}|\varDelta^L_{t,1}f_l(x)|)^q dt$ from above by the following
\begin{equation}\label{eq472}
\sum_{k\in\bbbz}2^{k(sq+1)}\int_{2^{-k}}^{2^{1-k}}(\sum_{l\leq k}|\varDelta^L_{t,1}f_l(x)|)^q dt+
\sum_{k\in\bbbz}2^{k(sq+1)}\int_{2^{-k}}^{2^{1-k}}(\sum_{l>k}|\varDelta^L_{t,1}f_l(x)|)^q dt.
\end{equation}
Using the same calculation technique exhibited in (\ref{eq276}) and (\ref{eq277}), if $0<\varepsilon<\min\{s,L-s\}$, we can obtain
\begin{align}
(\sum_{l\leq k}|\varDelta^L_{t,1}f_l(x)|)^q&\lesssim 2^{kq\varepsilon}\cdot
\sum_{l\leq k}2^{-lq\varepsilon}|\varDelta^L_{t,1}f_l(x)|^q,\label{eq473}\\
(\sum_{l>k}|\varDelta^L_{t,1}f_l(x)|)^q&\lesssim 2^{-kq\varepsilon}\cdot
\sum_{l>k}2^{lq\varepsilon}|\varDelta^L_{t,1}f_l(x)|^q.\label{eq474}
\end{align}
Inserting these estimates into (\ref{eq472}), we can estimate (\ref{eq472}) from above by
\begin{align}\label{eq475}
&\sum_{k\in\bbbz}2^{k(sq+\varepsilon q+1)}\sum_{l\leq k}
2^{-lq\varepsilon}\int_{2^{-k}}^{2^{1-k}} |\varDelta^L_{t,1}f_l(x)|^q dt \nonumber\\
&+\sum_{k\in\bbbz}2^{k(sq-\varepsilon q+1)}\sum_{l>k}2^{lq\varepsilon}
\int_{2^{-k}}^{2^{1-k}} |\varDelta^L_{t,1}f_l(x)|^q dt.
\end{align}
Now we give an important estimate for $\varDelta^L_{t,1}f_l(x)$. By using Mean Value Theorem consecutively with respect to the first coordinate, we obtain
$$\varDelta^L_{t,1}f_l(x)=\partial^{\alpha}f_l(x_1+\lambda t,x_1')\cdot t^L$$
for some $\lambda$ between $0$ and $L$ and $\alpha=(L,0,\cdots,0)$ is a multi-index. From Lemma \ref{lemma8}, we know for fixed $x_1'\in\bbbr^{n-1}$, the $1$-dimensional Peetre-Fefferman-Stein maximal function of $f_l(\cdot,x_1')$ is well-defined. Using the $1$-dimensional version of Remark \ref{remark4}, we have
$$|\partial^{\alpha} f_l(x_1+\lambda t,x_1')|\lesssim\PFSmax_1\partial^{\alpha} f_l(\cdot,x_1')(x_1+\lambda t)
\lesssim 2^{lL}\PFSmax_1 f_l(\cdot,x_1')(x_1+\lambda t).$$
Using $1$-dimensional version of Remark \ref{remark2} and assuming $t\lesssim 2^{-k}$, we can further obtain
$$\PFSmax_1 f_l(\cdot,x_1')(x_1+\lambda t)\lesssim(1+2^{l-k})^{\frac{1}{r}}\PFSmax_1 f_l(\cdot,x_1')(x_1),$$
where $0<r<\min\{p,q\}$. Therefore the estimate is given as follows
\begin{equation}\label{eq478}
|\varDelta^L_{t,1}f_l(x)|\lesssim 2^{(l-k)L}(1+2^{l-k})^{\frac{1}{r}}\PFSmax_1 f_l(\cdot,x_1')(x_1)
\quad\text{for}\quad|t|\lesssim 2^{-k}.
\end{equation}
For the first term in (\ref{eq475}), we use the above estimate (\ref{eq478}) and $1$-dimensional version of Lemma \ref{lemma4} to obtain
\begin{align}\label{eq479}
&\sum_{k\in\bbbz}2^{k(sq+\varepsilon q+1)}\sum_{l\leq k}
          2^{-lq\varepsilon}\int_{2^{-k}}^{2^{1-k}}\!\!\!\!\!\! |\varDelta^L_{t,1}f_l(x)|^q dt \nonumber\\
&\lesssim\sum_{l\in\bbbz}(\sum_{k\geq l}2^{kq(s+\varepsilon-L)})\cdot 2^{lq(L-\varepsilon)}\PFSmax_1 f_l(\cdot,x_1')(x_1)^q \nonumber\\
&\lesssim\sum_{l\in\bbbz}2^{lqs}\PFSmax_1 f_l(\cdot,x_1')(x_1)^q \nonumber\\
&\lesssim\sum_{l\in\bbbz}2^{lqs}\HLmax_1(|f_l(\cdot,x_1')|^r)(x_1)^{\frac{q}{r}},
\end{align}
since $(1+2^{l-k})^{\frac{q}{r}}$ is bounded from above by a constant when $l\leq k$, and $$\HLmax_1(|f_l(\cdot,x_1')|^r)(x_1)$$ is the $1$-dimensional Hardy-Littlewood maximal function of $|f_l(\cdot,x_1')|^r$ centered at $x_1$. For the second term in (\ref{eq475}), we use (\ref{eq260}) to get
\begin{equation}\label{eq480}
|\varDelta^L_{t,1}f_l(x)|\lesssim\sum_{m=0}^L|f_l(x_1+mt,x_1')|.
\end{equation}
When $0\leq m\leq L$, $|t|\lesssim 2^{-k}$ and $l>k$, we also have
\begin{equation}\label{eq481}
|f_l(x_1+mt,x_1')|
\lesssim(1+2^{l-k})^{\frac{1}{r}}\PFSmax_1 f_l(\cdot,x_1')(x_1)
\lesssim 2^{\frac{l-k}{r}}\HLmax_1(|f_l(\cdot,x_1')|^r)(x_1)^{\frac{1}{r}},
\end{equation}
by using $1$-dimensional versions of Remark \ref{remark12}, Remark \ref{remark2} and Lemma \ref{lemma4}, and constants are independent of $l,k,m,t$. And the following inequality is true for $0\leq m\leq L$
\begin{align}\label{eq556}
&\mvint_{[2^{-k},2^{1-k}]}|f_l(x_1+mt,x_1')|^r dt \nonumber\\
&\lesssim\mvint_{|t|\leq 2^{1-k}}|f_l(x_1+mt,x_1')|^r dt
\lesssim\HLmax_1(|f_l(\cdot,x_1')|^r)(x_1).
\end{align}
Applying estimates (\ref{eq480}), (\ref{eq481}) and (\ref{eq556}) yields the following
\begin{align}\label{eq557}
&2^k\int_{2^{-k}}^{2^{1-k}}|\varDelta^L_{t,1}f_l(x)|^q dt \nonumber\\
&\lesssim\sum_{m=0}^L 2^k\int_{2^{-k}}^{2^{1-k}}|f_l(x_1+mt,x_1')|^r\cdot|f_l(x_1+mt,x_1')|^{q-r}dt \nonumber\\
&\lesssim\sum_{m=0}^L\mvint_{[2^{-k},2^{1-k}]}|f_l(x_1+mt,x_1')|^r dt\cdot 2^{(l-k)(\frac{q}{r}-1)}
          \HLmax_1(|f_l(\cdot,x_1')|^r)(x_1)^{\frac{q}{r}-1} \nonumber\\
&\lesssim 2^{(l-k)(\frac{q}{r}-1)}\HLmax_1(|f_l(\cdot,x_1')|^r)(x_1)^{\frac{q}{r}}.
\end{align}
And estimate (\ref{eq557}) is true for $0<q<\infty$ and $l>k$. Therefore we can estimate the second term in (\ref{eq475}) as follows
\begin{align}\label{eq483}
&\sum_{k\in\bbbz}2^{k(sq-\varepsilon q+1)}\sum_{l>k}2^{lq\varepsilon}
          \int_{2^{-k}}^{2^{1-k}}|\varDelta^L_{t,1}f_l(x)|^q dt \nonumber\\
&\lesssim\sum_{k\in\bbbz}2^{kq(s-\varepsilon)}\sum_{l>k}2^{lq\varepsilon}\cdot
          2^{(l-k)(\frac{q}{r}-1)}\HLmax_1(|f_l(\cdot,x_1')|^r)(x_1)^{\frac{q}{r}} \nonumber\\
&\lesssim\sum_{l\in\bbbz}\sum_{k<l}2^{kq(s-\varepsilon-\frac{1}{r}+\frac{1}{q})}\cdot
          2^{lq(\varepsilon+\frac{1}{r}-\frac{1}{q})}\HLmax_1(|f_l(\cdot,x_1')|^r)(x_1)^{\frac{q}{r}} \nonumber\\
&\lesssim\sum_{l\in\bbbz}2^{lqs}\HLmax_1(|f_l(\cdot,x_1')|^r)(x_1)^{\frac{q}{r}},
\end{align}
where the last step is because the assumption $\tilde{\sigma}^1_{pq}<s$ indicates that $s-\varepsilon-\frac{1}{r}+\frac{1}{q}>0$ if we pick $\varepsilon$ sufficiently close to $0$ and $r$ sufficiently close to $\min\{p,q\}$. Combining (\ref{eq472}), (\ref{eq475}), (\ref{eq479}) and (\ref{eq483}) altogether, raising the power to $\frac{1}{q}$ and inserting the result into $\|\cdot\|_{L^p(\bbbr^n)}$ quasinorm yield
\begin{align}\label{eq484}
&\|(\sum_{k\in\bbbz}2^{k(sq+1)}\int_{2^{-k}}^{2^{1-k}}(\sum_{l\in\bbbz}
          |\varDelta^L_{t,1}f_l(x)|)^q dt)^{\frac{1}{q}}\|_{L^p(\bbbr^n)} \nonumber\\
&\lesssim(\int_{\bbbr^{n-1}}\int_{\bbbr}(\sum_{l\in\bbbz}2^{lqs}\HLmax_1(|f_l(\cdot,x_1')|^r)(x_1)^{\frac{q}{r}}
          )^{\frac{p/r}{q/r}}dx_1 dx_1')^{\frac{1}{p}} \nonumber\\
&\lesssim(\int_{\bbbr^{n-1}}\int_{\bbbr}(\sum_{l\in\bbbz}2^{lqs}|f_l(x_1,x_1')|^q)^{\frac{p}{q}}dx_1 dx_1')^{\frac{1}{p}}
          =\|f\|_{\Fspq},
\end{align}
where we also used the $1$-dimensional version of Lemma \ref{lemma6}, and inequality (\ref{eq484}) is true for $0<p,q<\infty$, $\tilde{\sigma}^1_{pq}<s<L$. The assumption $f\in\Fspq$ and inequality (\ref{eq484}) also tell the absolute convergence of the series $\sum_{l\in\bbbz}\varDelta^L_{t,1}f_l(x)$ for every $k\in\bbbz$ and almost every $t\in[2^{-k},2^{1-k}],x\in\bbbr^n$. In conjunction with (\ref{eq513}), we have proven the following claim that
\begin{equation}\label{eq510}
\varDelta^L_{t,1}f=\sum_{l\in\bbbz}\varDelta^L_{t,1}f_l(x)\text{ in the sense of }\Sw'(\bbbr^n)/\mathscr{P}(\bbbr^n)
\end{equation}
for every $k\in\bbbz$ and almost every $t\in[2^{-k},2^{1-k}],x\in\bbbr^n$ when $0<p,q<\infty$, $\tilde{\sigma}^1_{pq}<s<L$. Therefore the tempered distribution $\varDelta^L_{t,1}f$ has a function representative which is the pointwise limit of the series $\sum_{l\in\bbbz}\varDelta^L_{t,1}f_l(x)$ and integration of $\varDelta^L_{t,1}f$ with respect to Lebesgue measure is justified. Furthermore, we have obtained the following inequality
\begin{align}\label{eq558}
&\|(\int_0^{\infty}t^{-sq}|\varDelta^L_{t,1}f|^q\frac{dt}{t})^{\frac{1}{q}}\|_{L^p(\bbbr^n)} \nonumber\\
&\lesssim\|(\sum_{k\in\bbbz}2^{k(sq+1)}\int_{2^{-k}}^{2^{1-k}}
(\sum_{l\in\bbbz}|\varDelta^L_{t,1}f_l(x)|)^q dt)^{\frac{1}{q}}\|_{L^p(\bbbr^n)},
\end{align}
and then (\ref{eq484}) and (\ref{eq558}) conclude the proof of Theorem \ref{theorem6} (i).

Next, we show that inequality (\ref{eq451}) is true under the conditions of Theorem \ref{theorem6} (ii). We assume the right side of (\ref{eq451}) is finite, otherwise the inequality is trivial. We still use the sufficiently large positive integer $m_0$ given in (\ref{eq417}). Observe that if $\xi\in spt.\FT_n\psi\subseteq A'=\{\xi\in\bbbr^n:\frac{1}{2}\leq|\xi|<2\}$, then
$$\frac{\xi_1^2+\xi_2^2+\cdots+\xi_n^2}{n}\geq\frac{1}{4n}.$$
This means given a sufficiently small positive number $\delta$, there exists at least one $\xi_j$ such that $\delta\leq|\xi_j|<2$. Therefore we obtain the decomposition $A'=\bigcup_{j=1}^n A'_j$, where $A'_j=\{\xi\in\bbbr^n:\frac{1}{2}\leq|\xi|<2,\delta\leq|\xi_j|<2\}$. Let $\{\rho_j\}_{j=1}^n$ be the partition of unity associated with this decomposition, that is, each $\rho_j$ is a smooth function with compact support in $\bbbr^n$, and $spt.\rho_j$ is contained in a small neighborhood of $A'_j$, furthermore
\begin{equation}\label{eq589}
\sum_{j=1}^n\rho_j(\xi)=1\text{ if }\xi\in A'.
\end{equation}
Without loss of generality, we can assume that
\begin{equation}\label{eq590}
\frac{1}{2}\leq|\xi|<2\text{ and }\delta\leq|\xi_j|<2\text{ for }\xi\in spt.\rho_j.
\end{equation}
Then we have
\begin{equation}\label{eq485}
\|f\|_{\Fspq}\lesssim\sum_{j=1}^n 2^{-sm_0}
\|\{2^{ks}\iFT_n[\FT_n\psi(2^{m_0-k}\xi)\rho_j(2^{m_0-k}\xi)\FT_n f](x)\}_{k\in\bbbz}\|_{L^p(l^q)}.
\end{equation}
Thus to prove (\ref{eq451}), it is sufficient to prove
\begin{align}\label{eq486}
&\|\{2^{ks}\iFT_n[\FT_n\psi(2^{m_0-k}\xi)\rho_j(2^{m_0-k}\xi)\FT_n f](x)\}_{k\in\bbbz}\|_{L^p(l^q)} \nonumber\\
&\lesssim C_{0}\|(\int_0^{\infty}t^{-sq}|\varDelta^L_{t,j}f(\cdot)|^q\frac{dt}{t})^{\frac{1}{q}}\|_{L^p(\bbbr^n)}
          +C_{00}\|f\|_{\Fspq}
\end{align}
for every $j\in\{1,2,\cdots,n\}$ and $C_{00}$ is a positive constant that can be arbitrarily small. We only need to prove (\ref{eq486}) for $j=1$, and the cases for $j=2,\cdots,n$ can be proved in the same way. Notice that both $\FT_n\psi(2^{m_0-k}\xi)$ and $\rho_1(2^{m_0-k}\xi)$ are supported in a ball centered at $0$ of radius $2^{k+1-m_0}$ in $\bbbr^n$, thus using the argument of Remark \ref{remark3}, we have
\begin{align}\label{eq487}
&|\iFT_n[\FT_n\psi(2^{m_0-k}\xi)\rho_1(2^{m_0-k}\xi)\FT_n f](x)| \nonumber\\
&\lesssim\PFSmax_n\{\iFT_n[\FT_n\psi(2^{m_0-k}\xi)\rho_1(2^{m_0-k}\xi)\FT_n f]\}(x) \nonumber\\
&\lesssim\PFSmax_n\{\iFT_n[\rho_1(2^{m_0-k}\xi)\FT_n f]\}(x).
\end{align}
By Lemma \ref{lemma4}, Remark \ref{remark12} and Lemma \ref{lemma6}, we have
\begin{align}\label{eq488}
&\|\{2^{ks}\iFT_n[\FT_n\psi(2^{m_0-k}\xi)\rho_1(2^{m_0-k}\xi)\FT_n f](x)\}_{k\in\bbbz}\|_{L^p(l^q)} \nonumber\\
&\lesssim\|\{2^{ks}\iFT_n[\rho_1(2^{m_0-k}\xi)\FT_n f](x)\}_{k\in\bbbz}\|_{L^p(l^q)}.
\end{align}
If $1<\min\{p,q\}$ and $s\in\bbbr$, then we have
\begin{align}\label{eq489}
&\iFT_n[\rho_1(2^{m_0-k}\xi)\FT_n f](x) \nonumber\\
&=\mvint_{[1,2]}\iFT_n[\frac{\rho_1(2^{m_0-k}\xi)}{(e^{2\pi i\cdot 2^{-k}t\xi_1}-1)^L}\cdot
   (e^{2\pi i\cdot 2^{-k}t\xi_1}-1)^L \FT_n f](x)\frac{dt}{t} \nonumber\\
&=\mvint_{[1,2]}\iFT_n[\frac{\rho_1(2^{m_0-k}\xi)}{(e^{2\pi i\cdot 2^{-k}t\xi_1}-1)^L}]*
   \varDelta^L_{2^{-k}t,1}f(x)\frac{dt}{t}.
\end{align}
According to the support condition of $\rho_1$, when $\rho_1(2^{m_0}\xi)\neq 0$ and $t\in[1,2]$, we have $0<2^{-m_0}\delta\leq t|\xi_1|<2^{2-m_0}$ and thus $|(e^{2\pi i\cdot t\xi_1}-1)^L|\geq c>0$ for some constant $c$ independent of $t$ and $\xi_1$. Using the same method as in Lemma \ref{lemma2}, in particular since a similar condition like (\ref{eq409}) is satisfied because of the assumption on $\rho_1$, we can obtain
\begin{equation}\label{eq490}
|\iFT_n[\frac{\rho_1(2^{m_0}\xi)}{(e^{2\pi i\cdot t\xi_1}-1)^L}](y)|\lesssim
\frac{1}{(1+|y|)^N}
\end{equation}
for arbitrarily large positive integer $N$, and the constant is independent of $t$. We still use the notation $A_{k-l}=\{y\in\bbbr^n:2^{l-k}\leq|y|<2^{1+l-k}\}$ and hence
\begin{align}\label{eq491}
&|\iFT_n[\frac{\rho_1(2^{m_0-k}\xi)}{(e^{2\pi i\cdot 2^{-k}t\xi_1}-1)^L}]*\varDelta^L_{2^{-k}t,1}f(x)| \nonumber\\
&\lesssim\sum_{l\in\bbbz}\int_{A_{k-l}}2^{kn}|\iFT_n[\frac{\rho_1(2^{m_0}\xi)}{(e^{2\pi i\cdot t\xi_1}-1)^L}](2^k y)|\cdot
          |\varDelta^L_{2^{-k}t,1}f(x-y)|dy \nonumber\\
&\lesssim\sum_{l\leq 0}2^{ln}\mvint_{A_{k-l}}|\varDelta^L_{2^{-k}t,1}f(x-y)|dy \nonumber\\
&\quad+\sum_{l>0}2^{l(n-N)}\mvint_{A_{k-l}}|\varDelta^L_{2^{-k}t,1}f(x-y)|dy.
\end{align}
Insert (\ref{eq491}) into (\ref{eq489}), exchange the order of integration, and use the inequality
$$\mvint_{A_{k-l}}\int_{[1,2]}|\varDelta^L_{2^{-k}t,1}f(x-y)|\frac{dt}{t}dy\lesssim
\HLmax_n(\int_{[1,2]}|\varDelta^L_{2^{-k}t,1}f(\cdot)|\frac{dt}{t})(x),$$
then we can obtain
\begin{align}\label{eq492}
&|\iFT_n[\rho_1(2^{m_0-k}\xi)\FT_n f](x)| \nonumber\\
&\lesssim(\sum_{l\leq 0}2^{ln}+\sum_{l>0}2^{l(n-N)})\HLmax_n(\int_{[1,2]}|\varDelta^L_{2^{-k}t,1}f(\cdot)|\frac{dt}{t})(x) \nonumber\\
&\lesssim\HLmax_n(\int_{[1,2]}|\varDelta^L_{2^{-k}t,1}f(\cdot)|\frac{dt}{t})(x),
\end{align}
if we pick $N>n$. Inserting (\ref{eq492}) into (\ref{eq488}), applying Lemma \ref{lemma6} which requires the condition $1<\min\{p,q\}$, and also using H\"{o}lder's inequality for $1<q$ yield the following
\begin{align}\label{eq493}
&\|\{2^{ks}\iFT_n[\FT_n\psi(2^{m_0-k}\xi)\rho_1(2^{m_0-k}\xi)\FT_n f](x)\}_{k\in\bbbz}\|_{L^p(l^q)} \nonumber\\
&\lesssim\|\{2^{ks}\HLmax_n(\int_{[1,2]}|\varDelta^L_{2^{-k}t,1}f(\cdot)|\frac{dt}{t})(x)\}_{k\in\bbbz}\|_{L^p(l^q)} \nonumber\\
&\lesssim\|\{2^{ks}\int_{[1,2]}|\varDelta^L_{2^{-k}t,1}f(x)|\frac{dt}{t}\}_{k\in\bbbz}\|_{L^p(l^q)} \nonumber\\
&\lesssim\|(\sum_{k\in\bbbz}2^{ksq}\int_{[1,2]}|\varDelta^L_{2^{-k}t,1}f(x)|^q\frac{dt}{t})^{\frac{1}{q}}\|_{L^p(\bbbr^n)} \nonumber\\
&\lesssim\|(\int_0^{\infty}t^{-sq}|\varDelta^L_{t,1}f(x)|^q\frac{dt}{t})^{\frac{1}{q}}\|_{L^p(\bbbr^n)},
\end{align}
and this inequality is true for any $s\in\bbbr$. By now we have proven (\ref{eq486}) for $1\leq j\leq n$ and $C_{00}=0$, and inserting these inequalities back into (\ref{eq485}) proves (\ref{eq451}) under the conditions of Theorem \ref{theorem6} (ii) when $1<\min\{p,q\}$, $q<\infty$ and $s\in\bbbr$. Now we show that (\ref{eq451}) is still true under the conditions of Theorem \ref{theorem6} (ii) when $\min\{p,q\}\leq 1$, $q<\infty$ and $\sigma_{pq}+\tilde{\sigma}^1_{pq}<s<\infty$, we will have to use the $n$-dimensional Plancherel-Polya-Nikol'skij inequality and hence introduce $\sigma_{pq}$, a number depending on the dimension $n$, into the restriction of $s$. We use the function $\phi$ satisfying conditions (\ref{eq346}), (\ref{eq422}) and (\ref{eq423}), and $J$ is still a large positive integer whose value will be determined later. Because $spt.\rho_1(2^{m_0-k}\xi)\subseteq\{\xi\in\bbbr^n:|\xi|<2^{k+1-m_0}\}$, the $n$-dimensional Fourier transform of the Schwartz function
$$y\mapsto\iFT_n[\frac{\rho_1(2^{m_0-k}\xi)}{(e^{2\pi i\cdot 2^{-k}t\xi_1}-1)^L}](y)\cdot
\varDelta^L_{2^{-k}t,1}(\phi_{2^{m_0-J-k}}*f)(x-y)$$
is supported in a ball of radius about $2^{J+k-m_0}$, centered at the origin in $\bbbr^n$. Therefore by using Plancherel-Polya-Nikol'skij inequality or the more general Lemma \ref{lemma7} and the condition $0<r<\min\{p,q\}\leq 1$, we have
\begin{align}\label{eq494}
&|\iFT_n[\frac{\rho_1(2^{m_0-k}\xi)}{(e^{2\pi i\cdot 2^{-k}t\xi_1}-1)^L}]*
          [\varDelta^L_{2^{-k}t,1}(\phi_{2^{m_0-J-k}}*f)(\cdot)](x)|^r \nonumber\\
&\lesssim(\int_{\bbbr^n}|\iFT_n[\frac{\rho_1(2^{m_0-k}\xi)}{(e^{2\pi i\cdot 2^{-k}t\xi_1}-1)^L}](y)\cdot
          \varDelta^L_{2^{-k}t,1}(\phi_{2^{m_0-J-k}}*f)(x-y)|dy)^r \nonumber\\
&\lesssim 2^{(J+k-m_0)n(1-r)}\int_{\bbbr^n}2^{knr}
          |\iFT_n[\frac{\rho_1(2^{m_0}\xi)}{(e^{2\pi i\cdot t\xi_1}-1)^L}](2^k y)|^r \nonumber\\
&\quad\cdot|\varDelta^L_{2^{-k}t,1}(\phi_{2^{m_0-J-k}}*f)(x-y)|^r dy.
\end{align}
Recall that $\bbbr^n=\bigcup_{l\in\bbbz}A_{k-l}$ where $A_{k-l}$ is the annulus $\{y\in\bbbr^n:2^{l-k}\leq|y|<2^{1+l-k}\}$ and use (\ref{eq490}) with a sufficiently large positive integer $N'>\frac{n}{r}$, then we can estimate (\ref{eq494}) from above by
\begin{align}\label{eq495}
&2^{Jn(1-r)}\cdot\{\sum_{l\leq 0}2^{ln}\mvint_{A_{k-l}}|\varDelta^L_{2^{-k}t,1}(\phi_{2^{m_0-J-k}}*f)(x-y)|^r dy \nonumber\\
&\quad+\sum_{l>0}2^{l(n-N'r)}\mvint_{A_{k-l}}|\varDelta^L_{2^{-k}t,1}(\phi_{2^{m_0-J-k}}*f)(x-y)|^r dy\}.
\end{align}
Therefore from (\ref{eq417}) we know that when $\rho_1(2^{m_0-k}\xi)\neq 0$ and $t\in[1,2]$, $$|(e^{2\pi i\cdot 2^{-k}t\xi_1}-1)^L|>0,$$ and we can obtain
\begin{align}\label{eq496}
&|\iFT_n[\rho_1(2^{m_0-k}\xi)\FT_n f](x)|^r \nonumber\\
&=\mvint_{[1,2]}|\iFT_n[\frac{\rho_1(2^{m_0-k}\xi)}{(e^{2\pi i\cdot 2^{-k}t\xi_1}-1)^L}\cdot
   (e^{2\pi i\cdot 2^{-k}t\xi_1}-1)^L \nonumber\\
&\quad\cdot\FT_n\phi(2^{m_0-J-k}\xi)\FT_n f](x)|^r\frac{dt}{t} \nonumber\\
&=\mvint_{[1,2]}|\iFT_n[\frac{\rho_1(2^{m_0-k}\xi)}{(e^{2\pi i\cdot 2^{-k}t\xi_1}-1)^L}]*
   [\varDelta^L_{2^{-k}t,1}(\phi_{2^{m_0-J-k}}*f)(\cdot)](x)|^r\frac{dt}{t} \nonumber\\
&\lesssim 2^{Jn(1-r)}\cdot\{\sum_{l\leq 0}2^{ln}\mvint_{A_{k-l}}
          \int_1^2|\varDelta^L_{2^{-k}t,1}(\phi_{2^{m_0-J-k}}*f)(x-y)|^r\frac{dt}{t}dy \nonumber\\
&\quad+\sum_{l>0}2^{l(n-N'r)}\mvint_{A_{k-l}}\int_1^2|\varDelta^L_{2^{-k}t,1}(\phi_{2^{m_0-J-k}}*f)(x-y)|^r\frac{dt}{t}dy\}.
\end{align}
Using the fact that $\mvint_{A_{k-l}}\int_1^2|\varDelta^L_{2^{-k}t,1}(\phi_{2^{m_0-J-k}}*f)(x-y)|^r\frac{dt}{t}dy$ can be dominated by $$\HLmax_n(\int_1^2|\varDelta^L_{2^{-k}t,1}(\phi_{2^{m_0-J-k}}*f)(\cdot)|^r\frac{dt}{t})(x),$$ we obtain
\begin{align}\label{eq497}
&|\iFT_n[\rho_1(2^{m_0-k}\xi)\FT_n f](x)| \nonumber\\
&\lesssim 2^{Jn(\frac{1}{r}-1)}
          \HLmax_n(\int_1^2|\varDelta^L_{2^{-k}t,1}(\phi_{2^{m_0-J-k}}*f)(\cdot)|^r\frac{dt}{t})(x)^{\frac{1}{r}}.
\end{align}
Inserting (\ref{eq497}) into (\ref{eq488}) and using Lemma \ref{lemma6} and H\"{o}lder's inequality since $0<r<\min\{p,q\}$ yield
\begin{align}\label{eq498}
&\|\{2^{ks}\iFT_n[\FT_n\psi(2^{m_0-k}\xi)\rho_1(2^{m_0-k}\xi)\FT_n f](\cdot)\}_{k\in\bbbz}\|_{L^p(l^q)} \nonumber\\
&\lesssim 2^{Jn(\frac{1}{r}-1)}\cdot
          \|\{2^{ks}(\int_1^2|\varDelta^L_{2^{-k}t,1}(\phi_{2^{m_0-J-k}}*f)(\cdot)|^r\frac{dt}{t}
          )^{\frac{1}{r}}\}_{k\in\bbbz}\|_{L^p(l^q)} \nonumber\\
&\lesssim 2^{Jn(\frac{1}{r}-1)}\cdot
          \|(\sum_{k\in\bbbz}2^{ksq}\int_1^2|\varDelta^L_{2^{-k}t,1}(\phi_{2^{m_0-J-k}}*f)(\cdot)|^q\frac{dt}{t}
          )^{\frac{1}{q}}\|_{L^p(\bbbr^n)}.
\end{align}
Inferring from (\ref{eq500}), (\ref{eq514}) and assuming the validity of decomposition for now, then (\ref{eq498}) can be estimated from above by the sum of the two terms
\begin{align}\label{eq499}
&2^{Jn(\frac{1}{r}-1)}\cdot
 \|(\sum_{k\in\bbbz}2^{ksq}\int_1^2|\varDelta^L_{2^{-k}t,1}f(\cdot)|^q\frac{dt}{t})^{\frac{1}{q}}\|_{L^p(\bbbr^n)} \nonumber\\
& \quad\sim 2^{Jn(\frac{1}{r}-1)}\cdot
      \|(\int_0^{\infty}t^{-sq}|\varDelta^L_{t,1}f(\cdot)|^q\frac{dt}{t})^{\frac{1}{q}}\|_{L^p(\bbbr^n)}<\infty,
\end{align}
and
\begin{equation}\label{eq501}
2^{Jn(\frac{1}{r}-1)}\cdot
\|(\sum_{k\in\bbbz}2^{ksq}\int_1^2|\varDelta^L_{2^{-k}t,1}(\sum_{l=J+1}^{\infty}f_{k+l-m_0})
(\cdot)|^q\frac{dt}{t})^{\frac{1}{q}}\|_{L^p(\bbbr^n)}.
\end{equation}
To estimate (\ref{eq501}), we begin by applying the calculation method used for obtaining the second term of (\ref{eq475}) to the following term below and obtain
\begin{align}\label{eq561}
&2^{Jn(\frac{1}{r}-1)}\|(\sum_{k\in\bbbz}2^{ksq}\int_1^2(\sum_{l=J+1}^{\infty}|\varDelta^L_{2^{-k}t,1}f_{k+l-m_0}
          (\cdot)|)^q\frac{dt}{t})^{\frac{1}{q}}\|_{L^p(\bbbr^n)} \nonumber\\
&\lesssim 2^{Jn(\frac{1}{r}-1)-J\varepsilon}\|(\sum_{k\in\bbbz}\sum_{l>J}2^{ksq+k+lq\varepsilon}
          \int_{2^{-k}}^{2^{1-k}}|\varDelta^L_{t,1}f_{k+l-m_0}(\cdot)|^q dt)^{\frac{1}{q}}\|_{L^p(\bbbr^n)},
\end{align}
where we only need $\varepsilon>0$. Considering $x_1'=(x_2,\cdots,x_n)\in\bbbr^{n-1}$ is fixed for now, we use Lemma \ref{lemma8} and estimate (\ref{eq557}) since $k+l-m_0>k+J-m_0>k$ and then we can obtain the following
\begin{equation}\label{eq504}
2^k\int_{2^{-k}}^{2^{1-k}}|\varDelta^L_{t,1}f_{k+l-m_0}(x)|^q dt\lesssim
2^{l(\frac{q}{r}-1)}\HLmax_1(|f_{k+l-m_0}(\cdot,x_1')|^r)(x_1)^{\frac{q}{r}}.
\end{equation}
Inserting (\ref{eq504}) into (\ref{eq561}), because the assumption $\sigma_{pq}+\tilde{\sigma}^1_{pq}<s<\infty$ implies
\begin{equation}\label{eq505}
n(\frac{1}{r}-1)+(\frac{1}{r}-\frac{1}{q})<s\qquad\text{and}\qquad
\varepsilon+\frac{1}{r}-\frac{1}{q}<s
\end{equation}
when $r$ is sufficiently close to $\min\{p,q\}$ and $\varepsilon$ is sufficiently close to $0$, we can estimate (\ref{eq561}) from above by the following
\begin{equation}\label{eq506}
2^{Jn(\frac{1}{r}-1)-J\varepsilon}\cdot(\int_{\bbbr^n}(\sum_{l>J}2^{lq(\varepsilon+\frac{1}{r}-\frac{1}{q}-s)}\cdot
\sum_{k\in\bbbz}2^{ksq}\HLmax_1(|f_{k}(\cdot,x_1')|^r)(x_1)^{\frac{q}{r}})^{\frac{p}{q}}dx)^{\frac{1}{p}},
\end{equation}
and this term can be further estimated from above by
\begin{equation}\label{eq507}
2^{J[n(\frac{1}{r}-1)+(\frac{1}{r}-\frac{1}{q})-s]}\|f\|_{\Fspq},
\end{equation}
due to the $1$-dimensional version of Lemma \ref{lemma6}. Putting together (\ref{eq561}), (\ref{eq506}) and (\ref{eq507}) yields
\begin{align}\label{eq559}
&2^{Jn(\frac{1}{r}-1)}\|(\sum_{k\in\bbbz}2^{ksq}\int_1^2(\sum_{l=J+1}^{\infty}|\varDelta^L_{2^{-k}t,1}f_{k+l-m_0}
(\cdot)|)^q\frac{dt}{t})^{\frac{1}{q}}\|_{L^p(\bbbr^n)} \nonumber\\
&\lesssim 2^{J[n(\frac{1}{r}-1)+(\frac{1}{r}-\frac{1}{q})-s]}\|f\|_{\Fspq}.
\end{align}
Inequality (\ref{eq559}) and the assumption of $f$ being a member of $\Fspq$ also suggest that $\sum_{l=J+1}^{\infty}|\varDelta^L_{2^{-k}t,1}f_{k+l-m_0}(x)|<\infty$ for every $k\in\bbbz$ and almost every $t\in[1,2],x\in\bbbr^n$. From (\ref{eq514}), (\ref{eq499}), the above inference and the supposition of $f$ being a function, we can deduce that
\begin{equation}\label{eq560}
\varDelta^L_{2^{-k}t,1}(\phi_{2^{m_0-J-k}}*f)(x)=\varDelta^L_{2^{-k}t,1}f(x)
-\sum_{l=J+1}^{\infty}\varDelta^L_{2^{-k}t,1}f_{k+l-m_0}(x)
\end{equation}
in the sense of $\Sw'(\bbbr^n)$ for every $k\in\bbbz$ and almost every $t\in[1,2],x\in\bbbr^n$ when $\min\{p,q\}\leq 1$, $q<\infty$ and $\sigma_{pq}+\tilde{\sigma}^1_{pq}<s<\infty$, furthermore estimating (\ref{eq498}) from above by the sum of (\ref{eq499}) and (\ref{eq501}) is justified, moreover (\ref{eq501}) can be estimated from above by (\ref{eq561}) and hence by (\ref{eq559}). We have reached the conclusion
\begin{align}\label{eq508}
&\|\{2^{ks}\iFT_n[\FT_n\psi(2^{m_0-k}\xi)\rho_1(2^{m_0-k}\xi)\FT_n f(\xi)](\cdot)\}_{k\in\bbbz}\|_{L^p(l^q)} \nonumber\\
&\leq C_1 2^{Jn(\frac{1}{r}-1)}
      \|(\int_0^{\infty}t^{-sq}|\varDelta^L_{t,1}f(\cdot)|^q\frac{dt}{t})^{\frac{1}{q}}\|_{L^p(\bbbr^n)} \nonumber\\
&\quad+C_1' 2^{J[n(\frac{1}{r}-1)+(\frac{1}{r}-\frac{1}{q})-s]}\|f\|_{\Fspq}.
\end{align}
In a similar way, we can also prove (\ref{eq508}) if we replace $\rho_1$, $\varDelta^L_{t,1}$, $C_1$, $C_1'$ by $\rho_j$, $\varDelta^L_{t,j}$, $C_j$, $C_j'$ respectively for $j=2,\cdots,n$. From (\ref{eq485}) we have obtained
\begin{align}\label{eq509}
\|f\|_{\Fspq}
&\leq C' 2^{Jn(\frac{1}{r}-1)}\sum_{j=1}^n
      \|(\int_0^{\infty}t^{-sq}|\varDelta^L_{t,j}f(\cdot)|^q\frac{dt}{t})^{\frac{1}{q}}\|_{L^p(\bbbr^n)} \nonumber\\
&\quad+C'' 2^{J[n(\frac{1}{r}-1)+(\frac{1}{r}-\frac{1}{q})-s]}\|f\|_{\Fspq}.
\end{align}
By (\ref{eq505}) we can pick a sufficiently large positive integer $J$ so that the coefficient
$$C'' 2^{J[n(\frac{1}{r}-1)+(\frac{1}{r}-\frac{1}{q})-s]}<\frac{1}{2},$$ and shift the second term on the right side of (\ref{eq509}) to its left side and hence complete the proof of (\ref{eq451}) when $\min\{p,q\}\leq 1$, $q<\infty$ and $\sigma_{pq}+\tilde{\sigma}^1_{pq}<s<\infty$.

Now we prove Theorem \ref{theorem6} (iii). We only need to prove inequality (\ref{eq562}) when $j=1$ and the other cases when $j=2,\cdots,n$ can be proved in the same way. We begin with estimating $\|\esssup_{t>0}t^{-s}\sum_{l\in\bbbz}|\varDelta^L_{t,1}f_l(\cdot)|\|_{L^p(\bbbr^n)}$ from above by the sum
\begin{align}\label{eq564}
&\|\esssup_{k\in\bbbz}\esssup_{t\in[2^{-k},2^{1-k})}2^{ks}\sum_{l\leq k}|\varDelta^L_{t,1}f_l(\cdot)|\|_{L^p(\bbbr^n)} \nonumber\\
&\quad+\|\esssup_{k\in\bbbz}\esssup_{t\in[2^{-k},2^{1-k})}2^{ks}\sum_{l>k}|\varDelta^L_{t,1}f_l(\cdot)|\|_{L^p(\bbbr^n)}.
\end{align}
Pick $0<\varepsilon<\min\{s-\frac{1}{p},L-s\}$ and let $x_1'=(x_2,\cdots,x_n)\in\bbbr^{n-1}$ be fixed for now. When $l\leq k$, we use Lemma \ref{lemma8} and (\ref{eq478}) and calculate as follows
\begin{align}\label{eq565}
&\esssup_{k\in\bbbz}2^{ks}\sum_{l\leq k}\esssup_{t\in[2^{-k},2^{1-k})}|\varDelta^L_{t,1}f_l(x_1,x_1')| \nonumber\\
&\lesssim\esssup_{k\in\bbbz}2^{k(s-L)}\sum_{l\leq k}2^{l\varepsilon}\cdot 2^{l(L-\varepsilon)}
          \PFSmax_1 f_l(\cdot,x_1')(x_1) \nonumber\\
&\lesssim\esssup_{l\in\bbbz}\esssup_{k\geq l}2^{k(s+\varepsilon-L)}\cdot 2^{l(L-\varepsilon)}\PFSmax_1 f_l(\cdot,x_1')(x_1) \nonumber\\
&\lesssim\esssup_{l\in\bbbz}2^{ls}\HLmax_1(|f_l(\cdot,x_1')|^r)(x_1)^{\frac{1}{r}},
\end{align}
where we also used the $1$-dimensional version of Lemma \ref{lemma4} and $0<r<p$. Inserting (\ref{eq565}) into $\|\cdot\|_{L^p(\bbbr^n)}$ quasinorm and invoking the mapping property of the $1$-dimensional Hardy-Littlewood maximal function, we can estimate the first term in (\ref{eq564}) from above by the following
\begin{align}\label{eq566}
&\|\esssup_{l\in\bbbz}2^{ls}\HLmax_1(|f_l(\cdot,x_1')|^r)(x_1)^{\frac{1}{r}}\|_{L^p(\bbbr^n)} \nonumber\\
&\lesssim(\int_{\bbbr^{n-1}}\int_{\bbbr}\HLmax_1(\esssup_{l\in\bbbz}2^{lsr}|f_l(\cdot,x_1')|^r)(x_1)^{\frac{p}{r}}
          dx_1 dx_1')^{\frac{1}{p}} \nonumber\\
&\lesssim(\int_{\bbbr^{n-1}}\int_{\bbbr}\esssup_{l\in\bbbz}2^{lsp}|f_l(x_1,x_1')|^p dx_1 dx_1')^{\frac{1}{p}}
          =\|f\|_{\Fspinf}.
\end{align}
When $l>k$ and $|t|\lesssim 2^{-k}$, we use (\ref{eq480}) and (\ref{eq481}) with temporarily fixed $x_1'\in\bbbr^{n-1}$ to obtain
\begin{equation}\label{eq567}
\esssup_{t\in[2^{-k},2^{1-k})}|\varDelta^L_{t,1}f_l(x_1,x_1')|\lesssim 2^{\frac{l-k}{r}}\HLmax_1(|f_l(\cdot,x_1')|^r)(x_1)^{\frac{1}{r}}.
\end{equation}
Since $\frac{1}{p}<s<L$ by the assumption, we can pick $\varepsilon>0$ to be sufficiently close to $0$ and $r$ to be sufficiently close to $p$ so that $s-\frac{1}{r}-\varepsilon$ is a positive finite number, and then we can obtain
\begin{align}\label{eq568}
&\esssup_{k\in\bbbz}2^{ks}\sum_{l>k}\esssup_{t\in[2^{-k},2^{1-k})}|\varDelta^L_{t,1}f_l(x_1,x_1')| \nonumber\\
&\lesssim\esssup_{k\in\bbbz}2^{k(s-\frac{1}{r})}\sum_{l>k}2^{-l\varepsilon}\cdot 2^{l(\varepsilon+\frac{1}{r})}
          \HLmax_1(|f_l(\cdot,x_1')|^r)(x_1)^{\frac{1}{r}} \nonumber\\
&\lesssim\esssup_{l\in\bbbz}\esssup_{k<l}2^{k(s-\frac{1}{r}-\varepsilon)}\cdot 2^{l(\varepsilon+\frac{1}{r})}
          \HLmax_1(|f_l(\cdot,x_1')|^r)(x_1)^{\frac{1}{r}} \nonumber\\
&\lesssim\esssup_{l\in\bbbz}2^{ls}\HLmax_1(|f_l(\cdot,x_1')|^r)(x_1)^{\frac{1}{r}}.
\end{align}
Inserting (\ref{eq568}) into $\|\cdot\|_{L^p(\bbbr^n)}$ quasinorm and proceeding as in (\ref{eq566}), we can estimate the second term in (\ref{eq564}) from above by $\|f\|_{\Fspinf}$. Therefore we have obtained the inequality
\begin{equation}\label{eq569}
\|\esssup_{k\in\bbbz}\esssup_{t\in[2^{-k},2^{1-k})} 2^{ks}\sum_{l\in\bbbz}|\varDelta^L_{t,1}f_l(\cdot)|\|_{L^p(\bbbr^n)}\lesssim\|f\|_{\Fspinf},
\end{equation}
when $0<p<\infty$, $q=\infty$ and $\frac{1}{p}<s<L$. The assumption $f\in\Fspinf$ and the above inequality also show $\sum_{l\in\bbbz}|\varDelta^L_{t,1}f_l(x)|<\infty$ for every $k\in\bbbz$ and almost every $t\in[2^{-k},2^{1-k}),x\in\bbbr^n$. In conjunction with (\ref{eq513}), we have reached the conclusion that
\begin{equation}\label{eq570}
\varDelta^L_{t,1}f=\sum_{l\in\bbbz}\varDelta^L_{t,1}f_l(x)
\end{equation}
in the sense of $\Sw'(\bbbr^n)/\mathscr{P}(\bbbr^n)$ for every $k\in\bbbz$ and almost every $t\in[2^{-k},2^{1-k}),x\in\bbbr^n$ when $0<p<\infty$, $q=\infty$ and $\frac{1}{p}<s<L$, furthermore we also obtain
\begin{equation}\label{eq571}
\|\esssup_{t>0}\frac{|\varDelta^L_{t,1}f|}{t^s}\|_{L^p(\bbbr^n)}\lesssim
\|\esssup_{k\in\bbbz}\esssup_{t\in[2^{-k},2^{1-k})}2^{ks}\sum_{l\in\bbbz}|\varDelta^L_{t,1}f_l(\cdot)|\|_{L^p(\bbbr^n)}.
\end{equation}
Inequalities (\ref{eq569}) and (\ref{eq571}) conclude the proof of Theorem \ref{theorem6} (iii).

Finally, we come to the proof of Theorem \ref{theorem6} (iv). We assume the right side of (\ref{eq563}) is finite, otherwise the inequality is trivial. We also use the sufficiently large positive integer $m_0$ given in (\ref{eq417}) and let $0<r<p$. We continue using the partition of unity $\{\rho_j\}_{j=1}^n$ associated with the set $A'=\{\xi\in\bbbr^n:\frac{1}{2}\leq|\xi|<2\}$ introduced at the beginning of the proof of Theorem \ref{theorem6} (ii), and also continue assuming $\frac{1}{2}\leq|\xi|<2$ and $\delta\leq|\xi_j|<2$ for $\xi\in spt.\rho_j$ and $\delta$ being a sufficiently small positive number. Then by using Remark \ref{remark3}, Lemma \ref{lemma4}, and the mapping property of the Hardy-Littlewood maximal function, we can obtain the estimate
\begin{align}\label{eq572}
&\|f\|_{\Fspinf} \nonumber\\
&\lesssim 2^{-m_0 s}\sum_{j=1}^n\|\esssup_{k\in\bbbz}2^{ks}|\iFT_n(\FT_n\psi(2^{m_0-k}\xi)\rho_j(2^{m_0-k}\xi)
          \FT_n f)(\cdot)|\|_{L^p(\bbbr^n)} \nonumber\\
&\lesssim\sum_{j=1}^n\|\esssup_{k\in\bbbz}2^{ks}\PFSmax_n(\iFT_n[\rho_j(2^{m_0-k}\xi)\FT_n f])(\cdot)\|_{L^p(\bbbr^n)} \nonumber\\
&\lesssim\sum_{j=1}^n\|\esssup_{k\in\bbbz}2^{ks}
          \HLmax_n(|\iFT_n[\rho_j(2^{m_0-k}\xi)\FT_n f]|^r)(\cdot)^{\frac{1}{r}}\|_{L^p(\bbbr^n)} \nonumber\\
&\lesssim\sum_{j=1}^n\|\esssup_{k\in\bbbz}2^{ks}|\iFT_n[\rho_j(2^{m_0-k}\xi)\FT_n f](\cdot)|\|_{L^p(\bbbr^n)}.
\end{align}
We now estimate each term in the sum of (\ref{eq572}). It suffices to provide estimate for the term $\|\esssup_{k\in\bbbz}2^{ks}|\iFT_n[\rho_1(2^{m_0-k}\xi)\FT_n f](\cdot)|\|_{L^p(\bbbr^n)}$ when $j=1$, and estimates for the terms when $j=2,\cdots,n$ can be obtained in the same way. When $1<p<\infty$, we have
\begin{align}\label{eq657}
&|\iFT_n[\rho_1(2^{m_0-k}\xi)\FT_n f](x)| \nonumber\\
&=\esssup_{t\in[1,2]}|\iFT_n[\rho_1(2^{m_0-k}\xi)\FT_n f](x)| \nonumber\\
&=\esssup_{t\in[1,2]}|\iFT_n[\frac{\rho_1(2^{m_0-k}\xi)}{(e^{2\pi i\cdot 2^{-k}t\xi_1}-1)^L}
   \cdot(e^{2\pi i\cdot 2^{-k}t\xi_1}-1)^L\FT_n f](x)| \nonumber\\
&=\esssup_{t\in[1,2]}|\iFT_n[\frac{\rho_1(2^{m_0-k}\xi)}{(e^{2\pi i\cdot 2^{-k}t\xi_1}-1)^L}]*\varDelta^L_{2^{-k}t,1}f(x)|.
\end{align}
We use (\ref{eq491}) and the following estimate
\begin{equation}\label{eq573}
\mvint_{A_{k-l}}\esssup_{t\in[1,2]}|\varDelta^L_{2^{-k}t,1}f(x-y)|dy\lesssim
\HLmax_n(\esssup_{t\in[1,2]}|\varDelta^L_{2^{-k}t,1}f(\cdot)|)(x),
\end{equation}
and then we can estimate $|\iFT_n[\rho_1(2^{m_0-k}\xi)\FT_n f](x)|$ from above by the following
\begin{equation}\label{eq574}
(\sum_{l\leq 0}2^{ln}+\sum_{l>0}2^{l(n-N)})\HLmax_n(\esssup_{t\in[1,2]}|\varDelta^L_{2^{-k}t,1}f|)(x)
\lesssim\HLmax_n(\esssup_{t\in[1,2]}|\varDelta^L_{2^{-k}t,1}f|)(x)
\end{equation}
for every $x\in\bbbr^n$ if we pick $N>n$. Therefore when $1<p<\infty$, $q=\infty$ and $s\in\bbbr$, we invoke the mapping property of Hardy-Littlewood maximal function and obtain
\begin{align}\label{eq575}
&\|\esssup_{k\in\bbbz}2^{ks}|\iFT_n[\rho_1(2^{m_0-k}\xi)\FT_n f]|\|_{L^p(\bbbr^n)} \nonumber\\
&\lesssim\|\HLmax_n(\esssup_{k\in\bbbz}\esssup_{t\in[1,2]}2^{ks}|\varDelta^L_{2^{-k}t,1}f|)\|_{L^p(\bbbr^n)} \nonumber\\
&\lesssim\|\esssup_{t>0}\frac{|\varDelta^L_{t,1}f(\cdot)|}{t^s}\|_{L^p(\bbbr^n)}.
\end{align}
Inequality (\ref{eq575}) is still true if we replace $\rho_1,\varDelta^L_{2^{-k}t,1},\varDelta^L_{t,1}$ by $\rho_j,\varDelta^L_{2^{-k}t,j},\varDelta^L_{t,j}$ respectively for $j=2,\cdots,n$. Inserting these inequalities back into (\ref{eq572}) proves the first part of Theorem \ref{theorem6} (iv). To prove the second part of Theorem \ref{theorem6} (iv), we use the function $\phi$ satisfying conditions (\ref{eq346}), (\ref{eq422}), (\ref{eq423}), and the same $m_0$ as in (\ref{eq417}), and the large positive integer $J>m_0$ whose value will be determined later. From (\ref{eq494}), (\ref{eq495}) and the inequality
\begin{align}\label{eq576}
&\esssup_{t\in[1,2]}\mvint_{A_{k-l}}|\varDelta^L_{2^{-k}t,1}(\phi_{2^{m_0-J-k}}*f)(x-y)|^r dy \nonumber\\
&\lesssim\HLmax_n(\esssup_{t\in[1,2]}|\varDelta^L_{2^{-k}t,1}(\phi_{2^{m_0-J-k}}*f)|^r)(x),
\end{align}
we can obtain
\begin{align}\label{eq577}
&|\iFT_n[\rho_1(2^{m_0-k}\xi)\FT_n f](x)|^r \nonumber\\
&=\esssup_{t\in[1,2]}|\iFT_n[\frac{\rho_1(2^{m_0-k}\xi)}{(e^{2\pi i\cdot 2^{-k}t\xi_1}-1)^L}]*
   [\varDelta^L_{2^{-k}t,1}(\phi_{2^{m_0-J-k}}*f)](x)|^r \nonumber\\
&\lesssim 2^{Jn(1-r)}\cdot(\sum_{l\leq 0}2^{ln}+\sum_{l>0}2^{l(n-N'r)}) \nonumber\\
&\quad\cdot\HLmax_n(\esssup_{t\in[1,2]}|\varDelta^L_{2^{-k}t,1}(\phi_{2^{m_0-J-k}}*f)|^r)(x) \nonumber\\
&\lesssim 2^{Jn(1-r)}\HLmax_n(\esssup_{t\in[1,2]}|\varDelta^L_{2^{-k}t,1}(\phi_{2^{m_0-J-k}}*f)|^r)(x),
\end{align}
if we pick $N'>\frac{n}{r}$. From this inequality and the mapping property of the Hardy-Littlewood maximal function, we deduce the estimate
\begin{align}\label{eq578}
&\|\esssup_{k\in\bbbz}2^{ks}|\iFT_n[\rho_1(2^{m_0-k}\xi)\FT_n f](\cdot)|\|_{L^p(\bbbr^n)} \nonumber\\
&\lesssim 2^{Jn(\frac{1}{r}-1)}
\|\esssup_{k\in\bbbz}\esssup_{t\in[1,2]}2^{ks}|\varDelta^L_{2^{-k}t,1}(\phi_{2^{m_0-J-k}}*f)(\cdot)|\|_{L^p(\bbbr^n)}.
\end{align}
Assuming the validity of decomposition for now, then (\ref{eq578}) can be estimated from above by the sum of
\begin{align}\label{eq579}
&2^{Jn(\frac{1}{r}-1)}
\|\esssup_{k\in\bbbz}\esssup_{t\in[1,2]}2^{ks}|\varDelta^L_{2^{-k}t,1}f(\cdot)|\|_{L^p(\bbbr^n)} \nonumber\\
&\sim2^{Jn(\frac{1}{r}-1)} \|\esssup_{t>0}\frac{|\varDelta^L_{t,1}f(\cdot)|}{t^s}\|_{L^p(\bbbr^n)}<\infty,
\end{align}
and
\begin{equation}\label{eq580}
2^{Jn(\frac{1}{r}-1)}
\|\esssup_{k\in\bbbz}\esssup_{t\in[1,2]}2^{ks}|\varDelta^L_{2^{-k}t,1}(\sum_{l=J+1}^{\infty}f_{k+l-m_0})(\cdot)|\|_{L^p(\bbbr^n)}.
\end{equation}
To estimate (\ref{eq580}), we use (\ref{eq480}), (\ref{eq481}), Lebesgue's differentiation theorem and the fact that $k+l-m_0>k+J-m_0>k$ to obtain for $k\in\bbbz$, $l>J$ and $t\in[1,2]$,
\begin{equation}\label{eq581}
|\varDelta^L_{2^{-k}t,1}f_{k+l-m_0}(x)|\lesssim 2^{\frac{l}{r}}\HLmax_1(|f_{k+l-m_0}(\cdot,x_1')|^r)(x_1)^{\frac{1}{r}},
\end{equation}
where the constant is independent of $l,k,t$. Therefore we have
\begin{align}\label{eq582}
&2^{Jn(\frac{1}{r}-1)}
\|\esssup_{k\in\bbbz}\esssup_{t\in[1,2]}2^{ks}\sum_{l=J+1}^{\infty}|\varDelta^L_{2^{-k}t,1}f_{k+l-m_0}(\cdot)|\|_{L^p(\bbbr^n)} \nonumber\\
&=2^{Jn(\frac{1}{r}-1)}
\|\esssup_{k\in\bbbz}\esssup_{t\in[1,2]}2^{ks}\sum_{l=J+1}^{\infty}2^{-l\varepsilon}\cdot 2^{l\varepsilon} |\varDelta^L_{2^{-k}t,1}f_{k+l-m_0}(\cdot)|\|_{L^p(\bbbr^n)} \nonumber\\
&\lesssim 2^{Jn(\frac{1}{r}-1)-J\varepsilon}
\|\esssup_{\substack{l>J\\k\in\bbbz}}\esssup_{t\in[1,2]}
2^{ks+l\varepsilon}|\varDelta^L_{2^{-k}t,1}f_{k+l-m_0}(\cdot)|\|_{L^p(\bbbr^n)} \nonumber\\
&\lesssim 2^{Jn(\frac{1}{r}-1)-J\varepsilon}
\|\esssup_{\substack{l>J\\k\in\bbbz}}2^{l(\varepsilon+\frac{1}{r}-s)}\cdot 2^{(k+l-m_0)s} \nonumber\\
&\quad\cdot\HLmax_1(|f_{k+l-m_0}(\cdot,x_1')|^r)(x_1)^{\frac{1}{r}}\|_{L^p(\bbbr^n)}.
\end{align}
The assumption $\sigma_p+\frac{1}{p}<s<\infty$ indicates
\begin{equation}\label{eq583}
\varepsilon+\frac{1}{r}<n(\frac{1}{r}-1)+\frac{1}{r}<s
\end{equation}
if we pick $\varepsilon>0$ to be sufficiently small and $r$ to be sufficiently close to $p$ when $0<r<p\leq 1$. Thus by the mapping property of the $1$-dimensional Hardy-Littlewood maximal function, we can continue from (\ref{eq582}) and obtain the following estimate
\begin{align}\label{eq584}
&2^{Jn(\frac{1}{r}-1)}
\|\esssup_{k\in\bbbz}\esssup_{t\in[1,2]}2^{ks}\sum_{l=J+1}^{\infty}|\varDelta^L_{2^{-k}t,1}f_{k+l-m_0}(\cdot)|\|_{L^p(\bbbr^n)} \nonumber\\
&\lesssim 2^{J[n(\frac{1}{r}-1)+\frac{1}{r}-s]}
(\int_{\bbbr^{n-1}}\int_{\bbbr}\HLmax_1(\esssup_{k\in\bbbz}2^{ksr}|f_k(\cdot,x_1')|^r)(x_1)^{\frac{p}{r}}dx_1 dx_1')^{\frac{1}{p}} \nonumber\\
&\lesssim 2^{J[n(\frac{1}{r}-1)+\frac{1}{r}-s]}\|f\|_{\Fspinf}.
\end{align}
The assumption $f\in\Fspinf$ implies $\sum_{l=J+1}^{\infty}|\varDelta^L_{2^{-k}t,1}f_{k+l-m_0}(x)|<\infty$ for every $k\in\bbbz$ and almost every $t\in[1,2],x\in\bbbr^n$. From this implication, (\ref{eq514}) and (\ref{eq579}), we deduce
\begin{equation}\label{eq585}
\varDelta^L_{2^{-k}t,1}(\phi_{2^{m_0-J-k}}*f)(x)=\varDelta^L_{2^{-k}t,1}f(x)-
\sum_{l=J+1}^{\infty}\varDelta^L_{2^{-k}t,1}f_{k+l-m_0}(x)
\end{equation}
in the sense of $\Sw'(\bbbr^n)$ for every $k\in\bbbz$ and almost every $t\in[1,2],x\in\bbbr^n$ when $0<p\leq 1$, $q=\infty$ and $\sigma_p+\frac{1}{p}<s<\infty$, furthermore estimating (\ref{eq578}) from above by the sum of (\ref{eq579}) and (\ref{eq580}) is justified, moreover (\ref{eq580}) can be estimated from above by the first line of (\ref{eq582}). We have obtained
\begin{align}\label{eq587}
&\|\esssup_{k\in\bbbz}2^{ks}|\iFT_n[\rho_1(2^{m_0-k}\xi)\FT_n f](\cdot)|\|_{L^p(\bbbr^n)} \nonumber\\
&\leq C_1'2^{Jn(\frac{1}{r}-1)}\|\esssup_{t>0}\frac{|\varDelta^L_{t,1}f(\cdot)|}{t^s}\|_{L^p(\bbbr^n)}
      +C_1''2^{J[n(\frac{1}{r}-1)+\frac{1}{r}-s]}\|f\|_{\Fspinf}.
\end{align}
Inequality (\ref{eq587}) is still true if we replace $\rho_1,\varDelta^L_{t,1},C_1',C_1''$ by $\rho_j,\varDelta^L_{t,j},C_j',C_j''$ respectively for $j=2,\cdots,n$. Inserting these inequalities back into (\ref{eq572}) yields
\begin{align}\label{eq588}
\|f\|_{\Fspinf}
&\leq C'2^{Jn(\frac{1}{r}-1)}\sum_{j=1}^n\|\esssup_{t>0}\frac{|\varDelta^L_{t,j}f(\cdot)|}{t^s}\|_{L^p(\bbbr^n)} \nonumber\\
&\quad+C''2^{J[n(\frac{1}{r}-1)+\frac{1}{r}-s]}\|f\|_{\Fspinf}.
\end{align}
Due to (\ref{eq583}), we can pick $J$ to be sufficiently large so that the coefficient $$C''2^{J[n(\frac{1}{r}-1)+\frac{1}{r}-s]}<\frac{1}{2},$$ and shift the second term on the right side of (\ref{eq588}) to its left side to complete the proof of Theorem \ref{theorem6} (iv). Now we conclude the proof of Theorem \ref{theorem6}.
\end{proof}

\section{Besov-Lipschitz space and iterated differences along coordinate axes}\label{proof.of.theorem7}
\begin{proof}[Proof Of Theorem \ref{theorem7}]
To prove Theorem \ref{theorem7} (i), it suffices to prove inequality (\ref{eq600}) for $j=1$, and the other cases for $j=2,\cdots,n$ can be proven in the same way. We begin with estimating the following two terms
\begin{align}\label{eq604}
&(\sum_{k\in\bbbz}2^{k(sq+1)}\int_{2^{-k}}^{2^{1-k}}\|\sum_{l\leq k}|\varDelta^L_{t,1}f_l|\|_{L^p(\bbbr^n)}^q dt)^{\frac{1}{q}} \nonumber\\
&\lesssim(\sum_{k\in\bbbz}2^{ksq}\esssup_{t\in[1,2]}\|\sum_{l\leq k}|\varDelta^L_{2^{-k}t,1}f_l|\|_{L^p(\bbbr^n)}^q)^{\frac{1}{q}},
\end{align}
and
\begin{align}\label{eq605}
&(\sum_{k\in\bbbz}2^{k(sq+1)}\int_{2^{-k}}^{2^{1-k}}\|\sum_{l>k}|\varDelta^L_{t,1}f_l|\|_{L^p(\bbbr^n)}^q dt)^{\frac{1}{q}} \nonumber\\
&\lesssim(\sum_{k\in\bbbz}2^{ksq}\esssup_{t\in[1,2]}\|\sum_{l>k}|\varDelta^L_{2^{-k}t,1}f_l|\|_{L^p(\bbbr^n)}^q)^{\frac{1}{q}}.
\end{align}
We pick $0<\varepsilon<\min\{s,L-s\}$. If $1\leq p\leq\infty$, we use Minkowski's inequality for $\|\cdot\|_{L^p(\bbbr^n)}$-norm and obtain
\begin{align}\label{eq606}
&\|\sum_{l\leq k}|\varDelta^L_{2^{-k}t,1}f_l|\|_{L^p(\bbbr^n)}^q \nonumber\\
&\lesssim(\sum_{l\leq k}2^{l\varepsilon}\cdot 2^{-l\varepsilon}\|\varDelta^L_{2^{-k}t,1}f_l\|_{L^p(\bbbr^n)})^q \nonumber\\
&\lesssim(\sum_{l\leq k}2^{l\varepsilon})^q\esssup_{j\leq k}2^{-jq\varepsilon}\|\varDelta^L_{2^{-k}t,1}f_j\|_{L^p(\bbbr^n)}^q \nonumber\\
&\lesssim 2^{kq\varepsilon}\sum_{l\leq k}2^{-lq\varepsilon}\|\varDelta^L_{2^{-k}t,1}f_l\|_{L^p(\bbbr^n)}^q,
\end{align}
and
\begin{align}\label{eq607}
&\|\sum_{l>k}|\varDelta^L_{2^{-k}t,1}f_l|\|_{L^p(\bbbr^n)}^q \nonumber\\
&\lesssim(\sum_{l>k}2^{-l\varepsilon}\cdot 2^{l\varepsilon}\|\varDelta^L_{2^{-k}t,1}f_l\|_{L^p(\bbbr^n)})^q \nonumber\\
&\lesssim(\sum_{l>k}2^{-l\varepsilon})^q\esssup_{j>k}2^{jq\varepsilon}\|\varDelta^L_{2^{-k}t,1}f_j\|_{L^p(\bbbr^n)}^q \nonumber\\
&\lesssim 2^{-kq\varepsilon}\sum_{l>k}2^{lq\varepsilon}\|\varDelta^L_{2^{-k}t,1}f_l\|_{L^p(\bbbr^n)}^q.
\end{align}
If $0<p<1$, then we have
\begin{align}\label{eq608}
&\|\sum_{l\leq k}|\varDelta^L_{2^{-k}t,1}f_l|\|_{L^p(\bbbr^n)}^q \nonumber\\
&\lesssim(\sum_{l\leq k}2^{lp\varepsilon}\cdot 2^{-lp\varepsilon}\|\varDelta^L_{2^{-k}t,1}f_l\|_{L^p(\bbbr^n)}^p)^{\frac{q}{p}} \nonumber\\
&\lesssim 2^{kq\varepsilon}\sum_{l\leq k}2^{-lq\varepsilon}\|\varDelta^L_{2^{-k}t,1}f_l\|_{L^p(\bbbr^n)}^q,
\end{align}
and
\begin{align}\label{eq609}
&\|\sum_{l>k}|\varDelta^L_{2^{-k}t,1}f_l|\|_{L^p(\bbbr^n)}^q \nonumber\\
&\lesssim(\sum_{l>k}2^{-lp\varepsilon}\cdot 2^{lp\varepsilon}\|\varDelta^L_{2^{-k}t,1}f_l\|_{L^p(\bbbr^n)}^p)^{\frac{q}{p}} \nonumber\\
&\lesssim 2^{-kq\varepsilon}\sum_{l>k}2^{lq\varepsilon}\|\varDelta^L_{2^{-k}t,1}f_l\|_{L^p(\bbbr^n)}^q.
\end{align}
When $t\in[1,2]$ and $l\leq k$, we use estimate (\ref{eq478}) with $0<r<p$, the $1$-dimensional version of Lemma \ref{lemma4} and the mapping property of the $1$-dimensional Hardy-Littlewood maximal function to obtain
\begin{equation}\label{eq610}
\|\varDelta^L_{2^{-k}t,1}f_l\|_{L^p(\bbbr^n)}\lesssim 2^{(l-k)L}\|f_l\|_{L^p(\bbbr^n)}\text{ for }0<p\leq\infty,
\end{equation}
where the constant is independent of $t,l,k$. Inserting (\ref{eq606}), (\ref{eq608}) and (\ref{eq610}) into (\ref{eq604}) yields
\begin{equation}\label{eq611}
(\ref{eq604})\lesssim
(\sum_{l\in\bbbz}\sum_{k\geq l}2^{kq(s+\varepsilon-L)}\cdot 2^{lq(L-\varepsilon)}\|f_l\|_{L^p(\bbbr^n)}^q)^{\frac{1}{q}}
\lesssim\|f\|_{\Bspq}.
\end{equation}
We can also use estimate (\ref{eq480}) and proper change of variable to obtain
\begin{equation}\label{eq612}
\|\varDelta^L_{2^{-k}t,1}f_l\|_{L^p(\bbbr^n)}\lesssim\sum_{m=0}^L\|f_l(\cdot+2^{-k}mte_1)\|_{L^p(\bbbr^n)}
\lesssim\|f_l\|_{L^p(\bbbr^n)},
\end{equation}
where constants are independent of $t,k,l$, and (\ref{eq612}) is true for all $0<p\leq\infty$. Inserting (\ref{eq607}), (\ref{eq609}) and (\ref{eq612}) into (\ref{eq605}) yields
\begin{equation}\label{eq613}
(\ref{eq605})\lesssim
(\sum_{l\in\bbbz}\sum_{k<l}2^{kq(s-\varepsilon)}\cdot 2^{lq\varepsilon}\|f_l\|_{L^p(\bbbr^n)}^q)^{\frac{1}{q}}
\lesssim\|f\|_{\Bspq}.
\end{equation}
Combining (\ref{eq604}), (\ref{eq605}), (\ref{eq611}) and (\ref{eq613}), we have proven
\begin{align}\label{eq614}
&(\int_0^{\infty}t^{-sq}\|\sum_{l\in\bbbz}\!|\varDelta^L_{t,1}f_l|\|_{L^p(\bbbr^n)}^q\frac{dt}{t})^{\frac{1}{q}} \nonumber\\
&\lesssim(\sum_{k\in\bbbz}2^{ksq}
\esssup_{t\in[1,2]}\|\sum_{l\in\bbbz}|\varDelta^L_{2^{-k}t,1}f_l|\|_{L^p(\bbbr^n)}^q)^{\frac{1}{q}}
\lesssim\|f\|_{\Bspq},
\end{align}
if $0<p\leq\infty$, $0<q<\infty$ and $0<s<L$. The assumption $f\in\Bspq$ implies $\sum_{l\in\bbbz}|\varDelta^L_{t,1}f_l(x)|<\infty$ for almost every $t\in(0,\infty)$ and $x\in\bbbr^n$. In conjunction with (\ref{eq513}), we have reached the conclusion that
\begin{equation}\label{eq615}
\varDelta^L_{t,1}f=\sum_{l\in\bbbz}\varDelta^L_{t,1}f_l(x)
\end{equation}
in the sense of $\Sw'(\bbbr^n)/\mathscr{P}(\bbbr^n)$ for almost every $t\in(0,\infty)$ and $x\in\bbbr^n$ when $0<p\leq\infty$, $0<q<\infty$ and $0<s<L$. Hence we also have the estimate
\begin{equation}\label{eq616}
(\int_0^{\infty}t^{-sq}\|\varDelta^L_{t,1}f\|_{L^p(\bbbr^n)}^q\frac{dt}{t})^{\frac{1}{q}}\lesssim
(\int_0^{\infty}t^{-sq}\|\sum_{l\in\bbbz}|\varDelta^L_{t,1}f_l|\|_{L^p(\bbbr^n)}^q\frac{dt}{t})^{\frac{1}{q}}.
\end{equation}
Inequalities (\ref{eq614}) and (\ref{eq616}) conclude the proof of Theorem \ref{theorem7} (i).

Now we prove the first and the second parts of Theorem \ref{theorem7} (ii). We can assume the right side of (\ref{eq601}) is finite and the left side of (\ref{eq601}) is positive, otherwise inequality (\ref{eq601}) will be trivial. In the definition of the Peetre-Fefferman-Stein maximal function, we pick the number $r$ so that $0<r<p$. We also use the positive integer $m_0$ given in (\ref{eq417}) and $\{\rho_j\}_{j=1}^n$ is the partition of unity given in (\ref{eq589}) and (\ref{eq590}). Then we have
\begin{align}\label{eq617}
\|f\|_{\Bspq}
&=(\sum_{k\in\bbbz}2^{(k-m_0)sq}\|\psi_{2^{m_0-k}}*f\|_{L^p(\bbbr^n)}^q)^{\frac{1}{q}} \nonumber\\
&\lesssim\sum_{j=1}^n(\sum_{k\in\bbbz}2^{ksq}
         \|\iFT_n[\FT_n\psi(2^{m_0-k}\xi)\rho_j(2^{m_0-k}\xi)\FT_n f]\|_{L^p(\bbbr^n)}^q)^{\frac{1}{q}}.
\end{align}
By Remark \ref{remark3} and Remark \ref{remark12}, we can obtain the following pointwise estimate
\begin{align}\label{eq618}
&\iFT_n[\FT_n\psi(2^{m_0-k}\xi)\rho_j(2^{m_0-k}\xi)\FT_n f](x) \nonumber\\
&\lesssim\PFSmax_n\{\iFT_n[\FT_n\psi(2^{m_0-k}\xi)\rho_j(2^{m_0-k}\xi)\FT_n f]\}(x) \nonumber\\
&\lesssim\PFSmax_n\{\iFT_n[\rho_j(2^{m_0-k}\xi)\FT_n f]\}(x) \nonumber\\
&\lesssim\HLmax_n(|\iFT_n[\rho_j(2^{m_0-k}\xi)\FT_n f]|^r)(x)^{\frac{1}{r}},
\end{align}
for $j\in\{1,\cdots,n\}$ and $x\in\bbbr^n$. Inserting (\ref{eq618}) into (\ref{eq617}) and invoking the mapping property of Hardy-Littlewood maximal function yield
\begin{equation}\label{eq619}
\|f\|_{\Bspq}\lesssim\sum_{j=1}^n\|\{2^{ks}\iFT_n[\rho_j(2^{m_0-k}\xi)\FT_n f](\cdot)\}_{k\in\bbbz}\|_{l^q(L^p)}.
\end{equation}
We estimate the term with $j=1$ and estimates for other terms with $j=2,\cdots,n$ can be obtained in the same way. If $1<p\leq\infty$, $1\leq q<\infty$ and $s\in\bbbr$, we use (\ref{eq489}), (\ref{eq491}), (\ref{eq492}), the mapping property of Hardy-Littlewood maximal function, Minkowski's inequality for $\|\cdot\|_{L^p(\bbbr^n)}$-norm and H\"{o}lder's inequality for $1\leq q<\infty$ in a sequence and then we have
\begin{align}\label{eq621}
&\|\{2^{ks}\iFT_n[\rho_1(2^{m_0-k}\xi)\FT_n f](\cdot)\}_{k\in\bbbz}\|_{l^q(L^p)} \nonumber\\
&\lesssim\|\{2^{ks}\HLmax_n(\int_{[1,2]}|\varDelta^L_{2^{-k}t,1}f(\cdot)|\frac{dt}{t})(\cdot)\}_{k\in\bbbz}\|_{l^q(L^p)} \nonumber\\
&\lesssim(\sum_{k\in\bbbz}2^{ksq}(\int_{[1,2]}\|\varDelta^L_{2^{-k}t,1}f\|_{L^p(\bbbr^n)}\frac{dt}{t})^q)^{\frac{1}{q}} \nonumber\\
&\lesssim(\sum_{k\in\bbbz}2^{ksq}\int_{[1,2]}\|\varDelta^L_{2^{-k}t,1}f\|_{L^p(\bbbr^n)}^q\frac{dt}{t})^{\frac{1}{q}} \nonumber\\
&\lesssim(\int_0^{\infty}t^{-sq}\|\varDelta^L_{t,1}f\|_{L^p(\bbbr^n)}^q\frac{dt}{t})^{\frac{1}{q}}.
\end{align}
Inequality (\ref{eq621}) is also true if we replace $\rho_1,\varDelta^L_{2^{-k}t,1},\varDelta^L_{t,1}$ by $\rho_j,\varDelta^L_{2^{-k}t,j},\varDelta^L_{t,j}$ respectively for $j=2,\cdots,n$. Inserting these inequalities into (\ref{eq619}) proves inequality (\ref{eq601}) when $1<p\leq\infty$, $1\leq q<\infty$ and $s\in\bbbr$. If $1<p\leq\infty$, $0<q<1$ and $0<s<L$, we can still obtain the third line of (\ref{eq621}). We continue from there and estimate each term in the summation of the third line of (\ref{eq621}) as below
\begin{align}\label{eq622}
&2^{ksq}(\int_{[1,2]}\|\varDelta^L_{2^{-k}t,1}f\|_{L^p(\bbbr^n)}\frac{dt}{t})^q \nonumber\\
&=(\int_{[1,2]}2^{ksq}\|\varDelta^L_{2^{-k}t,1}f\|_{L^p(\bbbr^n)}^q\cdot
2^{ks(1-q)}\|\varDelta^L_{2^{-k}t,1}f\|_{L^p(\bbbr^n)}^{1-q}\frac{dt}{t})^q \nonumber\\
&\lesssim(\int_{[1,2]}2^{ksq}\|\varDelta^L_{2^{-k}t,1}f\|_{L^p(\bbbr^n)}^q\frac{dt}{t})^q\cdot
2^{ks(1-q)q}\esssup_{t\in[1,2]}\|\varDelta^L_{2^{-k}t,1}f\|_{L^p(\bbbr^n)}^{(1-q)q},
\end{align}
and then by using H\"{o}lder's inequality with conjugates $\frac{1}{q}$ and $\frac{1}{1-q}$, we can estimate the third line of (\ref{eq621}) from above by the product of the following two terms,
\begin{equation}\label{eq623}
\sum_{k\in\bbbz}\int_{[1,2]}2^{ksq}\|\varDelta^L_{2^{-k}t,1}f\|_{L^p(\bbbr^n)}^q\frac{dt}{t}\lesssim
\int_0^{\infty}t^{-sq}\|\varDelta^L_{t,1}f\|_{L^p(\bbbr^n)}^q\frac{dt}{t}
\end{equation}
and
\begin{equation}\label{eq624}
(\sum_{k\in\bbbz}2^{ksq}\esssup_{t\in[1,2]}\|\varDelta^L_{2^{-k}t,1}f\|_{L^p(\bbbr^n)}^q)^{\frac{1}{q}-1}.
\end{equation}
Furthermore (\ref{eq624}) can be estimated from above by $\|f\|_{\Bspq}^{1-q}$ if we apply (\ref{eq614}) and the argument for justifying the decomposition afterward. Combining these estimates together, we have shown
\begin{equation}\label{eq625}
\|\{2^{ks}\iFT_n[\rho_1(2^{m_0-k}\xi)\FT_n f](\cdot)\}_{k\in\bbbz}\|_{l^q(L^p)}\lesssim
\int_0^{\infty}t^{-sq}\|\varDelta^L_{t,1}f\|_{L^p(\bbbr^n)}^q\frac{dt}{t}\cdot\|f\|_{\Bspq}^{1-q}.
\end{equation}
Inequality (\ref{eq625}) is still true if we replace $\rho_1,\varDelta^L_{t,1}$ by $\rho_j,\varDelta^L_{t,j}$ respectively for $j=2,\cdots,n$. Inserting these inequalities into (\ref{eq619}) yields
\begin{equation}\label{eq626}
\|f\|_{\Bspq}\lesssim\sum_{j=1}^n
\int_0^{\infty}t^{-sq}\|\varDelta^L_{t,j}f\|_{L^p(\bbbr^n)}^q\frac{dt}{t}\cdot\|f\|_{\Bspq}^{1-q},
\end{equation}
when $1<p\leq\infty$, $0<q<1$ and $0<s<L$. Then inequality (\ref{eq626}) indicates the desired inequality (\ref{eq601}). Next, we come to the proof of the third part of Theorem \ref{theorem7} (ii). We prove (\ref{eq601}) when $0<p\leq 1$, $0<q<\infty$ and $\sigma_p<s<L$, we pick $0<r<p$ and estimate the term $\|\{2^{ks}\iFT_n[\rho_1(2^{m_0-k}\xi)\FT_n f](\cdot)\}_{k\in\bbbz}\|_{l^q(L^p)}$. We still use the sufficiently large positive integer $m_0$ given in (\ref{eq417}), the function $\phi$ satisfying conditions (\ref{eq346}), (\ref{eq422}), (\ref{eq423}), and $J>m_0$ is a large positive integer whose value will be determined later. From (\ref{eq497}), the mapping property of Hardy-Littlewood maximal function and Minkowski's inequality for $\|\cdot\|_{L^{p/r}(\bbbr^n)}$-norm, we can deduce the following
\begin{align}\label{eq627}
&\|\{2^{ks}\iFT_n[\rho_1(2^{m_0-k}\xi)\FT_n f](\cdot)\}_{k\in\bbbz}\|_{l^q(L^p)} \nonumber\\
&\lesssim 2^{Jn(\frac{1}{r}-1)}(\sum_{k\in\bbbz}2^{ksq}
          \|\int_1^2|\varDelta^L_{2^{-k}t,1}(\phi_{2^{m_0-J-k}}*f)(\cdot)|^r\frac{dt}{t}
          \|_{L^{p/r}(\bbbr^n)}^{q/r})^{\frac{1}{q}} \nonumber\\
&\lesssim 2^{Jn(\frac{1}{r}-1)}(\sum_{k\in\bbbz}2^{ksq}
          (\int_1^2\|\varDelta^L_{2^{-k}t,1}(\phi_{2^{m_0-J-k}}*f)\|_{L^p(\bbbr^n)}^r\frac{dt}{t})^{q/r})^{\frac{1}{q}}.
\end{align}
If furthermore $q$ and $r$ satisfy $q\geq r$, then by using H\"{o}lder's inequality we have
\begin{equation}\label{eq628}
(\int_1^2\|\varDelta^L_{2^{-k}t,1}(\phi_{2^{m_0-J-k}}*f)\|_{L^p(\bbbr^n)}^r\frac{dt}{t})^{q/r}\lesssim
\int_1^2\|\varDelta^L_{2^{-k}t,1}(\phi_{2^{m_0-J-k}}*f)\|_{L^p(\bbbr^n)}^q\frac{dt}{t}.
\end{equation}
Hence we can estimate (\ref{eq627}) from above by
\begin{equation}\label{eq629}
2^{Jn(\frac{1}{r}-1)}(\sum_{k\in\bbbz}2^{ksq}
\int_1^2\|\varDelta^L_{2^{-k}t,1}(\phi_{2^{m_0-J-k}}*f)\|_{L^p(\bbbr^n)}^q\frac{dt}{t})^{\frac{1}{q}}.
\end{equation}
From (\ref{eq500}) and (\ref{eq514}), we can infer
\begin{equation}\label{eq630}
\varDelta^L_{2^{-k}t,1}(\phi_{2^{m_0-J-k}}*f)(x)=\varDelta^L_{2^{-k}t,1}f(x)
-\sum_{j=J+1}^{\infty}\varDelta^L_{2^{-k}t,1}f_{k+j-m_0}(x)
\end{equation}
in the sense of $\Sw'(\bbbr^n)$. Assuming the validity of decomposition for now, we can estimate (\ref{eq629}) from above by the sum of the following two terms,
\begin{align}\label{eq631}
&2^{Jn(\frac{1}{r}-1)}(\sum_{k\in\bbbz}2^{ksq}
\int_1^2\|\varDelta^L_{2^{-k}t,1}f\|_{L^p(\bbbr^n)}^q\frac{dt}{t})^{\frac{1}{q}} \nonumber\\
&\sim 2^{Jn(\frac{1}{r}-1)}(\int_0^{\infty}t^{-sq}\|\varDelta^L_{t,1}f\|_{L^p(\bbbr^n)}^q\frac{dt}{t})^{\frac{1}{q}}<\infty,
\end{align}
and
\begin{equation}\label{eq632}
2^{Jn(\frac{1}{r}-1)}(\sum_{k\in\bbbz}2^{ksq}
\int_1^2\|\varDelta^L_{2^{-k}t,1}(\sum_{j=J+1}^{\infty}f_{k+j-m_0})\|_{L^p(\bbbr^n)}^q\frac{dt}{t})^{\frac{1}{q}}.
\end{equation}
To estimate (\ref{eq632}), we use (\ref{eq612}) and the condition $0<p\leq 1$ and begin with the following
\begin{align}\label{eq634}
&2^{Jn(\frac{1}{r}-1)}(\sum_{k\in\bbbz}2^{ksq}
\int_1^2\|\sum_{j=J+1}^{\infty}|\varDelta^L_{2^{-k}t,1}f_{k+j-m_0}|\|_{L^p(\bbbr^n)}^q\frac{dt}{t})^{\frac{1}{q}} \nonumber\\
&\lesssim 2^{Jn(\frac{1}{r}-1)}(\sum_{k\in\bbbz}2^{ksq}(\sum_{j=J+1}^{\infty}
          \|f_{k+j-m_0}\|_{L^p(\bbbr^n)}^p)^{\frac{q}{p}})^{\frac{1}{q}}.
\end{align}
In case $q\leq p$, since $0<\sigma_p<s$, we have
\begin{align}\label{eq635}
(\ref{eq634})
&\lesssim 2^{Jn(\frac{1}{r}-1)}(\sum_{k\in\bbbz}2^{ksq}\sum_{j=J+1}^{\infty}\|f_{k+j-m_0}\|_{L^p(\bbbr^n)}^q)^{\frac{1}{q}} \nonumber\\
&\lesssim 2^{Jn(\frac{1}{r}-1)}(\sum_{j=J+1}^{\infty}2^{(m_0-j)sq}
          \sum_{k\in\bbbz}2^{(k+j-m_0)sq}\|f_{k+j-m_0}\|_{L^p(\bbbr^n)}^q)^{\frac{1}{q}} \nonumber\\
&\lesssim 2^{J[n(\frac{1}{r}-1)-s]}\|f\|_{\Bspq}.
\end{align}
In case $p<q$, we use $0<\varepsilon<\min\{s,L-s\}$ and estimate as follows
\begin{align}\label{eq636}
(\ref{eq634})
&=2^{Jn(\frac{1}{r}-1)}(\sum_{k\in\bbbz}2^{ksq}(\sum_{j=J+1}^{\infty}2^{-jp\varepsilon}\cdot 2^{jp\varepsilon}
   \|f_{k+j-m_0}\|_{L^p(\bbbr^n)}^p)^{\frac{q}{p}})^{\frac{1}{q}} \nonumber\\
&\lesssim 2^{Jn(\frac{1}{r}-1)}(\sum_{k\in\bbbz}2^{ksq}(\sum_{j=J+1}^{\infty}2^{-jp\varepsilon})^{\frac{q}{p}}
          \cdot\esssup_{l>J}2^{lq\varepsilon}\|f_{k+l-m_0}\|_{L^p(\bbbr^n)}^q)^{\frac{1}{q}} \nonumber\\
&\lesssim 2^{J[n(\frac{1}{r}-1)-\varepsilon]}
          (\sum_{k\in\bbbz}2^{ksq}\sum_{l=J+1}^{\infty}2^{lq\varepsilon}\|f_{k+l-m_0}\|_{L^p(\bbbr^n)}^q)^{\frac{1}{q}} \nonumber\\
&\lesssim 2^{J[n(\frac{1}{r}-1)-s]}\|f\|_{\Bspq}.
\end{align}
Combining (\ref{eq634}), (\ref{eq635}), (\ref{eq636}) and the assumption $f\in\Bspq$ altogether, we find $\sum_{j=J+1}^{\infty}|\varDelta^L_{2^{-k}t,1}f_{k+j-m_0}(x)|<\infty$ for every $k\in\bbbz$ and almost every $t\in[1,2],x\in\bbbr^n$. In conjunction with (\ref{eq630}) and (\ref{eq631}), we have proven (\ref{eq630}) is true not only in the sense of $\Sw'(\bbbr^n)$ but also for every $k\in\bbbz$ and almost every $t\in[1,2],x\in\bbbr^n$ when $0<r<p\leq 1$, $r\leq q<\infty$ and $\sigma_p<s<L$. Furthermore estimating (\ref{eq629}) from above by the sum of (\ref{eq631}) and (\ref{eq632}) is justified, moreover (\ref{eq632}) can be estimated from above by (\ref{eq634}) and hence by the term $2^{J[n(\frac{1}{r}-1)-s]}\|f\|_{\Bspq}$. Recall (\ref{eq627}) and (\ref{eq629}), then we have obtained
\begin{align}\label{eq637}
&\|\{2^{ks}\iFT_n[\rho_1(2^{m_0-k}\xi)\FT_n f](\cdot)\}_{k\in\bbbz}\|_{l^q(L^p)} \nonumber\\
&\leq C_1'2^{Jn(\frac{1}{r}-1)}(\int_0^{\infty}\!\!\!\!t^{-sq}\|\varDelta^L_{t,1}f\|_{L^p(\bbbr^n)}^q\frac{dt}{t})^{\frac{1}{q}}
+C_1''2^{J[n(\frac{1}{r}-1)-s]}\|f\|_{\Bspq}.
\end{align}
Inequality (\ref{eq637}) is also true if we replace $\rho_1,\varDelta^L_{t,1},C_1',C_1''$ by $\rho_j,\varDelta^L_{t,j},C_j',C_j''$ respectively for $j=2,\cdots,n$. Inserting these inequalities into (\ref{eq619}) yields
\begin{align}\label{eq638}
\|f\|_{\Bspq}
&\leq C'2^{Jn(\frac{1}{r}-1)}\sum_{j=1}^n
(\int_0^{\infty}t^{-sq}\|\varDelta^L_{t,j}f\|_{L^p(\bbbr^n)}^q\frac{dt}{t})^{\frac{1}{q}} \nonumber\\
&\quad+C''2^{J[n(\frac{1}{r}-1)-s]}\|f\|_{\Bspq}.
\end{align}
The condition $\sigma_p<s<L$ implies $n(\frac{1}{r}-1)-s<0$ if $r$ is sufficiently close to $p$, and hence the coefficient $C''2^{J[n(\frac{1}{r}-1)-s]}$ is less than $\frac{1}{2}$ when $J$ is a sufficiently large positive integer. Shifting the second term on the right side of (\ref{eq638}) to its left side proves the desired inequality (\ref{eq601}) when $0<r<p\leq 1$, $r\leq q<\infty$ and $\sigma_p<s<L$. If $0<q<r<p\leq 1$ and $\sigma_p<s<L$, then we continue from (\ref{eq627}). Applying (\ref{eq630}) and assuming the validity of decomposition for now, then (\ref{eq627}) can be estimated from above by the sum of the following two terms,
\begin{equation}\label{eq639}
2^{Jn(\frac{1}{r}-1)}(\sum_{k\in\bbbz}2^{ksq}
(\int_1^2\|\varDelta^L_{2^{-k}t,1}f\|_{L^p(\bbbr^n)}^r\frac{dt}{t})^{\frac{q}{r}})^{\frac{1}{q}}
\end{equation}
and
\begin{equation}\label{eq640}
2^{Jn(\frac{1}{r}-1)}(\sum_{k\in\bbbz}2^{ksq}
(\int_1^2\|\varDelta^L_{2^{-k}t,1}(\sum_{j=J+1}^{\infty}f_{k+j-m_0})\|_{L^p(\bbbr^n)}^r\frac{dt}{t})^{\frac{q}{r}})^{\frac{1}{q}}.
\end{equation}
To estimate (\ref{eq639}), we first rewrite each term in the summation of (\ref{eq639}) as follows
\begin{align}\label{eq641}
&2^{ksq}(\int_1^2\|\varDelta^L_{2^{-k}t,1}f\|_{L^p(\bbbr^n)}^r\frac{dt}{t})^{\frac{q}{r}} \nonumber\\
&=(\int_1^2 2^{ksq}\|\varDelta^L_{2^{-k}t,1}f\|_{L^p(\bbbr^n)}^q\cdot
   2^{ks(r-q)}\|\varDelta^L_{2^{-k}t,1}f\|_{L^p(\bbbr^n)}^{r-q}\frac{dt}{t})^{\frac{q}{r}} \nonumber\\
&\lesssim(\int_1^2 2^{ksq}\|\varDelta^L_{2^{-k}t,1}f\|_{L^p(\bbbr^n)}^q\frac{dt}{t})^{\frac{q}{r}}\cdot
          (2^{ks}\esssup_{t\in[1,2]}\|\varDelta^L_{2^{-k}t,1}f\|_{L^p(\bbbr^n)})^{(r-q)\frac{q}{r}},
\end{align}
and then we apply H\"{o}lder's inequality with conjugates $\frac{r}{q}$ and $\frac{r}{r-q}$ to obtain
\begin{align}\label{eq642}
(\ref{eq639})
&\lesssim 2^{Jn(\frac{1}{r}-1)}
(\sum_{k\in\bbbz}\int_1^2 2^{ksq}\|\varDelta^L_{2^{-k}t,1}f\|_{L^p(\bbbr^n)}^q\frac{dt}{t})^{\frac{1}{r}} \nonumber\\
&\quad\cdot(\sum_{k\in\bbbz}2^{ksq}\esssup_{t\in[1,2]}\|\varDelta^L_{2^{-k}t,1}f\|_{L^p(\bbbr^n)}^q)^{\frac{1}{q}-\frac{1}{r}} \nonumber\\
&\lesssim 2^{Jn(\frac{1}{r}-1)}
(\int_0^{\infty}t^{-sq}\|\varDelta^L_{t,1}f\|_{L^p(\bbbr^n)}^q\frac{dt}{t})^{\frac{1}{r}}\cdot\|f\|_{\Bspq}^{1-\frac{q}{r}}
<\infty,
\end{align}
where in (\ref{eq642}) we used estimate (\ref{eq614}) and the argument afterward to justify the decomposition. And (\ref{eq614}) requires $0<s<L$. To estimate (\ref{eq640}), we use (\ref{eq612}) and the condition $0<q<r<p\leq 1$ to get
\begin{equation}\label{eq643}
\|\sum_{j=J+1}^{\infty}|\varDelta^L_{2^{-k}t,1}f_{k+j-m_0}|\|_{L^p(\bbbr^n)}^r\lesssim
\sum_{j=J+1}^{\infty}\|f_{k+j-m_0}\|_{L^p(\bbbr^n)}^r,
\end{equation}
and then we can insert (\ref{eq643}) into the following term and obtain
\begin{align}\label{eq644}
&2^{Jn(\frac{1}{r}-1)}(\sum_{k\in\bbbz}2^{ksq}(\int_1^2\|\sum_{j=J+1}^{\infty}|\varDelta^L_{2^{-k}t,1}f_{k+j-m_0}|
\|_{L^p(\bbbr^n)}^r\frac{dt}{t})^{\frac{q}{r}})^{\frac{1}{q}} \nonumber\\
&\lesssim 2^{Jn(\frac{1}{r}-1)}(\sum_{k\in\bbbz}2^{ksq}(\sum_{j=J+1}^{\infty}\|f_{k+j-m_0}\|_{L^p(\bbbr^n)}^r
          )^{\frac{q}{r}})^{\frac{1}{q}} \nonumber\\
&\lesssim 2^{Jn(\frac{1}{r}-1)}(\sum_{j=J+1}^{\infty}2^{(m_0-j)sq}\sum_{k\in\bbbz}2^{(k+j-m_0)sq}
          \|f_{k+j-m_0}\|_{L^p(\bbbr^n)}^q)^{\frac{1}{q}} \nonumber\\
&\lesssim 2^{J[n(\frac{1}{r}-1)-s]}\|f\|_{\Bspq}.
\end{align}
The assumption $f\in\Bspq$ tells us $\sum_{j=J+1}^{\infty}|\varDelta^L_{2^{-k}t,1}f_{k+j-m_0}(x)|<\infty$ for every $k\in\bbbz$ and almost every $t\in[1,2],x\in\bbbr^n$. In conjunction with (\ref{eq642}) and (\ref{eq630}), we have proven (\ref{eq630}) is true not only in the sense of $\Sw'(\bbbr^n)$ but also for every $k\in\bbbz$ and almost every $t\in[1,2],x\in\bbbr^n$ when $0<q<r<p\leq 1$ and $\sigma_p<s<L$. Furthermore estimating (\ref{eq627}) from above by the sum of (\ref{eq639}) and (\ref{eq640}) is justified, moreover (\ref{eq640}) can be estimated from above by the first line of (\ref{eq644}) and hence by the last line of (\ref{eq644}). Combining (\ref{eq627}), (\ref{eq639}), (\ref{eq640}), (\ref{eq642}), and (\ref{eq644}) altogether, we have obtained
\begin{align}\label{eq646}
&\|\{2^{ks}\iFT_n[\rho_1(2^{m_0-k}\xi)\FT_n f](\cdot)\}_{k\in\bbbz}\|_{l^q(L^p)} \nonumber\\
&\leq C_1'2^{Jn(\frac{1}{r}-1)}(\int_0^{\infty}t^{-sq}\|\varDelta^L_{t,1}f\|_{L^p(\bbbr^n)}^q\frac{dt}{t})^{\frac{1}{r}}
\cdot\|f\|_{\Bspq}^{1-\frac{q}{r}} \nonumber\\
&\quad+C_1''2^{J[n(\frac{1}{r}-1)-s]}\|f\|_{\Bspq}.
\end{align}
Inequality (\ref{eq646}) is also true if we replace $\rho_1,\varDelta^L_{t,1},C_1',C_1''$ by $\rho_j,\varDelta^L_{t,j},C_j',C_j''$ respectively for $j=2,\cdots,n$. Inserting these inequalities into (\ref{eq619}) yields
\begin{align}\label{eq647}
\|f\|_{\Bspq}
&\leq C'2^{Jn(\frac{1}{r}-1)}\sum_{j=1}^n
(\int_0^{\infty}\!\!\!\!t^{-sq}\|\varDelta^L_{t,j}f\|_{L^p(\bbbr^n)}^q\frac{dt}{t})^{\frac{1}{r}}
\cdot\|f\|_{\Bspq}^{1-\frac{q}{r}} \nonumber\\
&\quad+C''2^{J[n(\frac{1}{r}-1)-s]}\|f\|_{\Bspq}.
\end{align}
The condition $\sigma_p<s<L$ implies $n(\frac{1}{r}-1)-s<0$ if $r$ is sufficiently close to $p$, and hence the coefficient $C''2^{J[n(\frac{1}{r}-1)-s]}$ is less than $\frac{1}{2}$ when $J$ is a sufficiently large positive integer. And then we can shift the second term on the right side of (\ref{eq647}) to its left side, divide both sides of the resulting inequality by $\|f\|_{\Bspq}^{1-\frac{q}{r}}$ and raise the power to $\frac{r}{q}$, finally we can reach the desired inequality (\ref{eq601}) when $0<q<r<p\leq 1$ and $\sigma_p<s<L$. We have finished the proof for the third part of Theorem \ref{theorem7} (ii).

Now we prove Theorem \ref{theorem7} (iii). We only need to prove inequality (\ref{eq602}) for $j=1$, and the other cases for $j=2,\cdots,n$ can be proved in the same way. We pick $0<\varepsilon<\min\{s,L-s\}$ and begin with estimating the following term
\begin{equation}\label{eq648}
\esssup_{k\in\bbbz}\esssup_{2^{-k}\leq t<2^{1-k}}\!\!\!\!2^{ks}\|\sum_{l\leq k}|\varDelta^L_{t,1}f_l|\|_{L^p(\bbbr^n)}+
\esssup_{k\in\bbbz}\esssup_{2^{-k}\leq t<2^{1-k}}\!\!\!\!2^{ks}\|\sum_{l>k}|\varDelta^L_{t,1}f_l|\|_{L^p(\bbbr^n)}.
\end{equation}
If $1\leq p\leq\infty$, by using Minkowski's inequality and (\ref{eq610}), we can estimate the first term of (\ref{eq648}) from above by
\begin{align}\label{eq649}
&\esssup_{k\in\bbbz}2^{ks}\sum_{l\leq k}2^{l\varepsilon}\cdot 2^{-l\varepsilon}
  \esssup_{2^{-k}\leq t<2^{1-k}}\!\!\!\!\|\varDelta^L_{t,1}f_l\|_{L^p(\bbbr^n)} \nonumber\\
&\lesssim\esssup_{k\in\bbbz}2^{k(s+\varepsilon-L)}\esssup_{l\leq k}2^{l(L-\varepsilon)}\|f_l\|_{L^p(\bbbr^n)}\nonumber\\
&\lesssim\esssup_{l\in\bbbz}\esssup_{k\geq l}2^{k(s+\varepsilon-L)}\cdot 2^{l(L-\varepsilon)}\|f_l\|_{L^p(\bbbr^n)}
         \lesssim\|f\|_{\Bspinf},
\end{align}
and by using Minkowski's inequality and (\ref{eq612}), we can estimate the second term of (\ref{eq648}) from above by
\begin{align}\label{eq650}
&\esssup_{k\in\bbbz}2^{ks}\sum_{l>k}2^{-l\varepsilon}\cdot 2^{l\varepsilon}
  \esssup_{2^{-k}\leq t<2^{1-k}}\!\!\!\!\|\varDelta^L_{t,1}f_l\|_{L^p(\bbbr^n)} \nonumber\\
&\lesssim\esssup_{k\in\bbbz}2^{k(s-\varepsilon)}\esssup_{l>k}2^{l\varepsilon}\|f_l\|_{L^p(\bbbr^n)}\nonumber\\
&\lesssim\esssup_{l\in\bbbz}\esssup_{k<l}2^{k(s-\varepsilon)}\cdot 2^{l\varepsilon}\|f_l\|_{L^p(\bbbr^n)}
         \lesssim\|f\|_{\Bspinf}.
\end{align}
If $0<p<1$, by using (\ref{eq610}) for $l\leq k$, we can estimate the first term of (\ref{eq648}) from above by
\begin{equation}\label{eq651}
\esssup_{k\in\bbbz}2^{ks}\esssup_{2^{-k}\leq t<2^{1-k}}(\sum_{l\leq k}2^{lp\varepsilon}\cdot 2^{-lp\varepsilon}
\|\varDelta^L_{t,1}f_l\|_{L^p(\bbbr^n)}^p)^{\frac{1}{p}}\lesssim\|f\|_{\Bspinf},
\end{equation}
and by using (\ref{eq612}) for $l>k$, we can estimate the second term of (\ref{eq648}) from above by
\begin{equation}\label{eq652}
\esssup_{k\in\bbbz}2^{ks}\esssup_{2^{-k}\leq t<2^{1-k}}(\sum_{l>k}2^{-lp\varepsilon}\cdot 2^{lp\varepsilon}
\|\varDelta^L_{t,1}f_l\|_{L^p(\bbbr^n)}^p)^{\frac{1}{p}}\lesssim\|f\|_{\Bspinf}.
\end{equation}
From (\ref{eq648}), (\ref{eq649}), (\ref{eq650}), (\ref{eq651}) and (\ref{eq652}), we have proven
\begin{equation}\label{eq653}
\esssup_{t>0}t^{-s}\|\sum_{l\in\bbbz}|\varDelta^L_{t,1}f_l|\|_{L^p(\bbbr^n)}\lesssim(\ref{eq648})\lesssim\|f\|_{\Bspinf},
\end{equation}
when $0<p\leq\infty$, $q=\infty$ and $0<s<L$. The assumption $f\in\Bspinf$ implies $\sum_{l\in\bbbz}|\varDelta^L_{t,1}f_l(x)|<\infty$ for almost every $t>0,x\in\bbbr^n$. In conjunction with (\ref{eq513}), we have shown
\begin{equation}\label{eq654}
\varDelta^L_{t,1}f=\sum_{l\in\bbbz}\varDelta^L_{t,1}f_l(x)
\end{equation}
in the sense of $\Sw'(\bbbr^n)/\mathscr{P}(\bbbr^n)$ for almost every $t>0,x\in\bbbr^n$ when $0<p\leq\infty$, $q=\infty$ and $0<s<L$. Therefore we have obtained the inequality
\begin{equation}\label{eq655}
\esssup_{t>0}t^{-s}\|\varDelta^L_{t,1}f\|_{L^p(\bbbr^n)}\lesssim
\esssup_{t>0}t^{-s}\|\sum_{l\in\bbbz}|\varDelta^L_{t,1}f_l|\|_{L^p(\bbbr^n)}.
\end{equation}
Inequalities (\ref{eq653}) and (\ref{eq655}) complete the proof of Theorem \ref{theorem7} (iii).

Finally, we come to the proof of Theorem \ref{theorem7} (iv). We assume the right side of (\ref{eq603}) is finite, otherwise inequality (\ref{eq603}) is trivial. In the definition of the Peetre-Fefferman-Stein maximal function, we pick the number $r$ so that $0<r<p$. We also use the positive integer $m_0$ given in (\ref{eq417}) and $\{\rho_j\}_{j=1}^n$ is the partition of unity given in (\ref{eq589}) and (\ref{eq590}). Then we have
\begin{align}\label{eq656}
\|f\|_{\Bspinf}
&\lesssim\sum_{j=1}^n\esssup_{k\in\bbbz}2^{ks}
          \|\iFT_n[\FT_n\psi(2^{m_0-k}\xi)\rho_j(2^{m_0-k}\xi)\FT_n f]\|_{L^p(\bbbr^n)} \nonumber\\
&\lesssim\sum_{j=1}^n\esssup_{k\in\bbbz}2^{ks}\|\iFT_n[\rho_j(2^{m_0-k}\xi)\FT_n f]\|_{L^p(\bbbr^n)},
\end{align}
where (\ref{eq656}) is a consequence by Remark \ref{remark3}, Remark \ref{remark12} and the mapping property of Hardy-Littlewood maximal function. It suffices to estimate the term for $j=1$ in (\ref{eq656}), and estimates for the other terms in (\ref{eq656}) when $j=2,\cdots,n$ can be obtained in the same way. If $1<p\leq\infty$, $q=\infty$ and $s\in\bbbr$, we can infer from the calculation method displayed in (\ref{eq657}) that
\begin{align}\label{eq658}
&\|\iFT_n[\rho_1(2^{m_0-k}\xi)\FT_n f]\|_{L^p(\bbbr^n)} \nonumber\\
&=\esssup_{t\in[1,2]}\|\iFT_n[\rho_1(2^{m_0-k}\xi)\FT_n f]\|_{L^p(\bbbr^n)}\nonumber\\
&=\esssup_{t\in[1,2]}\|\iFT_n[\frac{\rho_1(2^{m_0-k}\xi)}{(e^{2\pi i\cdot 2^{-k}t\xi_1}-1)^L}]*\varDelta^L_{2^{-k}t,1}f
   \|_{L^p(\bbbr^n)}.
\end{align}
We use (\ref{eq491}), Minkowski's inequality for $\|\cdot\|_{L^p(\bbbr^n)}$-norm, the mapping property of Hardy-Littlewood maximal function and the following estimate
\begin{equation}\label{eq659}
\|\mvint_{A_{k-l}}|\varDelta^L_{2^{-k}t,1}f(\cdot-y)|dy\|_{L^p(\bbbr^n)}\lesssim
\|\HLmax_n(|\varDelta^L_{2^{-k}t,1}f|)(\cdot)\|_{L^p(\bbbr^n)}\lesssim
\|\varDelta^L_{2^{-k}t,1}f\|_{L^p(\bbbr^n)},
\end{equation}
where $A_{k-l}$ denotes the annulus $\{y\in\bbbr^n:2^{l-k}\leq|y|<2^{1+l-k}\}$, and then we can estimate (\ref{eq658}) from above by
\begin{equation}\label{eq660}
\esssup_{t\in[1,2]}(\sum_{l\leq 0}2^{ln}+\sum_{l>0}2^{l(n-N)})\|\varDelta^L_{2^{-k}t,1}f\|_{L^p(\bbbr^n)}\lesssim
\esssup_{t\in[1,2]}\|\varDelta^L_{2^{-k}t,1}f\|_{L^p(\bbbr^n)},
\end{equation}
if we pick $N>n$. Combining (\ref{eq658}) and (\ref{eq660}), we have proven
\begin{equation}\label{eq661}
\esssup_{k\in\bbbz}2^{ks}\|\iFT_n[\rho_1(2^{m_0-k}\xi)\FT_n f]\|_{L^p(\bbbr^n)}\lesssim
\esssup_{t>0}t^{-s}\|\varDelta^L_{t,1}f\|_{L^p(\bbbr^n)}
\end{equation}
for any $s\in\bbbr$. And inequality (\ref{eq661}) is true if we replace $\rho_1,\varDelta^L_{t,1}$ by $\rho_j,\varDelta^L_{t,j}$ respectively for $j=2,\cdots,n$. Inserting these inequalities into (\ref{eq656}) yields the desired inequality (\ref{eq603}). If $0<p\leq 1$, $q=\infty$ and $\sigma_p<s<\infty$, then we use the function $\phi$ satisfying conditions (\ref{eq346}), (\ref{eq422}), (\ref{eq423}), and $J>m_0$ is a large positive integer whose value will be determined later. Since $0<r<p\leq 1$, we use the following inequality
\begin{equation}\label{eq662}
\mvint_{A_{k-l}}|\varDelta^L_{2^{-k}t,1}(\phi_{2^{m_0-J-k}}*f)(x-y)|^rdy\lesssim
\HLmax_n(|\varDelta^L_{2^{-k}t,1}(\phi_{2^{m_0-J-k}}*f)|^r)(x),
\end{equation}
and deduce from (\ref{eq494}) and (\ref{eq495}) the estimate below
\begin{align}\label{eq663}
&|\iFT_n[\frac{\rho_1(2^{m_0-k}\xi)}{(e^{2\pi i\cdot 2^{-k}t\xi_1}-1)^L}]*
  [\varDelta^L_{2^{-k}t,1}(\phi_{2^{m_0-J-k}}*f)(\cdot)](x)| \nonumber\\
&\lesssim 2^{Jn(\frac{1}{r}-1)}\HLmax_n(|\varDelta^L_{2^{-k}t,1}(\phi_{2^{m_0-J-k}}*f)|^r)(x)^{\frac{1}{r}},
\end{align}
where we let $N'>\frac{n}{r}$ in (\ref{eq495}). From (\ref{eq417}) we know that when $\rho_1(2^{m_0-k}\xi)\neq 0$ and $t\in[1,2]$, $|(e^{2\pi i\cdot 2^{-k}t\xi_1}-1)^L|>0$ and we can infer from the calculation method displayed in the second line of (\ref{eq577}) that
\begin{align}\label{eq664}
&\esssup_{k\in\bbbz}2^{ks}\|\iFT_n[\rho_1(2^{m_0-k}\xi)\FT_n f]\|_{L^p(\bbbr^n)} \nonumber\\
&=\esssup_{k\in\bbbz}2^{ks}\esssup_{t\in[1,2]}\|\iFT_n[\frac{\rho_1(2^{m_0-k}\xi)}{(e^{2\pi i\cdot 2^{-k}t\xi_1}-1)^L}] \nonumber\\
&\quad*[\varDelta^L_{2^{-k}t,1}(\phi_{2^{m_0-J-k}}*f)]\|_{L^p(\bbbr^n)} \nonumber\\
&\lesssim\esssup_{k\in\bbbz}2^{ks+Jn(\frac{1}{r}-1)}\esssup_{t\in[1,2]}
          \|\HLmax_n(|\varDelta^L_{2^{-k}t,1}(\phi_{2^{m_0-J-k}}*f)|^r)\|_{L^{\frac{p}{r}}(\bbbr^n)}^{\frac{1}{r}} \nonumber\\
&\lesssim\esssup_{k\in\bbbz}2^{ks+Jn(\frac{1}{r}-1)}\esssup_{t\in[1,2]}
          \|\varDelta^L_{2^{-k}t,1}(\phi_{2^{m_0-J-k}}*f)\|_{L^p(\bbbr^n)}.
\end{align}
Recall (\ref{eq630}) and assume the validity of decomposition for now, then we can estimate (\ref{eq664}) from above by the sum of the following two terms,
\begin{align}\label{eq665}
&2^{Jn(\frac{1}{r}-1)}\esssup_{k\in\bbbz}\esssup_{t\in[1,2]}2^{ks}\|\varDelta^L_{2^{-k}t,1}f\|_{L^p(\bbbr^n)} \nonumber\\
&\sim 2^{Jn(\frac{1}{r}-1)}\esssup_{t>0}t^{-s}\|\varDelta^L_{t,1}f\|_{L^p(\bbbr^n)}<\infty,
\end{align}
and
\begin{equation}\label{eq666}
2^{Jn(\frac{1}{r}-1)}\esssup_{k\in\bbbz}\esssup_{t\in[1,2]}2^{ks}\|\varDelta^L_{2^{-k}t,1}
(\sum_{j=J+1}^{\infty}f_{k+j-m_0})\|_{L^p(\bbbr^n)}.
\end{equation}
To estimate (\ref{eq666}), we pick $0<\varepsilon<s$ and use (\ref{eq612}) to obtain
\begin{align}\label{eq667}
&\|\sum_{j=J+1}^{\infty}|\varDelta^L_{2^{-k}t,1}f_{k+j-m_0}|\|_{L^p(\bbbr^n)} \nonumber\\
&\lesssim(\sum_{j=J+1}^{\infty}\|\varDelta^L_{2^{-k}t,1}f_{k+j-m_0}\|_{L^p(\bbbr^n)}^p)^{\frac{1}{p}} \nonumber\\
&\lesssim(\sum_{j=J+1}^{\infty}2^{-jp\varepsilon}\cdot 2^{jp\varepsilon}\|f_{k+j-m_0}\|_{L^p(\bbbr^n)}^p)^{\frac{1}{p}} \nonumber\\
&\lesssim 2^{-J\varepsilon}\esssup_{j>J}2^{j\varepsilon}\|f_{k+j-m_0}\|_{L^p(\bbbr^n)},
\end{align}
where the constants are independent of $t$. And then we can have the following estimate
\begin{align}\label{eq669}
&2^{Jn(\frac{1}{r}-1)}\esssup_{k\in\bbbz}\esssup_{t\in[1,2]}2^{ks}
  \|\sum_{j=J+1}^{\infty}|\varDelta^L_{2^{-k}t,1}f_{k+j-m_0}|\|_{L^p(\bbbr^n)} \nonumber\\
&\lesssim 2^{J[n(\frac{1}{r}-1)-\varepsilon]}\esssup_{j>J}2^{j(\varepsilon-s)+m_0 s}\esssup_{k\in\bbbz}2^{(k+j-m_0)s}
          \|f_{k+j-m_0}\|_{L^p(\bbbr^n)} \nonumber\\
&\lesssim 2^{J[n(\frac{1}{r}-1)-s]}\|f\|_{\Bspinf}.
\end{align}
The assumption $f\in\Bspinf$ implies $\sum_{j=J+1}^{\infty}|\varDelta^L_{2^{-k}t,1}f_{k+j-m_0}(x)|<\infty$ for every $k\in\bbbz$ and almost every $t\in[1,2],x\in\bbbr^n$. In conjunction with (\ref{eq514}) and (\ref{eq665}), we have shown (\ref{eq630}) is true not only in the sense of $\Sw'(\bbbr^n)$ but also for every $k\in\bbbz$ and almost every $t\in[1,2],x\in\bbbr^n$ when $0<p\leq 1$, $q=\infty$ and $\sigma_p<s<\infty$. Furthermore estimating (\ref{eq664}) from above by the sum of (\ref{eq665}) and (\ref{eq666}) is justified, moreover (\ref{eq666}) can be estimated from above by the first line of (\ref{eq669}) and hence by the last line of (\ref{eq669}). We have obtained the inequality
\begin{align}\label{eq670}
&\esssup_{k\in\bbbz}2^{ks}\|\iFT_n[\rho_1(2^{m_0-k}\xi)\FT_n f]\|_{L^p(\bbbr^n)} \nonumber\\
&\leq C_1'2^{Jn(\frac{1}{r}-1)}\esssup_{t>0}t^{-s}\|\varDelta^L_{t,1}f\|_{L^p(\bbbr^n)}+
C_1''2^{J[n(\frac{1}{r}-1)-s]}\|f\|_{\Bspinf}.
\end{align}
Inequality (\ref{eq670}) is also true if we replace $\rho_1,\varDelta^L_{t,1},C_1',C_1''$ by $\rho_j,\varDelta^L_{t,j},C_j',C_j''$ respectively for $j=2,\cdots,n$. Inserting these inequalities into (\ref{eq656}) yields
\begin{align}\label{eq671}
\|f\|_{\Bspinf}&\leq C'2^{Jn(\frac{1}{r}-1)}\sum_{j=1}^n\esssup_{t>0}t^{-s}\|\varDelta^L_{t,j}f\|_{L^p(\bbbr^n)} \nonumber\\
&\quad+C''2^{J[n(\frac{1}{r}-1)-s]}\|f\|_{\Bspinf}.
\end{align}
The assumption $\sigma_p<s<\infty$ allows $n(\frac{1}{r}-1)-s<0$ when $r$ is sufficiently close to $p$. Thus when $J$ is a sufficiently large positive integer, the coefficient $C''2^{J[n(\frac{1}{r}-1)-s]}$ is less than $\frac{1}{2}$, and we can shift the second term on the right side of (\ref{eq671}) to its left side and then the desired inequality (\ref{eq603}) is proved. Now the proof of Theorem \ref{theorem7} is complete.
\end{proof}

\section{Besov-Lipschitz space and iterated differences}\label{proof.of.theorem8}
\begin{proof}[Proof Of Theorem \ref{theorem8}]
We first prove Theorem \ref{theorem8} (i). We continue using the notation $A_k=\{h\in\bbbr^n:2^{-k}\leq|h|<2^{1-k}\}$ for $k\in\bbbz$ and thus $\bbbr^n\setminus\{0\}=\bigcup_{k\in\bbbz}A_k$. We also pick the number $r$ in the definition of the Peetre-Fefferman-Stein maximal function so that $0<r<p$. We begin with estimating $$(\int_{\bbbr^n}|h|^{-sq}\|
\sum_{j\in\bbbz}|\varDelta^L_h f_j|\|_{L^p(\bbbr^n)}^q\frac{dh}{|h|^n})^{\frac{1}{q}}$$ from above by the following
\begin{align}\label{eq704}
&(\sum_{k\in\bbbz}2^{k(sq+n)}\int_{A_k}\|\sum_{j\leq k}|\varDelta^L_h f_j|\|_{L^p(\bbbr^n)}^q dh)^{\frac{1}{q}} \nonumber\\
& \quad+(\sum_{k\in\bbbz}2^{k(sq+n)}\int_{A_k}\|\sum_{j>k}|\varDelta^L_h f_j|\|_{L^p(\bbbr^n)}^q dh)^{\frac{1}{q}}.
\end{align}
We can pick $0<\varepsilon<\min\{s,L-s\}$ and use the same calculation method as in (\ref{eq606}), (\ref{eq607}), (\ref{eq608}), (\ref{eq609}) to obtain
\begin{align}
\|\sum_{j\leq k}|\varDelta^L_h f_j|\|_{L^p(\bbbr^n)}^q&\lesssim
2^{kq\varepsilon}\sum_{j\leq k}2^{-jq\varepsilon}\|\varDelta^L_h f_j\|_{L^p(\bbbr^n)}^q,\label{eq705}\\
\|\sum_{j>k}|\varDelta^L_h f_j|\|_{L^p(\bbbr^n)}^q&\lesssim
2^{-kq\varepsilon}\sum_{j>k}2^{jq\varepsilon}\|\varDelta^L_h f_j\|_{L^p(\bbbr^n)}^q.\label{eq706}
\end{align}
And (\ref{eq705}), (\ref{eq706}) are true for $0<p\leq\infty$, $0<q<\infty$. To estimate the first term in (\ref{eq704}), we use (\ref{eq270}), Remark \ref{remark12} and the mapping property of Hardy-Littlewood maximal function for $\|\cdot\|_{L^{\frac{p}{r}}(\bbbr^n)}$-norm to obtain
\begin{equation}\label{eq707}
\|\varDelta^L_h f_j\|_{L^p(\bbbr^n)}\lesssim 2^{(j-k)L}(1+2^{j-k})^{\frac{n}{r}}\|\PFSmax_n f_j\|_{L^p(\bbbr^n)}
\lesssim 2^{(j-k)L}\|f_j\|_{L^p(\bbbr^n)},
\end{equation}
for $j\leq k$, $|h|\lesssim 2^{-k}$ and constants are independent of $h\in\bbbr^n$ and $j,k\in\bbbz$. We put (\ref{eq705}) and (\ref{eq707}) into the first term of (\ref{eq704}) then we have
\begin{align}\label{eq710}
&(\sum_{k\in\bbbz}2^{k(sq+n)}\int_{A_k}\|\sum_{j\leq k}|\varDelta^L_h f_j|\|_{L^p(\bbbr^n)}^q dh)^{\frac{1}{q}} \nonumber\\
&\lesssim(\sum_{k\in\bbbz}\sum_{j\leq k}2^{kq(s+\varepsilon-L)}\cdot
          2^{jq(L-\varepsilon)}\|f_j\|_{L^p(\bbbr^n)}^q)^{\frac{1}{q}} \nonumber\\
&\lesssim(\sum_{j\in\bbbz}\sum_{k\geq j}2^{kq(s+\varepsilon-L)}\cdot
          2^{jq(L-\varepsilon)}\|f_j\|_{L^p(\bbbr^n)}^q)^{\frac{1}{q}}\lesssim\|f\|_{\Bspq}.
\end{align}
To estimate the second term of (\ref{eq704}), we can use (\ref{eq260}) and proper change of variable to obtain
\begin{equation}\label{eq708}
\|\varDelta^L_h f_j\|_{L^p(\bbbr^n)}\lesssim\|f_j\|_{L^p(\bbbr^n)}\qquad\text{ for all }j\in\bbbz,
\end{equation}
and the constant is independent of $j\in\bbbz$ and $h\in\bbbr^n$. We put (\ref{eq706}) and (\ref{eq708}) into the second term of (\ref{eq704}) then we have
\begin{align}\label{eq709}
&(\sum_{k\in\bbbz}2^{k(sq+n)}\int_{A_k}\|\sum_{j>k}|\varDelta^L_h f_j|\|_{L^p(\bbbr^n)}^q dh)^{\frac{1}{q}} \nonumber\\
&\lesssim(\sum_{k\in\bbbz}\sum_{j>k}2^{kq(s-\varepsilon)}\cdot 2^{jq\varepsilon}\|f_j\|_{L^p(\bbbr^n)}^q)^{\frac{1}{q}} \nonumber\\
&\lesssim(\sum_{j\in\bbbz}\sum_{k<j}2^{kq(s-\varepsilon)}\cdot 2^{jq\varepsilon}\|f_j\|_{L^p(\bbbr^n)}^q)^{\frac{1}{q}}
         \lesssim\|f\|_{\Bspq}.
\end{align}
Combining (\ref{eq704}), (\ref{eq710}) and (\ref{eq709}), we have proven
\begin{equation}\label{eq711}
(\int_{\bbbr^n}|h|^{-sq}\|\sum_{j\in\bbbz}|\varDelta^L_h f_j|\|_{L^p(\bbbr^n)}^q\frac{dh}{|h|^n})^{\frac{1}{q}}
\lesssim\|f\|_{\Bspq}.
\end{equation}
The assumption $f\in\Bspq$ implies $\sum_{j\in\bbbz}|\varDelta^L_h f_j(x)|<\infty$ for almost every $h\in\bbbr^n$ and $x\in\bbbr^n$. In conjunction with (\ref{eq513}), we have shown
\begin{equation}\label{eq712}
\varDelta^L_h f=\sum_{j\in\bbbz}\varDelta^L_h f_j(x)
\end{equation}
in the sense of $\Sw'(\bbbr^n)/\mathscr{P}(\bbbr^n)$ for almost every $h\in\bbbr^n$ and $x\in\bbbr^n$ when $0<p\leq\infty$, $0<q<\infty$ and $0<s<L$. Therefore we obtain
\begin{equation}\label{eq713}
(\int_{\bbbr^n}|h|^{-sq}\|\varDelta^L_h f\|_{L^p(\bbbr^n)}^q\frac{dh}{|h|^n})^{\frac{1}{q}}\lesssim
(\int_{\bbbr^n}|h|^{-sq}\|\sum_{j\in\bbbz}|\varDelta^L_h f_j|\|_{L^p(\bbbr^n)}^q\frac{dh}{|h|^n})^{\frac{1}{q}}.
\end{equation}
Inequalities (\ref{eq711}) and (\ref{eq713}) conclude the proof of Theorem \ref{theorem8} (i).

To prove Theorem \ref{theorem8} (ii), we assume the right side of inequality (\ref{eq699}) is finite otherwise the inequality is trivial. We recall that $spt.\FT_n\psi\subseteq A'=\{\xi\in\bbbr^n:\frac{1}{2}\leq|\xi|<2\}$ and use the positive integer $m_0$ satisfying (\ref{eq417}). We also continue using the spherical caps $\{C_l\}_{l=1}^M$ constructed right after (\ref{eq417}), the corresponding sets $\{P_l\}_{l=1}^M$ given in (\ref{eq418}), (\ref{eq419}), (\ref{eq420}), and the associated smooth partition of unity $\{\rho_l\}_{l=1}^M$ satisfying (\ref{eq421}). We have the apparent estimate
\begin{equation}\label{eq714}
\|f\|_{\Bspq}=\|\{2^{k-m_0}f_{k-m_0}\}_{k\in\bbbz}\|_{l^q(L^p)}\lesssim
(\sum_{k\in\bbbz}2^{ksq}\|\psi_{2^{m_0-k}}*f\|_{L^p(\bbbr^n)}^q)^{\frac{1}{q}}.
\end{equation}
When $1<p\leq\infty$, we can obtain from (\ref{eq421}) the following
\begin{equation}\label{eq715}
\psi_{2^{m_0-k}}*f(x)=\sum_{l=1}^M\iFT_n[\FT_n\psi(2^{m_0-k}\xi)\rho_l(2^{m_0-k}\xi)\FT_n f](x).
\end{equation}
For each $l\in\{1,\cdots,M\}$, $\theta\in C_l$ and $1\leq\tau\leq 2$, we can infer from (\ref{eq424}), (\ref{eq425}) and (\ref{eq426}) the following
\begin{align}\label{eq716}
&\iFT_n[\FT_n\psi(2^{m_0-k}\xi)\rho_l(2^{m_0-k}\xi)\FT_n f](x) \nonumber\\
&=\iFT_n[\frac{\FT_n\psi(2^{m_0-k}\xi)\rho_l(2^{m_0-k}\xi)}{(e^{2\pi i\cdot 2^{-k}\tau\theta\cdot\xi}-1)^L}\cdot
   (e^{2\pi i\cdot 2^{-k}\tau\theta\cdot\xi}-1)^L\FT_n f](x) \nonumber\\
&=\iFT_n\lambda_{l,\tau\theta}(2^{-k}\xi)(\cdot)*(\varDelta^L_{2^{-k}\tau\theta}f)(x).
\end{align}
Due to Lemma \ref{lemma2} and hence (\ref{eq429}), we have
\begin{align}\label{eq717}
&|\iFT_n\lambda_{l,\tau\theta}(2^{-k}\xi)(\cdot)*(\varDelta^L_{2^{-k}\tau\theta}f)(x)| \nonumber\\
&\lesssim\int_{2^k|y|<1}2^{kn}|\iFT_n\lambda_{l,\tau\theta}(2^{k}y)|\cdot|(\varDelta^L_{2^{-k}\tau\theta}f)(x-y)|dy \nonumber\\
&\quad+\sum_{l=0}^{\infty}
      \int_{A_{k-l}}2^{kn}|\iFT_n\lambda_{l,\tau\theta}(2^{k}y)|\cdot|(\varDelta^L_{2^{-k}\tau\theta}f)(x-y)|dy \nonumber\\
&\lesssim\mvint_{2^k|y|<1}|(\varDelta^L_{2^{-k}\tau\theta}f)(x-y)|dy+\sum_{l=0}^{\infty}2^{l(n-N)}
         \mvint_{A_{k-l}}|(\varDelta^L_{2^{-k}\tau\theta}f)(x-y)|dy \nonumber\\
&\lesssim(1+\sum_{l=0}^{\infty}2^{l(n-N)})\HLmax_n(|\varDelta^L_{2^{-k}\tau\theta}f|)(x)\lesssim
         \HLmax_n(|\varDelta^L_{2^{-k}\tau\theta}f|)(x),
\end{align}
if in the last step above we pick $N>n$. Since $1<p\leq\infty$, we invoke the mapping property of Hardy-Littlewood maximal function and obtain for $0<q<\infty$,
\begin{align}\label{eq718}
&\|\psi_{2^{m_0-k}}*f\|_{L^p(\bbbr^n)}^q \nonumber\\
&\lesssim\sum_{l=1}^M\mvint_{[1,2]}\mvint_{C_l}\|\iFT_n[\FT_n\psi(2^{m_0-k}\xi)\rho_l(2^{m_0-k}\xi)\FT_n f]\|_{L^p(\bbbr^n)}^q
          d\Haus^{n-1}(\theta)\frac{d\tau}{\tau} \nonumber\\
&\lesssim\sum_{l=1}^M\int_1^2\int_{C_l}\|\iFT_n\lambda_{l,\tau\theta}(2^{-k}\xi)(\cdot)*(\varDelta^L_{2^{-k}\tau\theta}f)
          \|_{L^p(\bbbr^n)}^q d\Haus^{n-1}(\theta)\frac{d\tau}{\tau} \nonumber\\
&\lesssim\int_1^2\int_{\unitsph}\|\varDelta^L_{2^{-k}\tau\theta}f\|_{L^p(\bbbr^n)}^q d\Haus^{n-1}(\theta)\frac{d\tau}{\tau} \nonumber\\
&\lesssim\int_{A_k}\|\varDelta^L_h f\|_{L^p(\bbbr^n)}^q\frac{dh}{|h|^n},
\end{align}
for every $k\in\bbbz$. Inserting (\ref{eq718}) into (\ref{eq714}) proves (\ref{eq699}) when $1<p\leq\infty$, $0<q<\infty$ and $s\in\bbbr$. When $0<p<1$, $0<q<\infty$ and $\sigma_p<s<\infty$, we use the function $\phi$ satisfying conditions (\ref{eq346}), (\ref{eq422}), (\ref{eq423}), and $J>m_0$ is a large positive integer whose value will be determined later. Then we have
\begin{equation}\label{eq719}
\psi_{2^{m_0-k}}*f(x)=\sum_{l=1}^M\iFT_n[\FT_n\psi(2^{m_0-k}\xi)\rho_l(2^{m_0-k}\xi)
\FT_n\phi(2^{m_0-J-k}\xi)\FT_n f](x).
\end{equation}
Furthermore for each $l\in\{1,\cdots,M\}$, $\theta\in C_l$ and $1\leq\tau\leq 2$, we can infer from (\ref{eq424}), (\ref{eq425}) and (\ref{eq426}) the following
\begin{align}\label{eq720}
&\iFT_n[\FT_n\psi(2^{m_0-k}\xi)\rho_l(2^{m_0-k}\xi)\FT_n\phi(2^{m_0-J-k}\xi)\FT_n f](x) \nonumber\\
&=\iFT_n[\frac{\FT_n\psi(2^{m_0-k}\xi)\rho_l(2^{m_0-k}\xi)}{(e^{2\pi i\cdot 2^{-k}\tau\theta\cdot\xi}-1)^L}\cdot
  (e^{2\pi i\cdot 2^{-k}\tau\theta\cdot\xi}-1)^L \nonumber\\
&\quad\cdot\FT_n\phi(2^{m_0-J-k}\xi)\FT_n f](x) \nonumber\\
&=\iFT_n\lambda_{l,\tau\theta}(2^{-k}\xi)(\cdot)*(\varDelta^L_{2^{-k}\tau\theta}(\phi_{2^{m_0-J-k}}*f))(x).
\end{align}
The Fourier transform of the Schwartz function
$$y\mapsto 2^{kn}\iFT_n\lambda_{l,\tau\theta}(2^{k}y)\cdot\varDelta^L_{2^{-k}\tau\theta}(\phi_{2^{m_0-J-k}}*f)(x-y)$$
is compactly supported in a ball centered at origin of radius about $2^{k+J-m_0}$ in $\bbbr^n$, therefore by invoking Remark \ref{remark5} or the more general Lemma \ref{lemma7} of Plancherel-Polya-Nikol'skij inequality, we can estimate (\ref{eq720}) from above by
\begin{align}\label{eq721}
&\|2^{kn}\iFT_n\lambda_{l,\tau\theta}(2^{k}\cdot)\cdot
\varDelta^L_{2^{-k}\tau\theta}(\phi_{2^{m_0-J-k}}*f)(x-\cdot)\|_{L^1(\bbbr^n)} \nonumber\\
&\lesssim 2^{(k+J-m_0)n(\frac{1}{r}-1)}\|2^{kn}\iFT_n\lambda_{l,\tau\theta}(2^{k}\cdot)\cdot
\varDelta^L_{2^{-k}\tau\theta}(\phi_{2^{m_0-J-k}}*f)(x\!-\!\cdot)\|_{L^r(\bbbr^n)},
\end{align}
for $0<r<p<1$. We insert (\ref{eq430}) with $n-Nr<0$ into (\ref{eq721}) and use the following inequality
\begin{equation}\label{eq722}
\mvint_{A_{k-m}}|\varDelta^L_{2^{-k}\tau\theta}(\phi_{2^{m_0-k-J}}*f)(x-y)|^r dy\lesssim
\HLmax_n(|\varDelta^L_{2^{-k}\tau\theta}(\phi_{2^{m_0-k-J}}*f)|^r)(x),
\end{equation}
and then combine the result with (\ref{eq720}) to obtain
\begin{align}\label{eq723}
&|\iFT_n[\FT_n\psi(2^{m_0-k}\xi)\rho_l(2^{m_0-k}\xi)\FT_n\phi(2^{m_0-J-k}\xi)\FT_n f](x)| \nonumber\\
&\lesssim 2^{Jn(\frac{1}{r}-1)}\HLmax_n(|\varDelta^L_{2^{-k}\tau\theta}(\phi_{2^{m_0-k-J}}*f)|^r)(x)^{\frac{1}{r}},
\end{align}
for every $l\in\{1,\cdots,M\}$, $\theta\in C_l$ and $\tau\in[1,2]$. Then we use (\ref{eq719}), (\ref{eq723}), the calculation method displayed in (\ref{eq718}) and the mapping property of Hardy-Littlewood maximal function for $\|\cdot\|_{L^{\frac{p}{r}}(\bbbr^n)}$-norm to obtain
\begin{align}\label{eq724}
&\|\psi_{2^{m_0-k}}*f\|_{L^p(\bbbr^n)}^q \nonumber\\
&\lesssim 2^{Jnq(\frac{1}{r}-1)}\sum_{l=1}^M\int_1^2\!\!\int_{C_l}\!\!
          \|\HLmax_n(|\varDelta^L_{2^{-k}\tau\theta}(\phi_{2^{m_0-k-J}}*f)|^r)^{\frac{1}{r}}\|_{L^p(\bbbr^n)}^q
          d\Haus^{n-1}(\theta)\frac{d\tau}{\tau} \nonumber\\
&\lesssim 2^{Jnq(\frac{1}{r}-1)}\int_1^2\int_{\unitsph}
          \|\varDelta^L_{2^{-k}\tau\theta}(\phi_{2^{m_0-k-J}}*f)\|_{L^p(\bbbr^n)}^q
          d\Haus^{n-1}(\theta)\frac{d\tau}{\tau} \nonumber\\
&\lesssim 2^{Jnq(\frac{1}{r}-1)}\int_{A_k}\|\varDelta^L_h(\phi_{2^{m_0-k-J}}*f)\|_{L^p(\bbbr^n)}^q\frac{dh}{|h|^n},
\end{align}
and (\ref{eq724}) is true for $0<q<\infty$. We insert (\ref{eq724}) into (\ref{eq714}), recall (\ref{eq423}), (\ref{eq500}) and (\ref{eq548}), and we also assume the validity of decomposition for now, then $\|f\|_{\Bspq}$ can be estimated from above by the sum of the following two terms,
\begin{align}\label{eq725}
&2^{Jn(\frac{1}{r}-1)}(\sum_{k\in\bbbz}2^{ksq}\int_{A_k}\|\varDelta^L_h f\|_{L^p(\bbbr^n)}^q\frac{dh}{|h|^n})^{\frac{1}{q}} \nonumber\\
&\sim 2^{Jn(\frac{1}{r}-1)}(\int_{\bbbr^n}|h|^{-sq}\|\varDelta^L_h f\|_{L^p(\bbbr^n)}^q\frac{dh}{|h|^n})^{\frac{1}{q}}<\infty
\end{align}
and
\begin{equation}\label{eq726}
2^{Jn(\frac{1}{r}-1)}(\sum_{k\in\bbbz}2^{ksq}\int_{A_k}\|\varDelta^L_h(\sum_{j=J+1}^{\infty}f_{k+j-m_0})
\|_{L^p(\bbbr^n)}^q\frac{dh}{|h|^n})^{\frac{1}{q}}.
\end{equation}
To estimate (\ref{eq726}), we begin with
\begin{align}\label{eq727}
&2^{Jn(\frac{1}{r}-1)}(\sum_{k\in\bbbz}2^{ksq}\int_{A_k}\|\sum_{j=J+1}^{\infty}|\varDelta^L_h f_{k+j-m_0}|
\|_{L^p(\bbbr^n)}^q\frac{dh}{|h|^n})^{\frac{1}{q}} \nonumber\\
&\lesssim 2^{Jn(\frac{1}{r}-1)}(\sum_{k\in\bbbz}2^{k(sq+n)}\int_{A_k}
(\sum_{j=J+1}^{\infty}\|\varDelta^L_h f_{k+j-m_0}\|_{L^p(\bbbr^n)}^p)^{\frac{q}{p}}dh)^{\frac{1}{q}},
\end{align}
since $0<p<1$. When $0<q\leq p$, we use (\ref{eq708}) and the following inequality
\begin{align}\label{eq728}
&(\sum_{j=J+1}^{\infty}\|\varDelta^L_h f_{k+j-m_0}\|_{L^p(\bbbr^n)}^p)^{\frac{q}{p}} \nonumber\\
&\lesssim\sum_{j=J+1}^{\infty}\|\varDelta^L_h f_{k+j-m_0}\|_{L^p(\bbbr^n)}^q
\lesssim\sum_{j=J+1}^{\infty}\|f_{k+j-m_0}\|_{L^p(\bbbr^n)}^q,
\end{align}
where the constants are independent of $j,k\in\bbbz$ and $h\in\bbbr^n$. When $p<q<\infty$, we pick $0<\varepsilon<s$, use (\ref{eq708}) and the following inequality
\begin{align}\label{eq729}
&(\sum_{j=J+1}^{\infty}2^{-jp\varepsilon}\cdot 2^{jp\varepsilon}\|\varDelta^L_h f_{k+j-m_0}\|_{L^p(\bbbr^n)}^p)^{\frac{q}{p}} \nonumber\\
&\lesssim(\sum_{l=J+1}^{\infty}2^{-lp\varepsilon})^{\frac{q}{p}}\esssup_{j>J}
          2^{jq\varepsilon}\|\varDelta^L_h f_{k+j-m_0}\|_{L^p(\bbbr^n)}^q \nonumber\\
&\lesssim 2^{-Jq\varepsilon}\sum_{j=J+1}^{\infty}2^{jq\varepsilon}\|f_{k+j-m_0}\|_{L^p(\bbbr^n)}^q,
\end{align}
where the constants are independent of $j,k\in\bbbz$ and $h\in\bbbr^n$. Insert (\ref{eq728}) and (\ref{eq729}) into (\ref{eq727}) and exchange the order of summation, and then we can estimate (\ref{eq727}) from above by $2^{J[n(\frac{1}{r}-1)-s]}\|f\|_{\Bspq}$. The assumption $f\in\Bspq$ implies $\sum_{j=J+1}^{\infty}|\varDelta^L_h f_{k+j-m_0}(x)|<\infty$ for every $k\in\bbbz$ and almost every $h\in A_k,x\in\bbbr^n$. In conjunction with (\ref{eq514}), (\ref{eq500}) and (\ref{eq725}), we have shown
\begin{equation}\label{eq730}
\varDelta^L_{h}(\phi_{2^{m_0-k-J}}*f)(x)=\varDelta^L_{h}f(x)-\sum_{j=J+1}^{\infty}\varDelta^L_{h}f_{k+j-m_0}(x)
\end{equation}
is true not only in the sense of $\Sw'(\bbbr^n)$ but also for every $k\in\bbbz$ and almost every $h\in A_k,x\in\bbbr^n$ when $0<p<1$, $0<q<\infty$ and $\sigma_p<s<\infty$. Furthermore estimating $\|f\|_{\Bspq}$ from above by the sum of (\ref{eq725}) and (\ref{eq726}) is justified, moreover (\ref{eq726}) can be estimated from above by (\ref{eq727}) and hence by $2^{J[n(\frac{1}{r}-1)-s]}\|f\|_{\Bspq}$. We have obtained the inequality
\begin{equation}\label{eq731}
\|f\|_{\Bspq}\!\leq\!
C'2^{Jn(\frac{1}{r}-1)}(\int_{\bbbr^n}\!\!\!\!|h|^{-sq}\|\varDelta^L_h f\|_{L^p(\bbbr^n)}^q\frac{dh}{|h|^n})^{\frac{1}{q}}\!+\!
C''2^{J[n(\frac{1}{r}-1)-s]}\|f\|_{\Bspq}.
\end{equation}
The condition $\sigma_p<s<\infty$ implies $n(\frac{1}{r}-1)-s<0$ when $r$ is sufficiently close to $p$. Hence when the positive integer $J$ is sufficiently large, the coefficient $C''2^{J[n(\frac{1}{r}-1)-s]}$ is less than $\frac{1}{2}$ and then we can shift the second term on the right side of (\ref{eq731}) to its left side and prove the inequality (\ref{eq699}) when $0<p<1$, $0<q<\infty$ and $\sigma_p<s<\infty$. The proof for Theorem \ref{theorem8} (ii) is now concluded.

Next, we prove Theorem \ref{theorem8} (iii). We pick $0<\varepsilon<\min\{s,L-s\}$ and begin with estimating $\esssup_{h\in\bbbr^n}|h|^{-s}\|\sum_{j\in\bbbz}|\varDelta^L_h f_j|\|_{L^p(\bbbr^n)}$ from above by
\begin{equation}\label{eq732}
\esssup_{k\in\bbbz}\esssup_{h\in A_k}2^{ks}\|\sum_{j\leq k}|\varDelta^L_h f_j|\|_{L^p(\bbbr^n)}+
\esssup_{k\in\bbbz}\esssup_{h\in A_k}2^{ks}\|\sum_{j>k}|\varDelta^L_h f_j|\|_{L^p(\bbbr^n)}.
\end{equation}
If $1\leq p\leq\infty$, by using Minkowski's inequality, (\ref{eq270}) and the calculation method displayed in (\ref{eq649}), we can estimate the first term in (\ref{eq732}) from above by $\|f\|_{\Bspinf}$. And by using Minkowski's inequality, (\ref{eq708}) and the calculation method displayed in (\ref{eq650}), we can estimate the second term in (\ref{eq732}) from above by $\|f\|_{\Bspinf}$. If $0<p<1$, by applying (\ref{eq270}) and the calculation method given in (\ref{eq651}) to the first term of (\ref{eq732}), and by applying (\ref{eq708}) and the calculation method given in (\ref{eq652}) to the second term of (\ref{eq732}), we can still estimate (\ref{eq732}) from above by $\|f\|_{\Bspinf}$. Thus we can obtain the inequality
\begin{equation}\label{eq733}
\esssup_{h\in\bbbr^n}|h|^{-s}\|\sum_{j\in\bbbz}|\varDelta^L_h f_j|\|_{L^p(\bbbr^n)}\lesssim\|f\|_{\Bspinf}.
\end{equation}
The assumption $f\in\Bspinf$ shows $\sum_{j\in\bbbz}|\varDelta^L_h f_j(x)|<\infty$ for almost every $h\in\bbbr^n,x\in\bbbr^n$. In conjunction with (\ref{eq513}), we have shown (\ref{eq712}) is true not only in the sense of $\Sw'(\bbbr^n)/\mathscr{P}(\bbbr^n)$ but also for almost every $h\in\bbbr^n,x\in\bbbr^n$ when $0<p\leq\infty$, $q=\infty$ and $0<s<L$. Therefore we have
\begin{equation}\label{eq734}
\esssup_{h\in\bbbr^n}|h|^{-s}\|\varDelta^L_h f\|_{L^p(\bbbr^n)}\lesssim
\esssup_{h\in\bbbr^n}|h|^{-s}\|\sum_{j\in\bbbz}|\varDelta^L_h f_j|\|_{L^p(\bbbr^n)}.
\end{equation}
Inequalities (\ref{eq733}) and (\ref{eq734}) conclude the proof of Theorem \ref{theorem8} (iii).

Now we prove Theorem \ref{theorem8} (iv). We assume the right side of (\ref{eq701}) is finite, otherwise the inequality is trivial. We still use the positive integer $m_0$ satisfying (\ref{eq417}), the spherical caps $\{C_l\}_{l=1}^M$ constructed right after (\ref{eq417}), the corresponding sets $\{P_l\}_{l=1}^M$ given in (\ref{eq418}), (\ref{eq419}), (\ref{eq420}), and the associated smooth partition of unity $\{\rho_l\}_{l=1}^M$ satisfying (\ref{eq421}). When $1<p\leq\infty$, we use (\ref{eq715}), (\ref{eq716}) and (\ref{eq717}) to obtain
\begin{align}\label{eq735}
&\|\psi_{2^{m_0-k}}*f\|_{L^p(\bbbr^n)} \nonumber\\
&\lesssim\sum_{l=1}^M\esssup_{\tau\in[1,2]}\esssup_{\theta\in C_l}
          \|\iFT_n[\FT_n\psi(2^{m_0-k}\xi)\rho_l(2^{m_0-k}\xi)\FT_n f]\|_{L^p(\bbbr^n)} \nonumber\\
&\lesssim\sum_{l=1}^M\esssup_{\tau\in[1,2]}\esssup_{\theta\in C_l}
          \|\iFT_n\lambda_{l,\tau\theta}(2^{-k}\xi)(\cdot)*(\varDelta^L_{2^{-k}\tau\theta}f)\|_{L^p(\bbbr^n)} \nonumber\\
&\lesssim\sum_{l=1}^M\esssup_{\tau\in[1,2]}\esssup_{\theta\in C_l}
          \|\HLmax_n(|\varDelta^L_{2^{-k}\tau\theta}f|)\|_{L^p(\bbbr^n)} \nonumber\\
&\lesssim\esssup_{\tau\in[1,2]}\esssup_{\theta\in\unitsph}\|\varDelta^L_{2^{-k}\tau\theta}f\|_{L^p(\bbbr^n)}
          =\esssup_{h\in A_k}\|\varDelta^L_h f\|_{L^p(\bbbr^n)}.
\end{align}
Therefore we have the inequality
\begin{equation}\label{eq736}
\|f\|_{\Bspinf}\lesssim\esssup_{k\in\bbbz}2^{ks}\|\psi_{2^{m_0-k}}*f\|_{L^p(\bbbr^n)}\lesssim
\esssup_{h\in\bbbr^n}|h|^{-s}\|\varDelta^L_h f\|_{L^p(\bbbr^n)},
\end{equation}
and inequality (\ref{eq736}) is true for $1<p\leq\infty$, $q=\infty$ and $s\in\bbbr$. When $0<p<1$, we also use the function $\phi$ satisfying conditions (\ref{eq346}), (\ref{eq422}), (\ref{eq423}), and $J>m_0$ is a large positive integer whose value will be determined later. Then we can use (\ref{eq719}) and (\ref{eq723}) with $0<r<p<1$ to obtain
\begin{align}\label{eq737}
&\|\psi_{2^{m_0-k}}*f\|_{L^p(\bbbr^n)} \nonumber\\
&\lesssim\sum_{l=1}^M\esssup_{\tau\in[1,2]}\esssup_{\theta\in C_l}
          \|\iFT_n[\FT_n\psi(2^{m_0-k}\xi) \nonumber\\
&\quad\cdot\rho_l(2^{m_0-k}\xi)\FT_n\phi(2^{m_0-J-k}\xi)\FT_n f]\|_{L^p(\bbbr^n)} \nonumber\\
&\lesssim 2^{Jn(\frac{1}{r}-1)}\sum_{l=1}^M\esssup_{\tau\in[1,2]}\esssup_{\theta\in C_l}
          \|\HLmax_n(|\varDelta^L_{2^{-k}\tau\theta}(\phi_{2^{m_0-k-J}}*f)|^r)^{\frac{1}{r}}\|_{L^p(\bbbr^n)} \nonumber\\
&\lesssim 2^{Jn(\frac{1}{r}-1)}\esssup_{\tau\in[1,2]}\esssup_{\theta\in\unitsph}
          \|\varDelta^L_{2^{-k}\tau\theta}(\phi_{2^{m_0-k-J}}*f)\|_{L^p(\bbbr^n)} \nonumber\\
&=2^{Jn(\frac{1}{r}-1)}\esssup_{h\in A_k}\|\varDelta^L_h(\phi_{2^{m_0-k-J}}*f)\|_{L^p(\bbbr^n)}.
\end{align}
Insert (\ref{eq737}) into $\|f\|_{\Bspinf}$, recall (\ref{eq500}) and assume the validity of decomposition for now, then we can estimate $\|f\|_{\Bspinf}$ from above by the sum of the following two terms,
\begin{equation}\label{eq738}
2^{Jn(\frac{1}{r}-1)}\esssup_{h\in\bbbr^n}|h|^{-s}\|\varDelta^L_h f\|_{L^p(\bbbr^n)}<\infty
\end{equation}
and
\begin{equation}\label{eq739}
2^{Jn(\frac{1}{r}-1)}\esssup_{k\in\bbbz}\esssup_{h\in A_k}2^{ks}
\|\varDelta^L_h(\sum_{j=J+1}^{\infty}f_{k+j-m_0})\|_{L^p(\bbbr^n)}.
\end{equation}
To estimate (\ref{eq739}), we use (\ref{eq708}) and begin with the following
\begin{align}\label{eq740}
&2^{Jn(\frac{1}{r}-1)}\esssup_{k\in\bbbz}\esssup_{h\in A_k}2^{ks}
\|\sum_{j=J+1}^{\infty}|\varDelta^L_h f_{k+j-m_0}|\|_{L^p(\bbbr^n)} \nonumber\\
&\lesssim 2^{Jn(\frac{1}{r}-1)}\esssup_{k\in\bbbz}\esssup_{h\in A_k}2^{ks}
          (\sum_{j=J+1}^{\infty}\|\varDelta^L_h f_{k+j-m_0}\|_{L^p(\bbbr^n)}^p)^{\frac{1}{p}} \nonumber\\
&\lesssim 2^{Jn(\frac{1}{r}-1)}\esssup_{k\in\bbbz}
          (\sum_{j=J+1}^{\infty}2^{(m_0-j)sp}\cdot 2^{(k+j-m_0)sp}\|f_{k+j-m_0}\|_{L^p(\bbbr^n)}^p)^{\frac{1}{p}} \nonumber\\
&\lesssim 2^{J[n(\frac{1}{r}-1)-s]}\|f\|_{\Bspinf}.
\end{align}
The assumption $f\in\Bspinf$ implies $\sum_{j=J+1}^{\infty}|\varDelta^L_h f_{k+j-m_0}(x)|<\infty$ for every $k\in\bbbz$ and almost every $h\in A_k,x\in\bbbr^n$. In conjunction with (\ref{eq514}), (\ref{eq500}) and (\ref{eq738}), we have shown (\ref{eq730}) is true not only in the sense of $\Sw'(\bbbr^n)$ but also for every $k\in\bbbz$ and almost every $h\in A_k,x\in\bbbr^n$ when $0<p<1$, $q=\infty$ and $\sigma_p<s<\infty$. Furthermore estimating $\|f\|_{\Bspinf}$ from above by the sum of (\ref{eq738}) and (\ref{eq739}) is justified, moreover (\ref{eq739}) can be estimated from above by the first line of (\ref{eq740}) and hence by the last line of (\ref{eq740}). We have obtained the inequality
\begin{align}\label{eq742}
\|f\|_{\Bspinf}&\leq C'2^{Jn(\frac{1}{r}-1)}\esssup_{h\in\bbbr^n}|h|^{-s}\|\varDelta^L_h f\|_{L^p(\bbbr^n)} \nonumber\\
&\quad+C''2^{J[n(\frac{1}{r}-1)-s]}\|f\|_{\Bspinf}.
\end{align}
The condition $\sigma_p<s<\infty$ indicates $n(\frac{1}{r}-1)-s<0$ when $r$ is sufficiently close to $p$. Thus the coefficient $C''2^{J[n(\frac{1}{r}-1)-s]}$ is less than $\frac{1}{2}$ when the positive integer $J$ is sufficiently large, and then we can shift the second term on the right side of (\ref{eq742}) to its left side and prove the desired inequality (\ref{eq701}). The proof of Theorem \ref{theorem8} (iv) is now complete.

Finally, we come to the proof of Theorem \ref{theorem8} (v). By using a different method, some optimal conditions can be obtained in the case $p=1$. We assume the right sides of (\ref{eq702}) and (\ref{eq703}) are finite. If $1\leq q\leq\infty$ and $-n<s<\infty$, we use the radial Schwartz function $g$ satisfying (\ref{eq297}), (\ref{eq298}), (\ref{eq299}), and the kernel $G_k(\cdot)=\sum^L_{j=1}d_j g_{2^{-k}j}(\cdot)$ satisfying (\ref{eq300}), (\ref{eq301}), (\ref{eq302}) with $0<r<p=1$. Then from the inequality (\ref{eq553}), we deduce
\begin{align}\label{eq743}
2^{ks}\|G_k*f\|_{L^1(\bbbr^n)}
&\lesssim\sum_{l\geq 0}2^{-l(n+s)}\cdot 2^{(k+l)s}\int_{A_0}\|\varDelta^L_{2^{-k-l}h}f\|_{L^1(\bbbr^n)}dh \nonumber\\
&\quad+\sum_{l<0}2^{l(N'-n-s)}\cdot 2^{(k+l)s}\int_{A_0}\|\varDelta^L_{2^{-k-l}h}f\|_{L^1(\bbbr^n)}dh.
\end{align}
When $1\leq q<\infty$, we use Minkowski's inequality for $\|\cdot\|_{l^q}$-norm, H\"{o}lder's inequality and compute as in (\ref{eq306}) and (\ref{eq307}) with $N'>n+s>0$ then we can obtain
\begin{align}\label{eq744}
&\|\{2^{ks}G_k*f\}_{k\in\bbbz}\|_{l^q(L^1)} \nonumber\\
&\lesssim(\sum_{k\in\bbbz}2^{ksq}\int_{A_0}\|\varDelta^L_{2^{-k}h}f\|_{L^1(\bbbr^n)}^q dh)^{\frac{1}{q}} \nonumber\\
&\lesssim(\int_{\bbbr^n}|h|^{-sq}\|\varDelta^L_{h}f\|_{L^1(\bbbr^n)}^q\frac{dh}{|h|^n})^{\frac{1}{q}}.
\end{align}
When $q=\infty$, we use Minkowski's inequality for $\|\cdot\|_{l^{\infty}}$-norm and the inequality (\ref{eq743}) with $N'>n+s>0$ to obtain
\begin{align}\label{eq745}
&\|\{2^{ks}G_k*f\}_{k\in\bbbz}\|_{l^{\infty}(L^1)} \nonumber\\
&\lesssim\esssup_{k\in\bbbz}2^{ks}\int_{A_0}\|\varDelta^L_{2^{-k}h}f\|_{L^1(\bbbr^n)}dh \nonumber\\
&\lesssim\esssup_{k\in\bbbz}2^{ks}\esssup_{h\in A_0}\|\varDelta^L_{2^{-k}h}f\|_{L^1(\bbbr^n)} \nonumber\\
&\lesssim\esssup_{h\in\bbbr^n}|h|^{-s}\|\varDelta^L_{h}f\|_{L^1(\bbbr^n)}.
\end{align}
Indicated by the first line of (\ref{eq553}), (\ref{eq743}), (\ref{eq744}), and (\ref{eq745}), we know that the integral on the left end of (\ref{eq299}) is absolutely convergent and hence well-defined for every $k\in\bbbz$ and almost every $x\in\bbbr^n$. By (\ref{eq301}), (\ref{eq302}), Remark \ref{remark12} and the mapping property of Hardy-Littlewood maximal function, we have $\|f\|_{\dot{B}^s_{1,q}(\bbbr^n)}\lesssim\|\{2^{ks}G_k*f\}_{k\in\bbbz}\|_{l^q(L^1)}$ for $1\leq q\leq\infty$ and any $s\in\bbbr$. In conjunction with (\ref{eq744}) and (\ref{eq745}), we have proven the inequality (\ref{eq703}) and the inequality (\ref{eq702}) when $p=1$, $1\leq q<\infty$ and $-n<s<\infty$. To prove (\ref{eq702}) is true for $p=1$, $0<q<1$ and $0<s<\infty$, we notice that by picking $0<r<p=1$, the method given for the proof of the second part of Theorem \ref{theorem8} (ii) still applies and $\sigma_1=0$. The proof of Theorem \ref{theorem8} is now complete.
\end{proof}

\section*{Acknowledgments}
The author would like to thank the anonymous reviewer for the review of this article.

\bibliographystyle{plain}
\bibliography{bibliography}

\begin{thebibliography}{1}

\bibitem{hitchhiker2012}
Eleonora Di~Nezza, Giampiero Palatucci, and Enrico Valdinoci.
\newblock Hitchhiker's guide to the fractional sobolev spaces.
\newblock {\em Bulletin des sciences math{\'e}matiques}, 136(5):521--573, 2012.

\bibitem{14classical}
Loukas Grafakos.
\newblock {\em Classical {F}ourier analysis}, volume 249 of {\em Graduate Texts
  in Mathematics}.
\newblock Springer, New York, third edition, 2014.

\bibitem{14modern}
Loukas Grafakos.
\newblock {\em Modern {F}ourier analysis}, volume 250 of {\em Graduate Texts in
  Mathematics}.
\newblock Springer, New York, third edition, 2014.

\bibitem{Prats19}
Martí Prats.
\newblock Measuring triebel-lizorkin fractional smoothness on domains in terms
  of first-order differences.
\newblock {\em Journal of the London Mathematical Society}, 100(2):692--716,
  2019.

\bibitem{seeger1989note}
Andreas Seeger.
\newblock A note on triebel-lizorkin spaces.
\newblock {\em Banach Center Publications}, 22(1):391--400, 1989.

\bibitem{Stein1961}
Elias~M. Stein.
\newblock The characterization of functions arising as potentials.
\newblock {\em Bull. Amer. Math. Soc.}, 67(1):102--104, 01 1961.

\bibitem{1983functionspaces}
Hans Triebel.
\newblock {\em Theory of Function Spaces}.
\newblock Monographs in Mathematics. Springer Basel, 1983.

\bibitem{1992functionspaces}
Hans Triebel.
\newblock {\em Theory of Function Spaces II}.
\newblock Monographs in Mathematics. Springer Basel, 1992.

\end{thebibliography}
{\textit{E-mail address:} liw87@pitt.edu, lifeng.wang.1987@gmail.com}
\end{document}